\newcommand{\rad}{{\rm rad}} 
\newcommand{\cF}{{\mathcal{F}}}
\newcommand{\TL}{{\mathrm{TL}}}
\newcommand{\Span}{{\rm{Span}}}
\newcommand{\cM}{\mathcal{M}}
\newcommand{\tto}{\twoheadrightarrow}
\newcommand{\cB}{\mathcal B}
\newcommand{\cO}{\mathcal O}
\newcommand{\cP}{\mathcal{P}}
\newcommand{\cR}{\mathcal{R}}
\newcommand{\im}{{\rm{im}}}
\newcommand{\id}{\mathrm{id}}
\newcommand{\Id}{\mathrm{Id}}
\newcommand{\Top}{\mathrm{Top}}
\newcommand{\Hom}{\mathrm{Hom}} 
\newcommand{\Ext}{\mathrm{Ext}}
\newcommand{\End}{\mathrm{End}}
\newcommand{\dfs}{{/\kern-2pt/}}
\newcommand{\JJJ}{\mathscr{J}}
\newcommand{\op}{{\rm op}}
\newcommand{\mk}{\Bbbk}
\newcommand{\cC}{\mathcal{C}}
\newcommand{\cL}{\mathcal{L}}
\newcommand{\cD}{\mathcal{D}}
\newcommand{\cS}{\mathcal{S}}
\newcommand{\cA}{\mathcal{A}}
\newcommand{\Ob}{{\rm{Ob}}}
\newcommand{\FI}{{\rm{FI}}}
\newcommand{\charr}{{\rm{char}}}
\newcommand{\FFF}{\overline{\mathcal{F}}}
\newcommand{\fn}{\mathfrak{n}}
\newcommand{\fg}{\mathfrak{g}}
\newcommand{\fh}{\mathfrak{h}}
\newcommand{\fb}{\mathfrak{b}}
\newcommand{\mN}{\mathbb{N}}
\newcommand{\mC}{\mathbb{C}}
\newcommand{\mZ}{\mathbb{Z}}
\newcommand{\mS}{\mathbb{S}}
\numberwithin{equation}{section}
\newcommand{\cQ}{\mathcal{Q}}
\newcommand{\cJ}{\mathcal{J}}
\newtheoremstyle{notes} {} {} {} {} {\bfseries} {.} {.5em} {}
\theoremstyle{plain}
\newtheorem{prop}[subsubsection]{Proposition}
\newtheorem{lemma}[subsubsection]{Lemma}
\newtheorem{cor}[subsubsection]{Corollary}
\newtheorem{thm}[subsubsection]{Theorem}
\newtheorem{thmA}{Theorem}
\newtheorem{CorA}[thmA]{Corollary}
\newtheorem{QueA}[thmA]{Question}
\theoremstyle{remark}
\newtheorem{rem}[subsubsection]{Remark} 
\newtheorem{ddef}[subsubsection]{Definition} 
\pretocmd{\appendix}{\addtocontents{toc}{\protect\addvspace{10\p@}}}{}{}
\theoremstyle{remark}
\newtheorem{ex}[subsubsection]{Example}
\newtheoremstyle{construction} {} {} {} {} {\bfseries} { } {0pt} {}
\theoremstyle{construction}
\title[Borelic pairs]{Borelic pairs for stratified algebras}
\author{Kevin~Coulembier}
\author{Ruibin~Zhang}
\newcommand{\ind}{{\rm Ind}}
\newcommand{\res}{{\rm Res}}
\keywords{stratified and quasi-hereditary algebras, exact Borel subalgebras, cellular and standardly based algebras, standard systems/exceptional sequences, quasi-hereditary covers, diagram algebras, Schur algebras}
\subjclass[2010]{16G10, 17B10, 20C05, 81R05}
\begin{document} 
\date{} 
\begin{abstract}
We determine all values of the parameters for which the cell modules form a standard system, for a class of cellular diagram algebras including partition, Brauer, walled Brauer, Temperley-Lieb and Jones algebras. 
For this, we develop and apply a general theory of finite dimensional algebras with Borelic pairs. The theory is also applied to give new uniform proofs of the cellular and quasi-hereditary properties of the diagram algebras and to construct quasi-hereditary 1-covers, in the sense of Rouquier, with exact Borel subalgebras, in the sense of K\"onig.
Another application of the theory leads to a proof
 that Auslander-Dlab-Ringel algebras admit exact Borel subalgebras.

	\end{abstract}

\maketitle 

\tableofcontents

% \pagebreak
\section{Introduction}

The theory of quasi-hereditary algebras was initiated by Cline, Parshall and Scott in~\cite{CPS, Scott} and provided a unified framework for studying modular representation theory of semisimple algebraic groups and the BGG category~$\cO$. A central role in this theory is played by the {\em standard modules} of quasi-hereditary algebras. 
Cellular algebras were introduced by Graham and Lehrer in~\cite{CellAlg} and include many {\em diagram algebras} such as Brauer, Iwahori-Hecke and BMW algebras. These algebras are of central importance in representation theory and low dimensional topology. Cellular algebras have a class of natural modules, known as the {\em cell modules}. For many values of their parameters, Brauer and BMW algebras are also quasi-hereditary and then the standard modules coincide with the cell modules. This remains true for a range of cellular algebras.

In \cite{Nakano1, Nakano2}, it is proved that, in most cases, the cell modules of Iwahori-Hecke algebras of type $A$ behave as the standard modules of some quasi-hereditary algebra, even though the Iwahori-Hecke algebra is itself not quasi-hereditary. This is formulated into the statement that ``the cell modules form a {\em standard system}'', see~\cite{DR}, and is equivalent to the condition that the algebra admits a {\em cover-Schur algebra}, in the sense of \cite{HHKP, Rou}. The result in \cite{Nakano1} extends to Brauer algebras, by~\cite{Paget}, to partition and BMW algebras, by~\cite{HHKP}, and to Iwahori-Hecke algebras of type $B$, by \cite{Rou}. An important consequence is that multiplicities in cell filtrations are well-defined.

In the present paper, we determine when the cell modules form a standard system for a variety of diagram algebras: partition, Brauer, walled Brauer, Temperley-Lieb and Jones algebras. In the cases of the Brauer and partition algebras, this reproduces (and completes) the corresponding results of~\cite{Paget, HHKP}. We will achieve this by developing a construction which is in part inspired by the theory of exact Borel subalgebras.

In \cite{Koenig}, K\"onig introduced the notion of an {\em exact Borel subalgebra} of a quasi-hereditary algebra, inspired by the Borel subalgebra of a semisimple Lie algebra. When an exact Borel subalgebra exists, the standard modules are given explicitly as the modules induced from the simple modules of that subalgebra. Not every quasi-hereditary algebra admits an exact Borel subalgebra. However, it was proved in~\cite{KKO} that every quasi-hereditary algebra over an algebraically closed field is Morita equivalent to a (quasi-hereditary) algebra admitting an exact Borel subalgebra. Although the proof is constructive, it would not be trivial to apply it to construct the Morita equivalent algebra and its exact Borel subalgebra for a given quasi-hereditary algebra.
In fact, few explicit examples of Morita equivalent algebras with exact Borel subalgebras for prevalent quasi-hereditary algebras are known, see {\it e.g.} \cite{Koenig, PSW}.

We will define a generalisation of the concept of exact Borel subalgebras, {\it viz.} {\em Borelic pairs $(B,H)$ of arbitrary algebras} $A$, and develop the theory of algebras with such pairs. We prove that key structural properties of~$A$ are determined by those of~$H$. 
For instance, if~$H$ is semisimple then $A$ is quasi-hereditary, and, roughly speaking, cellularity of~$H$ implies cellularity of~$A$. Furthermore, in the latter case, the cell modules of~$A$ form a standard system if and only if this is the case for~$H$. These and further applications are more precisely discussed in Section~\ref{IntroBP}.

We use this general theory to study the diagram algebras mentioned above, in particular for (i) determining when the cell modules form standard systems and~(ii) constructing Morita equivalent algebras with exact Borel subalgebras. For any of the diagram algebras $A$, we construct an algebra~$C$, satisfying a double centraliser property with~$A$. The algebra~$C$ also admits a Borelic pair, which allows to prove that it is cellular, determine when the cell modules form a standard system, and find out when it is quasi-hereditary. In the latter case,~$C$ admits an exact Borel subalgebra. In most cases, $C$ is Morita equivalent to~$A$ and in the remaining cases, it is a $1$-faithful cover in the sense of~\cite{Rou}. In some cases, $C$ constitutes a previously unknown `Schur algebra'. To the best of our knowledge, this is the first purely diagrammatic description of a Schur algebra. Aside from the new results on the algebras $A$, our constructions also yields a new proof of their cellularity and quasi-heredity.

\subsection{Standardly based and base stratified algebras}
In \cite{JieDu}, Du and Rui introduced a generalisation of cellular algebras, called {\em standardly based algebras}. We will prove that any algebra over an algebraically closed field can be given at least one standardly based structure. Standardly based algebras also come with cell modules. When the standardly based algebra is cellular, the two types of cell modules coincide. 

In \cite{HHKP}, Hartman {\it et al.} defined the concept of {\em cellularly stratified algebras}, giving a powerful tool to determine when cell modules form a standard system. We introduce the corresponding weaker notion of {\em base stratified algebras}, which is a significant simplification that is better compatible with the notion of Borelic pairs, and actually suffices for studying when the cell modules of a cellular algebra form a standard system. This generalisation will also allow the periplectic Brauer algebra, which is a non-cellular diagram algebra, to be studied using our techniques, see \cite{peri}.

\vspace{-1mm}
\begin{figure}[!h]\label{tabb}
\caption{Links between structures}
\[
\xymatrix{
&& *+[F]\txt{stratified}  \\
 *+[F]\txt{standardly\\ based} &*+[F]\txt{exactly stratified}\ar[ur]&&*+[F]\txt{standardly stratified}\ar[ul]\\
&&*+[F]\txt{ exactly\\ standardly stratified}\ar[ul]\ar[ur]&*+[F]\txt{strongly\\ standardly stratified}\ar[u]\\
*+[F]\txt{base\\stratified}\ar[uu]\ar[urr]&&*+[F]\txt{properly stratified}\ar[u]\ar[ur]\\
&&*+[F]\txt{quasi-hereditary}\ar[u]\ar[ull]
}
\]
\end{figure}
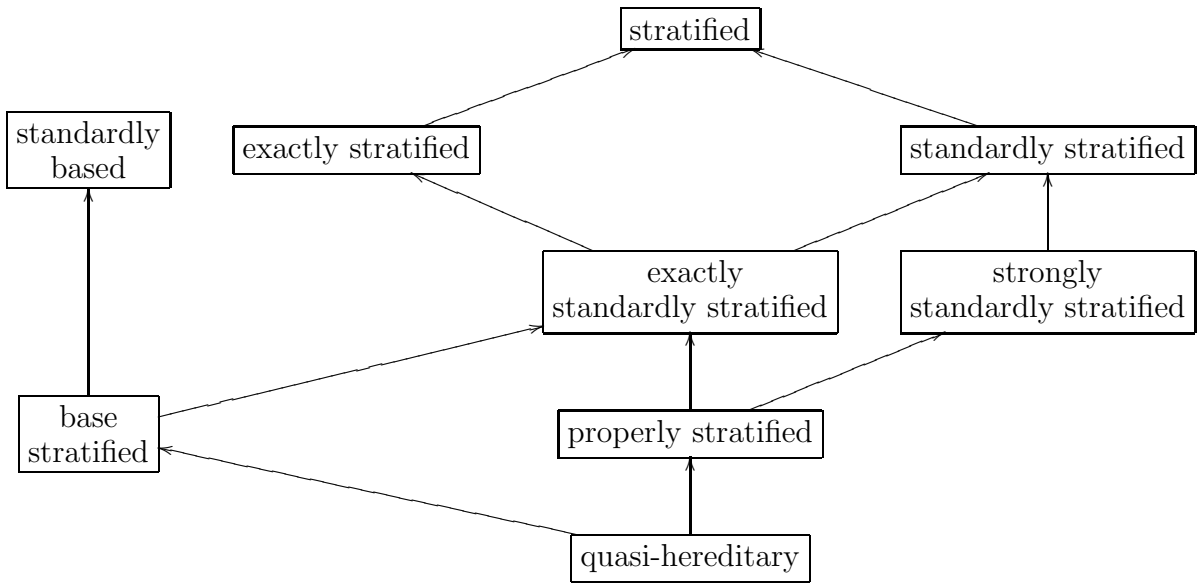
\vspace{-1mm}

\subsection{Overview of the structures on (finite dimensional) algebras}
The different structures on algebras we study are summarised in Figure 1. {\em Properly stratified algebras} were introduced in~\cite{Dlab, Kluc}.
The notion of {\em (standardly) stratified algebras} were introduced in~\cite[Chaper~2]{CPSbook}. The notion of {\em strongly standardly stratified algebras} was studied in~\cite{ADL}, although the term``standardly stratified'' was used, leading to inconsistency with \cite{CPSbook}. The term ``strongly standardly stratified algebras'' was coined in \cite{Frisk}, which also introduced {\em exactly standardly stratified algebras}, as ``weakly properly stratified algebras''.

An arrow means that one structure implies the other one.
For arrows between different versions of standardly stratified algebras, the (proper) standard modules remain the same. The arrow from quasi-hereditary to base stratified only holds when the field is algebraically closed and in this case the standard modules are mapped to the cell modules.
A general base stratified algebra has standard modules, cell modules and proper standard modules. The first type of modules are different from the other two, unless the algebra is quasi-hereditary.

\subsection{Borelic pairs}\label{IntroBP}
 We generalise the notion in \cite{Koenig} of exact Borel subalgebras from quasi-hereditary algebras to exactly standardly stratified algebras, where the generalisation to properly stratified algebras was already introduced by Klucznik and Mazorchuk in~\cite{Kluc}.

We further generalise this theory by introducing the notion of {\em Borelic pairs} $(B,H)$ of an arbitrary finite dimensional algebra~$A$. We prove that, when~$A$ admits a Borelic pair~$(B,H)$, it is standardly stratified. Moreover, if~$H$ is quasi-local, $A$ is strongly standardly stratified. We also consider a stronger notion of {\em exact Borelic pairs}, leading to exactly standardly stratified algebras. Then we find that, if~$H$ is quasi-local (resp. semisimple) $A$ is properly stratified (resp. quasi-hereditary) and in each case~$B$ is an exact Borel subalgebra. If $H$ is standardly based, $A$ is base stratified, so in particular standardly based. Furthermore, the cell modules of~$H$ form a standard system if and only the standard modules of~$A$ form a standard system.

\subsection{Overview of main applications} Since we will almost exclusively deal with {\em finite dimensional, unital, associative} algebras, we don't mention these characteristics in the following statements.

\begin{thmA}\label{ThmA}
Over a perfect field $\mk$, the Auslander-Dlab-Ringel algebra of a $\mk$-algebra~$R$ is quasi-hereditary with exact Borel subalgebra.
\end{thmA}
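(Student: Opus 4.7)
The plan is to exhibit an exact Borelic pair $(B, H)$ inside the ADR algebra $A$ with $H$ semisimple, so that the structural theorem for Borelic pairs recalled in Section~\ref{IntroBP} immediately yields that $A$ is quasi-hereditary and that $B$ is an exact Borel subalgebra in the sense of K\"onig.

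Write $J = \rad(R)$ and let $\ell$ be the Loewy length of $R$, so that (up to opposite algebra conventions) $A = \End_R(\bigoplus_{i=1}^{\ell} R/J^i)^{\op}$. Since $\mk$ is perfect, $R/J$ is a separable $\mk$-algebra, and the Wedderburn-Malcev theorem produces a lifted subalgebra $R_0 \subseteq R$ with $R_0 \cong R/J$. The action of $R_0$ diagonally on the summands $R/J^i$ identifies a semisimple subalgebra $H \subseteq A$ whose primitive idempotents are indexed by pairs $(S,i)$ with $S$ a simple $R/J$-module and $1 \le i \le \ell$. These pairs also label the simple $A$-modules, and equipping them with an appropriate partial order (with $i$ as the dominant parameter) supplies the poset required for a Borelic pair.

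Define $B \subseteq A$ as the subalgebra of "triangular" endomorphisms with respect to this filtration; concretely, the maps whose component $R/J^j \to R/J^i$ is forced to vanish unless $i \ge j$, with the evident refinement inside each fixed $i$. The $J$-adic filtration of $R$ has semisimple $R_0$-bimodule subquotients $J^{i-1}/J^i$, and these govern the $H$-bimodule decomposition of $A$ needed to verify the exact Borelic pair axioms. The main obstacle, and the step requiring the most work, is precisely this verification: one has to check that the $A$-modules induced from the simple $B$-modules recover the (Dlab-Ringel) standard modules of the ADR algebra, and that the requisite projectivity and $\Ext$-vanishing conditions built into the definition of an exact Borelic pair hold. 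Once this is established, the general theorem asserting that an exact Borelic pair with semisimple $H$ forces $A$ to be quasi-hereditary with exact Borel subalgebra $B$ delivers the conclusion.
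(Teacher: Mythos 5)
Your overall strategy coincides with the paper's: equip the ADR algebra $A$ with an idempotent grading via the projections $e_i^\ast$ onto the summands, find a complete $\mathbb{N}$-graded subalgebra $B$ whose degree-zero part $H$ is the semisimple algebra lifted by Wedderburn--Malcev, and then invoke the general machinery (Corollary~\ref{CorgrBrBo}(3) in the paper) to conclude quasi-heredity with $B$ as exact Borel subalgebra. So the route is the same; the problem is that the two nontrivial steps are precisely the ones you leave unaddressed.

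First, your definition of $B$ is too imprecise to carry the argument. You take the ``triangular'' endomorphisms, i.e.\ the maps whose only nonvanishing components go between summands of nondecreasing Loewy length, ``with the evident refinement inside each fixed $i$.'' Taken at face value this only restricts the diagonal blocks $e_i^\ast B e_i^\ast$ to the semisimple part $H$. That is not enough: the off-diagonal blocks $e_i^\ast B e_j^\ast$ (with $j>i$) must also be proper subspaces of $\Hom_R(M_R^j, M_R^i)$. In the paper's construction the defining condition is that $\alpha(c_{\ge j}^\ast)\in c_{\ge i}^\ast S$, i.e.\ each component map must carry the lifted Wedderburn idempotent into the lifted semisimple subalgebra of the target; for $i<j$ this selects exactly the morphisms that are surjections from indecomposable summands of $M_R^j$ onto their Loewy-length-$i$ factor modules, and excludes morphisms whose image lies in the radical of $M_R^i$. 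If you put \emph{all} triangular maps into $B$, the completeness identity $A = B \oplus A_- A$ (equation~\eqref{eqComplete2}, proved as~\eqref{rewcom} in the paper) fails: the maps with image in the radical lie in both the naive $B$ and in $A_- A$, so the sum would no longer be direct. Without completeness, $B$ is not a graded pre-Borelic subalgebra and the whole mechanism collapses.

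Second, you explicitly defer ``the requisite projectivity and Ext-vanishing conditions'' as ``the step requiring the most work,'' but this is where the actual content of the theorem lives. The paper's Lemma~\ref{LemmaAus} spends most of its effort on showing that $A$ is projective as a right $B$-module, by a delicate induction along the chain $A(\sum_{k< i} e_k^\ast)A \subset A(\sum_{k\le i}e_k^\ast)A$: at each step one identifies, for $\alpha \in Ae_i^\ast$, the submodule $N_\alpha$ of elements whose image under $\alpha$ drops in Loewy length, shows $N_\alpha = N_e$ for an idempotent $e$ coming from the lifted semisimple subalgebra, and deduces $\alpha B \cong eB$ inside the relevant quotient. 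Nothing in your proposal indicates how this would go. Stating that ``once this is established'' the general theorem applies is true, but the establishment is the proof, so as written the argument has a genuine gap.
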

This will be proved in Theorem~\ref{ThmAus}. The weak assumption that~$\mk$ is perfect is not required for the quasi-heredity of ADR algebras, see \cite{DRAus}, but our construction of the exact Borel subalgebra fails without it. This exact Borel subalgebra and the ones in Theorem~\ref{ThmC} seem unrelated to the construction in \cite{KKO}, see e.g.~Remark~\ref{NotKKO}.
The quasi-heredity of ADR algebras has the following consequence, see Theorem~\ref{AllBased}.
\begin{CorA}\label{CorB}
Any algebra over an algebraically closed field has a standardly based structure.
\end{CorA}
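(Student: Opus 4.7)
The strategy is to deduce Corollary~\ref{CorB} from Theorem~\ref{ThmA} combined with the hierarchy in Figure~1. Let $R$ be an algebra over the algebraically closed field $\mk$. Since any algebraically closed field is in particular perfect, Theorem~\ref{ThmA} applies to $R$, and the Auslander-Dlab-Ringel algebra $A$ of $R$ is quasi-hereditary over $\mk$.

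Because $\mk$ is algebraically closed, the chain of arrows \emph{quasi-hereditary} $\Rightarrow$ \emph{base stratified} $\Rightarrow$ \emph{standardly based} from Figure~1 (together with the remark immediately below it explaining that the first of these arrows is valid precisely over an algebraically closed field) applies, so $A$ carries a standardly based structure in the sense of Du-Rui. By construction of the ADR algebra, $R$ is isomorphic to $eAe$ for the explicit idempotent $e \in A$ projecting the generator $R \oplus R/\rad R \oplus R/\rad^2 R \oplus \cdots$ onto its first summand. Hence it suffices to show that a standardly based structure descends from $A$ to $eAe$.

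The main obstacle is precisely this last, technical step. Given a standardly based basis $\{a^\lambda_{S,T}\}$ of $A$ indexed by a poset $\Lambda$, one extracts a basis of $eAe$ from $\{e a^\lambda_{S,T} e\}$ after discarding linear dependencies and re-indexing, and verifies the Du-Rui axioms using that the filtration $A(\leq\lambda)\cap eAe$ of $eAe$ is a filtration by two-sided ideals, compatible with the filtration of $A$. What makes the argument succeed here but \emph{not} for full cellularity is the absence of an involution axiom in Du-Rui's definition: the basis indices $S$ and $T$ play independent roles and need no symmetric relation, so the possibly asymmetric truncation $a\mapsto eae$ preserves the structure. Once this truncation-stability lemma for standardly based algebras is in place (either verified directly from Du-Rui's axioms or quoted from their work), the corollary follows immediately.
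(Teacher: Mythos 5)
Your proposal follows essentially the same route as the paper: apply Theorem~\ref{ThmA} to get that $A(R)$ is quasi-hereditary, pass to a standardly based structure on $A(R)$, and descend it to $R \cong e^\ast A(R) e^\ast$. The paper carries out the last two steps by citing \cite[Theorem~4.2.7]{JieDu} and \cite[Proposition~3.5]{Yang} respectively rather than re-verifying them, but you have correctly identified both steps and the reason the truncation works (no involution axiom to preserve).
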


The following theorem, which uses a total quasi-order $\preccurlyeq$ and a partial order $\le$ on the set of simple modules, will be proved in Theorems~\ref{ThmDiagram1}, \ref{ThmMor} and~\ref{ThmCov} and Corollary~\ref{CorCell}. 
\begin{thmA}\label{ThmC}
Consider an arbitrary field $\mk$ and a fixed $\delta\in\mk$. Let $A$ be the partition algebra~$\cP_n(\delta)$, the Brauer algebra~$\cB_n(\delta)$, the walled Brauer algebra~$\cB_{r,s}(\delta)$, the Jones algebra~$J_n(\delta)$ or the Temperley-Lieb algebra~$\TL_n(\delta)$, with~$n,r,s\in\mN$, $n>1$, $r\ge 1$ and~$s\ge 1$. Then
\begin{enumerate}
\item $A$ is cellular if~$\mathscr{C}^3_A$;
\item $(A,\preccurlyeq)$ is exactly standardly stratified(*) if~$\mathscr{C}^1_A$;
\item $(A,\le)$ is quasi-hereditary if~$\mathscr{C}^1_A$ and~$\mathscr{C}^2_A$;
\item $A$ admits a cover $C$ which is:
\begin{itemize}
\item Morita equivalent to~$A$ if~$\mathscr{C}^1_A$;
\item quasi-hereditary with exact Borel subalgebra if~$\mathscr{C}^2_A$;
\item always exactly standardly stratified(*) with exact Borel subalgebra;
\item cellular and base stratified if~$\mathscr{C}^3_A$.
\end{itemize}
\end{enumerate}
%The conditions $\mathscr{C}^i_A$ are given by

\vspace{-4mm}

\begin{center}
\begin{tabular}{ | l | l | l |l |}
\multicolumn{4}{c }{}\\
\hline
algebra~$A$& $\mathscr{C}^1_A$& $\mathscr{C}^2_A$ & $\mathscr{C}^3_A$ \\ \hline\hline
$\cP_n(\delta)$ & $\delta\not=0$  &  $\charr(\mk)\not\in [2,n]$&$\emptyset$\\ \hline
$\cB_n(\delta)$ & $\delta\not=0$ or~$n$ odd  &  $\charr(\mk)\not\in [2,n]$&$\emptyset$\\ \hline
$J_n(\delta)$ & $\delta\not=0$ or~$n$ odd  & $n$ odd and~$\charr(\mk)\not\in [3,n]$ or & $x^i-1$ splits over $\mk$,\\ 
 &  &$n$ even and~$\charr(\mk)\not\in \{2\}\cup[3,n/2]$& for~$i\in\{n,n-2,\ldots\}$\\\hline
$\TL_n(\delta)$ & $\delta\not=0$  or~$n$ odd  &  $\quad\emptyset$&$\emptyset$\\ \hline
$\cB_{r,s}(\delta)$ & $\delta\not=0$ or~$r\not=s$ &  $\charr(\mk)\not\in [2,\max(r,s)]$&$\emptyset$\\ \hline
\end{tabular}
\end{center}
When $\mathscr{C}^1_A$ is satisfied, but not $\mathscr{C}^2_A$, the algebra~$A$ is not quasi-hereditary, for any order.

(*) In case~$A=J_n(\delta)$, they are furthermore properly stratified.
%(*) In case of~$A=J_n(\delta)$, we must assume that the polynomials $x^i-1$, for~$i\in\{n,n-2,\ldots\}$, split over $\mk$ for cellularity and base-stratificiation.
\end{thmA}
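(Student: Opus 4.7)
The plan is to apply the general Borelic pair machinery developed in the earlier sections uniformly to each of the five families. For each diagram algebra $A$ in the list, I would construct a companion algebra $C$ as the endomorphism algebra of a cell-filtered module $P$ supporting a double centraliser with $A$; concretely, $P$ should be a direct sum of ``maximally propagating'' cell modules indexed by the through-strand parameter $k$, so that $C := \End_A(P)^{\op}$ has a distinguished semisimple-or-quasi-local subalgebra $H=\bigoplus_k H_k$ built from the through-strand symmetry groups. For $\cP_n(\delta)$, $\cB_n(\delta)$, $\cB_{r,s}(\delta)$ and $\TL_n(\delta)$ the blocks $H_k$ are group algebras of symmetric groups (for $\TL_n$ only the trivial group survives), while for $J_n(\delta)$ the rotational symmetry of annular diagrams forces $H_k$ to be the group algebra of a cyclic group. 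The subalgebra $B\supseteq H$ is defined by adjoining the ``propagation-decreasing'' cup/cap morphisms, so that the $(B,H)$-filtration on $C$ is precisely the cell filtration.

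Next, I would verify the axioms of an exact Borelic pair $(B,H)$ of $C$. The crucial non-degeneracy axiom translates into non-vanishing of the iterated cap--cup pairing on each cell layer, which is exactly the content of condition $\mathscr{C}^1_A$: $\delta\neq 0$ kills the nullity of the handle-closing map, while the parity exceptions ($n$ odd for $\cB_n, J_n, \TL_n$; $r\neq s$ for $\cB_{r,s}$) cover the cases where the bottom stratum simply does not exist and degeneracy becomes irrelevant. Exactness of the pair is then automatic from the $H$-freeness of the cell subquotients in these diagrammatic settings. Once $(B,H)$ is established, the theorems on Borelic pairs from the previous sections immediately yield that $C$ is exactly standardly stratified with $B$ as an exact Borel subalgebra, becoming properly stratified when $H$ is quasi-local and quasi-hereditary when $H$ is semisimple; by Maschke's theorem for the groups involved, the latter is equivalent to~$\mathscr{C}^2_A$ (the slightly different form for $J_n(\delta)$ reflects the fact that the relevant cyclic orders for $n$ even exhaust $\{2,\ldots,n/2\}$ rather than $\{1,\ldots,n\}$, and in the parity-proper case the cyclic group algebra is local when not semisimple, accounting for the parenthetical ``properly stratified'' refinement).

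For the cellular half, condition $\mathscr{C}^3_A$ is chosen precisely to guarantee that $H$ itself is cellular over $\mk$: symmetric group algebras are cellular over any field, so $\mathscr{C}^3_A$ is vacuous in four of the five cases, whereas $\mk[C_i]$ is cellular exactly when $x^i-1$ splits over $\mk$ (so that the primitive central idempotents are $\mk$-rational and $\mk[C_i]$ decomposes as a product of local commutative rings). Having $H$ cellular and $(B,H)$ Borelic, the general result on base stratified algebras gives cellularity and base stratification of $C$, and furthermore the cell modules of $C$ form a standard system if and only if those of $H$ do. Transporting these properties to $A$ through the double centraliser property yields (1) and the cellular statement in (4), while the Morita equivalence $C\simeq A$ versus $1$-cover dichotomy is controlled by whether $P$ is a progenerator, which holds exactly under~$\mathscr{C}^1_A$. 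The final sentence of the theorem (failure of quasi-heredity of $A$ when $\mathscr{C}^1_A$ holds but $\mathscr{C}^2_A$ does not) follows from a direct check on cell multiplicities using the now-established stratified structure of $C$.

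The principal obstacle is to carry out the construction of $P$ and of $B$ in a genuinely uniform fashion across the five families, in particular reconciling the walled Brauer case (where the ``group'' is a product $\mathfrak{S}_r\times\mathfrak{S}_s$ and the propagation has two flavours) and the Jones case (where the relevant symmetry is cyclic rather than symmetric, so both $H$ and the exact Borel have a substantially different shape). A secondary difficulty is the delicate small-parameter and characteristic analysis needed to show that $\mathscr{C}^1_A$ is genuinely sharp for Morita equivalence and that $\mathscr{C}^2_A$ is genuinely sharp for quasi-heredity; these require explicit identification of extensions between cell modules in the critical layers, and form the bulk of the diagrammatic work.
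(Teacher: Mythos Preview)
Your overall strategy is close to the paper's, but you have the logical role of $\mathscr{C}^1_A$ inverted, and this creates a real gap. In the paper, $C$ is defined directly as the category algebra $\mk[\cA^{\le n}]$ of the truncated diagram category (Section~\ref{SecDia}), not primarily as an endomorphism algebra over $A$. The idempotent grading coming from the identity morphisms $e_i^\ast$ of the objects makes $C$ an idempotent graded algebra, and a graded pre-Borelic subalgebra $B=HN^+$ (with $H$ spanned by the purely propagating diagrams) is exhibited \emph{unconditionally} in Proposition~\ref{PropExBH}. Consequently all four bullets of part~(4), including ``\emph{always} exactly standardly stratified with exact Borel subalgebra'', hold with no hypothesis on $\delta$ whatsoever. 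Your claim that ``the crucial non-degeneracy axiom translates into non-vanishing of the iterated cap--cup pairing \ldots\ which is exactly $\mathscr{C}^1_A$'' is therefore misplaced: nothing in Definitions~\ref{defB} or~\ref{DefBorPair} requires such a pairing, and the paper never invokes one for $C$. The condition $\mathscr{C}^1_A$ enters only in Theorem~\ref{ThmMor}, where the elements $a_i,b_i$ of Lemma~\ref{Lemab} are constructed so that $b_ia_i=e_i^\ast$ precisely under $\mathscr{C}^1_A$, forcing $C=Ce_n^\ast C$ and hence $C\stackrel{M}{=}A$. With your ordering of the argument you would be unable to establish the third bullet of~(4) when $\mathscr{C}^1_A$ fails.

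A secondary issue: your proposed proof of the final sentence (failure of quasi-heredity of $A$ when $\mathscr{C}^1_A$ holds but $\mathscr{C}^2_A$ does not) via ``a direct check on cell multiplicities'' is not the paper's method and would be hard to make sharp. The paper instead observes that $H$ is a direct sum of group algebras, hence Frobenius; when not semisimple it therefore has infinite global dimension. Equation~\eqref{eqAiH} identifies $He_i^\ast$ with a subquotient $A^{(i)}$ of the stratifying chain, and Lemma~\ref{NewLemStr} passes infinite global dimension up to $C$, whence to $A$ by the Morita equivalence available under $\mathscr{C}^1_A$. Quasi-heredity for any order is then excluded since quasi-hereditary algebras have finite global dimension.
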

Here and throughout the paper, $\emptyset$ represents the `empty' condition. So condition $\emptyset$ is always satisfied.

When $\cC^1_A$ is not satisfied, it follows from \cite{CellQua} that~$A$ is not quasi-hereditary, for any order. Hence, we obtained a {\em Morita equivalent algebra with exact Borel subalgebra for all cases when~$A$ is quasi-hereditary}. 
The quasi-heredity and cellularity of~$A$ in Theorem~\ref{ThmC} were previously obtained by a variety of methods in~\cite{AST1, CDDM, CellAlg, CellQua, partition, Xi}. Here we find a new unified proof, which also constructs the exact Borel subalgebras. The quasi-heredity of~$J_n(\delta)$ has previously only been stated, in~\cite[Proposition~4.2]{CellQua}, without the explicit conditions.
Next we determine when the cell modules of~$A$ form a standard system.
\begin{thmA}\label{ThmD}
If the field $\mk$ is algebraically closed, all algebras $A$ in Theorem~\ref{ThmC} are cellular. The cell modules form a standard system if and only if the following condition is satisfied:
\vspace{-4mm}
\begin{center}
\begin{tabular}{ | l | l |  }
\multicolumn{2}{c }{}\\
\hline
algebra~$A$& condition \\ \hline\hline
$\cP_n(\delta)$ & $\delta\not=0$ if~$n=2$; and~$\begin{cases}\charr(\mk)\not\in\{2,3\}\qquad\mbox{or}\\\charr(\mk)=3\mbox{ and }n=2\end{cases}$  \\ \hline
$\cB_n(\delta)$ & $\delta\not=0$ if~$n\in \{2,4\}$; and~$\begin{cases}\charr(\mk)\not\in\{2,3\}\qquad\mbox{or}\\\charr(\mk)=3\mbox{ and }n=2\end{cases}$\\ \hline
$J_n(\delta)$ & $\delta\not=0$ if~$n\in \{2,4\}$; and~$\begin{cases} \charr(\mk)\not\in [3,n] & \mbox{if~$n$ is odd} \\ 
  \charr(\mk)\not\in\{2\}\cup[3,n/2]&\mbox{if~$n$ is even}\end{cases}$\\\hline
  $\TL_n(\delta)$ & $\delta\not=0$ if~$ n\in \{2,4\}$ \\ \hline
$\cB_{r,s}(\delta)$ &$\delta\not=0$ if~$(r,s)\in\{(1,1),(2,2)\}$; and~$\begin{cases}\charr(\mk)\not\in\{2,3\}\qquad\mbox{or}\\\charr(\mk)=3\mbox{ and }\max(r,s)\le 2\end{cases}$  \\ 
\hline
\end{tabular}
\end{center}

\end{thmA}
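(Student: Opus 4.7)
The plan is to apply the Borelic pair machinery from Theorem~\ref{ThmC}, which produces for each of the diagram algebras $A$ in question a cellular, base stratified cover $C$ equipped with a Borelic pair $(B,H)$ where $H$ is standardly based. By the general transfer principle highlighted in Section~\ref{IntroBP}, the cell modules of $C$ form a standard system if and only if the cell modules of $H$ do. Since $C$ is Morita equivalent to $A$ whenever $\mathscr{C}^1_A$ holds, the question reduces, for most values of $(n,\delta,\charr(\mk))$, to determining when the cell modules of the much smaller algebra $H$ form a standard system. The cellularity claim in the theorem is immediate from Theorem~\ref{ThmC}, as $\mathscr{C}^3_A$ is empty in all cases except the Jones algebra, where it amounts to the splitting of $x^i-1$ and hence is automatic over an algebraically closed field.

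Next I would identify $H$ explicitly in each case. For $\cP_n(\delta)$, $\cB_n(\delta)$ and $\cB_{r,s}(\delta)$, $H$ should be a direct sum of group algebras $\mk S_k$ of symmetric groups for $k$ running through the layers of the cell chain (up to $n$, resp.\ up to $\max(r,s)$ in the walled case). For $\TL_n(\delta)$, $H$ degenerates to a sum of copies of $\mk$, since the through-strand stabiliser at each layer is trivial. For $J_n(\delta)$, $H$ is a sum of group algebras of cyclic groups $\mk[\mZ/i\mZ]$ for the relevant $i$, whose semisimplicity over the algebraically closed field $\mk$ is controlled by the divisibility condition $\charr(\mk)\nmid i$.

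With these identifications, I would invoke the classical criteria for cell modules to form a standard system. By the Hemmer--Nakano theorem, the Specht modules of $\mk S_k$ form a standard system over an algebraically closed field precisely when $\charr(\mk)\notin\{2,3\}$, or trivially when $k$ is small enough that this hypothesis is vacuous. This recovers the characteristic constraints in the table for $\cP_n$, $\cB_n$ and $\cB_{r,s}$. For $\TL_n$ no characteristic condition arises, because $H$ is semisimple unconditionally. For $J_n$ the constraint on $\charr(\mk)$ is exactly the semisimplicity of the cyclic summands of $H$, which reproduces the case split between $n$ odd and $n$ even recorded in the table.

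The main obstacle, and final step, is the edge conditions on $\delta$ for small $n$. These occur precisely when $\mathscr{C}^1_A$ fails, so that $C$ is only a $1$-faithful cover and no longer Morita equivalent to $A$; in that regime the cell modules of $A$ and of $C$ are generally non-isomorphic and the transfer argument above does not suffice directly. I would then analyse each exceptional case ($n=2$ for $\cP_n$ and $\TL_n$; $n\in\{2,4\}$ for $\cB_n$, $J_n$, $\TL_n$; $(r,s)\in\{(1,1),(2,2)\}$ for $\cB_{r,s}$) by listing the few cell modules explicitly and checking the $\Ext^1$-vanishing and composition-factor conditions defining a standard system. The delicate point is that when $\delta=0$ a cell module at the bottom layer typically becomes non-simple and acquires a non-trivial $\Ext^1$ with a cell module of strictly larger label, violating the standard system axiom; conversely, when $\delta\neq 0$ in these exceptional cases a direct computation recovers the property, producing the remaining clauses of the form ``$\delta\neq 0$ if $n\in\{\dots\}$'' in the table.
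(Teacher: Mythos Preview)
Your overall strategy matches the paper's approach closely for the generic cases: reduce via the Borelic pair to the cell modules of $H$, identify $H$ as a sum of group algebras of symmetric or cyclic groups, and invoke the Hemmer--Nakano criterion. That part is fine.

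However, there is a genuine gap in your handling of the $\delta=0$ regime. You write that the edge conditions ``occur precisely when $\mathscr{C}^1_A$ fails'' and then propose to treat only the small cases $n\in\{2,4\}$ (or $(r,s)\in\{(1,1),(2,2)\}$) by direct inspection. But $\mathscr{C}^1_A$ fails for \emph{every} even $n$ when $\delta=0$ (and for every $n$ in the partition case, and every $r=s$ in the walled case). So your case split leaves infinitely many algebras---$\cB_6(0),\cB_8(0),\ldots$, $\cP_3(0),\cP_4(0),\ldots$, $\cB_{3,3}(0),\cB_{4,4}(0),\ldots$---uncovered by either the Morita equivalence or your hand computation.

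The paper closes this gap with a substantial argument (Theorem~\ref{Thm0Cover} and its supporting Lemmata~\ref{LemAc00}--\ref{ext02}): for these large-$n$, $\delta=0$ cases, one shows that $C$ is a $1$-faithful cover of $A$, i.e.\ the functor $F=e_n^\ast(-)$ induces isomorphisms on $\Hom$ and $\Ext^1$ between objects with $\overline{\Delta}$-filtrations. Since the cell modules of $A$ are precisely $F$ applied to the cell modules of $C$ (Proposition~\ref{PropCMod}), $1$-faithfulness is exactly what is needed to transfer the standard system property between $C$ and $A$. You mention $1$-faithfulness in passing but then dismiss it (``the transfer argument above does not suffice directly''); on the contrary, it \emph{is} the transfer mechanism here, and establishing it requires real work---explicit computations with the nilpotent element $c_0^\ast$ and the resolution $Ac_{2\gamma}^\ast\to Ac_\gamma^\ast\to Ac_0^\ast\to 0$. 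The genuinely exceptional small cases $n\in\{2,4\}$ are then handled separately (Lemma~\ref{NegPn}) by exhibiting explicit bad $\Ext^1$'s, as you suggest.
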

 This follows from Theorems~\ref{ThmDiagram2}, \ref{ThmMor} and~\ref{ThmCellStan}. The results for~$\cB_n(\delta)$ were previously obtained in~\cite{Paget} and most of the claim for~$\cP_n(\delta)$ was proved in~\cite{HHKP}. This theorem, together with \cite[Theorem~2]{DR} and Lemma~\ref{LemSchur}, yields the following consequence.

\begin{CorA}\label{CorE}
The cellular algebra~$A$, under the condition in Theorem~\ref{ThmD}, admits a Schur algebra in the sense of \cite[Definition~12.1]{HHKP} or Definition~\ref{DefSchur}. Moreover, the cell multiplicities in modules which admit cell filtrations are independent of the specific filtration.
\end{CorA}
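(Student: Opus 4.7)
The plan is to deduce Corollary~\ref{CorE} as a formal consequence of Theorem~\ref{ThmD} together with the cited results of Dlab--Ringel \cite{DR} and the internal Lemma~\ref{LemSchur}. Since the statement of the corollary already specifies ``under the condition in Theorem~\ref{ThmD}'', I may assume throughout that the cell modules of $A$ form a standard system. The work is thus purely deductive, requiring no further analysis of the individual diagram algebras.

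First, I would unpack what has already been established. For each of the five families of diagram algebras $A$, Theorem~\ref{ThmD} asserts that under the tabulated condition the set of cell modules $\{\Delta_\lambda\}_\lambda$, equipped with its natural partial order, forms a standard system in $A$-mod in the sense of \cite{DR}. By \cite[Theorem~2]{DR}, any standard system~$(\Theta_\lambda)_\lambda$ inside the module category of a finite-dimensional algebra~$A$ extends uniquely (up to Morita equivalence) to a quasi-hereditary algebra~$S$ together with an exact, fully faithful functor $F\colon \cF(\Theta)\to S\text{-mod}$ identifying~$F(\Theta_\lambda)$ with the standard module of~$S$ labelled by~$\lambda$; concretely, $S$ is realised as the opposite endomorphism algebra of a minimal projective generator for~$\cF(\Theta)$.

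Second, I would invoke Lemma~\ref{LemSchur} to match this output with the axiomatic notion of a Schur algebra of $A$ as in \cite[Definition~12.1]{HHKP} (or Definition~\ref{DefSchur}). The purpose of that lemma is precisely to certify that the quasi-hereditary algebra produced from a standard system of cell modules satisfies the double-centraliser and compatibility requirements imposed on a Schur algebra; thus its application is immediate and yields the first assertion of the corollary.

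Finally, the multiplicity statement is a general property of standard systems and does not require any further input specific to~$A$. In any category $\cF(\Theta)$ associated with a standard system, the Grothendieck group is free abelian on the classes $[\Theta_\lambda]$, so for every $M$ admitting a filtration by the $\Theta_\lambda$ the multiplicity $(M:\Theta_\lambda)$ coincides with the coefficient of $[\Theta_\lambda]$ in $[M]$, which is a filtration-independent invariant. Applied to $\Theta_\lambda$ the cell modules of~$A$, this gives the second assertion. No substantive obstacle is expected: since Theorem~\ref{ThmD} has already done the heavy lifting, the only thing to verify carefully is that the hypotheses of \cite[Theorem~2]{DR} and of Lemma~\ref{LemSchur} are met verbatim by the cellular data of~$A$, which is built into our conventions for standard systems of cell modules.
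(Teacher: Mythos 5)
Your route is essentially the paper's: Theorem~\ref{ThmD} provides the standard-system input, Lemma~\ref{LemSchur} converts it into a cover-Schur algebra, and \cite[Theorem~2]{DR} handles the multiplicities. There is, however, a genuine gap in your treatment of the second assertion. You claim the multiplicity statement follows because ``the Grothendieck group is free abelian on the classes $[\Theta_\lambda]$''. If by this you mean $G_0(A)$, the claim is false in the relevant cases: the poset $L$ indexing the cell modules is generally strictly larger than $\Lambda_A$ (precisely because $A$ need not be quasi-hereditary), so there are more cell modules than simple modules and the $[W(p)]$ cannot form a basis. If you mean the Grothendieck group of the exact category $\cF(\Theta)$, then the assertion that it is freely generated by the $[\Theta_\lambda]$ is \emph{exactly} the content of the well-definedness of filtration multiplicities --- i.e.~it is \cite[Theorem~2]{DR} itself --- so presenting it as a self-evident ``general property'' is circular. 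The correct argument either cites \cite[Theorem~2]{DR} directly for this conclusion, or passes through the quasi-hereditary cover $\cS$ constructed in Lemma~\ref{LemSchur}: the exact, fully faithful functor $G$ on $\cF(W)$ carries cell-filtrations of $M$ to $\Delta$-filtrations of $GM$ in $\cS$-mod, where the standard classes do form a basis of $G_0(\cS)$, and well-definedness there pulls back. A minor, non-fatal point: Lemma~\ref{LemSchur} is an equivalence whose forward direction already yields the Schur algebra from the standard-system hypothesis, so your separate invocation of \cite[Theorem~2]{DR} for the construction step duplicates what is internal to the lemma's proof.
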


When the condition~$\cC^1_A$ in Theorem~\ref{ThmC} is not satisfied and the cover $C$ is hence not Morita equivalent to~$A$, in most cases it is still a $1$-faithful cover. This yields a quasi-hereditary~$1$-cover in the sense of~\cite{Rou} when~$C$ is quasi-hereditary ({\em i.e.} when~$\cC^2_A$ is satisfied).

\begin{thmA} \label{ThmF}
Consider an arbitrary field $\mk$ and the cover $C$ of~$A$ in Theorem~\ref{ThmC}, then $C$ is a 1-faithful quasi-hereditary cover if~$\mathscr{C}^2_A$ and the condition in the table is satisfied.
\vspace{-4mm}
\begin{center}
\begin{tabular}{ | l | l |  }
\multicolumn{2}{c }{}\\
\hline
algebra~$A$& condition (along with~$\cC_A^2$) \\ \hline\hline
$\cP_n(\delta)$ & $n\not=2$ or~$\delta\not=0$  \\ \hline
$\cB_n(\delta)$ & $n\not\in \{2,4\}$ or~$\delta\not=0$ \\ \hline
$J_n(\delta)$ & $n\not\in \{2,4\}$ or~$\delta\not=0$  \\ \hline

  $\TL_n(\delta)$ & $n\not\in \{2,4\}$ or~$\delta\not=0$  \\ \hline
$\cB_{r,s}(\delta)$ &  $(r,s)\not\in\{(1,1),(2,2)\}$ or~$\delta\not=0$\\ 
\hline
\end{tabular}
\end{center}
Under these conditions, the algebra~$C$ is the Schur algebra predicted in Corollary~\ref{CorE}.
\end{thmA}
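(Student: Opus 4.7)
The plan is to combine the Borelic pair structure of $C$ with the standard-system property of cell modules of $A$ established in Theorem~\ref{ThmD}. Whenever condition $\mathscr{C}^1_A$ of Theorem~\ref{ThmC} is satisfied, $C$ is Morita equivalent to $A$ by that theorem, and is therefore trivially a $1$-faithful quasi-hereditary cover as soon as $\mathscr{C}^2_A$ secures quasi-heredity of $C$. The substantive content of Theorem~\ref{ThmF} therefore concerns the cases with $\delta=0$ and $(n,r,s)$ outside the small-range exceptions listed in the table, where $C$ is not Morita equivalent to $A$ yet is still claimed to be a $1$-faithful cover.

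In those remaining cases, let $P$ be the projective $C$-module realising the double centraliser $A\cong \End_C(P)^{\rm op}$, and let $F=\Hom_C(P,-)$ denote the associated Schur functor. From the Borelic pair of $C$ constructed in Theorem~\ref{ThmC}, the standard (resp.\ costandard) modules of $C$ are sent by $F$ to the cell (resp.\ dual cell) modules of $A$. The double centraliser property combined with the quasi-heredity of $C$ then readily yields that $C$ is a $0$-faithful cover of $A$.

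A direct comparison of the tables in Theorems~\ref{ThmD} and~\ref{ThmF} shows that the hypotheses of the latter, combined with $\mathscr{C}^2_A$, are exactly the hypotheses of the former, so under the assumptions of Theorem~\ref{ThmF} the cell modules of $A$ form a standard system. Invoking Rouquier's criterion relating $1$-faithful covers to the standard-system property (in the spirit of \cite{Rou} and \cite{HHKP}), I would then conclude that $C$ is a $1$-faithful quasi-hereditary cover of $A$. Finally, the identification of $C$ with the Schur algebra of Corollary~\ref{CorE} follows from the uniqueness of $1$-faithful quasi-hereditary covers up to Morita equivalence.

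The main obstacle lies in confirming that in the non-Morita regime the Borelic pair of $C$ really produces a Schur functor whose image of the standard modules matches the cell modules of $A$, and in verifying that the small-range exclusions $n\in\{2,4\}$ or $(r,s)\in\{(1,1),(2,2)\}$ at $\delta=0$ are genuine obstructions; in these borderline cases the cell modules fail to form a standard system, so the generic argument above does not apply and $1$-faithfulness has to be ruled out by an explicit low-rank analysis of the corresponding diagram algebras.
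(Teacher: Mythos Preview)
Your argument is circular. In the non-Morita regime (the only case with content), you invoke Theorem~\ref{ThmD} to conclude that the cell modules of $A$ form a standard system. But Theorem~\ref{ThmD} is proved in the paper via Theorem~\ref{ThmCellStan}, whose proof explicitly uses Theorem~\ref{Thm0Cover}---the $1$-faithfulness of $C$---to transfer the standard-system property from $C$ down to $A$ through the functor $F$. So in the very cases you need it, Theorem~\ref{ThmD} already presupposes the $1$-faithfulness you are trying to establish.

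Even if Theorem~\ref{ThmD} were available independently, the implication you invoke does not exist in the form you need. The fact that the cell modules of $A$ form a standard system guarantees, via the Dlab--Ringel construction (Lemma~\ref{LemSchur}), the existence of \emph{some} $1$-faithful quasi-hereditary cover $\mathcal{S}$. It does not show that the specific algebra $C$ is that cover. Uniqueness of $1$-faithful covers (\cite[Corollary~4.46]{Rou}) only applies once you already know $C$ is $1$-faithful; it cannot be used to promote a $0$-faithful cover to a $1$-faithful one. There is no general ``Rouquier criterion'' of the shape ``$0$-faithful cover $+$ standard system on the target $\Rightarrow$ $1$-faithful''.

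The paper's actual proof in the non-Morita case (Theorem~\ref{Thm0Cover}) is a direct verification: one shows that the unit $\eta_M : M \to GF(M)$ is an isomorphism and that $\mathcal{R}_1 G(FM)=0$ for every proper standard module $M=\overline{\Delta}(i,\nu)$, via explicit diagrammatic computations in $A$ (Lemmata~\ref{LemAc00}, \ref{Ac0A}, \ref{esc0}, \ref{LemHom01}, \ref{LemHom02}, \ref{ext02} and Propositions~\ref{PropHom}, \ref{PropExt}). These calculations exploit concrete presentations of $Ac_0^\ast$ and vanishing of certain $\Hom$ and $\Ext^1$ groups that genuinely depend on $n$ (resp.\ $r$) being large enough, which is also why the small-range exclusions are not merely formal. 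This diagrammatic work cannot be bypassed by the abstract considerations in your proposal.
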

This is proved in Theorems \ref{Thm0Cover} and \ref{ThmMor}. 
If, for~$A=\cB_n(\delta)$, we take the stronger condition `$n$ even or~$\delta\not=0$', but relax condition~$\cC^2_A$ to `$\charr(\mk)\not\in\{2,3\}$', the Schur algebra of Corollary~\ref{CorE} is constructed in a more combinatorial way in~\cite{Henke}, see also \cite{Bowman} for the walled Brauer algebra. The combination of our methods with the ones in \cite{Henke} can be used to construct the Schur algebras of Corollary~\ref{CorE} in full generality, see Remark~\ref{SchurGen}. The result in Theorem~\ref{ThmF} also leads to the following question. The answer is likely to grow with $n,r,s$.
\begin{QueA}
What is the maximal~$k\in\mN$ for which the quasi-hereditary covers in Theorem~\ref{ThmF} are $k$-faithful?
\end{QueA}

Finally, we remark that our treatment of Theorems \ref{ThmA}, \ref{ThmC} and \ref{ThmD} provide alternative proofs for several results in~\cite{AST1, CDDM, DRAus, CellAlg, HHKP, Paget, CellQua, partition, Xi}. Our approach does not rely on any of the results in these papers.

\subsection{Structure of the paper} In Section~\ref{SecPrel}, we recall some useful results and introduce some notation. Sections \ref{SecPre} to \ref{SecBase} constitute Part~\ref{GenThe}. Here, the general theory of borelic pairs is developed. In Part~\ref{Examp} we apply these results to a variety of examples. Concretely, in Section~\ref{SecADR} we consider Auslander-Dlab-Ringel algebras, in Section~\ref{SecBGG} a thick version of the BGG category~$\cO$ and in Section~\ref{SecDia} the class of diagram algebras.
In Part~\ref{Faith} (Section~\ref{SecCov}) we determine the precise homological connection between the diagram algebras and their covers constructed in Section~\ref{SecDia}. In Appendix~\ref{TheApp}, we recall some known technical properties of stratified algebras, which are not easy to find with proof in the literature.

\section{Preliminaries}\label{SecPrel}

We work over an arbitrary ground field $\mk$. Unless specified otherwise, we make no assumptions on its characteristic and do not require it to be algebraically closed. A field is called {\em perfect} if every algebraic field extension of~$\mk$ is separable. Examples are algebraically closed fields, fields of characteristic zero and finite fields. We set $\mN=\{0,1,2,\cdots\}$.

\subsection{Algebras and modules}
By an algebra~$A$ over $\mk$, we will always mean an {\em associative, unital, finite dimensional} algebra. All subalgebras of an algebra~$A$ are assumed to contain the identity $1_A$.  Modules over $A$ are supposed to be unital, in the sense that~$1_A$ acts as the identity. Furthermore, they are always assumed to be left modules and finitely generated, unless explicitly specified otherwise. The corresponding module category is denoted by~$A$-mod. The Jordan-H\"older multiplicity of a simple module~$L$ in $M\in A$-mod will be denoted by $[M:L]$.

Two algebras $A$ and~$B$ are {\em Morita equivalent} if there is an equivalence of categories 
$$A\mbox{-mod }\cong\; B\mbox{-mod}.$$
 We write this as $A\;\stackrel{M}{=}\; B$. A {\em local} algebra is an algebra with unique maximal left ideal.  We say that an algebra is {\em quasi-local} if it is Morita equivalent to the direct sum of local algebras, so if there are no extensions between non-isomorphic simple modules.

\subsection{Jacobson radical} The Jacobson radical~$\rad A$ of an algebra~$A$ is the intersection of all maximal left ideals, see \cite[Chapter~4]{Lam}. An algebra~$A$ is called {\em semisimple} if its Jacobson radical is zero, which is equivalent to the condition that~$A$ is the direct sum of simple algebras by \cite[Theorem 4.14]{Lam}.
All finite dimensional unital algebras are {\em semiprimary}, meaning that~$A/\rad A$ is semisimple and~$\rad A$ is nilpotent, see \cite[Theorem 4.15]{Lam}.

A $\mk$-algebra~$A$ is called {\em separable} if for any field extension~$\mathbb{K}$ of~$\mk$, $A\otimes_{\mk}\mathbb{K}$ is a semisimple $\mathbb{K}$-algebra. Every simple algebra over a perfect field is separable. 
If $A$ has a semisimple subalgebra~$S$, such that~$A=S\,\oplus \,\rad A$ as vector spaces, we say that~$A$ is a {\em Wedderburn algebra.} Wedderburn's principal theorem states that any algebra such that~$A/\rad A$ is separable is Wedderburn, see \cite[Exercise~9.3.1]{Weibel}.

For a subspace $I\subset A$ (not containing $1_A$) and an $A$-module $M$, we denote by 
\begin{equation}\label{eqInvariants}M^I=\{v\in M\,|\, xv=0,\quad\mbox{ for all $x\in I$}\},\end{equation}
the $I$-invariants of~$M$. In case~$I$ is a two-sided ideal, the space $M^I$ comes with a natural~$A/I$-module structure.

\subsection{Idempotents}\label{SecIdem}
An {\em idempotent}~$e\in A$ is {\em primitive} if the left $A$-module~$Ae$ is indecomposable. Two idempotents~$e$ and~$f$ in $A$ are {\em equivalent} if and only if~$Ae\cong Af$ as left $A$-modules, or equivalently if there are $a,b\in A$ such that~$e=ab$ and~$f=ba$.

For an algebra~$A$, there is a one-to-one correspondence between the isomorphism classes of simple unital modules, the equivalence classes of primitive idempotents and the isomorphism classes of projective covers of the simple modules. So there is a (finite) set $\Lambda=\Lambda_A$ with corresponding simple module~$L(\lambda)$, primitive idempotent~$e_\lambda$ and projective module~$P(\lambda):=A e_\lambda$ for each~$\lambda\in \Lambda_A$, exhausting the classes irredundantly, such that
$$e_{\lambda}L(\lambda')\not=0\quad\Leftrightarrow\quad\lambda'=\lambda\quad\Leftrightarrow\quad\Hom_A(P(\lambda), L(\lambda'))\not=0.$$

\subsection{Centraliser algebras}\label{SecCover}
For an algebra~$C$ with idempotent~$e$, the centraliser algebra of~$e$ is $C_0=eCe$. We consider the
pair of adjoint functors $(F,G)$: 
\begin{equation}\label{eq630}
\xymatrix{ 
C\text{-}\mathrm{mod}\ar@/^/[rrrrrr]^{F=eC\otimes_C-\,\cong\, e-\,\cong\,\mathrm{Hom}_C(Ce,-)}&&&&&& 
C_0\text{-}\mathrm{mod}.\ar@/^/[llllll]^{G=\Hom_{C_0}(eC,-)}
}
\end{equation}
We have $F\circ G\cong \Id$ on $C_0$-mod, and~$F$ is exact while $G$ is left exact.

\subsubsection{}\label{SecSecCover} We say that~$C$ is a {\em cover} of~$C_0$ if the restriction of~$F$ to the category of projective $C$-modules is fully faithful. In other words, the canonical morphism 
$$C\,\stackrel{\sim}{\to}\, \End_{C}(C)^{\op}\,\stackrel{F}{\to}\, \End_{C_0}(e C)^{\op}; \qquad c\mapsto\alpha_c,\quad\mbox{with}\quad \alpha_c(x)=xc\;\;\mbox{ for all $x\in eC$,}$$ is an isomorphism if and only if $C$ is a cover.

\subsubsection{}\label{SecMor} Assume that~$C=CeC$, then every primitive idempotent of~$C$ is equivalent to one contained in~$eCe$. Hence $Ce$ is a projective generator for~$C$-mod, so it follows that~$F$ is faithful. Since we already had $F\circ G\cong \Id$, it follows that~$F$ is an equivalence, so $C$ and~$C_0$ are Morita equivalent.

\subsection{Orders and partitions}

A {\em partial quasi-order} (also known as pre-order) $\preccurlyeq$ is a binary relation which is reflexive and transitive. When a partial quasi-order is also anti-symmetric, it is a {\em partial order}, and will usually be denoted by $\le$. When $\preccurlyeq$ satisfies the condition that, for all elements $s,t$, at least one of~$s\preccurlyeq t$ or~$t\preccurlyeq s$ holds, it is a {\em total quasi-order}, or simply a {\em quasi-order}. 

For a partial quasi-order $\preccurlyeq$, we use the notation~$s\prec t$ when~$s\preccurlyeq t$ but $t\not\preccurlyeq s$. We also write $s\sim t$ when~$s\preccurlyeq t$ and~$t\preccurlyeq s$. An {\em extension} $\preccurlyeq_e$ of a partial quasi-order $\preccurlyeq$ is a partial quasi-order such that~$s\preccurlyeq t$ implies $s\preccurlyeq_et$, and such that~$s\sim t$ if and only if~$s\sim_e t$. In particular, an extension of a partial order remains a partial order.

An {\em $n$-decomposition~$\cQ$ of a set} $S$ is an ordered disjoint union~$S=\sqcup_{i=0}^n S_i$ into~$n+1$ subsets. In case the set $S$ is finite, decompositions are in natural bijection with total quasi-orders. For the decomposition~$\cQ$ the corresponding total quasi-order $\preccurlyeq_{\cQ}$ on $S$ is given by~$s\preccurlyeq_{\cQ} t$ if and only if~$s\in S_i$ and~$t\in S_j$ with~$i\ge j$. To each decomposition~$\cQ$, we also associate a partial order $\le_{\cQ}$, defined by~$s<_{\cQ} t$ if and only if~$s\in S_i$ and~$t\in S_j$ with~$i> j$. Note that we cannot obtain every partial order in this way.

We will use the term {\em partition} in two different situations. A {\em partition of~$n\in\mN$} is a (non-strictly) decreasing sequence of natural numbers adding up to~$n$. When~$\lambda$ is a partition of~$n$ we write ~$\lambda\vdash n$. For~$p>0$, we say that~$\lambda\vdash n$ is $p$-regular if the sequence of the partition does not contain $p$ times the same non-zero number. For a field~$\mk$, we say that~$\lambda$ is $\mk$-regular if either $\charr(\mk)=0$ or~$\lambda$ is $\charr(\mk)$-regular, and write this as ~$\lambda\vdash_{\mk} n$.  Secondly, we will use the notion of a {\em partition of a set}, which is a grouping of the elements into non-empty subsets (or equivalence classes).

\subsection{Stratifying ideals}\label{SecStratId}
An idempotent ideal~$J$ in an algebra~$A$ is a two-sided ideal which satisfies $J^2=J$. This means $J=AeA$ for some idempotent~$e$, see {\it e.g.}~\cite[p673]{APT}. 
This idempotent~$e$ is not uniquely determined by~$J$. However, it follows easily, see {\it e.g.}~\cite[p673]{APT}, that~$AeA=A\tilde{e}A$, for two idempotents $e$ and~$\tilde e$, implies
\begin{equation}\label{MoreAe0}eAe\;\stackrel{M}{=}\; \tilde{e}A\tilde{e}.\end{equation}
%Furthermore, the bimodules giving the equivalences by tensoring are $eA\tilde{e}$ and~$\tilde{e}Ae$.

\subsubsection{}\label{SecStratId2} We introduce some specific types of stratifying ideals, using in particular terminology from \cite{CPS, CPSbook, Frisk, Kluc}.  An idempotent ideal~$J=AeA$ in $A$ is
\begin{enumerate}
\item[(0)]  an {\em exactly stratifying ideal} if:
\begin{itemize}
\item the right $A$-module~$J_A$ is projective.
\end{itemize}
\item  a {\em standardly stratifying ideal} if:
\begin{itemize}
\item the $A$-module~${}_AJ$ is projective.
\end{itemize}
\item  a {\em exactly standardly stratifying ideal} if:
\begin{itemize}
\item the left $A$-module~${}_AJ$ and right $A$-module~$J_A$ are projective.
\end{itemize}
\item  a {\em strongly standardly stratifying ideal} if:
\begin{itemize}
\item the $A$-module~${}_AJ$ is projective and
\item the algebra~$eAe$ is quasi-local.\end{itemize}
\item  a {\em properly stratifying ideal} if:
\begin{itemize}
\item the left $A$-module~${}_AJ$ and right $A$-module~$J_A$ are projective and
\item the algebra~$eAe$ is quasi-local.\end{itemize}
\item  a {\em heredity ideal} if:
\begin{itemize}
\item the $A$-module~${}_AJ$ is projective and
\item the algebra~$eAe$ is semisimple.
\end{itemize}
\end{enumerate}
All these ideals are stratifying in the sense of~\cite[Definition~2.1.1]{CPSbook}, by~\cite[Remark~2.1.2(b)]{CPSbook}. A heredity ideal is properly stratifying by \cite[Corollary 5.3]{APT}, the other relations in Figure 1 are by definition. 

\begin{rem}\label{MoreAe}
By equation~\eqref{MoreAe0}, the definitions depend only on $J$ and not on $e$.
\end{rem}

\subsubsection{}\label{SecChain}We say that a chain of idempotent ideals
\begin{equation}
\label{stratchain}
0=J_0\subsetneq J_1\subsetneq \cdots \subsetneq J_{m-1}\subsetneq J_m =A,
\end{equation}
has length $m-1$. With this convention, the trivial chain $0=J_0\subset J_1=A$ has length~$0$. Chains of length $k$ are in one-to-one correspondence with~$k$-decompositions of~$\Lambda$. The decomposition corresponding to a chain of idempotent ideals \eqref{stratchain} is defined by setting~$\lambda\in\Lambda_i$ for the minimal~$i$ for which $J_{i+1} L(\lambda)\not=0$. Consequently, the chains of idempotent ideals of~$A$ are also in one-to-one correspondence with the total quasi-orders of~$\Lambda$.

\subsubsection{}If for a chain \eqref{stratchain}, each ideal~$J_{i}/J_{i-1}$ is a standardly, exactly standardly, strongly standardly or properly stratifying ideal in $A/J_{i-1}$, the chain is also called standardly, exactly standardly, strongly standardly or properly stratifying. If each ideal~$J_{i}/J_{i-1}$ is a heredity ideal, the chain is called heredity.

We will take the convention of denoting the image of an idempotent~$e\in A$, in the quotient~$A/J$, for some idempotent ideal~$J$ again by~$e$.

\begin{rem}\label{DefAi}
For a chain \eqref{stratchain}, we can choose idempotents $f_i$ for~$i\in\{1,\cdots,m\}$ such that~$J_i=Af_iA$, with~$f_if_j=f_i=f_jf_i$ if~$i\le j$ and~$f_m=1$.
 We define the algebras $$A^{(j)}=f_{j+1}(A/J_{j})f_{j+1},\quad\mbox{ for }\quad j\in\{0,1,\ldots, m-1\},$$ which are only uniquely associated to the chain up to Morita equivalence, by \eqref{MoreAe0}. 
\end{rem}

\subsection{Standardly stratified algebras}\label{SecStrat}
An algebra~$A$ with some partial quasi-order $\preccurlyeq$ on $\Lambda$ will be denoted as $(A,\preccurlyeq)$. 

\begin{ddef}\label{DefA1}${}$
Consider an algebra~$(A,\preccurlyeq)$, for~$\preccurlyeq$ a total quasi-order, and the chain \eqref{stratchain} of idempotent ideals corresponding to~$\preccurlyeq$. We say that~$(A,\preccurlyeq)$ is 
\begin{enumerate}
\item {\em standardly stratified}, if the chain \eqref{stratchain} is standardly stratifying;
\item {\em exactly standardly stratified}, if the chain \eqref{stratchain} is exactly standardly stratifying;
\item {\em strongly standardly stratified}, if~$\preccurlyeq$ is a total order and the chain \eqref{stratchain} is strongly standardly stratifying;
\item {\em properly stratified}, if~$\preccurlyeq$ is a total order and the chain \eqref{stratchain} is properly stratifying;
\item {\em quasi-hereditary}, if~$\preccurlyeq$ is a total order and the chain \eqref{stratchain} is heredity.
\end{enumerate}
\end{ddef}
\begin{rem}\label{RemTriv}
The trivial chain $0=J_0\subset J_1=A$ is an exactly standardly stratifying chain for any algebra~$A$. It is thus essential to specify the quasi-order $\preccurlyeq$ when speaking about types of standardly stratified algebras.
\end{rem}

\begin{rem}\label{RemSSS}
If $\preccurlyeq$ is a total order and the chain \eqref{stratchain} corresponding to~$\preccurlyeq$ is standardly (resp. exactly standardly) stratifying, it is automatically strongly standardly (resp. properly) stratifying, as the algebra~$A^{(i)}$ will only have one simple module up to isomorphism.
\end{rem}

There is also a module theoretic approach to standardly stratified algebras, where we no longer demand the orders to be total. The equivalence between both definitions (when the order is total) is well-known by e.g. \cite{CPS, CPSbook, Dlab, Frisk}, see also Appendix~\ref{AppEq}.
\begin{ddef}\label{DefA2}
Consider an algebra~$(A,\preccurlyeq)$ for some partial quasi-order $\preccurlyeq$ on $\Lambda$. For any $\lambda\in\Lambda$, let $L(\lambda)$ denote the corresponding simple $A$-module and~$P(\lambda)$ its projective cover.
\begin{enumerate}
\item If there is a set of~$A$-modules $\{S(\lambda),\lambda\in \Lambda\}$ such that for any~$\lambda,\mu\in\Lambda$:
\begin{itemize}
\item we have $[S(\lambda):L(\mu)]=0$ unless $\mu\preccurlyeq \lambda$, and
\item there is a surjection~$P(\lambda)\tto S(\lambda)$ such that the kernel has a filtration where the section are isomorphic to modules $S(\nu)$ for~$\lambda \prec\nu$,
\end{itemize}
we say that~$(A,\preccurlyeq)$ is {\em standardly stratified.}
\item If $(A,\preccurlyeq)$ is standardly stratified and there are also modules $\{\overline{S}(\lambda),\lambda\in \Lambda\}$ with~$[\overline{S}(\lambda):L(\lambda)]=1$ and~$[\overline{S}(\lambda):L(\mu)]=0$ unless $\mu=\lambda$ or~$\mu\prec\lambda$, such that each module~$S(\lambda)$ has a filtration where the sections are isomorphic to modules $\overline{S}(\mu)$ with~$\mu\sim\lambda$, we say that~$(A,\preccurlyeq)$ is {\em exactly standardly stratified.}
\item If $\preccurlyeq$ is a partial order and~$(A,\preccurlyeq)$ is standardly stratified as in (1), we say that~$(A,\preccurlyeq)$ is {\em strongly standardly stratified.}
\item If $\preccurlyeq$ is a partial order and~$(A,\preccurlyeq)$ is exactly standardly stratified as in (2), we say that~$(A,\preccurlyeq)$ is {\em properly stratified.}
\item If $\preccurlyeq$ is a partial order, $(A,\preccurlyeq)$ is standardly stratified as in (1) and in addition~$[S(\lambda):L(\lambda)]$=1 for all~$\lambda\in \Lambda$, we say that~$(A,\preccurlyeq)$ is {\em quasi-hereditary.}
\end{enumerate}
\end{ddef}
Observe that if~$(A,\preccurlyeq)$ is standardly stratified, $(A,\preccurlyeq')$  is also standardly stratified, for any extension~$\preccurlyeq'$ of~$\preccurlyeq$. %An alternative characterisation of quasi-hereditary algebras would be as properly stratified algebras with~$S(\lambda)=\overline{S}(\lambda)$.

\subsubsection{}\label{SAj}The modules $S(\lambda)$ in Definition~\ref{DefA2} have simple top $L(\lambda)$ and are known as {\em standard modules}. They satisfy~$S(\lambda)\cong Ae_\lambda/J_{j-1}e_\lambda$ with~$j$ the largest such that~$J_{j-1}L(\lambda)=0$. The modules $\overline{S}(\lambda)$ are {\em proper standard modules} and are given by~$S(\lambda)/(Ae_\lambda\rad S(\lambda))$. %A general construction of these modules is given in Appendix~\ref{IntroSC}.

\subsection{Faithful covers and standard systems}\label{IntroFaithCov}
In \cite[Definition~4.37]{Rou}, Rouquier introduced notions of {\em faithfulness} of {\em quasi-hereditary covers}. We extend this to standardly stratified algebras.
Consider a cover $C$ as in~\ref{SecSecCover}, which is standardly stratified.
Such a cover is called {\em $j$-faithful} if the functor~$F=\Hom_C(Ce,-)$ induces isomorphisms
\begin{equation}\Ext^i_C(M,N)\;\,\tilde\to\;\, \Ext^i_{C_0}(FM,FN),\qquad\forall \;0\le i\le j,\label{ifaith}\end{equation}
for all modules $M,N$ admitting a filtration with sections given by proper standard modules. Important examples of quasi-hereditary~$1$-covers are the $q$-Schur algebras of the Hecke algebra, by \cite{Nakano2} and the analogue of \cite{qSchAndrew, qSchJie} in type $B$, by \cite[Theorem~6.6]{Rou}.

Consider an abelian category~$\cC$ and a partially ordered set $(S,\le)$. 
As in~\cite[Section~3]{DR} or \cite[Definition~10.1]{HHKP}, 
a {\em standard system in $\cC$ for~$S$} (also known as an exceptional sequence) is a set of objects $\{\Theta(p)\,|\, p\in S\}$ in $\cC$, such that for all $p,q\in S$:
\begin{enumerate}
\item $\End_{\cC}(\Theta(p))$ is a division ring;
\item $\Hom_{\cC}(\Theta(p),\Theta(q))=0$ unless $p\le q$;
\item  $\Ext^1_{\cC}(\Theta(p),\Theta(q))=0$ unless $p< q$.
\end{enumerate}
For a quasi-hereditary algebra~$(A,\le)$, the standard modules $\{S(\lambda),\,\lambda\in\Lambda\}$ form a standard system in $A$-mod for~$(\Lambda,\le)$, see {\it e.g.} \cite[Lemmata 1.2, 1.3 and 1.6]{DR}.

\subsection{Standardly based algebras and Schur algebras}\label{SecSBalg}
There is a close connection, and large overlap, between quasi-hereditary algebras and {\em cellular algebras}, see \cite[Remark~3.10]{CellAlg}, \cite[Proposition~4.1 and Corollary~4.2]{StructureCell}, \cite[Theorem 1.1]{CellQua} and \cite[Theorem 1.1]{Cao}. Moreover, an interesting concept of ``cellularly stratified'' algebras was recently introduced in~\cite{HHKP}, which combines properties of cellular and stratified algebras.
Comparing or combining properties of cellular and stratified algebras is often complicated by the involutive anti-automorphism $\imath$ in the definition of a cell datum in~\cite[Definition 1.1]{CellAlg}. Omitting~$\imath$ in the definition leads to the concept of standardly based algebras.

We use a reformulation of~\cite[Definition~1.2.1]{JieDu}. A {\em standardly based structure} of an algebra~$A$ is a poset $L$, with two-sided ideals $A^{\ge p}$ for each $p\in L$, such that for all $p,q\in L$
\begin{itemize}
\item $A^{\ge p}\supseteq A^{\ge q}$ if~$p<q$, hence $A^{>p}:=\cup_{q>p}A^{\ge q}$ in $A^{\ge p}$ is an ideal in $A^{\ge p}$
\item we can take complements $A^{(p)}$ of~$A^{>p}$ in $A^{\ge p}$, such that~$\bigoplus_{p\in L}A^{(p)}=A$;
\item $A^{\ge p}/A^{>p}\cong W(p)\otimes_{\mk} W'(p)$ as $A$-bimodules for a left module~$W(p)$ and right module~$W'(q)$.
\end{itemize}
The modules $W(p)$ will be referred to as the {\em cell modules} of~$A$.

\begin{rem}\label{remMorSB}
A standardly based structure of an algebra is preserved under Morita equivalences, see \cite[Section~3]{Yang}, meaning that the ideals are naturally linked. In particular, the cell modules are mapped to the corresponding cell modules.
\end{rem}

By \cite[Theorem~2.4.1]{JieDu}, $\Lambda$ can be naturally identified with a subset of~$L$. We consider $\Lambda$ then as a poset for the inherited partial order from~$L$. For~$\lambda\in\Lambda\subset L$, we have furthermore that~$\Top W(\lambda)=L(\lambda)$ and
$$[W(\lambda):L(\lambda)]=1\qquad\mbox{and}\qquad [W(\lambda):L(\mu)]=0\quad\mbox{unless } \mu\le \lambda.$$
By \cite[Proposition 2.4.4]{JieDu}, every indecomposable projective module~$P(\lambda)$ with~$\lambda\in\Lambda$ has a filtration with sections given by modules $W(p)$ such that
$$(P(\lambda):W(\lambda))=1\qquad\mbox{and}\qquad (P(\lambda):W(p))=0\quad\mbox{unless } p\ge \lambda.$$ 
Hence if~$L=\Lambda$, the standardly based algebra is quasi-hereditary with standard modules~$W(\lambda)$. Conversely, over an algebraically closed field,
every quasi-hereditary algebra is standardly based for~$L=\Lambda$ (as posets) and~$W(\lambda)=S(\lambda)$, by \cite[Theorem~4.2.3]{JieDu}.

\begin{ex}\label{ExS1}
Consider $A:=\mk[x]/(x^{t}-1)$ for a field $\mk$
such that~$x^t-1$ splits as
$$x^t-1=\prod_{i=1}^t(x-\omega_i),$$
for not necessarily distinct $\omega_i\in\mk$. Then consider $L:=\{1,2,\cdots,t\}$ with usual order and
$$a_k=\prod_{i=1}^{k-1}(x-\omega_i)\;\;\mbox{ for }\; 2\le k\le t\quad\mbox{ and } a_1=1.$$
The ideals $A^{\ge i}=\Span\{a_i, a_{i+1},\ldots,a_t\}$ give a standardly based structure of~$A$. $A$ is even cellular for involution~$\imath=\id_A$ the identity, by \cite[Lemma~1.2.4]{JieDu}, or \cite[Example 1.3]{CellAlg}.
\end{ex}

\begin{ex}\label{ExS2}
For any field $\mk$, the group algebra~$\mk \mS_t$ of the symmetric group $\mS_t$ on $t$ symbols is cellular and hence standardly based for
$L=\{\lambda\vdash t\},$
equipped with the partial order obtained by reversing the dominance order on partitions, see \cite[Example~1.2]{CellAlg}.
The ideals are obtained from the Murphy basis and the cell modules are the Specht modules of \cite[Section~4]{James}. The algebra~$\mk\mS_t$ is then cellular for the involution~$\imath$, which is the linearisation of the inversion on the group $\mS_t$.
\end{ex}

The following definition is essentially \cite[Definition~12.1]{HHKP}, see also \cite{Henke, Rou}.
\begin{ddef}\label{DefSchur}A {\em (cover-)Schur algebra} of a standardly based algebra~$A$ is a quasi-hereditary~$1$-cover $(\cS,\le)$, such that~$(\Lambda_{\cS},\le)=(L_A,\le)$ and~$F(S(p))\cong W(p)$ for all $p\in L_A$. \end{ddef}
By \cite[Corollary~4.46]{Rou}, a cover-Schur algebra of a standardly based algebra is unique, up to Morita equivalence, if it exists. We use the specification cover-Schur algebra since the generalised Schur algebra representing the orthogonal or symplectic group in the double centraliser property for the Brauer algebra does not act as a Schur algebra in the above sense.

%%%%%%%%%%%%%%%%%%%%%%%%%%%%%%%%%%%%%%%%%%%%%%%%%%%%%%%%%%%%%%%%%%%%%%%%%%

\part{General theory}
\label{GenThe}

 \section{Pre-Borelic pairs}\label{SecPre}

In this section we introduce the notion of pre-Borelic pairs for arbitrary algebras. These will lead to Borelic pairs in the next section, where we will also prove that these contain K\"onig's notion of exact Borel subalgebras for quasi-hereditary algebras as a special case.

 \subsection{Definition and properties}
Fix an algebra~$A$.

\begin{ddef}\label{defB}{\rm
A pair~$(B,H)$ of subalgebras $H\subset B\subset A$ forms a {\em pre-Borelic pair of} $A$ if there exists a two-sided ideal~$B_+$ in $B$, with~$B=H\oplus B_+$, such that
\begin{enumerate}[(I)]
\item $A$ is projective as a right $B$-module and~$B$ is projective as a left $H$-module;
\item taking~$B_+$-invariants, $(\res^A_B-)^{B_+}$, as in \eqref{eqInvariants}, yields an equivalence of categories between simple $A$-modules and simple~$H$-modules;
\item the ideal~$B_+$ is contained in the Jacobson radical~$\rad B$ of~$B$.
\end{enumerate}
}
\end{ddef}

\subsubsection{}\label{ee0} Label the set of isomorphism classes of simple $A$-modules by~$\Lambda$ as in Section~\ref{SecIdem}. By (II), we can use the same set for~$H$. We use the notation~$L(\lambda)$ (resp. $L^0(\lambda)$) for the corresponding simple modules over $A$ (resp.~$H$), and hence 
\begin{equation}
\label{LL0}
L^0(\lambda)\;\cong \;L(\lambda)^{B_+},\qquad\mbox{ for all $\lambda\in\Lambda$}.
\end{equation}  By assumption (III), we also have a one-to-one correspondence between simple $B$-modules and simple~$H$-modules, so
\begin{equation}\label{eqLABH}\Lambda=\Lambda_A=\Lambda_B=\Lambda_H.\end{equation} We use the same notation~$L^0(\lambda)$, for the simple $B$-module with trivial~$B_+$-action defined as the inflation of the $H$-module~$L^0(\lambda)$. 
For each~$\lambda\in\Lambda$, there is an idempotent~$e^0_\lambda\in H\subset B\subset A$, primitive in $H$ (but generally not in $A$) such that~$e^0_\lambda L^0(\lambda)\not=0$.

\subsubsection{}\label{DefDelta} 
For any~$H$-module~$N$, interpreted as a $B$-module with trivial~$B_+$-action, we define
$$\Delta_N\;:=\;\ind^A_B N=A\otimes_BN.$$
By adjunction, we have
\begin{equation}\label{FroDel}\Hom_A(\Delta_N,M)\;\cong \;\Hom_H(N,M^{B_+}),\qquad\mbox{for all $M\in A$-mod.}
\end{equation}

\begin{lemma}\label{TopStan}
Consider a pre-Borelic pair~$(B,H)$ of~$A$. For any $N\in H${\rm-mod} with simple top $L^0(\lambda)$, for some~$\lambda\in \Lambda$, we have
$$\Top \Delta_N\;\cong\; L(\lambda).$$
\end{lemma}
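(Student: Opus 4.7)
The strategy is to identify $\Top \Delta_N$ by computing $\Hom_A(\Delta_N, L(\mu))$ for every $\mu \in \Lambda$ via the Frobenius-type adjunction \eqref{FroDel}, and then reading off the multiplicities of each simple summand. Note first that $N$ is finite dimensional over $\mk$, hence so is $\Delta_N = A \otimes_B N$, so $\Top \Delta_N$ is a well-defined semisimple $A$-module of finite length.

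\textbf{Step 1: apply the adjunction.} For any $\mu \in \Lambda$, combine \eqref{FroDel} with \eqref{LL0} to get
\[
\Hom_A(\Delta_N, L(\mu)) \;\cong\; \Hom_H(N, L(\mu)^{B_+}) \;\cong\; \Hom_H(N, L^0(\mu)).
\]
Since $L^0(\mu)$ is a simple $H$-module, any morphism $N \to L^0(\mu)$ factors through $\Top N = L^0(\lambda)$, so
\[
\Hom_H(N, L^0(\mu)) \;\cong\; \Hom_H(L^0(\lambda), L^0(\mu)),
\]
which by Schur's lemma vanishes unless $\mu = \lambda$, and equals the division ring $D := \End_H(L^0(\lambda))$ when $\mu = \lambda$.

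\textbf{Step 2: compare endomorphism rings.} Because the functor $(\res^A_B-)^{B_+}$ in Definition~\ref{defB}(II) is an equivalence of categories on simple modules and sends $L(\lambda)$ to $L^0(\lambda)$ by \eqref{LL0}, it induces an isomorphism $\End_A(L(\lambda)) \cong \End_H(L^0(\lambda)) = D$.

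\textbf{Step 3: read off the top.} Write $\Top \Delta_N = \bigoplus_{\mu \in \Lambda} L(\mu)^{\oplus m_\mu}$ for multiplicities $m_\mu \in \mN$. Because every $A$-morphism $\Delta_N \to L(\mu)$ factors through the top,
\[
\Hom_A(\Delta_N, L(\mu)) \;\cong\; \Hom_A(\Top \Delta_N, L(\mu)) \;\cong\; \End_A(L(\mu))^{\oplus m_\mu}.
\]
Combining this with Steps 1 and 2 gives $m_\mu = 0$ for $\mu \neq \lambda$ and $\End_A(L(\lambda))^{\oplus m_\lambda} \cong D$ as right $D$-modules, forcing $m_\lambda = 1$. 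Hence $\Top \Delta_N \cong L(\lambda)$.

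\textbf{Anticipated difficulty.} There is no serious obstacle here: once the adjunction \eqref{FroDel} is in place, the argument is a routine application of Schur's lemma. The only point demanding mild care is Step 2, where one must genuinely use that the correspondence of simples in Definition~\ref{defB}(II) is an equivalence of categories (not merely a bijection of isomorphism classes), so that endomorphism rings are matched; this is what guarantees $m_\lambda = 1$ rather than a possibly larger multiplicity.
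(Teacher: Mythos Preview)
Your proof is correct and follows essentially the same route as the paper: compute $\Hom_A(\Delta_N,L(\mu))$ via the adjunction \eqref{FroDel} and \eqref{LL0}, then invoke Schur's lemma. Your Steps 2--3 are actually more careful than the paper's version, which simply asserts $\dim\Hom_A(\Delta_N,L(\lambda'))=\delta_{\lambda,\lambda'}$; you correctly handle the possibility that $\End_H(L^0(\lambda))$ is a nontrivial division ring by matching it with $\End_A(L(\lambda))$ through the equivalence in Definition~\ref{defB}(II).
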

\begin{proof}
Equations \eqref{FroDel} and \eqref{LL0} imply that, for~$\lambda,\lambda'\in\Lambda$, we have
$$\dim\Hom_A(\Delta_N,L(\lambda'))=\dim\Hom_H(N,L^0(\lambda'))=\delta_{\lambda,\lambda'},$$
proving the claim.
\end{proof}

\begin{lemma}\label{PPlambda}
Given a pre-Borelic pair~$(B,H)$, define $P_\lambda=  Ae_\lambda^0$, for any~$\lambda\in \Lambda$. Then
$$P_\lambda\;\cong\; \bigoplus_{\mu\in\Lambda}P(\mu)^{\oplus c_\mu^\lambda}\quad\mbox{with}\quad c_\mu^\lambda:= [\res^A_BL(\mu):L^0(\lambda)].$$
\end{lemma}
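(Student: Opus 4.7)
The plan is to observe projectivity of $P_\lambda=Ae^0_\lambda$, apply Krull--Schmidt to write $P_\lambda\cong\bigoplus_{\mu\in\Lambda}P(\mu)^{\oplus c_\mu^\lambda}$, and then pin down the multiplicities $c_\mu^\lambda$ by computing $\Hom_A(P_\lambda,L(\mu))$ in two different ways.

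First I would note that, since $e^0_\lambda\in H\subset A$ is an idempotent in $A$, the module $P_\lambda$ is a direct summand of the regular module ${}_AA$, hence projective, so Krull--Schmidt yields a unique decomposition into indecomposable projectives. For fixed $\mu\in\Lambda$, combining this decomposition with the orthogonality $\Hom_A(P(\nu),L(\mu))=\delta_{\nu\mu}\End_A(L(\mu))$ recalled in~\ref{SecIdem} gives
\[
\dim_\mk\Hom_A(P_\lambda,L(\mu))\;=\;c_\mu^\lambda\cdot\dim_\mk\End_A(L(\mu)).
\]

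Next I would compute the same Hom space directly: $\Hom_A(Ae^0_\lambda,L(\mu))=e^0_\lambda L(\mu)=e^0_\lambda\res^A_B L(\mu)$, since $e^0_\lambda$ acts through its image in $B$. By (III) together with the identification $\Lambda_B=\Lambda_H$ from~\ref{ee0}, every composition factor of the $B$-module $\res^A_B L(\mu)$ is of the form $L^0(\nu)$ with trivial $B_+$-action, and primitivity of $e^0_\lambda$ in $H$ transfers to $B$ (since $B_+\subset\rad B$ makes $B/\rad B$ a quotient of $H/\rad H$), ensuring $e^0_\lambda L^0(\nu)=0$ for $\nu\neq\lambda$. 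Projectivity of the $B$-module $Be^0_\lambda$ makes $\Hom_B(Be^0_\lambda,-)=e^0_\lambda(-)$ exact, so summing dimensions along a composition series of $\res^A_B L(\mu)$ collapses to a single contribution:
\[
\dim_\mk e^0_\lambda\res^A_B L(\mu)\;=\;[\res^A_B L(\mu):L^0(\lambda)]\cdot\dim_\mk\End_H(L^0(\lambda)).
\]

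Equating the two expressions and cancelling the common division-algebra factor will yield the claimed identity $c_\mu^\lambda=[\res^A_B L(\mu):L^0(\lambda)]$. The cancellation is legitimate because condition (II), being an equivalence of categories on simples sending $L(\nu)\mapsto L^0(\nu)$, preserves endomorphism rings and hence identifies $\End_A(L(\nu))\cong\End_H(L^0(\nu))$ for all $\nu$. The main delicate point is precisely this bookkeeping of division rings over a general (not necessarily algebraically closed) ground field: the endomorphism rings of distinct simple modules can genuinely differ, and the two sides of the displayed dimension equation carry \emph{a priori} different division-ring factors. Condition~(II) is the ingredient that matches them, and I expect verifying this cancellation cleanly --- rather than the projectivity or Krull--Schmidt steps --- to be the technical core of the proof.
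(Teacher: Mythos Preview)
Your proof is correct and follows essentially the same approach as the paper's. The paper's one-line proof (``This follows from $A\otimes_B Be^0_\lambda\cong Ae^0_\lambda$ and adjunction'') computes $\Hom_A(P_\lambda,L(\mu))\cong\Hom_B(Be^0_\lambda,\res^A_B L(\mu))$ via Frobenius reciprocity, which is exactly your computation $\Hom_A(Ae^0_\lambda,L(\mu))=e^0_\lambda\res^A_B L(\mu)$ phrased through the adjunction; you have moreover made explicit the division-ring bookkeeping (matching $\End_A(L(\mu))$ with $\End_H(L^0(\mu))$ via condition~(II)) that the paper leaves implicit.
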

\begin{proof}
This follows from $A\otimes_B Be^0_\lambda\cong Ae^0_\lambda$ and adjunction.
\end{proof}

\subsubsection{}For a fixed pre-Borelic pair~$(B,H)$, we introduce the modules
\begin{equation}\label{Standards}
\Delta(\lambda)\;=\;\Delta_{He^0_\lambda}\;=\; A\otimes_B He^0_\lambda,\qquad\mbox{and}\qquad \overline{\Delta}(\lambda)\;=\;\Delta_{L^0(\lambda)}\;=\; A\otimes_B L^0(\lambda).
\end{equation}
We say that an $A$-module~$N$ {\em has a $\Delta$-flag~$\cM$ of length $d\in\mN$} if there are submodules 
\begin{equation}\label{eqFlag}N= M_0\supset M_1\supset M_2\supset\cdots\supset M_{d-1}\supset M_d=0 ,\end{equation}
such that for each $0\le i< d$, we have $M_i/M_{i+1}\cong \Delta(\lambda)$, for some $\lambda\in\Delta$.
For such a $\Delta$-flag~$\cM$, we introduce
\begin{equation}\label{DefMult}(N:\Delta(\lambda))_{\cM}\,:=\; \sum_{i=0}^{d-1}\dim \Hom_A\left(M_i/M_{i+1},L(\lambda)\right).\end{equation}
By definition and Lemma~\ref{TopStan}, we then have
\begin{equation}\label{flagGro}[N]=\sum_{\mu\in\Lambda}\,(N:\Delta(\mu))_{\cM}\;[\Delta(\mu)],\end{equation}
in the Grothendieck group $G_0(A)$ of~$A$-mod.

By left exactness of $\Hom_A(-,L(\lambda))$, equation~\eqref{DefMult} implies the following inequality.
\begin{lemma}\label{BoundMult}
If and~$A$-module~$M$ has a $\Delta$-flag~$\cM$, then
$$\dim\Hom_A(M,L(\lambda))\;\le\;(M:\Delta(\lambda))_{\cM},\qquad\mbox{for all $\lambda\in\Lambda$.}$$
\end{lemma}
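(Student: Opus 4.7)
The plan is to prove the inequality by induction on the length $d$ of the $\Delta$-flag $\cM$ given in \eqref{eqFlag}, exploiting the left exactness of $\Hom_A(-,L(\lambda))$.

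For the base case $d=1$, we have $M\cong \Delta(\mu)$ for some $\mu\in \Lambda$. By Lemma~\ref{TopStan}, $\Top\Delta(\mu)\cong L(\mu)$, so $\dim\Hom_A(\Delta(\mu),L(\lambda))=\delta_{\lambda,\mu}$, which is precisely $(M:\Delta(\lambda))_{\cM}$ as defined in \eqref{DefMult}. (The case $d=0$ is trivial, with both sides equal to zero.)

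For the inductive step, let $\cM'$ denote the $\Delta$-flag $M_1\supset M_2\supset\cdots\supset M_d=0$ of length $d-1$ on $M_1$, obtained from $\cM$ by truncation. The short exact sequence
$$0\to M_1\to M=M_0\to \Delta(\mu_0)\to 0$$
together with left exactness of $\Hom_A(-,L(\lambda))$ yields
$$\dim\Hom_A(M,L(\lambda))\;\le\;\dim\Hom_A(\Delta(\mu_0),L(\lambda))+\dim\Hom_A(M_1,L(\lambda)).$$
By Lemma~\ref{TopStan}, the first term on the right equals $\delta_{\lambda,\mu_0}$, and by the induction hypothesis, the second term is bounded by $(M_1:\Delta(\lambda))_{\cM'}$. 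By the definition in \eqref{DefMult}, the sum of these two quantities is exactly $(M:\Delta(\lambda))_{\cM}$, which completes the induction.

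There is no real obstacle: the statement is essentially a direct consequence of left exactness of $\Hom_A(-,L(\lambda))$ combined with the identification of the top of a standard module from Lemma~\ref{TopStan}. The only subtle point to keep in mind is that the multiplicity $(M:\Delta(\lambda))_{\cM}$ is defined in terms of $\dim\Hom_A(M_i/M_{i+1},L(\lambda))$ rather than literal occurrences of $\Delta(\lambda)$ among the sections, which is exactly what makes the inductive bookkeeping line up with the dimensions appearing on the left-hand side.
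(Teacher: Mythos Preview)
Your proof is correct and follows essentially the same approach as the paper, which simply observes that the inequality is a direct consequence of the left exactness of $\Hom_A(-,L(\lambda))$ applied to the filtration \eqref{eqFlag}. Your induction just makes this one-line observation explicit; note that the appeal to Lemma~\ref{TopStan} is not strictly needed, since the definition \eqref{DefMult} already records the summands as $\dim\Hom_A(M_i/M_{i+1},L(\lambda))$ rather than as Kronecker deltas.
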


\subsection{A special case: idempotent graded algebras}\label{SecPreSpec}

For an algebra~$A$, we choose a decomposition
\begin{equation}\label{decomp1}1_A\;=\; e^\ast_0+e^\ast_1+\cdots+e^\ast_n,
\end{equation}
where~$e^\ast_i$ are mutually orthogonal, but not necessarily primitive, idempotents. For notational convenience, we allow some of these idempotents to be zero.
We have a $\mZ$-grading on $A$:
\begin{equation}\label{grBr}A_j=\bigoplus_{i=\max(0,-j)}^{\min(n, n-j)} e^\ast_i A e^\ast_{i+j},\qquad\mbox{for all $j\in\mZ$}.\end{equation} Indeed, we have $A=\bigoplus_j A_j$ as vector spaces and furthermore
$$A_jA_k=\bigoplus_i e^\ast_i A e^\ast_{i+j} Ae^\ast_{i+j+k}\subset A_{j+k},\qquad\mbox{for all $j,k\in\mZ$}.$$
We call this the {\em idempotent grading} associated to the (ordered) choice of~$e_i^\ast$ in \eqref{decomp1} and set
$$A_+:=\bigoplus_{j>0}A_j\;\mbox{ and }\;A_{-}:=\bigoplus_{j<0}A_j.$$
\begin{ddef}\label{BHgrad}
An {\em $\mN$-graded subalgebra} of an idempotent graded algebra~$A$ is a graded subalgebra~$B$ such that~$B_{j}=0$ for~$j<0$.
For such $B$, set 
$$H\,:=\,B_0\,=\,\bigoplus_{i\in\mN}e^\ast_i B e^\ast_i\quad\mbox{ and}\qquad B_+\,:=\,\bigoplus_{j>0}B_j.$$ An $\mN$-graded subalgebra~$B$ is {\em complete} if 
\begin{equation}\label{eqComplete}A=B\oplus A_{-}B.\end{equation}
\end{ddef}
Note that any $\mZ$-graded subalgebra~$B$ satisfying \eqref{eqComplete} is automatically $\mN$-graded. Furthermore, it is easy to check that equation~\eqref{eqComplete} is equivalent to
\begin{equation}\label{eqComplete2}A\;=\; B\,\oplus\, A_-A.\end{equation}

We will use freely the fact that, as follows from the definitions, {\em any module~$M$, over $A$, $B$ or~$H$, is automatically a $\mZ$-graded module}, by setting
\begin{equation}\label{eqDefZGrad}M_{-j}:=e^\ast_{j}M,\quad\mbox{ for }\;\quad j\in\{0,1,\cdots,n\}.\end{equation}

\begin{lemma}\label{LemGradBor}
Consider a complete $\mN$-graded subalgebra~$B$ of an idempotent graded algebra~$A$. If $A_B$ and~${}_HB$ are projective, then~$(B,H)$ is a pre-Borelic pair.
\end{lemma}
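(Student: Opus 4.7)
My plan is to take $B_+ := \bigoplus_{j > 0} B_j$, which by $B_i B_j \subset B_{i+j}$ is a two-sided ideal of $B$ with $B = H \oplus B_+$. Axiom (I) of Definition \ref{defB} is the given hypothesis. For axiom (III), since \eqref{decomp1} involves only the finitely many idempotents $e^\ast_0, \ldots, e^\ast_n$, we have $A_j = 0$ and a fortiori $B_j = 0$ for $j > n$; hence $(B_+)^{n+1} \subset \bigoplus_{j > n} B_j = 0$, so $B_+$ is a nilpotent two-sided ideal of $B$ and therefore contained in $\rad B$.

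The main step is axiom (II). I would systematically exploit the automatic $\mZ$-grading \eqref{eqDefZGrad} on any $A$-module, writing $M^{(i)} := e^\ast_i M$, together with the observation that every $A$-submodule respects this grading because $e^\ast_i \in A$. Under this grading $A_j M^{(i)} \subset M^{(i-j)}$, so $B_+$ strictly decreases the index while $A_-$ strictly increases it. For a simple $A$-module $L$, let $i_L := \min\{i : L^{(i)} \neq 0\}$. The identity $L^{B_+} = L^{(i_L)}$ is proved as follows: the inclusion $\supset$ is immediate since $B_+ \cdot L^{(i_L)} \subset \bigoplus_{j > 0} L^{(i_L - j)} = 0$; for $\subset$, observe that $L^{(i_L)} \subset L^{B_+}$ implies $w := v - v_{i_L}$ still lies in $L^{B_+}$ for any $v \in L^{B_+}$ (where $v_{i_L}$ is the degree-$i_L$ component), and if $w \neq 0$ then simplicity of $L$ forces $Aw = L$, while completeness $A = B \oplus A_- B$ combined with $B_+ w = 0$ yields
\[
Aw \;=\; Hw + A_- L \;\subset\; \bigoplus_{k > i_L} L^{(k)},
\]
using that the degrees of $w$ are all $> i_L$ and that $A_-$ raises degree. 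This contradicts $L^{(i_L)} \neq 0$, so $v \in L^{(i_L)}$. The same style of argument, applied to a proper nonzero $H$-submodule $N \subsetneq L^{(i_L)}$, gives $AN \subsetneq L$, proving $L^{(i_L)}$ is a \emph{simple} $H$-module. This yields a well-defined map $L \mapsto L^{B_+}$ between simples.

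For the inverse, given a simple $H$-module $N$, the decomposition $H = \bigoplus_i e^\ast_i H e^\ast_i$ as a direct product of algebras forces $N = e^\ast_{i_0} N$ for a unique $i_0$. By completeness, $\Delta_N = N \oplus A_- N$ with $A_- N$ in strictly higher degrees, so $(\Delta_N)^{(i_0)} = N$. Let $I_N \subset \Delta_N$ be the maximal $A$-submodule satisfying $I_N \cap N = 0$: this is well-defined since any such submodule $M$ has $M^{(i_0)} = M \cap N = 0$ and hence lies in $A_- N$, so sums of such submodules remain in $A_- N$, still disjoint from $N$. Then $L_N := \Delta_N/I_N$ is simple: a proper submodule of $L_N$ lifts to one of $\Delta_N$ properly containing $I_N$, whose intersection with $N$ is then nonzero by maximality of $I_N$, hence all of $N$ by $H$-simplicity, hence generates $A \cdot N = \Delta_N$ --- contradiction. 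Surjectivity of $L \mapsto L^{B_+}$ now follows from $L_N^{B_+} \cong N$. For injectivity, any simple $L$ with $L^{B_+} \cong N$ produces via \eqref{FroDel} a surjection $\Delta_N \twoheadrightarrow L$ with kernel disjoint from $N$, hence equal to $I_N$ by maximality, forcing $L \cong L_N$.

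The hardest step is the injectivity in (II), which a priori could fail if two non-isomorphic simples of $A$ shared their $B_+$-invariants; the remedy is the universal construction of $L_N$ as the quotient of $\Delta_N$ by the maximal $A$-submodule disjoint from $N$, which identifies $L$ uniquely from $N = L^{B_+}$.
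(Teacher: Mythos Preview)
Your proof is correct and follows essentially the same strategy as the paper's: both identify $L^{B_+}$ with the extremal graded piece of a simple $A$-module, show it is $H$-simple, and then use the induced module $\Delta_N$ (showing it has a unique maximal submodule, which is your $I_N$) together with adjunction to produce the inverse correspondence.

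There is one mild organisational difference worth recording. The paper first proves $L(\lambda)^{A_+}$ equals the extremal graded piece and is simple as an $A_0$-module, and only then invokes the consequences $A_0 = H \oplus (A_-A_+)_0$ and $A_+\subset AB_+$ of completeness to deduce that $L(\lambda)^{B_+}=L(\lambda)^{A_+}$ is $H$-simple. You instead apply completeness $A=B\oplus A_-B$ directly at each step, never passing through $A_+$ or $A_0$; this is a slight streamlining. For the inverse direction your $I_N$ coincides with the paper's ``union of all proper submodules'' of $\Delta_N$, since any proper $A$-submodule of $\Delta_N$ automatically has trivial intersection with $N$ (else it would contain $N$ by $H$-simplicity and hence equal $\Delta_N$); so your $L_N$ is exactly the simple top of $\Delta_N$ used in the paper.
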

\begin{proof}
Condition (I) in Definition~\ref{defB} is given.

Condition (II) is the requirement that~$\{L(\lambda)^{B_+},\lambda\in \Lambda\}$ be a complete set of non-isomorphic simple~$H$-modules. Consider the simple $A$-module~$L(\lambda)$, which we equip with $\mZ$-grading as in~\eqref{eqDefZGrad}. Let $k_0$ be the maximal degree for which $L(\lambda)_{k_0}$ is non-zero. We clearly have $L(\lambda)_{k_0}\subset L(\lambda)^{A_+}$. Now take some $w\in L(\lambda)_i$, for~$i<k_0$. Since $L(\lambda)$ is simple, we have $L(\lambda)_{k_0}\subset A_+ w$. In particular $w\not\in L(\lambda)^{A_+}$. It then follows quickly that~$L(\lambda)_{k_0}=L(\lambda)^{A_+}$.
Similarly, simplicity of the $A$-module~$L(\lambda)$ implies that each $L(\lambda)_j$, so in particular $L(\lambda)^{A_+}=L(\lambda)_{k_0}$, is simple as an $A_0$-module. Equation~\eqref{eqComplete} implies that~$A_0=H\oplus (A_{-}A_+)_0$. Consequently, we have $A_0v=Hv$ for any $v\in L(\lambda)^{A_+}$ and the restriction of~$L(\lambda)^{A_+}$ to an $H$-module remains simple.
Equation~\eqref{eqComplete} implies
$A_+\subset AB_+, $
from which it follows that~$L(\lambda)^{B_+}=L(\lambda)^{A_+}$ as $H$-modules. Hence,
\begin{equation}\label{simpleComp}L^0(\lambda):=L(\lambda)^{B_+}\end{equation} is a simple~$H$-module.

Now we consider an arbitrary simple~$H$-module~$L^0$, which we can interpret as a simple $B$-module contained in one degree, say $-i$ for~$i\in\mN$, so $L^0=e_i^\ast L^0$. The $A$-module~$M:=\ind^A_B L^0$ satisfies $e_j^\ast M=0$ unless $j\ge i$, with~$e_i^\ast M\cong L^0$, by equation~\eqref{eqComplete}. It thus follows that~$M\not=0$, and that any proper submodule~$S$ of~$M$ satisfies $e_i^\ast S=0$. Hence, taking the union of all proper submodules yields the
 unique maximal submodule. We conclude that~$\ind^A_B L^0$ has simple top. It follows from adjunction that this top is~$L(\lambda)$ if and only if~$L^0\cong L^0(\lambda)$.
This implies that the simple modules in \eqref{simpleComp} exhaust all simple~$H$-modules and that~$L^0(\lambda)\cong L^0(\mu)$ implies $L(\lambda)\cong L(\mu)$, which proves condition~(II).

Condition (III) is immediate from the $\mN$-grading on $B$. This concludes the proof.
\end{proof}

The lemma motivates the following definition.
\begin{ddef}\label{GrPreBor}
Let $A$ be an idempotent graded algebra with complete $\mN$-graded subalgebra~$B$.
If $A_B$ and~${}_HB$ are projective with~$H:=B_0$, we call $B$ a {\em graded pre-Borelic algebra}.
\end{ddef}
The reason we just work with the algebra~$B$ for graded pre-Borelic subalgebras, instead of a pair, is that~$H:=B_0$ is defined through the grading on $B$.

\begin{rem}\label{RemJchain}
An idempotent grading on an algebra~$A$ implies a chain of idempotent ideals. Set $f_k:=\sum_{j<k}e^\ast_j$, for~$1\le k\le n+1$ and~$J_k:=Af_kA$, leading to the chain
\begin{equation}\label{chainProp}0=J_0\subset J_1\subset \cdots\subset J_{n}\subset J_{n+1}=A.\end{equation}
Using the chain \eqref{chainProp}, we can rewrite equation~\eqref{eqComplete2} as
\begin{equation}\label{eqABJ}e^\ast_j A\;=\; e^\ast_j B\,\oplus\, e^\ast_jJ_{j},\qquad\mbox{for all }\,0\le j\le n.\end{equation}
In particular, $e_j^\ast Ae_j^\ast=e_j^\ast He_j^\ast\oplus e_j^\ast J_je_j^\ast$, and for the algebras in Remark \ref{RemSSS} we find
\begin{equation}\label{eqAiH}A^{(i)}=f_{i+1} (A/J_i)f_{i+1}=e_i^\ast (A/J_i)e_i^\ast\cong e_i^\ast He_i^\ast=He_i^\ast=e_i^\ast H.\end{equation}
\end{rem}

\begin{lemma}\label{LemHB}
Let $A$ be an idempotent graded algebra with complete $\mN$-graded subalgebra~$B$. Then ${}_HB$ is projective if and only if the left $A^{(j)}$-module~$f_{j+1} A/J_j$ is projective for~$0\le j\le n$.
\end{lemma}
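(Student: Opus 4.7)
The plan is to reduce both sides of the equivalence to the same statement about projectivity of the individual components $e_j^\ast B$, using the fact that $H$ is concentrated in degree zero and so splits as a direct product of corner subalgebras.

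First I would unpack $H$ itself. Since $B$ is $\mN$-graded with $e_i^\ast B e_j^\ast \subset B_{j-i}$, the degree-zero part satisfies $H = B_0 = \bigoplus_{i} e_i^\ast H e_i^\ast$, and each $e_i^\ast$ is a central idempotent of $H$. Consequently every left $H$-module $M$ decomposes canonically as $M = \bigoplus_i e_i^\ast M$, with $e_i^\ast M$ a left module over the algebra $e_i^\ast H e_i^\ast$, and $M$ is projective over $H$ if and only if each $e_i^\ast M$ is projective over $e_i^\ast H e_i^\ast$. Applying this to $M = B$, we get that ${}_H B$ is projective if and only if $e_j^\ast B$ is projective over $e_j^\ast H e_j^\ast$ for every $0 \le j \le n$.

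Next I would identify the right-hand side of the equivalence with the same data. By \eqref{eqAiH} we already have $A^{(j)} \cong e_j^\ast H e_j^\ast$ as algebras. For the module, I would note that in $A/J_j$ the idempotents $e_i^\ast$ with $i<j$ vanish (since $e_i^\ast \in J_j$), so the image of $f_{j+1} = \sum_{i \le j} e_i^\ast$ in $A/J_j$ equals that of $e_j^\ast$, whence
\[
f_{j+1} A/J_j \;=\; e_j^\ast A/(e_j^\ast A \cap J_j) \;=\; e_j^\ast A/e_j^\ast J_j.
\]
Now \eqref{eqABJ} provides the vector space decomposition $e_j^\ast A = e_j^\ast B \oplus e_j^\ast J_j$, and projecting onto the first summand yields a linear isomorphism $\varphi\colon e_j^\ast A/e_j^\ast J_j \xrightarrow{\sim} e_j^\ast B$.

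The key technical step is checking that $\varphi$ is $A^{(j)}$-linear once we identify $A^{(j)}$ with $e_j^\ast H e_j^\ast$ via \eqref{eqAiH}. For this, take $e_j^\ast h e_j^\ast \in e_j^\ast H e_j^\ast \subset B$ acting on $\overline{e_j^\ast a} \in e_j^\ast A/e_j^\ast J_j$. Write $e_j^\ast a = e_j^\ast b + e_j^\ast \jmath$ with $e_j^\ast b \in e_j^\ast B$ and $e_j^\ast \jmath \in e_j^\ast J_j$. Then $e_j^\ast h e_j^\ast (e_j^\ast b) \in e_j^\ast B$ because $B$ is a subalgebra, and $e_j^\ast h e_j^\ast (e_j^\ast \jmath) \in e_j^\ast J_j$ because $J_j$ is a two-sided ideal; thus $\varphi$ commutes with the action. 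Hence $f_{j+1} A/J_j \cong e_j^\ast B$ as left $A^{(j)}$-modules, and combining with the first paragraph proves the equivalence.

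The only delicate point I anticipate is the verification that the projection $\varphi$ is a morphism of $A^{(j)}$-modules, since one must be careful that the action on $A/J_j$ is compatible with multiplication inside $B$ on the other side; everything else is a direct consequence of the idempotent grading together with equations \eqref{eqABJ} and \eqref{eqAiH}.
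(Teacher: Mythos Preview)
Your proof is correct and follows essentially the same approach as the paper's own proof, which is a very terse two-line argument invoking \eqref{eqAiH} and \eqref{eqABJ} to identify $He_j^\ast \cong A^{(j)}$ and $e_j^\ast B \cong f_{j+1}(A/J_j)$. You have simply filled in the details the paper leaves implicit, including the centrality of the $e_i^\ast$ in $H$ and the verification that the linear isomorphism $\varphi$ is compatible with the $A^{(j)}$-action; this last point is exactly what is needed to make the paper's compressed argument rigorous.
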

\begin{proof}
By \eqref{eqAiH}, we have $ He_j^\ast\cong A^{(j)}$ and by \eqref{eqABJ}, $e_j^\ast B\cong f_{j+1} (A/J_j)$, which implies the statement.
\end{proof}

\begin{lemma}\label{LemABAM}
Consider an idempotent graded algebra~$A$ with graded pre-Borelic algebra~$B$, and an $He_i^\ast\cong A^{(i)}$-module~$M$. We have
$$\Delta_M=A\otimes_B M\;\cong\; (A/J_i)f_{i+1}\otimes_{A^{(i)}}M.$$
\end{lemma}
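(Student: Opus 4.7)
The plan is to exhibit explicit mutually inverse $A$-linear maps between $A\otimes_BM$ and $(A/J_i)f_{i+1}\otimes_{A^{(i)}}M$. I would first spell out the $B$-module structure on $M$: since $H=\bigoplus_j e_j^\ast H e_j^\ast$ as algebras and $B=H\oplus B_+$ with $B_+\subseteq\rad B$, the $A^{(i)}\cong e_i^\ast He_i^\ast$-module $M$ becomes a $B$-module via the composite algebra surjection $B\tto H\tto A^{(i)}$. In particular $e_j^\ast m=\delta_{ij}m$, whence $M=e_i^\ast M$. Also, in $A/J_i$ the idempotent $f_{i+1}$ reduces to $e_i^\ast$, since $e_j^\ast\in J_i$ for all $j<i$.

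Define $\phi:A\otimes_BM\to(A/J_i)e_i^\ast\otimes_{A^{(i)}}M$ on simple tensors by $\phi(a\otimes m):=\overline{ae_i^\ast}\otimes m$. Checking that $\phi$ is $B$-balanced is routine for $b\in H$, so the only substantive case is $b\in B_+$: here $bm=0$, so we need $\overline{abe_i^\ast}\otimes m=0$ in the target. This follows from the inclusion $B_+e_i^\ast\subseteq J_i$, itself a direct grading consequence: if $b\in B_k$ with $k>0$, then $be_i^\ast\in e_{i-k}^\ast Ae_i^\ast$, and $e_{i-k}^\ast\in J_i$ since $i-k<i$ (and $be_i^\ast=0$ when $i-k<0$).

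In the opposite direction define $\psi:(A/J_i)e_i^\ast\otimes_{A^{(i)}}M\to A\otimes_BM$ on simple tensors by $\psi(\overline{ae_i^\ast}\otimes m):=ae_i^\ast\otimes m$. The $A^{(i)}$-balancedness is immediate since $A^{(i)}\subseteq B$. The core of the proof is thus showing independence of representatives in $A/J_i$, which amounts to the key vanishing $J_ie_i^\ast\otimes m=0$ in $A\otimes_BM$. I would prove the stronger statement that $J_ke_i^\ast\otimes m=0$ for every $0\le k\le i$ by induction on $k$. The base $k=0$ is trivial, as $J_0=0$. For the inductive step, write $J_k=J_{k-1}+Ae_{k-1}^\ast A$; the contribution from $J_{k-1}$ vanishes by induction, using that $J_{k-1}$ is two-sided (so $A\cdot J_{k-1}=J_{k-1}$). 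For an element $ae_{k-1}^\ast ce_i^\ast$ with $a,c\in A$, apply~\eqref{eqABJ} to split $e_{k-1}^\ast c=\beta+\gamma$ with $\beta\in e_{k-1}^\ast B$ and $\gamma\in e_{k-1}^\ast J_{k-1}$. Then $\beta e_i^\ast$ has grading degree $i-k+1\ge 1$, so $\beta e_i^\ast\in B_+$ annihilates $m$; and $a\gamma e_i^\ast\in J_{k-1}e_i^\ast$ is killed by the induction hypothesis.

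The verifications $\phi\circ\psi=\id$ and $\psi\circ\phi=\id$ are then immediate on simple tensors, using $m=e_i^\ast m$. The only real obstacle throughout is the inductive vanishing of $J_ie_i^\ast\otimes m$; however, the interplay between the $\mN$-grading on $A$ and the identity~\eqref{eqABJ} reduces each inductive step to a transparent computation.
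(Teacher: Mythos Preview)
Your proof is correct. The key technical step---the inductive vanishing of $J_k e_i^\ast\otimes m$ in $A\otimes_BM$ via the decomposition~\eqref{eqABJ}---is cleanly executed, and the balancedness checks for $\phi$ and $\psi$ are fine.

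However, your route differs from the paper's. The paper does not construct explicit maps; instead it observes that both functors $A\otimes_B-$ (from $He_i^\ast$-mod) and $(A/J_i)f_{i+1}\otimes_{A^{(i)}}-$ are left adjoints, computes their right adjoints as $e_i^\ast(\res^A_B-)^{B_+}$ and $\Hom_A(Af_{i+1}/J_if_{i+1},-)$ respectively, and checks that on any $N\in A\text{-mod}$ both pick out the same subspace of $e_i^\ast N$ (elements killed by $B_+$, equivalently by $J_i$). Uniqueness of adjoints then gives the isomorphism functorially in $M$ at once. Your approach is more elementary and hands-on, and makes the isomorphism concrete; it also makes visible exactly how \eqref{eqABJ} is used to kill $J_i$. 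The paper's approach is shorter and immediately yields naturality in $M$, but hides the combinatorics inside the identification of the two right adjoints.
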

\begin{proof}
We will prove that the right adjoints of the functors $A\otimes_B-$ and~$(A/J_i)f_{i+1}\otimes_{A^{(i)}}-$ are isomorphic. These
right adjoints are given respectively by
$$e_i^\ast(\res^A_B-)^{B_+}\qquad\mbox{and}\qquad \Hom_A(Af_{i+1}/J_if_{i+1},-).$$
When applied to~$N\in A$-mod, the first corresponds to taking all elements in $e_i^\ast N$ which are annihilated by $B_+$ and thus by $A_+$. The second corresponds to taking all elements in $f_{i+1}N$ which are annihilated by elements of $J_if_{i+1}$. These two clearly coincide.
\end{proof}

%%%%%%%%%%%%%%%%%%%%%%%%%%%%%%%%%%%%%%%%%%%%%%%%%%%%%%%%%%%%%

\section{Borelic pairs}
\subsection{Definition}
Consider an algebra~$A$ with pre-Borelic pair~$(B,H)$.

\subsubsection{}
We define an equivalence relation~$\sim_H$ on $\Lambda$, where~$\lambda\sim_H\mu$ if and only if~$L^0(\lambda)$ and~$L^0(\mu)$ are in the same indecomposable block of~$H$-mod. This equivalence relation is hence trivial if and only if $H$ is quasi-local. In particular, the equivalence relation is trivial when~$B_+=\rad B$. A partial quasi-order $\preccurlyeq$ on $\Lambda$ is said to be {\em~$H$-compatible} if
$$\lambda\sim_H\mu\; \Rightarrow\; \lambda\sim\mu,$$
where we recall that~$\lambda\sim\mu$ stands for~$\lambda\preccurlyeq\mu$ and~$\mu\preccurlyeq \lambda$.

\begin{ddef}\label{DefBorPair}
Consider an~$H$-compatible partial quasi-order $\preccurlyeq$ on $\Lambda$.
We say that~$(B,H)$ is a {\em Borelic pair} of~$(A,\preccurlyeq)$ if, for all~$\lambda,\mu\in \Lambda$, we have
\begin{enumerate}
\item $[B_+e^0_\lambda:L^0(\mu)]=0\mbox{ unless }\mu\succ \lambda; $
\item $[\Delta(\lambda):L(\mu)]=0\mbox{ unless }\; \mu \preccurlyeq\lambda;$
%\item $[\res^A_BL(\mu):L^0(\lambda)]=0$ unless $\mu=\lambda$ or~$\lambda \prec\mu$.
\end{enumerate}
A Borelic pair~$(B,H)$ is called {\em exact} if~$[\overline{\Delta}(\lambda):L(\lambda)]=1$, for all~$\lambda\in\Lambda$.
\end{ddef}

\subsection{Properties}

Now we start exploring the properties of Borelic pairs. Whenever a partial quasi-order $\preccurlyeq$ on $\Lambda$ is considered in relation to a pre-Borelic pair~$(B,H)$ of~$A$, we assume it to be~$H$-compatible.

\begin{lemma}
\label{PlambdaFilt}
Consider an algebra~$(A,\preccurlyeq)$ with Borelic pair~$(B,H)$, then the module~$P_\lambda=A\otimes_B Be^0_\lambda$ admits a $\Delta$-flag. In particular, the module~$K$ defined through the short exact sequence
\begin{equation}\label{sesPD}0\to K\to P_\lambda\to \Delta(\lambda)\to 0,\end{equation} has a $\Delta$-flag~$\cM$ with
$$(K: \Delta(\mu))_{\cM}=0\quad \mbox{unless} \;\;{\mu}\succ{\lambda}.$$
\end{lemma}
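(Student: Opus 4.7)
My plan is to split the argument into a "transport" step using the exactness of induction from $B$ to $A$, and a filtration-building step at the $B$-level. The starting point is the tautological short exact sequence of left $B$-modules
\[
0 \;\to\; B_+ e^0_\lambda \;\longrightarrow\; Be^0_\lambda \;\longrightarrow\; He^0_\lambda \;\to\; 0,
\]
in which $He^0_\lambda$ is inflated along $B\twoheadrightarrow B/B_+\cong H$. Since $A$ is right-$B$-projective by Definition~\ref{defB}(I), applying $A\otimes_B-$ preserves exactness and yields $0\to A\otimes_B(B_+e^0_\lambda)\to P_\lambda\to\Delta(\lambda)\to 0$, so that $K\cong A\otimes_B(B_+e^0_\lambda)$. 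Both assertions of the lemma then reduce to exhibiting a filtration of $B_+e^0_\lambda$ by $B$-submodules with successive quotients of the form $He^0_\mu$ (viewed as $B$-modules with trivial $B_+$-action) all satisfying $\mu\succ\lambda$; applying $A\otimes_B-$ to such a filtration produces the required $\Delta$-flag on $K$ and, together with $\Delta(\lambda)$ on top, on $P_\lambda$.

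For the filtration step, the key structural input is that $B_+e^0_\lambda$ is projective as a left $H$-module. Indeed, the decomposition $Be^0_\lambda=He^0_\lambda\oplus B_+e^0_\lambda$ (obtained by right-multiplying the $H$-bimodule splitting $B=H\oplus B_+$ by $e^0_\lambda$) realises $B_+e^0_\lambda$ as a direct $H$-summand of $Be^0_\lambda$, which is in turn a direct $H$-summand of ${}_HB$, the latter projective by Definition~\ref{defB}(I). Combined with condition~(1) of Definition~\ref{DefBorPair}, this gives an $H$-module isomorphism $B_+e^0_\lambda\cong\bigoplus_\mu(He^0_\mu)^{n_\mu}$ with $n_\mu=0$ unless $\mu\succ\lambda$. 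I would then induct on $\dim_\mk B_+e^0_\lambda$: at each step produce a $B$-surjection $B_+e^0_\lambda\twoheadrightarrow He^0_\mu$ for some $\mu\succ\lambda$, whose kernel fits in a short exact sequence with $H$-projective quotient, hence splits as $H$-modules, remains $H$-projective, and has composition factors still among the $L^0(\mu')$ with $\mu'\succ\lambda$, so that the induction continues.

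The technical crux is producing the required $B$-surjection at each stage. Using the adjunction $\Hom_B(N,He^0_\mu)\cong\Hom_H(N/B_+N,He^0_\mu)$ (valid because $B_+$ acts trivially on the target), this reduces to showing that the coinvariants $B_+e^0_\lambda/B_+^2 e^0_\lambda$ are $H$-projective, after which one decomposes them into copies of $He^0_\mu$ ($\mu\succ\lambda$, by condition~(1) applied to the relevant composition factors) and projects onto any factor. Establishing $H$-projectivity of these coinvariants, and of their analogues that appear further along the induction, is what I expect to demand the most care: the natural route is to use left-$H$-flatness of $He^0_\lambda$ to identify $B_+^i e^0_\lambda\cong B_+^i\otimes_H He^0_\lambda$ and thereby express each coinvariant as $(B_+^i/B_+^{i+1})\otimes_H He^0_\lambda$, then leverage the left-$H$-projectivity of $B$ from Definition~\ref{defB}(I) together with the $H$-bimodule structure of $B_+^i/B_+^{i+1}$ to conclude. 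Once the $B$-filtration of $B_+e^0_\lambda$ is in place, the refined statement $(K:\Delta(\mu))_\cM=0$ for $\mu\not\succ\lambda$ is immediate from the construction, and joining $\Delta(\lambda)$ on top gives the $\Delta$-flag of $P_\lambda$.
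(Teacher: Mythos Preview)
Your reduction to filtering $N:=B_+e^0_\lambda$ as a $B$-module, via exactness of $A\otimes_B-$ and the $H$-projectivity of $N$, is exactly the paper's setup. The gap is in how you build the filtration. Your plan peels off quotients $N\twoheadrightarrow He^0_\mu$, which forces you to show that $N/B_+N$ (and its analogues at later stages) is $H$-projective. Your proposed route---identifying the coinvariants with $(B_+^i/B_+^{i+1})\otimes_H He^0_\lambda$ and invoking left $H$-projectivity of $B$---does not go through: left $H$-projectivity of $B$ gives only that $B_+$ is left $H$-projective, not $B_+^i$ for $i\ge 2$ nor the quotients $B_+^i/B_+^{i+1}$; and even granting that, tensoring on the right by $He^0_\lambda$ need not preserve left $H$-projectivity. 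Worse, after the first step your kernel is no longer of the form $B_+^ie^0_\lambda$, so the $B_+$-adic analysis does not apply to ``their analogues further along the induction''.

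The missing ingredient is the one hypothesis you never invoke: $H$-compatibility of $\preccurlyeq$. Because each $\sim$-class is a union of $H$-blocks, there are central idempotents $\epsilon_{[\mu]}\in H$ giving a canonical $H$-module decomposition $N=\bigoplus_{[\mu]}N_{[\mu]}$ with $N_{[\mu]}=\epsilon_{[\mu]}N$. Condition~(1) of the Borelic pair then says exactly that $\epsilon_{[\nu]}B_+\epsilon_{[\mu]}=0$ unless $\nu\succ\mu$, so $B_+N_{[\mu]}\subseteq\bigoplus_{\nu\succ\mu}N_{[\nu]}$. Hence a \emph{maximal} class component $N_{[\mu]}$ is a $B$-submodule annihilated by $B_+$; being an $H$-summand of $N$ it is $H$-projective, i.e.\ a direct sum of $He^0_\kappa$ with $\kappa\sim\mu\succ\lambda$. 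Quotienting and iterating gives the filtration. In other words, build the filtration from the bottom (submodules in the $B_+$-invariants) rather than from the top (quotients via coinvariants); this sidesteps entirely the $H$-projectivity-of-coinvariants problem.
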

\begin{proof}
We start from the short exact sequence of~$B$-modules
$$0\to B_+ e^0_\lambda\to Be^0_\lambda\to He_\lambda^0\to 0.$$
We set $N:=B_+ e^0_\lambda$, and apply the exact functor~$A\otimes_B-$, see Definition \ref{defB}(I), to the above short. This yields the short exact sequence
$$0\to A\otimes_BN\to P_\lambda\to \Delta(\lambda)\to 0.$$

We claim that the $B$-module~$N=B_+ e^0_\lambda$
has a filtration with sections given by the $B$-modules $He_\mu^0=Be_\mu^0/B_+e_\mu^0$ with~$\mu\succ \lambda$. 
Firstly, by Definition~\ref{defB}(I), ${}_HBe^0_\lambda$ and~$He_\lambda^0$ are projective, so also $N$ is projective as an $H$-module. Since $\sim$ is $H$-compatible, we have a decomposition
$${}_HN\;=\;\bigoplus_{[\mu]}N_{[\mu]},$$
where $[\mu]$ runs over the equivalence classes of~$\sim$ and~$[N_{[\mu]}:L^0(\nu)]=0$ unless $\mu\sim\nu$. Furthermore, $N_{[\mu]}$ is projective as an $H$-module, so a direct sum of modules $He^0_\kappa$ with $\kappa\in [\mu]$. By Definition~\ref{DefBorPair}(1),
 $N_{[\mu]}=0$ unless $\mu\succ\lambda$.
Take $\mu\in\Lambda$ such that~$N_{[\mu']}=0$ if~$\mu'\succ\mu$. This actually constitutes a $B$-submodule with trivial~$B_+$ action, by Definition~\ref{DefBorPair}(1). We can then proceed iteratively with the module~$N/N_{[\mu]}$. This yields the desired filtration.

As $\ind^A_B-$ is exact by Definition~\ref{defB}(I), this filtration of~$N$ induces the desired filtration of~$K:=A\otimes_BN$.
\end{proof}

\begin{cor}\label{CorPP}
Consider an algebra~$(A,\preccurlyeq)$ with Borelic pair~$(B,H)$, the projective module~$P_\lambda$ is the direct sum of $P(\lambda)$ and certain $P(\mu)$ with $\mu\succ\lambda$. %Consequently,
%$$[\res^A_BL(\mu):L^0(\lambda)]=0 \quad\mbox{unless $\mu=\lambda$ or~$\lambda \prec\mu$}.$$
\end{cor}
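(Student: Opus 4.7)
The plan is to combine Lemma~\ref{PPlambda} with Lemma~\ref{PlambdaFilt} and Lemma~\ref{BoundMult}. By Lemma~\ref{PPlambda}, $P_\lambda$ decomposes as $\bigoplus_\mu P(\mu)^{\oplus c_\mu^\lambda}$, so the content of the corollary is the two claims that $c_\lambda^\lambda=1$ and that $c_\mu^\lambda=0$ for every $\mu$ with $\mu\ne\lambda$ and $\mu\not\succ\lambda$.

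First I would upgrade the short exact sequence~\eqref{sesPD} by splicing in the $\Delta$-flag of $K$ furnished by Lemma~\ref{PlambdaFilt}, producing a single $\Delta$-flag $\cN$ of $P_\lambda$ with precisely one section isomorphic to $\Delta(\lambda)$ (the top quotient $P_\lambda/K$) and all other sections of the form $\Delta(\nu)$ with $\nu\succ\lambda$. By construction and by Lemma~\ref{TopStan}, this flag satisfies
\[
(P_\lambda:\Delta(\lambda))_{\cN}\;=\;\dim\End_A L(\lambda),\qquad (P_\lambda:\Delta(\mu))_{\cN}\;=\;0\ \text{for } \mu\ne\lambda,\ \mu\not\succ\lambda,
\]
since $\nu\succ\lambda$ forces $\nu\ne\lambda$ and $\nu\not\sim\lambda$, so all sections other than the top one avoid contributing to such $\mu$.

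Now I would apply Lemma~\ref{BoundMult} to $\cN$. For $\mu$ with $\mu\ne\lambda$ and $\mu\not\succ\lambda$, the lemma gives $\dim\Hom_A(P_\lambda,L(\mu))=0$; since $\Hom_A(P_\lambda,L(\mu))\cong\End_A(L(\mu))^{c_\mu^\lambda}$ as vector spaces, and $\End_A L(\mu)$ is a nonzero division ring, this forces $c_\mu^\lambda=0$. For $\mu=\lambda$, the same dimension count yields $c_\lambda^\lambda\le 1$; the matching lower bound $c_\lambda^\lambda\ge 1$ is produced by composing the surjection $P_\lambda\twoheadrightarrow\Delta(\lambda)$ from~\eqref{sesPD} with the surjection $\Delta(\lambda)\twoheadrightarrow L(\lambda)$ of Lemma~\ref{TopStan}, which yields a nonzero homomorphism and hence shows $P(\lambda)$ occurs at least once in $P_\lambda$.

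The argument is essentially a bookkeeping consequence of the lemmas already established, so I do not anticipate a substantive obstacle. The only mild subtlety is in the $H$-compatible \emph{quasi}-order setting: one must take care to rule out also the case $\mu\sim\lambda$ with $\mu\ne\lambda$, which is where the strict inequality $\nu\succ\lambda$ on the sections of $K$ from Lemma~\ref{PlambdaFilt} (rather than merely $\nu\succcurlyeq\lambda$) does the crucial work.
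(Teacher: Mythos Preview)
Your argument is correct and follows essentially the same route as the paper: construct a $\Delta$-flag of $P_\lambda$ from Lemma~\ref{PlambdaFilt}, apply Lemma~\ref{BoundMult} to control $\dim\Hom_A(P_\lambda,L(\mu))$, and read off the decomposition into indecomposable projectives. Your treatment is in fact slightly more careful than the paper's in that you track the factor $\dim\End_A L(\lambda)$ rather than tacitly assuming it equals $1$, and you make explicit how the \emph{strict} relation $\nu\succ\lambda$ on the sections of $K$ is exactly what disposes of the case $\mu\sim\lambda$, $\mu\ne\lambda$.
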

\begin{proof}
It follows from Lemmata~\ref{PlambdaFilt} and~\ref{BoundMult} that 
$$\dim\Hom_A(P_\lambda,L(\lambda))=1,\qquad\mbox{and}\qquad\Hom_A(P_\lambda,L(\mu))=0,\;\mbox{ for~$\mu\not\succ\lambda$.}$$
This proves the requested decomposition of $P_\lambda$ into projective covers. %The result on simple modules then follows from Lemma~\ref{PPlambda}.
\end{proof}

\begin{lemma}\label{LemBasis}
Consider an algebra~$(A,\preccurlyeq)$ with Borelic pair~$(B,H)$. The modules $\Delta(\lambda)$ induce a $\mk$-basis $\{[\Delta(\lambda)]\,|\, \lambda\in\Lambda\}$ of~$G_0(A)$.
\end{lemma}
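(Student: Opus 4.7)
The plan is to prove linear independence of $\{[\Delta(\lambda)]\,|\,\lambda\in\Lambda\}$ in $G_0(A)$; since the cardinality $|\Lambda|$ matches the rank of $G_0(A)$ (which has the standard basis $\{[L(\lambda)]\}$), linear independence will give the desired $\mk$-basis. The strategy is to use the auxiliary module $P_\lambda=Ae^0_\lambda$ to bridge the classes $[\Delta(\lambda)]$ with the indecomposable projective classes $[P(\mu)]$.

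First I would extend the partial quasi-order $\preccurlyeq$ to a total quasi-order preserving the equivalence relation $\sim$, and enumerate $\Lambda$ so that the $\sim$-classes appear as contiguous blocks arranged in increasing order. With respect to this enumeration, Lemma~\ref{PlambdaFilt} yields in $G_0(A)$ the equality
\[
[P_\lambda]\;=\;[\Delta(\lambda)]\;+\;\sum_{\mu\succ\lambda}(K:\Delta(\mu))_{\cM}\,[\Delta(\mu)],
\]
where the $\mu$ appearing lie in strictly higher equivalence classes; this is an upper unitriangular transition (identity on diagonal blocks, vanishing below the block diagonal). Similarly, Corollary~\ref{CorPP} yields $[P_\lambda]=[P(\lambda)]+\sum_{\mu\succ\lambda}m_\mu[P(\mu)]$, which is also upper unitriangular. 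Composing the two invertible changes of basis, $\{[\Delta(\lambda)]\}$ and $\{[P(\mu)]\}$ generate the same $\mZ$-submodule of $G_0(A)$, so linear independence of one is equivalent to that of the other.

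It remains to verify invertibility (over $\mk$) of the transition matrix $D=([\Delta(\lambda):L(\mu)])_{\lambda,\mu\in\Lambda}$. By Definition~\ref{DefBorPair}(2), $D$ is block upper triangular in our enumeration with blocks indexed by $\sim$-classes (off-diagonal-block entries vanish). Within a diagonal block $[\lambda_0]$, applying the exact functor $A\otimes_B-$ to the Jordan--H\"older filtration of the inflated $B$-module $He_\lambda^0$ gives the factorisation
\[
D^{[\lambda_0]}\;=\;C^{H,[\lambda_0]}\cdot \bar D^{[\lambda_0]},
\]
where $C^{H,[\lambda_0]}=([He^0_\lambda:L^0(\mu)])_{\lambda,\mu\in[\lambda_0]}$ is the Cartan matrix of $H$ restricted to the $H$-blocks meeting $[\lambda_0]$ (here using that $\sim_H$ refines $\sim$), and $\bar D^{[\lambda_0]}=([\overline\Delta(\mu):L(\nu)])_{\mu,\nu\in[\lambda_0]}$.

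The main obstacle I anticipate is showing invertibility of the diagonal blocks $D^{[\lambda_0]}$. My plan would be to induct on the length of the chain~\eqref{stratchain} associated to $\preccurlyeq$, reducing to the case of a single $\sim$-class where the order collapses and the pre-Borelic axioms (particularly projectivity of $B$ over $H$ and of $A$ over $B$, together with the identification~\eqref{eqAiH} of the slice algebras with the pieces of $H$) constrain the structure enough to verify invertibility directly from the Cartan theory of $H$. The case of an exact Borelic pair is immediate since then $\bar D^{[\lambda_0]}$ reduces to the identity matrix (as $[\overline\Delta(\mu):L(\nu)]=0$ for $\mu\neq\nu$ in the same $\sim$-class, given $[\overline\Delta(\mu):L(\mu)]=1$), leaving the invertibility of $C^{H,[\lambda_0]}$ as the only requirement.
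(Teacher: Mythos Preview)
Your second paragraph is already the complete proof, and it coincides with the paper's argument: the transitions from $\{[P_\lambda]\}$ to $\{[P(\mu)]\}$ (Corollary~\ref{CorPP}) and from $\{[P_\lambda]\}$ to $\{[\Delta(\mu)]\}$ (Lemma~\ref{PlambdaFilt} together with~\eqref{flagGro}) are both block upper triangular with identity diagonal blocks, hence invertible over~$\mZ$. Since the indecomposable projectives induce a basis, so do the $\Delta(\lambda)$. You are done at that point; the paper's proof is exactly these three sentences.

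Your third and fourth paragraphs are superfluous and manufacture a difficulty that does not arise. By passing to the decomposition matrix $D=([\Delta(\lambda):L(\mu)])$ you are trading the projective basis for the simple basis, which drags in the Cartan matrix of $A$ and forces you to analyse the diagonal blocks $D^{[\lambda_0]}$ separately. Staying with the projectives avoids all of this. Two further issues with the detour: the identification~\eqref{eqAiH} you plan to use is specific to the idempotent-graded setting of Section~\ref{SecPreSpec} and is not available for a general Borelic pair; and your proposed induction on the chain~\eqref{stratchain} presupposes a total quasi-order, which $\preccurlyeq$ need not be at this stage of the paper. Drop everything after the second paragraph.
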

\begin{proof}
The indecomposable projective $A$-modules induce a basis $\{[P(\lambda)]\,|\, \lambda\in\Lambda\}$. Corollary~\ref{CorPP} implies that another basis is given by~$\{[P_\lambda]\,|\, \lambda\in\Lambda\}$. Lemma~\ref{PlambdaFilt} and equation~\eqref{flagGro} then imply that~$\{[\Delta(\lambda)]\,|\, \lambda\in\Lambda\}$ also forms a basis.
\end{proof}

By equation~\eqref{flagGro} and Lemma~\ref{LemBasis}, the multiplicities $(N:\Delta(\mu))_{\cM}$ coincide for all possible $\Delta$-flags $\cM$, if~$(B,H)$ is a 
Borelic pair. Hence, we leave out the reference to~$\cM$.

The following lemma states that, in case~$\preceq$ is actually a partial order, the modules $\{\Delta(\lambda)\,|\,\lambda\in\Lambda\}$ form a standard system.
\begin{lemma}\label{LemDmor}\label{LemExt1}
For an algebra~$(A,\preccurlyeq)$ with Borelic pair~$(B,H)$ and~$\lambda,\mu\in\Lambda$, we have
\begin{enumerate}
\item $\Hom_A(\Delta(\lambda),\Delta(\mu))=0\qquad\mbox{unless }\; \lambda\preccurlyeq \mu;$
\item $\Ext^1_A(\Delta(\lambda),\Delta(\mu))=0\quad\mbox{unless}\quad \lambda\prec\mu;$
\item $\Ext^1_A(\Delta(\lambda),M)=0$ for any $M\in A\mbox{\rm{-mod}}$ for which $[M:L(\nu)]=0$ if $\nu\succ\lambda$.
\end{enumerate}
\end{lemma}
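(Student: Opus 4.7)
The plan is to prove (1) directly, then establish (3) via the short exact sequence produced by Lemma~\ref{PlambdaFilt}, and finally deduce (2) as a special case of (3).

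For (1), any nonzero morphism $\Delta(\lambda)\to\Delta(\mu)$ has image whose top is a nonzero quotient of $\Top\Delta(\lambda)=L(\lambda)$, by Lemma~\ref{TopStan}. Hence $L(\lambda)$ occurs as a composition factor of $\Delta(\mu)$, and Definition~\ref{DefBorPair}(2) forces $\lambda\preccurlyeq\mu$.

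For (3), I would apply $\Hom_A(-,M)$ to the short exact sequence $0\to K\to P_\lambda\to\Delta(\lambda)\to 0$ of Lemma~\ref{PlambdaFilt}. Since $P_\lambda$ is projective, $\Ext^1_A(\Delta(\lambda),M)$ appears as the cokernel of $\Hom_A(P_\lambda,M)\to\Hom_A(K,M)$, so it suffices to show that $\Hom_A(K,M)=0$. By Lemma~\ref{PlambdaFilt}, $K$ admits a $\Delta$-flag whose sections are $\Delta(\nu)$ with $\nu\succ\lambda$, and by hypothesis $[M:L(\nu)]=0$ for all such $\nu$. The argument of (1) adapts: any nonzero map $\Delta(\nu)\to M$ would put $L(\nu)=\Top\Delta(\nu)$ among the composition factors of $M$, so $\Hom_A(\Delta(\nu),M)=0$ for every section. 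An induction on the length of the $\Delta$-flag of $K$, using the long exact sequence obtained by extracting the bottom section at each step, then yields $\Hom_A(K,M)=0$.

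For (2), I would specialise (3) to $M=\Delta(\mu)$ and verify the composition-factor hypothesis when $\lambda\not\prec\mu$. Suppose $\nu\succ\lambda$ and $[\Delta(\mu):L(\nu)]\neq 0$; by Definition~\ref{DefBorPair}(2), $\nu\preccurlyeq\mu$. Combined with $\lambda\preccurlyeq\nu$, transitivity yields $\lambda\preccurlyeq\mu$; moreover $\mu\preccurlyeq\lambda$ is impossible, as it would give $\nu\preccurlyeq\mu\preccurlyeq\lambda$, contradicting $\nu\not\preccurlyeq\lambda$. Hence $\lambda\prec\mu$, contrary to the hypothesis. So the condition of (3) is met and the vanishing follows.

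No step presents a genuine obstacle: (1) is a one-line top argument, the induction in (3) is routine once the filtration of $K$ from Lemma~\ref{PlambdaFilt} is in hand, and the only subtle point is the order-theoretic transitivity check in the reduction of (2) to (3), which is entirely formal in a partial quasi-order.
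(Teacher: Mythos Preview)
Your proof is correct and follows essentially the same approach as the paper: part~(1) via Lemma~\ref{TopStan} and Definition~\ref{DefBorPair}(2), part~(3) by applying $\Hom_A(-,M)$ to the short exact sequence of Lemma~\ref{PlambdaFilt} and showing $\Hom_A(K,M)=0$ from the $\Delta$-flag of $K$, and part~(2) as a special case of~(3). Your treatment is slightly more detailed (explicit induction on the flag length, and the transitivity check for the reduction of~(2) to~(3)), but the argument is the same.
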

\begin{proof}
Part (1) is a consequence of Definition~\ref{DefBorPair}(2) and Lemma~\ref{TopStan}. Part (2) is a special case of part (3), by Definition~\ref{DefBorPair}(2).

Now we consider $M$ as in part (3). Consider $K$ as in Lemma~\ref{PlambdaFilt}. The contravariant left exact functor~$\Hom_A(-,M)$ applied to \eqref{sesPD} yields a surjection
$$\Hom_A(K,M)\tto \Ext^1_A(\Delta(\lambda),M).$$
By Lemmata~\ref{PlambdaFilt} and~\ref{TopStan}, the left-hand side (and therefore the right-hand side) is zero.\end{proof}

\begin{lemma}\label{DomDelta}
Consider an algebra~$(A,\preccurlyeq)$ with Borelic pair~$(B,H)$. If for an $A$-module~$M$ with~$\Delta$-flag and~$\lambda\in\Lambda$, we have 
$$(M:\Delta(\lambda))\not=0\qquad\mbox{and}\qquad(M:\Delta(\mu))=0\quad\mbox{if $\mu\succ\lambda$,}$$
then there exists a module $M'$ with $\Delta$-flag for which we have a short exact sequence
$$0\to \Delta(\lambda)\to M\to M'\to 0.$$
\end{lemma}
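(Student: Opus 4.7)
The plan is to locate a copy of $\Delta(\lambda)$ within the given $\Delta$-flag of $M$ and show it can be split off. Fix a $\Delta$-flag $M=M_0\supset M_1\supset\cdots\supset M_d=0$ with sections $M_i/M_{i+1}\cong\Delta(\mu_i)$. Since $(M:\Delta(\lambda))\neq0$, there is some $i$ with $\mu_i=\lambda$; let $k$ be the largest such index. Then the submodule $M_{k+1}$ inherits a $\Delta$-flag whose sections $\Delta(\mu_{k+1}),\ldots,\Delta(\mu_{d-1})$ all satisfy $\mu_j\neq\lambda$ and, by hypothesis on $M$, $\mu_j\not\succ\lambda$.

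The key step is to apply Lemma~\ref{LemExt1}(3) to conclude $\Ext^1_A(\Delta(\lambda),M_{k+1})=0$. This requires $[M_{k+1}:L(\nu)]=0$ for $\nu\succ\lambda$, which follows from Definition~\ref{DefBorPair}(2): if $[\Delta(\mu_j):L(\nu)]\neq0$ then $\nu\preccurlyeq\mu_j$, and combining $\lambda\prec\nu\preccurlyeq\mu_j$ would force $\mu_j\succ\lambda$, contradicting the choice of $\mu_j$. Consequently, the short exact sequence
\[
0\to M_{k+1}\to M_k\to\Delta(\lambda)\to 0
\]
splits, yielding an embedding $\iota\colon\Delta(\lambda)\hookrightarrow M_k\subset M$ with $M_k=\iota(\Delta(\lambda))\oplus M_{k+1}$.

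Finally, set $M':=M/\iota(\Delta(\lambda))$. Since $\iota(\Delta(\lambda))\cap M_{k+1}=0$, the submodule $M_{k+1}$ embeds into $M'$, and the quotient $M'/M_{k+1}\cong M/(\iota(\Delta(\lambda))+M_{k+1})=M/M_k$ inherits the $\Delta$-flag $M/M_k\supset M_1/M_k\supset\cdots\supset M_{k-1}/M_k\supset 0$ with sections $\Delta(\mu_0),\ldots,\Delta(\mu_{k-1})$. Combining this flag of $M/M_k$ with the one of $M_{k+1}$ via the short exact sequence $0\to M_{k+1}\to M'\to M/M_k\to 0$ equips $M'$ with a $\Delta$-flag, so the desired short exact sequence $0\to\Delta(\lambda)\to M\to M'\to 0$ is obtained.

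The main obstacle is verifying the $\Ext^1$-vanishing cleanly; once Lemma~\ref{LemExt1}(3) is invoked with the correct bound on the composition factors of $M_{k+1}$, the rest is a direct construction. Choosing $k$ \emph{maximal} (rather than minimal) is what guarantees that the sections below $M_k$ avoid $\Delta(\lambda)$ and hence satisfy the hypothesis $\mu_j\not\succ\lambda$ needed for the vanishing.
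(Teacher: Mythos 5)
Your proof is correct and fills in exactly the argument the paper intends: the paper's proof simply says the lemma ``follows immediately from Lemma~\ref{LemExt1},'' and your unpacking—locating a maximal $\Delta(\lambda)$ section, bounding the composition factors of the submodule below it via Definition~\ref{DefBorPair}(2), invoking Lemma~\ref{LemExt1}(3) (of which part (2) is the special case the paper likely had in mind) to split off $\Delta(\lambda)$, and then reassembling the flag of $M'$—is the standard realization of that one-line reference.
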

\begin{proof}
This follows immediately from Lemma~\ref{LemExt1}.
\end{proof}

\begin{lemma}\label{LemSurjN}
Consider an algebra~$(A,\preccurlyeq)$ with Borelic pair~$(B,H)$. If we have a surjection $M_1\tto M_2$ where $M_1,M_2$ have $\Delta$-flags, the kernel $K$ also has a $\Delta$-flag.

\end{lemma}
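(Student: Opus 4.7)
I would prove the lemma by a double induction. The outer induction is on the length of a $\Delta$-flag of $M_2$, and the inner induction (needed to handle the outer length-one case) is on the length of a $\Delta$-flag of $M_1$. The outer base case $M_2=0$ is trivial, since then $K=M_1$ inherits the given flag. For the outer inductive step with $M_2$-flag length $\ell\ge 2$, I apply Lemma~\ref{DomDelta} with $\nu$ maximal in $M_2$'s flag to obtain $0\to\Delta(\nu)\to M_2\to M_2'\to 0$ with $M_2'$ of flag length $\ell-1$. Setting $\tilde M_1:=g^{-1}(\Delta(\nu))$ for the given surjection $g\colon M_1\twoheadrightarrow M_2$ produces the pair of short exact sequences
$$0\to\tilde M_1\to M_1\to M_2'\to 0\qquad\text{and}\qquad 0\to K\to\tilde M_1\to\Delta(\nu)\to 0.$$
Applying the outer hypothesis to the first yields a $\Delta$-flag on $\tilde M_1$; feeding the second into the outer length-one case yields a $\Delta$-flag on $K$.

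For the outer length-one case, say $M_2=\Delta(\nu)$, I induct on the length of a $\Delta$-flag of $M_1$. When $M_1=\Delta(\lambda)$, Lemma~\ref{TopStan} and comparison of simple tops of the surjection force $\lambda=\nu$, so that a surjective endomorphism of the finite-dimensional module $\Delta(\lambda)$ is an isomorphism and $K=0$. Otherwise, pick $\mu$ maximal in $M_1$'s flag and apply Lemma~\ref{DomDelta} to obtain $0\to\Delta(\mu)\to M_1\to M_1'\to 0$. Consider the composition $h\colon\Delta(\mu)\hookrightarrow M_1\xrightarrow{g}\Delta(\nu)$. When $h=0$, we have $\Delta(\mu)\subseteq K$ and the induced sequence $0\to K/\Delta(\mu)\to M_1'\to\Delta(\nu)\to 0$ falls under the inner hypothesis.

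The main obstacle is the case $h\ne 0$. Here Lemma~\ref{LemDmor}(1) gives $\mu\preccurlyeq\nu$, while maximality of $\mu$ in $M_1$'s flag together with Definition~\ref{DefBorPair}(2), applied to the composition factors of $M_2$ (which are composition factors of $M_1$), forces $\nu\preccurlyeq\mu$; hence $\mu\sim\nu$. Crucially, $\mu$ and $\nu$ need not coincide when $H$ is not quasi-local. My plan is to combine the inner hypothesis with the $\mathrm{Ext}^1$-vanishing of Lemma~\ref{LemExt1}(3), which gives $\mathrm{Ext}^1_A(\Delta(\mu),N)=0$ for any $N$ whose composition factors $L(\kappa)$ satisfy $\kappa\not\succ\mu$ (a condition met by every subquotient arising in the analysis, by maximality of $\mu$ in $M_1$'s flag). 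Using the $H$-compatibility of $\preccurlyeq$, one can then treat the entire $\sim$-equivalence class $[\mu]$ of $\Delta$-sections of $M_1$ simultaneously: rearrange these sections so that $g$ factors through a single $\Delta$-subquotient in the class, thereby reducing to the case where either $h=0$ (handled above) or the inner inductive hypothesis applies directly. This bookkeeping across the $\sim$-class is the delicate point.
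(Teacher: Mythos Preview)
Your double-induction scaffolding and the $h=0$ branch are sound, but the $h\ne0$ branch is left as a sketch that does not close. The plan to ``rearrange the $\sim$-class so that $g$ factors through a single $\Delta$-subquotient'' is not made precise, and there is a real obstacle: when $\mu\sim\nu$ with $\mu\ne\nu$, a nonzero $h\colon\Delta(\mu)\to\Delta(\nu)$ is generally neither zero nor an isomorphism, so neither $\Delta(\nu)/h(\Delta(\mu))$ nor the induced quotient of $M_1$ has any reason to carry a $\Delta$-flag, and reshuffling the sections inside the $\sim$-class does not change that.

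The paper resolves this by a different case division, with a single induction on the flag length of $M_1$. Rather than first fixing a maximal $\mu$ in $M_1$'s flag and then testing whether its image in $M_2$ vanishes, the paper asks whether there exists \emph{any} $\mu$ with $(M_1:\Delta(\mu))\ne0$ and $(M_2:\Delta(\kappa))=0$ for all $\kappa\succcurlyeq\mu$. If so (Case~1), choose such a $\mu$ that is also maximal in $M_1$'s flag; the composite $\Delta(\mu)\hookrightarrow M_1\to M_2$ is then \emph{automatically} zero (its image, if nonzero, would have top $L(\mu)$, forcing $(M_2:\Delta(\kappa))\ne0$ for some $\kappa\succcurlyeq\mu$), giving your clean $h=0$ reduction. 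If no such $\mu$ exists (Case~2), one works from the $M_2$ side instead: take $\lambda$ maximal in $M_2$'s flag via Lemma~\ref{DomDelta}, use the failure of Case~1 to show $[K:L(\kappa)]=0$ for all $\kappa\succ\lambda$, conclude $\Ext^1_A(\Delta(\lambda),K)=0$ by Lemma~\ref{LemExt1}(3), and lift $\Delta(\lambda)\hookrightarrow M_2$ along $g$ to a monomorphism $\iota\colon\Delta(\lambda)\hookrightarrow M_1$; then $K$ is the kernel of $M_1/\iota(\Delta(\lambda))\tto M_2/\Delta(\lambda)$. The idea you were missing is precisely this: in the bad case, lift a bottom $\Delta$ of $M_2$ into $M_1$, rather than trying to push a maximal $\Delta$ of $M_1$ down.
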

\begin{proof}
We prove this by induction on the length of the $\Delta$-flag of~$M_1$. Since all $\Delta(\lambda)$ have different simple top $L(\lambda)$, see Lemma~\ref{TopStan}, it follows that there are no epimorphisms $\Delta(\lambda)\tto\Delta(\mu)$, different from identities. It follows easily that the claim in the lemma is thus true for flags of length 1, meaning if $M_1\cong \Delta(\lambda)$, for some $\lambda\in\Lambda$.

First assume that there exists $\mu\in\Lambda$ such that~$(M_1:\Delta(\mu))\not=0$, but $(M_2:\Delta(\nu))=0$ for all $\nu  \succcurlyeq\mu$. Without restriction, we can take such $\mu$ such that~$(M_1:\Delta(\kappa))=0$, for all $\kappa\succ\mu$. By Lemma~\ref{DomDelta}, we have $\Delta(\mu)\hookrightarrow M_1$ with cokernel with $\Delta$-flag and such that the composition with $M_1\tto M_2$ is zero. Thus there exists a morphism $\iota$ making
$$\xymatrix{
K\ar@{^{(}->}[rr]&&M_1\ar@{->>}[rr]&&M_2\\
&&\Delta(\mu)\ar@{^{(}->}[u]\ar@{.>}[llu]^{\iota}
}$$ 
commute. Clearly $\iota$ is a monomorphism and
the kernel $K$ of~$M_1\tto M_2$ will thus have a $\Delta$-flag if the kernel $K/\Delta(\mu)$ of~$M_1/\Delta(\mu)\tto M_2$ has one. 

Now we assume that there exists no $\mu$ as in the previous paragraph.
We take $\lambda\in\Lambda$ for which we can apply Lemma~\ref{DomDelta} to obtain a monomorphism $\Delta(\lambda)\hookrightarrow M_2$. If we would have $[K:L(\nu)]\not=0$, for some $\nu\succ\lambda$, then by Definition~\ref{DefBorPair}(2), we must have $(M_2:\Delta(\mu))\not=0$, for some $\mu\succcurlyeq\nu\succ \lambda$. This contradicts the assumption in the beginning of this paragraph. By Lemma~\ref{LemExt1}(3), we thus have $\Ext^1_{A}(\Delta(\lambda),K)=0$.
This means we get a morphism $\iota$, making
$$\xymatrix{
K\ar@{^{(}->}[rr]&&M_1\ar@{->>}[rr]&&M_2\\
&&\Delta(\lambda)\ar@{^{(}->}[rru]\ar@{.>}[u]^{\iota}
}$$
commute. Since $\iota$ is a monomorphism, $K$ is the kernel of~$M_1/\Delta(\lambda)\tto M_2/\Delta(\lambda)$.

In both cases, we have thus reduced the problem to a case where the length of the $\Delta$-flag of~$M_1$ is strictly lower.
\end{proof}

\begin{cor}\label{corPFlag}
Consider an algebra~$(A,\preccurlyeq)$ with Borelic pair~$(B,H)$. Then the kernel $K$ of~$P(\lambda)\tto \Delta(\lambda)$ has a $\Delta$-flag with 
$$(K: \Delta(\nu))=0\quad \mbox{unless} \;\;{\nu}\succ{\lambda}.$$
\end{cor}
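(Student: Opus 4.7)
The plan is to prove Corollary~\ref{corPFlag} by first establishing a stronger auxiliary claim by induction along the partial quasi-order~$\preccurlyeq$: for every~$\mu\in\Lambda$, the indecomposable projective~$P(\mu)$ admits a~$\Delta$-flag satisfying $(P(\mu):\Delta(\mu))=1$ and $(P(\mu):\Delta(\nu))=0$ unless $\nu\succcurlyeq\mu$. The corollary then drops out of one further application of Lemma~\ref{LemSurjN}.

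For the base of the induction I take~$\mu$ maximal under~$\preccurlyeq$. Lemma~\ref{PlambdaFilt} forces the kernel~$K'$ in~\eqref{sesPD} to have a $\Delta$-flag with $(K':\Delta(\nu))=0$ for every $\nu\in\Lambda$, so by Lemma~\ref{LemBasis} we get $K'=0$ and $P_\mu\cong\Delta(\mu)$. Corollary~\ref{CorPP} additionally forces $P_\mu\cong P(\mu)$, so $P(\mu)\cong\Delta(\mu)$ and the required flag is trivial.

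For the inductive step applied to a non-maximal~$\lambda$, I invoke Corollary~\ref{CorPP} to write $P_\lambda\cong P(\lambda)\oplus Q$, where~$Q$ is a direct sum of projectives $P(\mu)$ with $\mu\succ\lambda$. By the inductive hypothesis, each such $P(\mu)$ carries a $\Delta$-flag supported in $\{\nu:\nu\succcurlyeq\mu\}\subseteq\{\nu:\nu\succ\lambda\}$, so the direct sum~$Q$ inherits (by concatenation) a $\Delta$-flag with $(Q:\Delta(\nu))=0$ unless $\nu\succ\lambda$. Since $P_\lambda$ also has a $\Delta$-flag by Lemma~\ref{PlambdaFilt}, Lemma~\ref{LemSurjN} applied to the split projection $P_\lambda\tto Q$ onto the second summand yields a $\Delta$-flag on the kernel $P(\lambda)$. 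The identity $[P(\lambda)]=[P_\lambda]-[Q]$ in the Grothendieck group, together with Lemma~\ref{LemBasis}, then delivers the claimed vanishing of $(P(\lambda):\Delta(\nu))$ for~$\nu\not\succcurlyeq\lambda$ and the equality $(P(\lambda):\Delta(\lambda))=1$.

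To conclude the corollary itself, I apply Lemma~\ref{LemSurjN} once more, this time to the given surjection $P(\lambda)\tto\Delta(\lambda)$. Both sides now carry $\Delta$-flags (the latter trivially), so the kernel~$K$ does as well; Lemma~\ref{LemBasis} with $[K]=[P(\lambda)]-[\Delta(\lambda)]$ gives $(K:\Delta(\nu))=(P(\lambda):\Delta(\nu))-\delta_{\nu,\lambda}$, which vanishes for $\nu\not\succ\lambda$. The main subtle point lies in the inductive step: Lemma~\ref{LemSurjN} needs $\Delta$-flags on both modules involved in the surjection, and securing such a flag on the complementary summand~$Q$ is precisely what the inductive hypothesis provides; a direct (non-inductive) attempt starting from Lemma~\ref{PlambdaFilt} alone only produces a flag on~$P_\lambda$, not on the summand~$P(\lambda)$.
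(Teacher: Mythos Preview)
Your proof is correct and follows essentially the same approach as the paper: induction on~$\preccurlyeq$ to show each $P(\lambda)$ has a $\Delta$-flag, using Corollary~\ref{CorPP} to split off~$Q$ and Lemma~\ref{LemSurjN} applied to the projection $P_\lambda\tto Q$, then one more application of Lemma~\ref{LemSurjN} for the kernel of $P(\lambda)\tto\Delta(\lambda)$. Your tracking of the multiplicity constraints via the Grothendieck group and Lemma~\ref{LemBasis} is slightly more explicit than the paper's one-line appeal to Lemma~\ref{PlambdaFilt}, but the substance is the same.
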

\begin{proof}
We prove by induction that~$P(\lambda)$ has a $\Delta$-flag. If there are no $\nu\succ\lambda$, then $P_\lambda=P(\lambda)$ by Corollary~\ref{CorPP}. Hence it has a $\Delta$-flag by Lemma~\ref{PlambdaFilt}. 

If we already established that~$P(\mu)$ has a $\Delta$-flag for all $\mu\succ\lambda$, then it follows from the inclusion $P(\lambda)\hookrightarrow P_\lambda$ of the direct summand~$P(\lambda)$ into~$P_\lambda$ and Lemma~\ref{LemSurjN} that~$P(\lambda)$ has a $\Delta$-flag.

Lemma~\ref{LemSurjN} then further implies that the kernel of~$P(\lambda)\tto \Delta(\lambda)$ has a $\Delta$-flag and the restriction on multiplicities is inherited from Lemma~\ref{PlambdaFilt}.
\end{proof}

 \subsection{Exact Borel subalgebras for exactly standardly stratified algebras}

 \label{IntroQH}

Before we present the properties of algebras with Borelic pairs we need to review the notion of exact Borel subalgebras.

\subsubsection{}\label{DefKoe}An {\em exact Borel subalgebra of a quasi-hereditary algebra}~$(A,\le)$, as introduced by K\"onig in~\cite{Koenig}, is a subalgebra~$B$ satisfying the following three conditions from \cite[Definition~2.2]{KKO}.
\begin{enumerate}[(i)]
\item The simple modules of~$B$ are also labelled by~$\Lambda_A$, so $\Lambda_A=\Lambda_B=\Lambda$ and we denote the simple $B$-modules by $\{L^0(\lambda)\,|\, \lambda\in\Lambda\}$. The only simple subquotients in the radical of the projective cover of~$L^0(\lambda)$ are $L^0(\mu)$ with~$\mu>\lambda$.
\item The functor~$A\otimes_B-$ is exact.
\item There is an isomorphism $A\otimes_B L^0(\lambda)\cong S(\lambda)$ for each~$\lambda\in\Lambda$.
\end{enumerate}

For an algebra~$B$ with simple modules $L^0(\lambda)$ for~$\lambda\in\Lambda$, choose a decomposition~$1=\sum_{\lambda\in\Lambda}\overline{e}_\lambda$, where $\{\overline{e}_\lambda\}$ is a set of mutually orthogonal idempotents, satisfying~$\overline{e}_\mu L^0(\lambda)=0$ unless $\mu=\lambda$.
 
 \begin{lemma}\label{BorelWed}
 An exact Borel subalgebra~$B$ of a quasi-hereditary algebra~$(A,\le)$ is Wedderburn. We have
 $$B\;=\; H\,\oplus\, \rad B\qquad\mbox{ with }\; H:=\bigoplus_{\lambda\in\Lambda}\overline{e}_\lambda\, B\, \overline{e}_\lambda.$$
 \end{lemma}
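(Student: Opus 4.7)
The plan rests on the trivial vector-space decomposition $B = H \oplus J$ coming from $1 = \sum_\lambda \overline{e}_\lambda$, with $H := \bigoplus_\lambda \overline{e}_\lambda B \overline{e}_\lambda$ (a subalgebra by orthogonality of the $\overline{e}_\lambda$) and $J := \bigoplus_{\lambda \neq \mu} \overline{e}_\lambda B \overline{e}_\mu$. I would then establish in turn: (a) $H$ is semisimple, (b) $J \subseteq \rad B$, and (c) $\dim H = \dim (B/\rad B)$. Together these force $H \cap \rad B = 0$ and hence $B = H \oplus \rad B$, proving the lemma.

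The main obstacle is (a). Denote by $P^B(\lambda)$ the projective cover of $L^0(\lambda)$ and set $D_\lambda := \End_B(L^0(\lambda))$. The assumption $\overline{e}_\mu L^0(\lambda) = \delta_{\lambda\mu} L^0(\lambda)$ rules out any $P^B(\mu)$ with $\mu \neq \lambda$ as a summand of the projective module $B\overline{e}_\lambda$ (since $\Hom_B(B\overline{e}_\lambda, L^0(\mu)) = \overline{e}_\lambda L^0(\mu) = 0$), so $B\overline{e}_\lambda \cong P^B(\lambda)^{\oplus n_\lambda}$ for some $n_\lambda$. The key input from~\ref{DefKoe}(i) is that $L^0(\lambda)$ does not appear as a composition factor of $\rad P^B(\lambda)$. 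From this I would deduce $\Hom_B(P^B(\lambda), \rad P^B(\lambda)) = 0$: a nonzero such map would have an image whose top is a simple quotient of $\Top P^B(\lambda) = L^0(\lambda)$, hence isomorphic to $L^0(\lambda)$, yet as the top of a submodule of $\rad P^B(\lambda)$ it is forced to be some $L^0(\mu)$ with $\mu > \lambda$, a contradiction. Consequently $\End_B(P^B(\lambda)) = D_\lambda$, so
\[\overline{e}_\lambda B \overline{e}_\lambda \;\cong\; \End_B(B\overline{e}_\lambda)^{\op} \;\cong\; M_{n_\lambda}(D_\lambda)^{\op}\]
is a simple algebra, and $H$ is semisimple.

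Claim (b) is formal: the images of the pairwise orthogonal idempotents $\overline{e}_\lambda$ in the semisimple algebra $B/\rad B$ act as the identity on $L^0(\lambda)$ and as zero on every other simple, so they are the central primitive idempotents corresponding to the distinct simple factors. Hence $\overline{e}_\lambda (B/\rad B) \overline{e}_\mu = 0$ for $\lambda \neq \mu$, i.e., $\overline{e}_\lambda B \overline{e}_\mu \subseteq \rad B$.

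For (c), set $k_\lambda := \dim_{D_\lambda} L^0(\lambda)$, so that $\dim(B/\rad B) = \sum_\lambda k_\lambda^2 \dim D_\lambda$. Computing $\Hom_B(B\overline{e}_\lambda, L^0(\lambda))$ in two ways — as $\overline{e}_\lambda L^0(\lambda) = L^0(\lambda)$ (right $D_\lambda$-module of $\mk$-dimension $k_\lambda \dim D_\lambda$), and as $\Hom_B(P^B(\lambda)^{n_\lambda}, L^0(\lambda)) = D_\lambda^{n_\lambda}$ — forces $n_\lambda = k_\lambda$. Therefore $\dim H = \sum_\lambda n_\lambda^2 \dim D_\lambda = \dim(B/\rad B)$, which combined with $B = H + J \subseteq H + \rad B$ finishes the proof.
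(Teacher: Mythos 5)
Your proof is correct, and the engine of the argument — showing that condition~\ref{DefKoe}(i) forces $[P^B(\lambda):L^0(\lambda)]=1$, hence $\End_B(P^B(\lambda))\cong D_\lambda$ is a division ring, hence $\overline{e}_\lambda B\overline{e}_\lambda \cong M_{n_\lambda}(D_\lambda)^{\op}$ is simple — is exactly the idea in the paper's proof. Where you diverge is in the two auxiliary steps. For step (b), the paper instead invokes condition~(i) a second time to get $\overline{e}_\mu B\overline{e}_\lambda = 0$ unless $\mu \ge \lambda$, so that the off-diagonal sum is ``strictly triangular'' and therefore a nilpotent two-sided ideal; your observation that the images $\bar e_\lambda$ in $B/\rad B$ are central idempotents (because each acts as the identity on $L^0(\lambda)$ and as zero on every other simple) achieves the inclusion $J\subseteq \rad B$ more cheaply, without condition~(i), and is worth noting as a genuine simplification. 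For step (c), the dimension count is more machinery than you need: having $B=H\oplus J\subseteq H+\rad B$ with $H$ semisimple, the subspace $H\cap\rad B$ is a two-sided ideal of $H$ (as $H$ is a subalgebra and $\rad B$ a two-sided ideal) contained in the nilpotent ideal $\rad B$, hence is a nilpotent ideal of the semisimple algebra $H$ and thus zero, giving $B=H\oplus\rad B$ directly. The paper's own finish is shorter still — $B/J\cong H$ is semisimple, so $\rad B\subseteq J$, forcing $J=\rad B$ — but it relies on $J$ being a two-sided ideal, which in the paper's version follows from the triangularity supplied by condition~(i); with your step (b) alone that is not automatic, so your dimension count (or the $H\cap\rad B$ argument) is the right way to close.
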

 \begin{proof}
 By condition (i) above, we have $\overline{e}_\mu \,B \,\overline{e}_\lambda=0$ unless $\mu \ge \lambda$, showing that 
 $$\oplus_{\mu\not=\lambda} \overline{e}_\lambda \,B\, \overline{e}_\mu=\oplus_{\mu>\lambda} \overline{e}_\lambda\, B \,\overline{e}_\mu\;\subset\; \rad B.$$
 That $\overline{e}_\lambda \,B \,\overline{e}_\lambda$ is semisimple follows immediately from the fact that~$[B\overline{e}_\lambda:L^0(\lambda)]$ is equal to the number of direct summands in $B\overline{e}_\lambda$ by condition (i). Hence, $H$ is semisimple and the displayed inclusion an equality.
\end{proof}

  In \cite[Definition~1]{Kluc}, the notion of exact Borel subalgebras was generalised to properly stratified algebras and it can be further generalised to exactly standardly stratified algebras.

\begin{ddef}\label{DefEB}
An {\em exact Borel subalgebra of an exactly standardly stratified algebra}~$(A,\preccurlyeq)$ is a subalgebra~$B$ such that the following four conditions are satisfied.
\begin{enumerate}
\item The simple modules of~$B$ are also labelled by~$\Lambda_A$, so $\Lambda_A=\Lambda_B=\Lambda$ and we denote the simple $B$-modules by $\{L^0(\lambda)\,|\, \lambda\in\Lambda\}$. We have $\overline{e}_\mu \, B\, \overline{e}_\lambda=0$ unless~$\lambda\preccurlyeq \mu$.
\item The functor~$A\otimes_B-$ is exact.
\item The algebra~$$H:=\bigoplus_{\lambda\sim\mu} \overline{e}_\lambda\, B\,\overline{e}_\mu\;\cong\; B/\left(\bigoplus_{\mu \prec\lambda}\overline{e}_\lambda \, B\, \overline{e}_\mu\right)$$
is such that the left $H$-module~${}_HB$ is projective. The interpretation of~$H$ as a quotient algebra of~$B$ allows to interpret $H$-modules as $B$-modules by inflation.
\item There are isomorphisms $A\otimes_B L^0(\lambda)\cong \overline{S}(\lambda)$ and~$A\otimes_B P^0(\lambda)\cong {S}(\lambda)$, with~$P^0(\lambda)$ the $H$-projective cover of~$L^0(\lambda)$, for each~$\lambda\in\Lambda$.
\end{enumerate}
\end{ddef}

\begin{rem}\label{CompareEB}
There is no conflict between Definition~\ref{DefEB} and the special case of quasi-hereditary algebras in \ref{DefKoe}:
\begin{itemize}
\item An exact Borel subalgebra~$B$ of a quasi-hereditary algebra~$(A,\le)$ is an exact Borel subalgebra of~$(A,\le)$, interpreted as an exactly standardly stratified algebra. This follows from Lemma~\ref{BorelWed}.
\item Consider an exact Borel subalgebra~$B$ of an exactly standardly stratified algebra~$(A,\preccurlyeq)$ which happens to be quasi-hereditary. As $S(\lambda)\cong \overline{S}(\lambda)$, Definition~\ref{DefEB}(4) implies that~$H$ is semisimple. It then follows that the multiplicity of~$L^0(\lambda)$ in its projective cover over $B$ is 1. Hence $B$ is an exact Borel subalgebra of~$(A,\preccurlyeq)$ as a quasi-hereditary algebra.
\end{itemize}
\end{rem}

\begin{rem}
If $(A,\preccurlyeq)$ is properly stratified, Definition~\ref{DefEB} is almost identical to \cite[Definition~1]{Kluc}. The latter additionally requires projectivity of $B_H$.
\end{rem}

\subsection{Algebras with (exact) Borelic pairs}\label{Sec4}

\begin{thm}\label{ThminfQH}
If an algebra~$A$ admits a Borelic pair~$(B,H)$ for an~$H$-compatible partial quasi-order $\preccurlyeq$, then~$(A,\preccurlyeq)$ is standardly stratified with $S(\lambda):=\Delta(\lambda)$. If moreover $\preccurlyeq$ is a partial order (implying that~$H$ is quasi-local) then~$(A,\preccurlyeq)$ is strongly standardly stratified.
\end{thm}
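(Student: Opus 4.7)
The plan is to verify directly the module-theoretic characterization from Definition~\ref{DefA2}(1), taking $S(\lambda):=\Delta(\lambda)$, and then deduce the strongly standardly stratified statement as essentially a definitional upgrade.

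First I would check the composition factor condition: for all $\lambda,\mu\in\Lambda$,
\[
[\Delta(\lambda):L(\mu)]=0 \quad\text{unless}\quad \mu\preccurlyeq\lambda.
\]
This is built into the Borelic pair axioms, namely Definition~\ref{DefBorPair}(2), so no work is required here.

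Next I would produce the required surjection $P(\lambda)\tto \Delta(\lambda)$ together with the filtration of its kernel. By Lemma~\ref{TopStan} applied to $N=He_\lambda^0$, whose top is $L^0(\lambda)$, we have $\Top\,\Delta(\lambda)\cong L(\lambda)$; hence by projectivity there is a surjection $P(\lambda)\tto \Delta(\lambda)$. Let $K$ denote its kernel. By Corollary~\ref{corPFlag}, $K$ admits a $\Delta$-flag all of whose sections are of the form $\Delta(\nu)=S(\nu)$ with $\nu\succ\lambda$. This verifies the second requirement in Definition~\ref{DefA2}(1), so $(A,\preccurlyeq)$ is standardly stratified with $S(\lambda)=\Delta(\lambda)$.

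For the second assertion, assume in addition that $\preccurlyeq$ is a partial order. Then being strongly standardly stratified is, by Definition~\ref{DefA2}(3), just the conjunction of standardly stratified with the order being a partial order, so the conclusion is immediate. As a side remark one also sees that $H$ must then be quasi-local: since $\sim$ is trivial in a partial order, the $H$-compatibility condition $\lambda\sim_H\mu\Rightarrow \lambda\sim\mu$ forces $\sim_H$ to be trivial, which by definition means there are no extensions between non-isomorphic simple $H$-modules.

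There is no substantive obstacle here; all the real content has already been packaged into Lemma~\ref{TopStan}, Lemma~\ref{PlambdaFilt}, Lemma~\ref{LemSurjN} and Corollary~\ref{corPFlag}, and the proof is essentially an assembly of these into the format of Definition~\ref{DefA2}. The only point requiring any care is ensuring the filtration on $K$ (and not merely on $P_\lambda$) has sections strictly above $\lambda$, which is exactly why Corollary~\ref{corPFlag}, rather than the weaker Lemma~\ref{PlambdaFilt}, is invoked.
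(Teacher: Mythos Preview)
Your proof is correct and follows essentially the same approach as the paper: invoke Definition~\ref{DefBorPair}(2) for the composition factor bound and Corollary~\ref{corPFlag} for the filtration of the kernel of $P(\lambda)\tto\Delta(\lambda)$, then observe that Definition~\ref{DefA2}(3) is Definition~\ref{DefA2}(1) plus a partial order. Your version is slightly more explicit (spelling out Lemma~\ref{TopStan} for the simple top and the side remark on quasi-locality), but the argument is the same.
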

\begin{proof}%[Proof of Theorem~\ref{ThminfQH}]
Consider $(A,\preccurlyeq)$ with Borelic pair~$(B,H)$. That the modules $S(\lambda):=\Delta(\lambda)$
satisfy Definition~\ref{DefA2}(1) follows from Definition~\ref{DefBorPair}(2) and Corollary~\ref{corPFlag}.
The same reasoning for the case where $\preccurlyeq$ is a partial order implies that~$(A,\preccurlyeq)$ is strongly standardly stratified as in Definition~\ref{DefA2}(3). 
\end{proof}

\begin{thm}\label{PropKo}
Consider an algebra~$(A,\preccurlyeq)$ for some partial quasi-order $\preccurlyeq$.
\begin{enumerate}
\item Assume that~$\preccurlyeq$ is a partial order. The following statements are equivalent, for any subalgebra~$B$ of~$A$:
\begin{itemize} 
\item $B$ is Wedderburn and~$(B,B/\rad B)$ is an exact Borelic pair.
\item $(A,\preccurlyeq)$ is quasi-hereditary and~$B$ is an exact Borel subalgebra.
\end{itemize}
\item Assume that~$\preccurlyeq$ is a partial order. The following statements are equivalent, for any subalgebras $H\subset B$ of~$A$:
\begin{itemize} 
\item $(B,H)$ is an exact Borelic pair.
\item $(A,\preccurlyeq)$ is properly stratified and~$B$ is an exact Borel subalgebra.
\end{itemize}
\item The following statements are equivalent, for any subalgebras $H\subset B$ of~$A$:
\begin{itemize} 
\item $(B,H)$ is an exact Borelic pair.
\item $(A,\preccurlyeq)$ is exactly standardly stratified and~$B$ is an exact Borel subalgebra.
\end{itemize}
\end{enumerate}
\end{thm}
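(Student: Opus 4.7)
The overall strategy is to prove the full equivalence in part (3) and obtain parts (1), (2) as specialisations. For (2), when $\preccurlyeq$ is a partial order the $H$-compatibility forces each $\sim_H$-class to be a singleton, so $H$ is quasi-local; combined with Remark~\ref{RemSSS}, an exactly standardly stratified algebra with quasi-local $H$ is properly stratified. For (1), under the Wedderburn assumption $H = B/\rad B$ is semisimple, so $He^0_\lambda = L^0(\lambda)$ and $\Delta(\lambda) = \overline{\Delta}(\lambda)$; the exactness condition $[\overline{\Delta}(\lambda):L(\lambda)] = 1$ then coincides with the quasi-heredity condition $[S(\lambda):L(\lambda)] = 1$, and Lemma~\ref{BorelWed} completes the translation in both directions.

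For the forward direction of part (3), Theorem~\ref{ThminfQH} already provides the standardly stratified structure with $S(\lambda) := \Delta(\lambda)$. The plan is to set $\overline{S}(\lambda) := \overline{\Delta}(\lambda)$ and construct the proper standard filtration of $\Delta(\lambda)$ by pushing a composition series of $He^0_\lambda$ as an $H$-module (whose sections are $L^0(\mu)$ with $\mu$ in the same $H$-block, hence $\mu \sim \lambda$ by $H$-compatibility) through the exact functor $A \otimes_B -$, exactness coming from Definition~\ref{defB}(I). The composition-factor constraints on $\overline{\Delta}(\lambda)$ are then obtained from three inputs: Lemma~\ref{TopStan} gives the simple top $L(\lambda)$, surjectivity of $\Delta(\lambda) \tto \overline{\Delta}(\lambda)$ together with Definition~\ref{DefBorPair}(2) bounds composition factors by $\mu \preccurlyeq \lambda$, and $[\overline{\Delta}(\lambda):L(\lambda)] = 1$ is built into the exactness condition. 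Verifying that $B$ is an exact Borel subalgebra (Definition~\ref{DefEB}) is then largely bookkeeping: point (1) follows from Definition~\ref{DefBorPair}(1), the decomposition $B = H \oplus B_+$, and $H$-compatibility; point (2) is Definition~\ref{defB}(I); point (3) identifies the $H$ of Definition~\ref{DefEB}(3) with ours via the block decomposition of $H$, and projectivity is again from Definition~\ref{defB}(I); point (4) is immediate from $\Delta(\lambda) = A \otimes_B He^0_\lambda$ and $\overline{\Delta}(\lambda) = A \otimes_B L^0(\lambda)$.

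For the converse, assume $(A,\preccurlyeq)$ is exactly standardly stratified with exact Borel subalgebra $B$ and associated $H$ from Definition~\ref{DefEB}(3). Definition~\ref{defB}(I) follows from Definition~\ref{DefEB}(2) (flatness equals projectivity in the finite-dimensional setting, yielding $A_B$ projective) and Definition~\ref{DefEB}(3) (for ${}_HB$). Definition~\ref{defB}(III) is automatic: $B_+ = \bigoplus_{\mu \prec \lambda} \overline{e}_\lambda B \overline{e}_\mu$ is nilpotent by the triangularity in Definition~\ref{DefEB}(1). For Definition~\ref{defB}(II), adjunction applied to $\overline{S}(\lambda) = A \otimes_B L^0(\lambda)$ together with the simple-top property of $\overline{S}(\lambda)$ (Definition~\ref{DefA2}(2)) establishes the bijection between simple $H$- and $A$-modules and the identification $L(\lambda)^{B_+} \cong L^0(\lambda)$. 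The remaining Borelic conditions in Definition~\ref{DefBorPair} follow from the block description of $H$ and the composition-factor bounds on $S(\lambda)$ built into Definition~\ref{DefA2}; the exactness condition transfers from $[\overline{S}(\lambda):L(\lambda)] = 1$ via Definition~\ref{DefEB}(4); and $H$-compatibility of $\preccurlyeq$ is a direct consequence of the block decomposition $H = \bigoplus_{\lambda \sim \mu} \overline{e}_\lambda B \overline{e}_\mu$, which places each $H$-block inside a single $\sim$-class.

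The main obstacle I anticipate is the refinement of the composition-factor bound on $\overline{\Delta}(\lambda)$ in the forward direction: from the immediate bound $[\overline{\Delta}(\lambda):L(\mu)] = 0$ unless $\mu \preccurlyeq \lambda$ one must additionally exclude the case $\mu \sim \lambda$, $\mu \neq \lambda$, as required by Definition~\ref{DefA2}(2). This exclusion does not follow from Definition~\ref{DefBorPair}(2) alone, but from combining the proper standard filtration of $\Delta(\lambda)$ (with sections $\overline{\Delta}(\mu)$ for $\mu \sim \lambda$) with the exactness condition applied simultaneously to every $\overline{\Delta}(\mu)$ in this filtration; a counting argument in the Grothendieck group via Lemma~\ref{LemBasis}, together with the fact that each $[\overline{\Delta}(\mu):L(\mu)] = 1$ and the multiplicities in the filtration are fixed by $\dim e^0_\mu He^0_\lambda$, should pin down the decomposition and force the desired vanishing.
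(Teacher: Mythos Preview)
Your approach is essentially the paper's: reduce to part~(3), then in each direction translate between the axioms of an exact Borelic pair (Definitions~\ref{defB} and~\ref{DefBorPair}) and those of an exact Borel subalgebra (Definition~\ref{DefEB}). The paper compresses the derivation of (1) and (2) into a one-line appeal to Remark~\ref{CompareEB}, and packages the pre-Borelic verification in the backward direction as Lemma~\ref{LempreBor}; the substance matches what you write.

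On your anticipated obstacle: the paper does not address it either. After recording the filtration of $\Delta(\lambda)$ by the $\overline{\Delta}(\mu)$ with $\mu\sim_H\lambda$ and the exactness condition $[\overline{\Delta}(\mu):L(\mu)]=1$, the paper simply asserts ``so $(A,\preccurlyeq)$ is exactly standardly stratified'', without separately verifying the remaining clause $[\overline{\Delta}(\lambda):L(\mu)]=0$ for $\mu\sim\lambda$, $\mu\neq\lambda$, in Definition~\ref{DefA2}(2). So your caution here is well placed rather than a divergence from the paper. Two small corrections to your sketch: the filtration multiplicity of $\overline{\Delta}(\mu)$ in $\Delta(\lambda)$ is $[He^0_\lambda:L^0(\mu)]$, not $\dim e^0_\mu He^0_\lambda$; and Lemma~\ref{LemBasis} concerns the $\Delta(\lambda)$, not the $\overline{\Delta}(\lambda)$, so a pure Grothendieck-group count will need an additional input. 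One route is to use Corollary~\ref{CorPP} to identify $\Hom_A(P_\mu,\overline{\Delta}(\lambda))$ with $\Hom_A(P(\mu),\overline{\Delta}(\lambda))$ for $\mu\sim\lambda$ and then control $e^0_\mu\overline{\Delta}(\lambda)=e^0_\mu A\otimes_B L^0(\lambda)$ via the right-$B$-projectivity of $A$; another is to bypass Definition~\ref{DefA2}(2) altogether and verify Definition~\ref{DefA1}(2) using the equivalence recorded in Appendix~\ref{AppEq}.
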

We start the proof of this theorem with the following lemma.

\begin{lemma}\label{LempreBor}
If $A$ is exactly standardly stratified with an exact Borel subalgebra~$B$ with subalgebra~$H$ as in Definition~\ref{DefEB}, then~$(B, H)$ is a pre-Borelic pair.
\end{lemma}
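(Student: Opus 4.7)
The plan is to verify the three conditions of Definition~\ref{defB} for the pair $(B,H)$, using the canonical decomposition $B = H \oplus B_+$ with $B_+ := \bigoplus_{\mu \prec \lambda} \overline{e}_\lambda B \overline{e}_\mu$ from Definition~\ref{DefEB}(3).

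For condition (I), projectivity of $A$ as a right $B$-module follows from Definition~\ref{DefEB}(2): exactness of $A \otimes_B -$ gives flatness of $A_B$, and a flat finitely generated module over a finite-dimensional, hence noetherian, algebra is projective. Projectivity of ${}_H B$ is a direct hypothesis of Definition~\ref{DefEB}(3).

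For condition (III), I will use the Peirce-like identity $(\overline{e}_\lambda B \overline{e}_\mu)(\overline{e}_{\lambda'} B \overline{e}_{\mu'}) \subseteq \delta_{\mu,\lambda'}\,\overline{e}_\lambda B \overline{e}_{\mu'}$ together with the fact that each nonzero summand of $B_+$ satisfies $\mu \prec \lambda$. A product of $N$ elements of $B_+$ then produces a strictly $\prec$-decreasing chain of length $N$ in $\Lambda$; since $\Lambda$ is finite, this forces $B_+^{|\Lambda|} = 0$. Any nilpotent two-sided ideal lies in $\rad B$.

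The main work is condition (II). From (III) and $B/B_+ \cong H$ one obtains $\rad B = B_+ \oplus \rad H$ and a natural bijection between simple $B$-modules and simple $H$-modules, both indexed by $\Lambda$ via the idempotents $\overline{e}_\lambda$ of Definition~\ref{DefEB}(1). To show $L(\lambda)^{B_+} \cong L^0(\lambda)$ as $H$-modules, I combine the $(A \otimes_B -) \dashv \res^A_B$ adjunction with Definition~\ref{DefEB}(4), computing for all $\lambda, \mu \in \Lambda$:
\[
\overline{e}_\mu L(\lambda)^{B_+} \;\cong\; \Hom_H(H\overline{e}_\mu, L(\lambda)^{B_+}) \;\cong\; \Hom_B(P^0(\mu), \res^A_B L(\lambda)) \;\cong\; \Hom_A(S(\mu), L(\lambda)),
\]
which vanishes unless $\mu = \lambda$, because $S(\mu)$ has simple top $L(\mu)$, and equals $\End_A(L(\lambda))$ when $\mu = \lambda$. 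This shows that $L(\lambda)^{B_+}$ is supported entirely on $\overline{e}_\lambda$. Running the $A$-surjection $\overline{S}(\lambda) \twoheadrightarrow L(\lambda)$ through the same adjunction then produces a nonzero, and hence injective, $B$-map $L^0(\lambda) \hookrightarrow L(\lambda)^{B_+}$, while the analogous Hom-computation with $\overline{S}(\lambda)$ in place of $S(\mu)$ delivers the identification $\End_A(L(\lambda)) \cong \End_H(L^0(\lambda))$; comparing $\mk$-dimensions then forces $L^0(\lambda) \cong L(\lambda)^{B_+}$. The hardest step is precisely this last identification of the division rings $\End_A(L(\lambda))$ and $\End_H(L^0(\lambda))$: establishing that $L(\lambda)^{B_+}$ is $\overline{e}_\lambda$-supported and contains $L^0(\lambda)$ is essentially automatic, but ruling out nontrivial multiplicity over a non-algebraically closed field requires the parallel Hom-computation above.
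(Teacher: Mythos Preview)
Your treatment of conditions (I) and (III) is correct and matches the paper's (terser) argument. For condition (II) you follow the same adjunction strategy as the paper and correctly establish that $L(\lambda)^{B_+}$ is supported on $\overline{e}_\lambda$ and contains a copy of $L^0(\lambda)$.

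The gap is in the step you yourself flag as hardest. Your ``analogous Hom-computation with $\overline{S}(\lambda)$'' gives
\[
\Hom_H\bigl(L^0(\lambda),\, L(\lambda)^{B_+}\bigr)\;\cong\;\Hom_A\bigl(\overline{S}(\lambda),\, L(\lambda)\bigr)\;\cong\;\End_A(L(\lambda)),
\]
but this does \emph{not} deliver $\End_H(L^0(\lambda)) \cong \End_A(L(\lambda))$: identifying the left-hand side with $\End_H(L^0(\lambda))$ would require $L(\lambda)^{B_+}\cong L^0(\lambda)$, which is precisely the conclusion you are after. Combining your two Hom-computations only yields $L(\lambda)^{B_+}\cong L^0(\lambda)^{\oplus m}$ with $m=\dim_{\mk}\End_A(L(\lambda))/\dim_{\mk}\End_H(L^0(\lambda))$, and nothing in Definition~\ref{DefEB} forces $m=1$. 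Indeed the lemma is false as stated: take $\mk=\mathbb{R}$, $A=\mathbb{C}$, and $B=H=\mathbb{R}$. All four conditions of Definition~\ref{DefEB} hold (with $S(\ast)=\overline{S}(\ast)=\mathbb{C}$), yet $L(\ast)^{B_+}=\mathbb{C}$ restricts to $\mathbb{R}^{\oplus 2}$ as an $H$-module, violating condition~(II) of Definition~\ref{defB}. The paper's own proof has the same slip --- its displayed chain of equalities tacitly assumes both endomorphism rings have $\mk$-dimension~$1$ --- so the statement should be read under the hypothesis that $\mk$ is algebraically closed, or more generally that $A$ and $H$ are split over~$\mk$; this is harmless for every application in the paper.
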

\begin{proof}
We check the conditions in Definition~\ref{defB}. Condition~\ref{defB}(I) follows from Definition~\ref{DefEB}(2) and~(3). We define the two-sided ideal~$B_+:=\oplus_{\lambda\not\sim\mu}\overline{e}_\lambda B\overline{e}_\mu=\oplus_{\lambda\succ\mu}\overline{e}_\lambda B\overline{e}_\mu$, for which we have $B=H\oplus B_+$.
Condition~\ref{defB}(III) is clearly satisfied.

As the modules ${S}(\lambda)$ of \ref{DefA2}(1) have simple top $L(\lambda)$, Definition~\ref{DefEB}(4) leads, using adjunction, to
$$\delta_{\lambda,\mu}\;=\;\dim\Hom_B(P^0(\lambda),L(\mu))\;=\;\dim\Hom_H(P^0(\lambda),L(\mu)^{B_+})\;=\; [L(\mu)^{B_+}: L^0(\lambda)].$$
Condition~\ref{defB}(II) follows from this equation. \end{proof}

\begin{proof}[Proof of Theorem~\ref{PropKo}]
It suffices to prove part (3). Part (2) is a special case and so is part~(1) by remark~\ref{CompareEB}.

Firstly, we assume that~$(B,H)$ is an exact Borelic pair. By Theorem~\ref{ThminfQH}, $A$ is standardly stratified. Furthermore, the standard module~$S(\lambda)$ has a filtration where the sections are given by~$\overline{S}(\mu):=\overline{\Delta}(\mu)$ by Definition~\ref{defB}(I), with~$\mu\sim_H\lambda$. By Definition~\ref{DefBorPair} we have $[\overline{S}(\mu):L(\mu)]=1$, so $(A,\preccurlyeq)$ is exactly standardly stratified.

Now we continue by proving that~$(B,\preccurlyeq)$ is an exact Borel subalgebra, as in Definition~\ref{DefEB}. Condition~\ref{DefEB}(1) is satisfied by equation~\eqref{eqLABH} and Definition~\ref{DefBorPair}(1). Condition~\ref{DefEB}(2) follows from \ref{defB}(I). To prove the two remaining conditions \ref{DefEB}(3) and~(4), we just need to show that the subalgebra of~$B$ defined in \ref{DefEB}(3), which we denote by~$H'$, is the same as the subalgebra~$H$ in the pair~$(B,H)$. Definition~\ref{DefBorPair}(1) implies that $H'\subset H$. The other inclusion follows from the fact that~$\sim$ is assumed to be $H$-compatible.

Secondly, we assume that~$(A,\preccurlyeq)$ is exactly standardly stratified and~$B$ is an exact Borel subalgebra. By Lemma~\ref{LempreBor}, $(B,H)$ is a pre-Borelic pair, so it remains to check the conditions in Definition~\ref{DefBorPair}. Condition~\ref{DefBorPair}(1) follows from \ref{DefEB}(1) and~\ref{DefEB}(3). Condition~\ref{DefBorPair}(2) follows from Definitions \ref{DefA2}(1) and~\ref{DefEB}(4). Hence $(B,H)$ is a Borelic pair.
The extra condition for exact Borelic pairs follows from~\ref{DefEB}(4).\end{proof}

\subsection{A special case: graded Borelic subalgebras}\label{ssspecial}
Fix an idempotent graded algebra~$A$ as in Section~\ref{SecPreSpec}. We have the corresponding chain \eqref{chainProp} of idempotent ideals and hence an $n$-decomposition~$\cQ$ of~$\Lambda=\sqcup_{i=0}^n\Lambda_i$, as in Section~\ref{SecChain}, with corresponding quasi-order and partial-order.  We now introduce these directly.

\begin{ddef}\label{lepleq}
Introduce the function
$F_A:\Lambda\to \{0,1,\cdots,n\}$, where $F_A(\lambda)$ is the minimal~$i\in\mN$ such that there is an idempotent~$f\in e_i^\ast Ae_i^\ast$, equivalent to~$e_\lambda$. Then we have~$\Lambda_i:=\{\lambda\in\Lambda\,|\, F_A(\lambda)=i\}$.
\begin{enumerate}
\item The {\em idempotent quasi-order} is the total quasi-order $\preccurlyeq_{\cQ}$ on $\Lambda$ such that~$\mu\preccurlyeq_{\cQ} \lambda$ if and only if~$F_A(\mu)\ge F_A(\lambda)$.
\item The {\em idempotent partial order} is the partial order $\le_{\cQ}$ on $\Lambda$ such that~$\mu<_{\cQ} \lambda$ if and only if~$F_A(\mu)> F_A(\lambda)$
\end{enumerate}
\end{ddef}

\begin{lemma}\label{LemHcomp}
For any~$\lambda\in\Lambda$ we have~$e_\lambda^0\in He^\ast_i$ with~$i=F_A(\lambda)$. In particular, the partial quasi-order $\preccurlyeq_{\cQ}$ is~$H$-compatible.
\end{lemma}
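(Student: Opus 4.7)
The strategy is to pin down where each simple $H$-module $L^0(\lambda)$ lives with respect to the idempotents $e^\ast_i$, and then read off both claims from this localisation. The key input is the analysis of simple $A$-modules carried out in the proof of Lemma~\ref{LemGradBor}: every simple $A$-module $L(\lambda)$ is naturally $\mathbb{Z}$-graded via $L(\lambda)_{-j}=e^\ast_j L(\lambda)$, and if $k_0$ is the maximal degree with $L(\lambda)_{k_0}\neq 0$, then $L(\lambda)^{A_+}=L(\lambda)_{k_0}$ and this coincides with $L^0(\lambda)=L(\lambda)^{B_+}$.

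First I would reinterpret $F_A(\lambda)$. I claim that $F_A(\lambda)$ equals the minimal $j$ with $e^\ast_j L(\lambda)\neq 0$, call it $j_0$. If $f\in e^\ast_i Ae^\ast_i$ is equivalent to $e_\lambda$, then $Af\cong P(\lambda)$ has simple top $L(\lambda)$, so $fL(\lambda)\neq 0$; since $f=e^\ast_i f$, we get $e^\ast_i L(\lambda)\neq 0$, forcing $i\geq j_0$. Conversely, at level $j_0$ decompose $e^\ast_{j_0}=\sum_k f_k$ into primitive orthogonal idempotents of $A$; then $e^\ast_{j_0}L(\lambda)\neq 0$ forces some $f_k L(\lambda)\neq 0$, and since $f_k$ is primitive we obtain $Af_k\cong Ae_\lambda$. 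As $f_k=e^\ast_{j_0}f_k e^\ast_{j_0}\in e^\ast_{j_0}Ae^\ast_{j_0}$, this proves $F_A(\lambda)\leq j_0$, hence equality.

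Next, combining this with the displayed fact $L^0(\lambda)=L(\lambda)_{k_0}$, we see that $L^0(\lambda)$ is concentrated in the single degree $-j_0=-F_A(\lambda)$; equivalently $e^\ast_j L^0(\lambda)=0$ for $j\neq F_A(\lambda)$ while $e^\ast_{F_A(\lambda)}L^0(\lambda)=L^0(\lambda)$. Since $e^0_\lambda$ is the primitive idempotent of $H$ with $e^0_\lambda L^0(\lambda)\neq 0$ (and $e^0_\mu L^0(\lambda)=0$ for $\mu\neq\lambda$), a standard argument together with the orthogonal decomposition $H=\bigoplus_i e^\ast_i H e^\ast_i$ (which is a direct product of algebras because $e^\ast_i e^\ast_j=0$ for $i\neq j$) forces $e^0_\lambda=e^\ast_{F_A(\lambda)}e^0_\lambda e^\ast_{F_A(\lambda)}$. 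In particular $e^0_\lambda\in He^\ast_{F_A(\lambda)}$, proving the first assertion.

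Finally, for $H$-compatibility of $\preccurlyeq_{\cQ}$, note that the decomposition $H=\prod_{i=0}^n e^\ast_i H e^\ast_i$ has the $e^\ast_i$ as central idempotents in $H$, so no block of $H$-mod crosses different values of $i$. Hence $\lambda\sim_H\mu$ implies that $L^0(\lambda)$ and $L^0(\mu)$ lie in the same summand, i.e.\ $F_A(\lambda)=F_A(\mu)$, which by Definition~\ref{lepleq} is precisely $\lambda\sim_{\cQ}\mu$. The only subtle point is the clean identification in the second paragraph — once that is in hand, the rest is bookkeeping with the direct-product structure of $H$.
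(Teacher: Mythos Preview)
Your proof is correct and follows essentially the same approach as the paper's. Both arguments use the key fact from the proof of Lemma~\ref{LemGradBor} that $L^0(\lambda)=L(\lambda)^{B_+}$ sits in the single top degree of $L(\lambda)$, and then identify this degree with $-F_A(\lambda)$. Your version is more explicit: you spell out the equivalence between $F_A(\lambda)$ and the minimal $j$ with $e^\ast_jL(\lambda)\neq 0$ (which the paper leaves implicit), and you give the $H$-compatibility argument via the direct-product decomposition $H=\prod_i e^\ast_iHe^\ast_i$, whereas the paper simply writes ``In particular'' and leaves this step to the reader.
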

\begin{proof}
It follows from equation~\eqref{simpleComp} that~$L^0(\lambda)$ is a vector space contained in the highest degree of the graded vector space $L(\lambda)$. This implies that~$e^\ast_iL^0(\lambda)\not=0$ for the lowest $i$ for which there is an idempotent~$e$ equivalent to~$e_\lambda$ such that~$ee^\ast_i=e$.

Hence, for that minimal~$i$, which is $F_A(\lambda)$, we have~$e_\lambda^0e_i^\ast=e^0_\lambda$.
\end{proof}

\begin{prop}\label{grBrBo}
Consider an algebra~$A$ with an idempotent grading. Let $B$ be a graded pre-Borelic subalgebra with~$H:=B_0$.
\begin{enumerate}
\item The pair~$(B,H)$ is an exact Borelic pair for the idempotent quasi-order $\preccurlyeq_{\cQ}$.
\item If~$H$ is quasi-local, $(B,H)$ is an exact Borelic pair for the idempotent partial order~$\le_{\cQ}$. 
%\item If~$H$ is semisimple, $B$ is a ??? for the idempotent partial order $\le_{\cQ}$.
%\item We always have $[\overline{\Delta}(\lambda):L(\lambda)]=1$.
\end{enumerate}
\end{prop}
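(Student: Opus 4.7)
The plan is to verify conditions (1) and (2) of Definition \ref{DefBorPair} together with the exactness condition $[\overline{\Delta}(\lambda):L(\lambda)]=1$, exploiting the idempotent grading. By Lemma \ref{LemGradBor}, $(B,H)$ is already a pre-Borelic pair, and by Lemma \ref{LemHcomp} the quasi-order $\preccurlyeq_{\cQ}$ is $H$-compatible. Throughout, I write $i=F_A(\lambda)$, so that $e^0_\lambda\in e^\ast_iHe^\ast_i$ and $L^0(\lambda)$ sits in the top grade of $L(\lambda)$.

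For condition (1), the key point is that $B_+\subset\bigoplus_{j>0}B_j$ strictly raises the degree, so $B_+e^0_\lambda\subset\bigoplus_{k<i}e^\ast_k B$. Any $H$-composition factor $L^0(\mu)$ is supported on $e^\ast_{F_A(\mu)}$, which forces $F_A(\mu)<i$, i.e.\ $\mu\succ_{\cQ}\lambda$. For condition (2) and exactness, I would invoke Lemma \ref{LemABAM} to realize
\[\Delta(\lambda)\cong (A/J_i)f_{i+1}\otimes_{A^{(i)}}He^0_\lambda,\qquad \overline{\Delta}(\lambda)\cong (A/J_i)f_{i+1}\otimes_{A^{(i)}}L^0(\lambda).\]
Both are modules over $A/J_i$; since $e^\ast_j=e^\ast_jf_i\in J_i$ for every $j<i$, any composition factor $L(\mu)$ has $e^\ast_jL(\mu)=0$ for $j<i$, while $e^\ast_{F_A(\mu)}L(\mu)\supset L^0(\mu)\neq 0$. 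Hence $F_A(\mu)\ge i$, i.e.\ $\mu\preccurlyeq_{\cQ}\lambda$, yielding condition~(2). For exactness, using that $e^\ast_j(A/J_i)=0$ for $j<i$ and $e^\ast_i(A/J_i)e^\ast_i\cong A^{(i)}$, the same lemma collapses to $e^\ast_i\overline{\Delta}(\lambda)\cong L^0(\lambda)$ as an $H$-module. Applying the exact functor $e^\ast_i(-)$ to a composition series of $\overline{\Delta}(\lambda)$ and noting that $e^\ast_iL(\mu)\neq 0$ precisely when $F_A(\mu)=i$ (in which case it equals $L^0(\mu)$), one obtains in $G_0(H)$ the identity
\[[L^0(\lambda)]\;=\;\sum_{\mu:\,F_A(\mu)=i}[\overline{\Delta}(\lambda):L(\mu)]\,[L^0(\mu)].\]
Linear independence of the distinct simple classes then forces $[\overline{\Delta}(\lambda):L(\lambda)]=1$ (and all other multiplicities within the $\sim_{\cQ}$-class of $\lambda$ to vanish).

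For part (2), quasi-locality of $H$ makes $\sim_H$ trivial, so $\le_{\cQ}$ is automatically $H$-compatible. Condition (1) is unaffected, as the strict parts of $\preccurlyeq_{\cQ}$ and $\le_{\cQ}$ coincide. Condition (2) now requires the stronger statement $[\Delta(\lambda):L(\mu)]=0$ for $\mu\neq\lambda$ with $F_A(\mu)=i$. For this I would use that, in a quasi-local algebra, the indecomposable projective $He^0_\lambda$ has all Jordan--H\"older factors isomorphic to $L^0(\lambda)$; pushing such a filtration through the exact functor $A\otimes_B-$ (Definition \ref{defB}(I)) produces a filtration of $\Delta(\lambda)$ by copies of $\overline{\Delta}(\lambda)$, so the composition-factor analysis of $\overline{\Delta}(\lambda)$ above transfers verbatim.

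The main obstacle will be aligning the grading conventions: positive degree in $A$ decreases the $e^\ast$-index, the top of a simple module is pinned by the minimal such index, and one must verify that Lemma \ref{LemABAM} genuinely kills $e^\ast_j$ for $j<i$ while restoring $A^{(i)}$ in the extremal degree. Once that bookkeeping is set, the Grothendieck-group manipulation and the transfer from $\overline{\Delta}$ to $\Delta$ are essentially immediate.
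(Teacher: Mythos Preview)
Your proof is correct and follows essentially the same approach as the paper. The only cosmetic difference is that you route through Lemma~\ref{LemABAM} and the ideal chain $\{J_i\}$ to see that $\Delta(\lambda),\overline{\Delta}(\lambda)$ are $A/J_i$-modules, whereas the paper argues directly with the grading via equation~\eqref{eqComplete} to locate $\overline{\Delta}(\lambda)$ in degrees $\{-i,-i-1,\ldots\}$ with top component $L^0(\lambda)$; your Grothendieck-group computation in $G_0(He^\ast_i)$ is exactly the paper's degree-$(-i)$ comparison unpacked, and both then filter $\Delta(\lambda)$ by $\overline{\Delta}$'s to finish.
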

Combining this with Theorem~\ref{PropKo} yields the following corollary.
\begin{cor}\label{CorgrBrBo}
Consider an algebra~$A$ with an idempotent grading and graded pre-Borelic subalgebra~$B$ with~$H:=B_0$.
\begin{enumerate}
\item Then $(A,\preccurlyeq_{\cQ})$ is exactly standardly stratified with exact Borel subalgebra~$B$.
\item If $H$ is quasi-local, $(A,\le_{\cQ})$ is properly stratified with exact Borel subalgebra~$B$.
\item If $H$ is semisimple, $(A,\le_{\cQ})$ is quasi-hereditary with exact Borel subalgebra~$B$.
\end{enumerate}
\end{cor}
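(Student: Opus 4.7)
The proof of the corollary will be essentially a direct combination of Proposition~\ref{grBrBo} with the three parts of Theorem~\ref{PropKo}, so the plan is mostly bookkeeping to match hypotheses. I now sketch the three parts.

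For part~(1), the idempotent quasi-order $\preccurlyeq_{\cQ}$ is a (total) partial quasi-order by Definition~\ref{lepleq}, and by Proposition~\ref{grBrBo}(1) the pair $(B,H)$ is an exact Borelic pair of $(A,\preccurlyeq_{\cQ})$. Theorem~\ref{PropKo}(3) then immediately yields that $(A,\preccurlyeq_{\cQ})$ is exactly standardly stratified with exact Borel subalgebra $B$.

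For part~(2), we assume $H$ is quasi-local, so Proposition~\ref{grBrBo}(2) tells us that $(B,H)$ is an exact Borelic pair with respect to the partial order $\le_{\cQ}$. Now Theorem~\ref{PropKo}(2), whose hypothesis requires precisely a partial order, gives that $(A,\le_{\cQ})$ is properly stratified with exact Borel subalgebra~$B$.

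Part~(3) is the only one that needs a small extra step, which I expect to be the only non-formal point. Semisimple algebras have no extensions between simple modules, hence are quasi-local, so Proposition~\ref{grBrBo}(2) applies and $(B,H)$ is an exact Borelic pair of $(A,\le_{\cQ})$. The key observation is that when $H$ is semisimple, $B$ is Wedderburn with $H = B/\rad B$. Indeed, from $B = H \oplus B_+$ we get $B/B_+ \cong H$ semisimple, which forces $\rad B \subseteq B_+$; combined with the condition $B_+ \subseteq \rad B$ from Definition~\ref{defB}(III), we obtain $\rad B = B_+$, hence $B = H \oplus \rad B$ with $B/\rad B \cong H$. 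Thus the pair $(B, B/\rad B)$ is an exact Borelic pair, and Theorem~\ref{PropKo}(1) gives that $(A,\le_{\cQ})$ is quasi-hereditary with exact Borel subalgebra~$B$.

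The only substantive content, beyond citing the two preceding results, is the identification $\rad B = B_+$ when $H$ is semisimple in part~(3); everything else is verifying that the hypotheses of the respective parts of Theorem~\ref{PropKo} are met. I would present the proof as three short paragraphs with that computation highlighted.
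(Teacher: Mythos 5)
Your proposal is correct and matches the paper's approach exactly: the paper simply states ``Combining this with Theorem~\ref{PropKo} yields the following corollary'' and leaves the bookkeeping implicit, while you have supplied it. The one non-formal step you identify, the identification $\rad B = B_+$ when $H$ is semisimple (needed to cast $(B,H)$ in the Wedderburn form $(B,B/\rad B)$ required by Theorem~\ref{PropKo}(1)), is indeed the right detail to highlight and your verification of it is sound.
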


\begin{rem}
The corollary indicates that our methods might be well-suited to extend to the setting of \cite{AffineK}. With notation introduced in Appendix~\ref{AppKl}, it can be paraphrased as: If an idempotent graded algebra~$A$ has graded pre-Borelic subalgebra~$B$ with~$H\in\cB$, then $A$ is $\cB$-properly stratified, for~$\cB$ one of the classes of algebras $\cD$, $\cL$ or~$\cS$.
\end{rem}

\begin{proof}[Proof of Proposition \ref{grBrBo}]
First observe that~$\overline{\Delta}(\lambda)$, with grading as in \eqref{eqDefZGrad}, lives in degrees $\{-j,-j-1,\ldots\}$ with~$j=F_A(\lambda)$, by Lemma~\ref{LemHcomp} and equation~\eqref{eqComplete}. Moreover, the degree $-j$-component is precisely $L^0(\lambda)$. Each simple module~$L(\mu)$ lives similarly in degree $\{-k,-k-1,\ldots\}$ with~$k=F_A(\mu)$.
Therefore we find that
\begin{equation}
\label{deltabar}
[\overline{\Delta}(\lambda):L(\lambda)]=1\quad\mbox{ and }\quad [\overline{\Delta}(\lambda):L(\mu)]=0\mbox{ unless $\mu\le_{\cQ}\lambda$}.
\end{equation}
%Hence the corresponding condition in Definition~\ref{DefBorPair} is satisfied.

Now we focus on the proof of part (1). By \eqref{deltabar} we only need to check conditions (1) and~(2) in Definition~\ref{DefBorPair}. We have
$$[B_+e^0_\lambda:L^0(\mu)]=\sum_{j>0}\dim e_\mu^0 B_j e_\lambda^0.$$
By Lemma~\ref{LemHcomp} we find that~$e^0_\mu B_j e^0_\lambda\not=0$ for~$j>0$ implies that~$\mu\succ_{\cQ}\lambda$, proving condition~\ref{DefBorPair}(1). 
The module~$\Delta(\lambda)=A\otimes_{B}He_\lambda^0$ has a filtration with sections $\overline{\Delta}(\nu)=A\otimes_{B}L^0(\nu)$ with~$\nu\sim_H \lambda$, by Definition~\ref{defB}(I). Hence, by equation~\eqref{deltabar}, $[\Delta(\lambda):L(\mu)]\not=0$ implies there is a $\nu\in\Lambda$ with
$$\mu\,\le_{\cQ}\,\nu\,\sim_H \lambda\qquad\mbox{and hence}\qquad \mu\preccurlyeq_{\cQ} \lambda.$$
So condition~\ref{DefBorPair}(2) for~$\preccurlyeq_{\cQ}$ is satisfied.

For the proof of part (2) we only need to observe that $<_{\cQ}$ and~$\prec_{\cQ}$ are identical and that, since $H$ is quasi-local, now $\Delta(\lambda)$ has a filtration where each section is isomorphic to~$\overline{\Delta}(\lambda)$.\end{proof}

\begin{rem}
By the proofs in Sections \ref{SecPreSpec} and \ref{ssspecial}, we find that Corollary~\ref{CorgrBrBo} remains true for a $\mN$-graded subalgebra~$B$, such that~$A_B$ and~${}_HB$ are projective, and for which
$$
A_+\subset AB_+\qquad\mbox{and}\qquad
A_0=H\oplus (A B_+)_0.
$$
However, in all examples of quasi-hereditary algebras with exact Borel subalgebras we will encounter in Part II, the stronger condition \eqref{eqComplete} is satisfied.
\end{rem}

%%%%%%%%%%%%%%%%%%%%%%%%%%%%%%%%%%%%%%%%%%%%%%%%%%%%%%%%%%%%%%%%%%%%%%%%%%%%%%

\section{Base stratified algebras}
\label{SecBase}
\subsection{Definition}
The following definition is inspired by \cite[Propositions 5.1 and 5.2]{HHKP}.
\begin{ddef}\label{BasedId}
A {\em base stratifying ideal} in an algebra~$A$ is an idempotent ideal~$J=AeA$ which is exactly standardly stratifying, {\it i.e.} ${}_AJ$ and~$J_A$ are projective, see \ref{SecStratId2}, and such that~$eAe$ is standardly based, see Section~\ref{SecSBalg}.
\end{ddef}

\begin{ex}\label{ExQHBS}
By Corollary~\ref{CorB}, {\em any} exactly standardly stratifying ideal can be given the structure of a base stratifying ideal, when working over an algebraically closed field.
\end{ex}

\begin{rem}
By equation~\eqref{MoreAe0} and Remark~\ref{remMorSB}, the definition above does not intrinsically depend on the choice of the idempotent~$e$.
\end{rem}

We use Definition~\ref{BasedId} to introduce a generalisation of the concept of ``cellularly stratified algebra'' of~\cite[Definition~2.1]{HHKP}, see also \cite[Section~5]{HHKP}.
\begin{ddef}\label{DefBS}
Consider an algebra~$A$ with an $n$-decomposition~$\cQ$: $\Lambda=\sqcup_{i=0}^n\Lambda_i$. We consider the corresponding chain of idempotent ideals $\{J_i\,|\, 0\le i\le n+1\}$ of~\ref{SecChain}. Then $(A,\cQ)$ is {\em base stratified} if each $J_i/J_{i-1}$ is a base stratifying ideal in $A/J_{i-1}$.
\end{ddef}

\begin{rem}%{RemMorBS}
It follows easily from Remark~\ref{remMorSB} and the equivalence of Definition~\ref{DefA1}(2) and~\ref{DefA2}(2) that a base stratification is a property of the Morita class of an algebra.
\end{rem}

\begin{rem}\label{RemParTriv}
A trivial example of a base stratified algebra is any standardly based algebra with the $0$-decomposition of~$\Lambda$. It is hence essential to keep track of the decomposition~$\cQ$ of a base stratified algebra~$(A,\cQ)$ in order to make non-trivial statements.
\end{rem}

\subsubsection{}\label{DefL} For a base stratified algebra~$(A,\cQ)$, we make a choice of idempotents $\{f_l\,|\, 1\le l\le n+1\}$ as in Remark \ref{DefAi} and consider the corresponding algebras $A^{(l)}$. By definition $A^{(l)}$ is standardly based for some poset $L_l$.
We consider the set $L:=\sqcup_{i=0}^n L_i=\{(i,p)\,|\, 1\le i\le n\,,\; p\in L_i\}$. We make $L$ into a poset by setting~$(i,p)\le (j,q)$ if~$i>j$, or~$i=j$ and~$p\le q$, so we take the lexicographic ordering, up to reversal of the order on natural numbers.

%By definition, a base stratified algebra is exactly standardly stratified. Now we show it is also standardly based.
\begin{thm}\label{ThmBaSt}
A base stratified algebra~$(A,\cQ)$ is standardly based for the poset~$L$. The cell modules of~$A$ satisfy
$$W(i,p)\;\cong\; A f_{i+1}\otimes _{f_{i+1}Af_{i+1}} W^0_i(p),$$
with~$W^0_i(p)$ the cell modules of~$A^{(i)}$, regarded as $f_{i+1} Af_{i+1}$-modules with trivial~$f_{i+1}J_{i}f_{i+1}$-action.

\end{thm}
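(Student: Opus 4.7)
The plan is to propagate the standardly based structure from each stratum algebra $A^{(i)}$ up to $A$ by exploiting the fact that $J_{i+1}/J_i$ is an exactly standardly stratifying ideal of $A/J_i$, which provides an explicit tensor-product presentation of each layer of the stratification.

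For each $(i,p) \in L$, first define the candidate ideal
\[ A^{\ge (i,p)} \,:=\, J_i \,+\, Af_{i+1}\, \widetilde{(A^{(i)})^{\ge p}}\, f_{i+1}A, \]
where $\widetilde{(A^{(i)})^{\ge p}} \subset f_{i+1}Af_{i+1}$ is any vector-space lift of $(A^{(i)})^{\ge p} \subset A^{(i)} = f_{i+1}(A/J_i)f_{i+1}$. This is a two-sided ideal of $A$ independent of the lift, equal to the preimage of the $(A/J_i)$-ideal generated by $(A^{(i)})^{\ge p}$. The required nesting $A^{\ge (i,p)} \supseteq A^{\ge (j,q)}$ for $(i,p) \le (j,q)$ is then immediate: if $i > j$ one has $A^{\ge (i,p)} \supseteq J_i \supseteq J_{j+1} \supseteq A^{\ge (j,q)}$, while if $i = j$ it is inherited from the standardly based nesting on $A^{(i)}$.

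The core of the argument is the bimodule identification of the sections. Because $J_{i+1}/J_i$ is exactly standardly stratifying in $A/J_i$, and both $(A/J_i)f_{i+1}$ and $f_{i+1}(A/J_i)$ are projective over $A^{(i)}$, multiplication yields an $(A/J_i)$-bimodule isomorphism $(A/J_i)f_{i+1} \otimes_{A^{(i)}} f_{i+1}(A/J_i) \xrightarrow{\,\sim\,} J_{i+1}/J_i$, and the functor $\Phi_i(M) := (A/J_i)f_{i+1} \otimes_{A^{(i)}} M \otimes_{A^{(i)}} f_{i+1}(A/J_i)$ on $A^{(i)}$-bimodules is exact. Under this isomorphism $A^{\ge (i,p)}/J_i$ and $A^{>(i,p)}/J_i$ correspond to $\Phi_i((A^{(i)})^{\ge p})$ and $\Phi_i((A^{(i)})^{>p})$ respectively, so exactness of $\Phi_i$ applied to $(A^{(i)})^{\ge p}/(A^{(i)})^{>p} \cong W^0_i(p) \otimes_\mk W'^0_i(p)$ gives
\[ A^{\ge (i,p)}/A^{>(i,p)} \;\cong\; \Phi_i\bigl(W^0_i(p) \otimes_\mk W'^0_i(p)\bigr) \;\cong\; W(i,p) \otimes_\mk W'(i,p). \]
The last isomorphism uses that the two $A^{(i)}$-actions on $W^0_i(p) \otimes_\mk W'^0_i(p)$ split between the factors, so the $A^{(i)}$-tensor products can be absorbed separately; the identification $W(i,p) \cong Af_{i+1} \otimes_{f_{i+1}Af_{i+1}} W^0_i(p)$ from the theorem follows because the trivial $f_{i+1}J_if_{i+1}$-action on $W^0_i(p)$ forces the tensor product over $f_{i+1}Af_{i+1}$ to collapse to that over $A^{(i)}$, absorbing $J_if_{i+1}$.

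To finish the standardly based structure on $A$ I would construct the direct sum decomposition $A = \bigoplus_{(i,p) \in L} A^{(i,p)}$. For each $i$, choose a linear extension of $L_i$ with maximal elements first and form the filtration of $A^{(i)}$ by two-sided ideals $F^i_k := \sum_{j \le k} (A^{(i)})^{\ge p^i_j}$, whose successive quotients are isomorphic to $W^0_i(p^i_k) \otimes_\mk W'^0_i(p^i_k)$. Applying $\Phi_i$ yields an analogous $(A/J_i)$-ideal filtration of $J_{i+1}/J_i$ with sections $W(i,p^i_k) \otimes_\mk W'(i,p^i_k)$, and splicing these along the chain $J_0 \subset J_1 \subset \cdots \subset J_{n+1} = A$ produces a two-sided ideal filtration of $A$ indexed by $L$. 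Choosing vector-space complements along this global filtration then delivers the required subspaces $A^{(i,p)}$. The main obstacle is precisely this last step: the vector-space decomposition $A^{(i)} = \bigoplus_p (A^{(i)})^{(p)}$ is not a decomposition by $A^{(i)}$-sub-bimodules, so $\Phi_i$ cannot be applied termwise; the remedy is to route the argument through the ideal filtration $F^i_\bullet$, where exactness of $\Phi_i$ does apply, and to extract the vector-space complements only after this bimodule transfer is complete.
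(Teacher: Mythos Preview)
Your proposal is correct and follows essentially the same approach as the paper: both arguments hinge on the bimodule isomorphism $J_{i+1}/J_i \cong (A/J_i)f_{i+1}\otimes_{A^{(i)}} f_{i+1}(A/J_i)$ and the exactness of the associated tensor functors (the paper's Lemma~\ref{LemStu}), using these to transport the standardly based structure of $A^{(i)}$ to the $i$-th layer of $A$. The only organisational difference is that the paper runs an induction on $n$, handling just the bottom layer $J_1$ explicitly and invoking the inductive hypothesis for $A/J_1$, whereas you unfold this and treat all layers simultaneously; your direct description of $A^{\ge(i,p)}$ and $A^{>(i,p)}$ is exactly what the paper's induction produces, and your care about extracting vector-space complements only after applying $\Phi_i$ to the ideal filtration is the right way to handle the non-bimodule nature of the pieces $(A^{(i)})^{(p)}$.
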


\begin{proof}
We use induction on $n$, where $\cQ$ is an $n$-decomposition. If $n=0$, there is nothing to prove. Assume the statement is true for~$n-1$ and consider the case~$n$. Clearly the algebra~$\tilde{A}:=A/J_1$ is base stratified and hence standardly based for the poset $L'=\sqcup_{i=1}^n L_i$. We fix corresponding ideals $\tilde{A}^{\ge (l,p)}$ with~$1\le l\le n$ and~$p\in L_l$.

We set $K:=A^{(0)}=f_1Af_1$. By Definition~\ref{BasedId} and Lemma \ref{LemStu} we have
$$J_1\;\cong\; Af_1\otimes_{f_1Af_1}f_1A\;\cong\; Af_1\otimes_K K\otimes_K f_1A.$$
By assumption, $K$ is standardly based and we set
$$A^{\ge (0,p)}\;:=\;Af_1\otimes_K K^{\ge p}\otimes_K f_1A,$$
for all $p\in L_0$. The fact that the quotients $A^{\ge (0,p)}/A^{> (0,p)}$ satisfy the desired properties then follows from the exactness of the functors $Af_1\otimes_K-$ and~$-\otimes_K f_1A$ in Lemma \ref{LemStu}. In particular, we have
$$W(0,p)\;\cong\; A f_{1}\otimes _{K} W^0_0(p),$$

The set of ideals of~$A$ is then completed by taking~$A^{\ge(l,p)}$ to be an arbitrary ideal in $A$ containing~$J_1$ with~$A^{\ge(l,p)}/J_1=\tilde{A}^{\ge (l,p)}$ for~$l>0$ and~$p\in L_l$.
\end{proof}

%\begin{cor}\label{CorCell0}
%Maintain the notation and assumptions of Theorem~\ref{ThmBaSt}. \end{cor}
\begin{rem}
It follows from the considerations in Remark \ref{MoreAe} that the cell module~$W(i,p)$ of~$A$ does not depend on the actual choice of idempotents $\{f_i\}$.
\end{rem}

%We have the following consequence for the morphisms and extensions between cell modules for base stratified algebras.
Our setup gives a very simple proof of a generalisation of~\cite[Theorem 10.2]{HHKP}.
\begin{prop}\label{corHKKP}
Maintain the notation and assumptions of Theorem~\ref{ThmBaSt}. 
\begin{enumerate}
\item For~$0\le i\le j\le n$, $p\in L_i$ and~$q\in L_j$, we have
$$\Ext^k_A(W(i,p), W(j,q))\;\cong\; \delta_{i,j}\,\Ext^k_{A^{(i)}}(W^0_i(p),W_i^0(q)),\qquad\mbox{for all $k\in\mN$}.$$
\item The cell modules of~$A$ form a standard system for poset $L$ if and only if, for each $0\le i\le n$, the cell modules of~$A^{(i)}$ form a standard system for poset $L_i$. 
\end{enumerate}
\end{prop}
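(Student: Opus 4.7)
The plan is to establish~(1) by a two-step reduction: from $\Ext_A$ to $\Ext$ over the quotient $A/J_i$ via the stratifying property, and then across the tensor-restriction adjunction to $\Ext_{A^{(i)}}$. Part~(2) will follow by unwinding the standard system conditions in the lexicographic poset~$L$.

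For part~(1), I would fix $0\le i\le j\le n$ and write $\bar A := A/J_i$ and $\bar f := f_{i+1}$, so that $A^{(i)} = \bar f\,\bar A\,\bar f$ and $W(i,p) = \bar A\bar f \otimes_{A^{(i)}} W^0_i(p)$. Both $W(i,p)$ and $W(j,q)$ are $\bar A$-modules; the latter because $W(j,q)$ is annihilated by $J_j \supseteq J_i$. First, since $J_i$ sits as a term of an exactly standardly stratifying chain in~$A$ under the base stratification hypothesis, it is in particular a stratifying ideal, and so $\Ext^k_A$ and $\Ext^k_{\bar A}$ agree on $\bar A$-modules (cf.\ Appendix~\ref{TheApp}). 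Next, consider the adjunction between $F := \bar A\bar f\otimes_{A^{(i)}} -$ and $G := \bar f\cdot -$. Here $G$ is exact (multiplication by an idempotent), and $F$ is exact by Lemma~\ref{LemStu}, applied to the exactly stratifying ideal $\bar A\bar f\bar A = J_{i+1}/J_i$ of $\bar A$ furnished by the base stratification. Consequently $F$ sends projective resolutions to projective resolutions, and exactness of $G$ allows the adjunction isomorphism to be derived, yielding
$$\Ext^k_{\bar A}(W(i,p),M)\;\cong\;\Ext^k_{A^{(i)}}(W^0_i(p),\bar f M)$$
for every $\bar A$-module~$M$. Specializing to $M = W(j,q)$: when $j > i$, $W(j,q)$ is annihilated by $J_{i+1} \subseteq J_j$, so $\bar f M = 0$ and the Ext vanishes; when $j = i$, one has $\bar f M = A^{(i)}\otimes_{A^{(i)}} W^0_i(q) = W^0_i(q)$, giving the claimed isomorphism.

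Part~(2) then follows from the definition of a standard system. The conditions are (i) $\End_A(W(i,p))$ is a division ring, (ii) $\Hom_A(W(i,p),W(j,q)) = 0$ unless $(i,p)\le(j,q)$, and (iii) $\Ext^1_A(W(i,p),W(j,q)) = 0$ unless $(i,p)<(j,q)$. In the lex order of~$L$ (reversed on indices), if $i>j$ then $(i,p)<(j,q)$ holds automatically, so (ii) and~(iii) are vacuous; if $i<j$ part~(1) provides automatic vanishing of both $\Hom$ and $\Ext^1$; if $i=j$ part~(1) identifies the groups on the left with the corresponding ones over~$A^{(i)}$, thereby reducing (i), (ii), (iii) for the pairs $(i,p),(i,q)$ to the standard-system conditions on $W^0_i(p), W^0_i(q)$ in $L_i$. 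Putting these cases together gives the claimed equivalence.

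The hardest step is verifying the exactness of $F$ and the Ext-comparison across the quotient by $J_i$. Both are expected to follow cleanly from the base stratification hypothesis---the first via Lemma~\ref{LemStu} and the second via the general stratifying-ideal theory collected in Appendix~\ref{TheApp}---so beyond invoking these results the argument reduces to bookkeeping with tensor-hom adjunctions and the idempotent decomposition of the chain.
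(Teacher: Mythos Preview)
Your argument is correct and follows essentially the same route as the paper. The paper's proof simply invokes Theorem~\ref{ThmBaSt} together with Lemma~\ref{NewLemStr} from Appendix~\ref{TheApp}; your two-step reduction (first passing to $A/J_i$ via the stratifying property, then transporting across the exact adjunction $(\bar A\bar f\otimes_{A^{(i)}}-,\ \bar f\,{-})$) is precisely the content of the proof of Lemma~\ref{NewLemStr}, which you have reproduced inline rather than cited.
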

\begin{proof}
Part (1) follows from Theorem~\ref{ThmBaSt} and Lemma~\ref{NewLemStr}, part (2) from part~(1).\end{proof}

As, by definition, a base stratified algebra is exactly standardly stratified, it has (proper) standard modules $S(\lambda)$ and~$\overline{S}(\lambda)$, for~$\lambda\in\Lambda$, see Section~\ref{SecStrat}. For easy comparison with the cell modules we actually write $S(i,\lambda)$, instead of $S(\lambda)$, for~$\lambda\in\Lambda_i\subset\Lambda$, with same convention for proper standard modules. 

\begin{cor}
Consider a base stratified algebra~$(A,\cQ)$. For any fixed $i$, $\lambda\in\Lambda_i$ and~$p\in L_i$, we have
\begin{itemize}
\item $S(i,\lambda)$ has a filtration where the sections are modules $W(i,q)$, $q\in L_i$ with~$q\ge \lambda$;
\item $W(i,p)$ has a filtration where the sections are modules $\overline{S}(i,\mu)$,~$\mu\in\Lambda_i$.
\end{itemize}
\end{cor}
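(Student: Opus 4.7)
The plan is to reduce both claims to standard filtrations in the standardly based algebra $A^{(i)}$, transported to $A$-mod through the exact functor
\[
F_i\;:=\; Af_{i+1}\otimes_{f_{i+1}Af_{i+1}}-\;\cong\;(A/J_i)f_{i+1}\otimes_{A^{(i)}}-,
\]
where $A^{(i)}$-modules are regarded as $f_{i+1}Af_{i+1}$-modules on which $f_{i+1}J_if_{i+1}$ acts as zero, exactly as in Theorem~\ref{ThmBaSt}. Exactness of $F_i$ follows, by the same argument already used in the proof of Theorem~\ref{ThmBaSt}, from the base stratifying property of $J_i/J_{i-1}$ in $A/J_{i-1}$ together with the Appendix Lemma~\ref{LemStu}. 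Theorem~\ref{ThmBaSt} also supplies the identification $F_i(W^0_i(q))\cong W(i,q)$.

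The crux of the proof is then to establish, for $\lambda,\mu\in\Lambda_i$,
\[
S(i,\lambda)\;\cong\; F_i(P^0_i(\lambda))\qquad\text{and}\qquad \overline{S}(i,\mu)\;\cong\; F_i(L^0_i(\mu)),
\]
where $P^0_i(\lambda):=A^{(i)}e_\lambda$ denotes the projective cover of $L^0_i(\lambda)$ in $A^{(i)}$-mod. The first is essentially immediate from~\ref{SAj}: choosing idempotents so that $e_\lambda=f_{i+1}e_\lambda$ and using that $J_iL(\lambda)=0$ while $J_{i+1}L(\lambda)\neq 0$, one has $S(i,\lambda)=(A/J_i)e_\lambda=(A/J_i)f_{i+1}e_\lambda\cong F_i(A^{(i)}e_\lambda)$. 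The second is the standard description of proper standards arising from an exactly standardly stratifying ideal, and should be extractable from Appendix~\ref{TheApp}; alternatively, it can be verified by matching the filtration of $S(i,\mu)$ by proper standards $\overline{S}(i,\nu)$, $\nu\in\Lambda_i$ guaranteed by Definition~\ref{DefA2}(2), against the filtration of $F_i(P^0_i(\mu))$ obtained by applying $F_i$ to a composition series of $P^0_i(\mu)$.

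With these identifications in hand, part~(1) follows by applying the exact functor $F_i$ to the filtration of $P^0_i(\lambda)$ by cell modules $W^0_i(q)$ with $q\ge\lambda$, which exists for any standardly based algebra by \cite[Proposition~2.4.4]{JieDu}, as recalled in Section~\ref{SecSBalg}. Part~(2) follows by applying $F_i$ to any composition series of the finite-dimensional $A^{(i)}$-module $W^0_i(p)$; its simple sections are all of the form $L^0_i(\mu)$ with $\mu\in\Lambda_i$ by the standardly based properties recalled in Section~\ref{SecSBalg}, so the resulting sections of $F_i(W^0_i(p))=W(i,p)$ are the proper standards $\overline{S}(i,\mu)$.

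The main obstacle I anticipate is the second of the two identifications: while it is the natural description of proper standards, a clean treatment seems to require invoking (or re-proving) the Appendix result that, for an exactly standardly stratifying ideal in an algebra, the proper standards arise by inducing the simples of the idempotent-truncated subquotient algebra. Once this is granted, both claims reduce to routine transport of filtrations through the exact functor $F_i$.
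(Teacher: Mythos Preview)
Your approach is correct and essentially identical to the paper's. The paper's proof is even terser: it simply states that the filtration of $S(i,\lambda)$ is induced from the cell filtration of the projective $A^{(i)}$-module and that the filtration of $W(i,p)$ is induced from a Jordan--H\"older series of $W^0_i(p)$, implicitly relying on precisely the identifications $S(i,\lambda)\cong F_i(P^0_i(\lambda))$ and $\overline{S}(i,\mu)\cong F_i(L^0_i(\mu))$ that you take the trouble to spell out.
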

\begin{proof}
The filtration of~$S(i,\lambda)$ is induced from the filtration of the projective left $A^{(i)}$-module using its cell modules. The filtration of~$W(i,p)$ is induced from the Jordan-H\"older decomposition series of the $A^{(i)}$-module~$W^0_i(\lambda)$.
\end{proof}

\subsection{Base-Borelic pairs}\label{BBP}
Given an algebra~$A$, we fix a decomposition of unity as in~\eqref{decomp1}. Assume that we have a graded pre-Borelic algebra~$B$ as in Definition~\ref{GrPreBor}. We have the corresponding decomposition $\cQ$ as in Section~\ref{SecPreSpec}.

\begin{thm}\label{ThmIdGrB}
Consider an algebra~$A$ with a graded pre-Borelic algebra~$B$ such that~$H=B_0$ is standardly based. Then $(A,\cQ)$ is base stratified.
\end{thm}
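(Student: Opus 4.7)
The plan is to verify the two conditions of Definition \ref{BasedId} for every quotient $J_i/J_{i-1}$ in the chain \eqref{chainProp} attached to $\cQ$: namely (a) that $J_i/J_{i-1}$ is exactly standardly stratifying in $A/J_{i-1}$, and (b) that the centraliser algebra $\bar{f}_i(A/J_{i-1})\bar{f}_i$ is standardly based.

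Condition (a) will come for free from Corollary \ref{CorgrBrBo}(1): since $(B, H)$ is an exact Borelic pair for $\preccurlyeq_\cQ$, the algebra $(A, \preccurlyeq_\cQ)$ is exactly standardly stratified, which by Definition \ref{DefA1}(2) is precisely the statement that the chain \eqref{chainProp} is exactly standardly stratifying. For condition (b), the plan is to use the identification from equation \eqref{eqAiH}, which yields $A^{(i-1)} = \bar{f}_i(A/J_{i-1})\bar{f}_i \cong e_{i-1}^\ast H e_{i-1}^\ast$. Because $B$ is $\mathbb{N}$-graded with $B_0 = H$, the structure of the idempotent grading forces $e_j^\ast H e_{j'}^\ast = 0$ whenever $j \ne j'$, so $H = \bigoplus_j H^{(j)}$ decomposes as a direct sum of the algebras $H^{(j)} := e_j^\ast H e_j^\ast$ with $H^{(i)} H^{(j)} = 0$ for $i \ne j$. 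Hence each centraliser algebra sits canonically as an algebra direct factor of $H$.

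The remaining step, and the main technical point, will be to show that any algebra direct factor of a standardly based algebra inherits a standardly based structure. Starting from the standardly based datum $(L, \{H^{\ge p}\}, \{W(p), W'(p)\})$ of $H$, the two-sided ideals $H^{\ge p}$ and $H^{>p}$ split automatically along the direct-product decomposition, as $H^{\ge p} = \bigoplus_j (H^{\ge p} \cap H^{(j)})$ and similarly for $H^{>p}$. The orthogonality $H^{(i)} H^{(j)} = 0$ then forbids the sub-bimodule $H^{\ge p}/H^{>p} \cong W(p) \otimes W'(p)$ of $H/H^{>p}$ from supporting any cross-terms $e_i^\ast(-)e_j^\ast$ with $i \ne j$, so there is a unique $i(p)$ for which this bimodule lies in $H^{(i(p))}$. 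Setting $L^{(i)} := \{p \in L : i(p) = i\}$ together with the ideals $H^{(i), \ge p} := e_i^\ast H^{\ge p} e_i^\ast$, and re-choosing complements within $H^{(i)}$, then equips each $H^{(i)}$ with a standardly based structure whose cell modules are inherited from those of $H$.

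Putting these three ingredients together, namely the exactly standardly stratifying chain from step 1, the identification with an algebra direct factor from step 2, and the descent of the standardly based structure from step 3, one concludes that every $J_i/J_{i-1}$ is a base stratifying ideal of $A/J_{i-1}$, so $(A, \cQ)$ is base stratified. The main obstacle will be step 3, which is not yet recorded as a lemma earlier in the paper and therefore demands a short but careful verification of the standardly based axioms from Section \ref{SecSBalg}; the key observation that makes the descent work is precisely that the $\mathbb{N}$-grading on $B$ forces $H$ to be an algebra direct product, which is what prevents any cell bimodule from straddling two factors.
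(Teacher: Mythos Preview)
Your proof is correct and follows essentially the same route as the paper's. The paper's argument is just two lines: Corollary~\ref{CorgrBrBo}(1) together with the equivalence of Definitions~\ref{DefA1} and~\ref{DefA2} gives the exactly standardly stratifying chain, and equation~\eqref{eqAiH} identifies $A^{(i)}\cong He_i^\ast$, which is a direct algebra factor of~$H$ and hence standardly based. Your step~3 spells out explicitly why a direct factor of a standardly based algebra inherits the structure, a point the paper leaves implicit; your argument that each cell bimodule $W(p)\otimes W'(p)$ is supported on a single factor (because $e_i^\ast H e_j^\ast=0$ for $i\neq j$ forces $e_i^\ast W(p)\otimes W'(p)e_j^\ast=0$) is the right one.
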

\begin{proof}
Corollary~\ref{CorgrBrBo}(1) and the equivalence of Definition~\ref{DefA1} and~\ref{DefA2} implies that the chain \eqref{chainProp} is exactly standardly stratifying.
Equation \eqref{eqAiH} then implies the chain is base stratifying.\end{proof}

\begin{cor}\label{CorStB}
Consider an algebra~$A$ with a graded pre-Borelic algebra~$B$ such that~$H$ is standardly based. For each $0\le i\le n$ consider the poset $L_i$ (containing~$\Lambda_i$) of the standardly based algebra~$ H e^\ast_i$. Then $A$ is standardly based for the poset $L$ defined in \ref{DefL}.

Moreover, the cell modules over $A$ form a standard system for~$L$, if and only if the cell modules for~$H e^\ast_i$ form a standard system for~$L_i$, for each $0\le i\le n$.
\end{cor}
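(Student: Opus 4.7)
My plan is to derive this corollary almost entirely by concatenating the results already established in the section, so the work is mainly bookkeeping rather than new content.

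First I would invoke Theorem~\ref{ThmIdGrB} to conclude that $(A,\cQ)$ is base stratified, where $\cQ$ is the $n$-decomposition of $\Lambda$ attached to the chain \eqref{chainProp} of idempotent ideals $J_i=Af_iA$, with $f_i=\sum_{j<i}e^\ast_j$. This provides the setup required to apply the theory of Section~\ref{SecBase}. The key identification is equation~\eqref{eqAiH}, which gives a canonical isomorphism $A^{(i)}=f_{i+1}(A/J_i)f_{i+1}\cong He^\ast_i$ for each $0\le i\le n$. Thus the standardly based structure of $He^\ast_i$ transfers to $A^{(i)}$, equipping it with the poset $L_i$ containing $\Lambda_i$.

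Next I would apply Theorem~\ref{ThmBaSt} to the base stratified algebra $(A,\cQ)$. This theorem directly produces a standardly based structure on $A$ with underlying poset $L=\sqcup_{i=0}^n L_i$, endowed with the lexicographic order of \ref{DefL}. Moreover, it describes the cell modules of $A$ explicitly as
\[
W(i,p)\;\cong\; Af_{i+1}\otimes_{f_{i+1}Af_{i+1}}W^0_i(p),
\]
where $W^0_i(p)$ denotes the cell modules of $A^{(i)}\cong He^\ast_i$ (with trivial action of $f_{i+1}J_i f_{i+1}$). This establishes the first statement of the corollary.

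For the second statement, I would appeal to Proposition~\ref{corHKKP}(2), which asserts precisely that the cell modules of a base stratified algebra form a standard system for $L$ if and only if the cell modules of each $A^{(i)}$ form a standard system for $L_i$. Translating through the isomorphism $A^{(i)}\cong He^\ast_i$ yields the equivalent condition stated in the corollary. The only potential obstacle is verifying that the poset structures on $L_i$ obtained on the one hand intrinsically from $He^\ast_i$ and on the other hand from viewing $A^{(i)}\cong He^\ast_i$ agree, but this is immediate from the fact that the isomorphism is one of algebras, so the standardly based structure (and hence its poset of labels) is preserved. No further computation is required; the corollary is a direct consequence of the preceding results.
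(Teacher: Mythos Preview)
Your proposal is correct and follows essentially the same route as the paper: invoke Theorem~\ref{ThmIdGrB} to obtain base stratification, apply Theorem~\ref{ThmBaSt} for the standardly based structure on $A$, and then use Proposition~\ref{corHKKP}(2) together with the identification $A^{(i)}\cong He^\ast_i$ from equation~\eqref{eqAiH} for the standard-system equivalence. The paper's proof is terser, but the logical content is identical.
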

\begin{proof}
The first statement follows from the combination of Theorems~\ref{ThmIdGrB} and~\ref{ThmBaSt}.
The claim about standard systems follows from Proposition~\ref{corHKKP}(2) and Theorem \ref{chainProp}(1).
\end{proof}

\begin{prop}\label{CellB}
Maintain the notation and assumptions of Corollary~\ref{CorStB}. The cell modules of the algebra~$A$ satisfy
$$W(i,p)\cong\Delta_{W^0_i(p)}=A\otimes_B W^0_i(p),$$
with~$W^0_i(p)$ the cell module for~$He^\ast_i$, with~$p\in L_i$.
\end{prop}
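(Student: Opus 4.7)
The plan is to combine the explicit formula for cell modules of a base stratified algebra furnished by Theorem \ref{ThmBaSt} with the change-of-rings isomorphism of Lemma \ref{LemABAM}. Theorem \ref{ThmIdGrB} guarantees that $(A,\cQ)$ is indeed base stratified, so Theorem \ref{ThmBaSt} applies and yields
$$W(i,p)\;\cong\; Af_{i+1}\otimes_{f_{i+1}Af_{i+1}}W^0_i(p),$$
where $W^0_i(p)$ is the cell module of $A^{(i)}$, regarded as an $f_{i+1}Af_{i+1}$-module on which $f_{i+1}J_if_{i+1}$ acts as zero. Through the isomorphism $A^{(i)}\cong He_i^\ast$ from \eqref{eqAiH}, the module $W^0_i(p)$ here coincides with the cell module of $He_i^\ast$ at $p\in L_i$ supplied by Corollary \ref{CorStB}.

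Next I would push the quotient by $f_{i+1}J_if_{i+1}$ through the tensor product. Since $W^0_i(p)$ is annihilated by $f_{i+1}J_if_{i+1}$, and since $J_i$ is a two-sided ideal so that $A\cdot f_{i+1}J_if_{i+1} = AJ_if_{i+1}=J_if_{i+1}$, the standard identification of tensor products over a quotient ring gives
$$Af_{i+1}\otimes_{f_{i+1}Af_{i+1}}W^0_i(p)\;\cong\;(A/J_i)f_{i+1}\otimes_{A^{(i)}}W^0_i(p).$$
The left $A$-module structure on both sides is inherited from $Af_{i+1}$, so this is an isomorphism of $A$-modules.

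Finally, the right-hand side is exactly the expression produced by Lemma \ref{LemABAM} when applied to the $A^{(i)}$-module $W^0_i(p)$, and therefore equals $\Delta_{W^0_i(p)}=A\otimes_B W^0_i(p)$. Chaining these isomorphisms delivers the desired identification.

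I do not expect any genuine obstacle, since the heavy lifting has already been performed: Theorem \ref{ThmBaSt} supplies the cell module formula for base stratified algebras, and Lemma \ref{LemABAM} compares induction from $B$ with tensoring over $A^{(i)}$ through the quotient $A/J_i$. The only subtlety to check carefully is the left/right bookkeeping around the idempotent $f_{i+1}$, ensuring that the trivial $f_{i+1}J_if_{i+1}$-action on $W^0_i(p)$ legitimately allows us to replace $f_{i+1}Af_{i+1}$ by its quotient $A^{(i)}$ in the tensor product, which is guaranteed by $AJ_if_{i+1}=J_if_{i+1}$.
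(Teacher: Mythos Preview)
Your proposal is correct and follows essentially the same route as the paper, which simply says the result follows from Theorem~\ref{ThmBaSt} and Lemma~\ref{LemABAM}. You have just spelled out the connecting step---the change-of-rings identification $Af_{i+1}\otimes_{f_{i+1}Af_{i+1}}W^0_i(p)\cong (A/J_i)f_{i+1}\otimes_{A^{(i)}}W^0_i(p)$---that the paper leaves implicit.
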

\begin{proof}
This follows from Lemma \ref{LemABAM} and Theorem~\ref{ThmBaSt}.\end{proof}

\subsection{Schur algebras}
The following observation is implicit in \cite[Sections~11-13]{HHKP}.
\begin{lemma}\label{LemSchur}
Consider a standardly based algebra~$A$. The cell modules $\{W(p), \,p\in L\}$, form a standard system for~$(L_A,\le)$ in $A${\rm -mod} if and only if A admits a cover-Schur algebra as in Definition \ref{DefSchur}.\end{lemma}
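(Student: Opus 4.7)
The plan is to apply the Dlab--Ringel correspondence \cite[Theorem~2]{DR} between standard systems and quasi-hereditary algebras, together with the fact (recalled in Section~\ref{SecSBalg}) that the standardly based structure provides cell filtrations of the projective $A$-modules.

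For the backward implication, suppose $(\cS,\le)$ is a cover-Schur algebra of $A$ with cover functor $F$. The standard modules $\{S(p)\mid p\in L\}$ of the quasi-hereditary algebra $\cS$ form a standard system in $\cS$-mod by \cite[Lemmata~1.2, 1.3 and~1.6]{DR}. Each $S(p)$ is trivially a module with proper standard filtration, so the $1$-faithfulness of the cover yields
$$\Ext^i_A(W(p), W(q))\;\cong\;\Ext^i_\cS(S(p), S(q)),\qquad i\in\{0,1\},$$
using $FS(p)\cong W(p)$. Thus the three defining axioms of a standard system transfer from $\{S(p)\}$ to $\{W(p)\}$.

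For the forward implication, suppose the cell modules $\{W(p)\mid p\in L\}$ form a standard system in $A$-mod. Apply \cite[Theorem~2]{DR} to obtain a basic quasi-hereditary algebra $(\cS,\le)$ with $\Lambda_\cS=L$, standard modules $S(p)$, and an exact equivalence
$$\Phi\colon \cF(\Delta_\cS)\;\stackrel{\sim}{\longrightarrow}\; \cF(W),\qquad S(p)\mapsto W(p),$$
where $\cF(\Delta_\cS)$ (resp.\ $\cF(W)$) is the full subcategory of modules admitting a filtration with sections among the $S(p)$ (resp.\ $W(p)$). By Section~\ref{SecSBalg}, every indecomposable projective $A$-module has a cell filtration, so ${}_AA\in\cF(W)$. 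Set $T:=\Phi^{-1}({}_AA)$; the equivalence produces $A\cong\End_\cS(T)^{\op}$. Because $\Phi$ preserves $\Ext^1$ against $\cF(\Delta_\cS)$ and ${}_AA$ is projective over $A$, we have $\Ext^1_\cS(T,S(p))=0$ for every $p\in L$, which characterises $T$ as a projective $\cS$-module (as $T\in\cF(\Delta_\cS)$). Writing $T\cong\cS e$ for an idempotent $e\in\cS$, we obtain $A\cong e\cS e$ and $\Phi|_{\cF(\Delta_\cS)}$ is represented by $T=\cS e$; consequently $\Phi$ agrees on $\cF(\Delta_\cS)$ with the cover functor $F=\Hom_\cS(\cS e,-)=e(-)$. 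Since $\Phi$ is an equivalence, $F$ is fully faithful on projective $\cS$-modules (the cover condition of Section~\ref{SecSecCover}) and induces $\Ext^i$-isomorphisms between all modules in $\cF(\Delta_\cS)$, in particular the $1$-faithfulness condition. By construction $F(S(p))\cong W(p)$, so $(\cS,\le)$ is a cover-Schur algebra of $A$.

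The main obstacle is realising the abstract Dlab--Ringel algebra $\cS$ as an honest cover of $A$, rather than merely as a quasi-hereditary algebra with the prescribed standard modules. This reduces to identifying $\Phi^{-1}({}_AA)$ as a projective $\cS$-module, for which the key input is the characterisation of relative projectivity inside $\cF(\Delta_\cS)$ by $\Ext^1$-vanishing against all standard modules, a property preserved by the exact equivalence~$\Phi$.
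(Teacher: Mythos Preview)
Your proof is correct and uses the same essential input as the paper, namely the Dlab--Ringel theory of \cite[Section~3]{DR}, but the two arguments are organised dually. The paper works entirely inside $A$-mod: it takes the Ext-projective objects $Y(p)$ in $\cF(W)$ furnished by \cite{DR}, observes that $Y(\lambda)\cong P(\lambda)$ for $\lambda\in\Lambda_A\subset L_A$, and then defines $\cS$ concretely as $\End_A\bigl(\bigoplus_p Y(p)^{\oplus m_p}\bigr)^{\op}$ with the $m_p$ chosen so that the summand indexed by $\Lambda_A$ is exactly ${}_AA$. This makes the cover structure $A\cong f\cS f$ immediate, and the functor $G=\Hom_A(\bigoplus_p Y(p)^{\oplus m_p},-)$ is by construction the Dlab--Ringel equivalence on $\cF(W)$; the relation $F\circ G\cong\Id$ then yields $1$-faithfulness directly. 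Your version instead starts from the abstract equivalence $\Phi$ and recovers the cover by showing $T=\Phi^{-1}({}_AA)$ is projective in $\cS$-mod, using that Ext-projectives in $\cF(\Delta_\cS)$ coincide with projectives. Both routes are valid; the paper's is slightly more concrete and sidesteps the (routine) verification that $\Phi$ is naturally isomorphic to $e(-)$ as functors to $A$-mod, which in your argument requires matching the $A$-module structures through the identification $A\cong\End_\cS(T)^{\op}$.
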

\begin{proof}
The existence of a Schur algebra clearly implies that the cell modules form a standard system. Now consider a standardly based algebra such that~$\{W(p)\}$ forms a standard system.

Set $W=\oplus_p W(p)$. We denote by $\cF(W)$ the exact category of $A$-modules with filtrations where each section is a direct sum of summands of $W$. By \cite[Section~3]{DR}, $\cF(W)$ contains unique indecomposable objects $Y(p)$, with~$p\in L$, such that 
$\Ext^1_A(Y(p),W)=0$ and there is a surjection~$Y(p)\tto W(p)$ with kernel in $\cF(W)$. Since $P(\lambda)\in \cF(W)$, by Section~\ref{SecSBalg},
we have in particular $Y(\lambda)\cong P(\lambda)$ for~$\lambda\in\Lambda_A\subset L_A$. Hence, we can take $m_p\in \mN$ such that
$$A\cong \End_A(\bigoplus_{\lambda\in\Lambda_A}P(\lambda)^{\oplus m_\lambda})^{\op},\quad\mbox{and we define} \quad \cS:= \End_A(\bigoplus_{p\in L_A}Y(p)^{\oplus m_p})^{\op}.$$
Consider the idempotent $f\in\cS$ corresponding to the projection of $\bigoplus_{p\in L_A}Y(p)^{\oplus m_p}$ onto the summand~$ \bigoplus_{\lambda\in\Lambda_A}P(\lambda)^{\oplus m_\lambda}$. Then we have $f\cS f\cong A$ and we can interpret $\bigoplus_{p\in L_A}Y(p)^{\oplus m_p}$ as a $(A,\cS)$-bimodule of the form $f\cS$.

By~\cite[Section~3]{DR}, $(\cS,\le)$ is quasi-hereditary and the functor
$$G=\Hom_A(\bigoplus_{p\in L_A}Y(p)^{\oplus m_p},-)\;:\; A\mbox{-mod}\to \cS\mbox{-mod}.$$
is exact and fully faithful on $\cF(W)$ and maps the cell modules of~$A$ to the standard modules of~$\cS$.
Since the functor 
$$F=f-\;:\; \cS\mbox{-mod}\to A\mbox{-mod}.$$
satisfies $F\circ G=\Id$, it follows immediately that~$F$ induces isomorphisms of the appropriate homomorphism spaces and first extension groups, which implies $\cS$ is a $1$-faithful quasi-hereditary cover.
\end{proof}

%%%%%%%%%%%%%%%%%%%%%%%%%%%%%%%%%%%%%%%%%%%%%%%%%%%%%%%%%%%%%%%%%%%%%%%%%%%%%%%%

\part{Examples}\label{Examp}

\section{Auslander-Dlab-Ringel algebras}
\label{SecADR}
In \cite{Auslander}, Auslander proved that each finite dimensional unital algebra~$R$ has a cover~$A(R)$ with finite global dimension. In \cite{DRAus}, Dlab and Ringel proved that~$A(R)$ is quasi-hereditary. In this section, we prove that~$A(R)$ admits an exact Borel subalgebra (while giving an independent proof of the quasi-heredity), under the weak assumption that~$R$ is Wedderburn. 

\subsection{The Auslander construction}\label{introAus}
There are several Morita equivalent versions of the algebra~$A(R)$ which appear in the literature. Let $R$ be a $\mk$-algebra.
Consider the {\em right} regular $R$-module and its radical, which is the Jacobson radical~$\cJ$ of~$R$. For each $i\ge 1$, we let $M_R^i$ be the largest direct summand of the {\em right} $R$-module~$R/\cJ^i$ such that no direct summand of~$M_R^i$ appears as a direct summand of some $M_R^j$ for~$j<i$. We set $n$ to be the largest integer for which $M_R^n\not=0$. Alternatively, $n$ is defined as the smallest positive integer for which $\cJ^{n}=0$.

The {\em ADR-algebra} $A(R)$ is the cover of $R$ defined as
$$A(R):=\End_R(\oplus_{i=1}^nM_R^i).$$
We label the simple {\em right} $R$-modules by~$\Lambda^0:=\Lambda_{R^{\op}}$. We have idempotents $\{c^\ast_\lambda\,|\,\lambda\in\Lambda^0\}$ in $R$ such that the top of the right $R$-module~$c_\lambda^\ast R$ is of type $\lambda$ and such that~$1_R=\sum_\lambda c^\ast_\lambda$. 

For each $\lambda\in \Lambda^0$, let $\ell(\lambda)$ be the Loewy length of~$c_\lambda^\ast R$. For~$l\in\mN$ we set $$c_{\ge l}^\ast=\sum_{\lambda\in\Lambda^0\,|\, \ell(\lambda)\ge l}c_\lambda^\ast,$$
so in particular $c_{\ge 1}^\ast=1_R$.
Then we have
$$M_R^i\;\cong\;c_{\ge i}^\ast R/\cJ^i,\quad\mbox{ for~$1\le i\le n$}.$$
In particular, every direct summand of~$M_R^i$ has Loewy length $i$.

\subsection{Construction of the Borelic subalgebra} 
From now on, we assume that~$R$ is Wedderburn, meaning~$R=S\oplus\cJ$, for a semisimple subalgebra~$S$. Then we have a direct sum of simple algebras
$$S\;=\;\bigoplus_{\lambda\in\Lambda^0} S c_\lambda^\ast,$$
where the idempotent $c_\lambda^\ast$ is now central in $S$. The space $c_\lambda^\ast S$ is then naturally a subspace of~$c_\lambda^\ast R/\cJ^i$, which is bijectively mapped to the top of~$c_\lambda^\ast R/\cJ^i$ under the canonical surjection.

We let~$e^\ast_i\in A(R)$ be the projection onto the summand~$M_R^{i}$ for~$1\le i\le n$. This yields a decomposition of unity as in~\eqref{decomp1}, with $e_0^\ast=0$.
We define a subspace $B=\bigoplus_{i,j}e_i^\ast B e_j^\ast$ of~$A(R)$, by
$$e_i^\ast B e_j^\ast \;:=\;\{\alpha\in\Hom_R(M_R^j,M_R^i)\,|\, \alpha(c_{\ge j}^\ast) \in c_{\ge i}^\ast S\subset M^i_R\},\qquad\forall\;1\le i,j\le n.$$
In particular, we find $e_i^\ast B e_j^\ast =0$ if~$i>j$, by considering Loewy lengths. If $i\le j$, then $c_{\ge j}^\ast c_{\ge i}^\ast =c_{\ge j}^\ast $, so $\alpha\in e_i^\ast Be_j^\ast$ even implies $\alpha(c_{\ge j}^\ast) \in c_{\ge j}^\ast S$.

\begin{lemma}\label{LemmaAus}
The subspace $B$ is a graded pre-Borelic subalgebra of~$A(R)$ regarded as an idempotent graded algebra.
\end{lemma}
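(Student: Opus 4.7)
The plan is to verify each condition of Definition~\ref{GrPreBor} in turn: that $B$ is a subalgebra of $A(R)$ respecting the idempotent grading with only non-negative components, that the completeness condition~\eqref{eqComplete} holds, and that both ${}_H B$ and $A(R)_B$ are projective, where $H:=B_0$. The subalgebra property is immediate from $R$-linearity: the unit $1_{A(R)}=\sum_i\id_{M_R^i}$ lies in $B$ because $\id_{M_R^i}(c^\ast_{\ge i})=c^\ast_{\ge i}\in c^\ast_{\ge i}S$, and for $\alpha\in e^\ast_iBe^\ast_j$ and $\beta\in e^\ast_jBe^\ast_k$, writing $\beta(c^\ast_{\ge k})=c^\ast_{\ge j}s$ with $s\in S$ gives $(\alpha\beta)(c^\ast_{\ge k})=\alpha(c^\ast_{\ge j})\cdot s\in c^\ast_{\ge i}S$, since $c^\ast_{\ge i}S$ is a right $S$-submodule of $M_R^i$. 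The $\mathbb{N}$-grading follows from the vanishing $e^\ast_iBe^\ast_j=0$ for $i>j$ already recorded in the excerpt.

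For completeness in the equivalent form~\eqref{eqABJ}, I would use the Wedderburn decomposition $R=S\oplus\cJ$ to induce a splitting
\[
M_R^i\;=\;c^\ast_{\ge i}S\,\oplus\, c^\ast_{\ge i}\cJ/\cJ^i
\]
of right $R$-modules, the first summand being the semisimple top. Any $\alpha\in e^\ast_iA(R)e^\ast_j$ is determined by its value $\alpha(c^\ast_{\ge j})\in M_R^i$, and splitting this value along the above decomposition writes $\alpha$ uniquely as $\alpha^B+\alpha^J$ with $\alpha^B\in e^\ast_iBe^\ast_j$ and $\alpha^J$ having image in the submodule $c^\ast_{\ge i}\cJ/\cJ^i$. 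Since this submodule is annihilated by $\cJ^{i-1}$, it has Loewy length at most $i-1$, so $\alpha^J$ factors through $\bigoplus_{k<i}M_R^k$ and lies in $e^\ast_iJ_ie^\ast_j$, giving $e^\ast_iA(R)=e^\ast_iB\oplus e^\ast_iJ_i$.

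It remains to verify the two projectivity conditions. The algebra $H=B_0=\bigoplus_ie^\ast_iBe^\ast_i$ identifies with $\bigoplus_ic^\ast_{\ge i}S$, a direct sum of two-sided ideals of the semisimple algebra $S$ (the $c^\ast_{\ge i}$ being central idempotents of $S$), and is therefore semisimple; hence every left $H$-module, and in particular ${}_HB$, is projective. The principal obstacle is the projectivity of $A(R)_B$. Since $J_i$ is a two-sided ideal, the splitting $e^\ast_iA(R)=e^\ast_iB\oplus e^\ast_iJ_i$ is already a decomposition of right $B$-modules, with $e^\ast_iB$ automatically projective. For the remaining summand $e^\ast_iJ_i$, I would use the primitive idempotent decomposition $e^\ast_k=\sum_{\ell(\lambda)\ge k}\bar c_{k,\lambda}$ in $H$ arising from the simple decomposition $c^\ast_{\ge k}S=\bigoplus_{\ell(\lambda)\ge k}Sc^\ast_\lambda$, together with the matching decomposition $M_R^k=\bigoplus_{\ell(\lambda)\ge k}c^\ast_\lambda R/c^\ast_\lambda\cJ^k$, and show that each space $\Hom_R(M_R^k,M_R^i)$ (for $k<i$) decomposes as a right $B$-module into summands isomorphic to the projective right $B$-module $\bar c_{k,\lambda}B$. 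Making this identification precise, by tracking the right $B$-action through the Wedderburn structure of $R$, is the main technical burden of the proof.
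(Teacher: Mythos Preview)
Your treatment of the subalgebra property, the $\mathbb{N}$-grading, and the projectivity of ${}_HB$ via the semisimplicity of $H\cong\bigoplus_i c^\ast_{\ge i}S$ is correct and matches the paper. One small slip in the completeness argument: the decomposition $M_R^i=c^\ast_{\ge i}S\oplus c^\ast_{\ge i}\cJ/\cJ^i$ is \emph{not} a splitting of right $R$-modules, since $c^\ast_{\ge i}S$ is not stable under right multiplication by~$\cJ$; only the radical summand is an $R$-submodule. Your argument still works, because what you actually need is that each $s\in c^\ast_{\ge j}S$ (for $i\le j$) extends to an $R$-linear map $M_R^j\to M_R^i$ sending $c^\ast_{\ge j}\mapsto s$ (which holds since $s\cJ^j\subset\cJ^j\subset\cJ^i$), and this suffices to define~$\alpha^B$.

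The genuine gap is in the projectivity of $A(R)_B$. Your plan is to decompose $e^\ast_iJ_i$ via the spaces $\Hom_R(M_R^k,M_R^i)=e^\ast_iA e^\ast_k$ for $k<i$, but these are not right $B$-submodules: right multiplication by $e^\ast_kBe^\ast_l$ with $l>k$ carries $e^\ast_iAe^\ast_k$ into $e^\ast_iAe^\ast_l$. Even reading your sketch charitably as a decomposition into the right $B$-modules $e^\ast_iAe^\ast_kB$, these are not disjoint for distinct~$k$: already for $R=\mk[x]/(x^3)$ one has $e^\ast_3Ae^\ast_1B=e^\ast_3J_3$ (dimension~$6$) while $e^\ast_3Ae^\ast_2B$ is a $4$-dimensional submodule, so the sum is far from direct. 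The paper instead filters $A$ by the chain $J_1\subset J_2\subset\cdots$ and shows each successive quotient is right $B$-projective: for $\alpha\in Ae^\ast_k$ one identifies an idempotent $e\in c^\ast_{\ge k}S$ recording which summands of $M_R^k$ are mapped by $\alpha$ to something of Loewy length exactly~$k$, and proves $\alpha B\cong eB$ modulo~$J_k$. This inductive mechanism, or a genuine substitute for it, is what is missing from your argument.
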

\begin{proof}
Set $A:=A(R)$. By construction, $B$ is a $\mZ$-graded subspace of~$A$ containing~$1_A$.
Now we prove that~$B$ is in fact an algebra. Take $\alpha\in e^\ast_i B e^\ast_k$ and~$\beta\in e^\ast_k Be^\ast_j$ with~$i\le k\le j$. It follows easily that~$\alpha\beta\in B$ from the fact that~$S$ is a subalgebra of~$R$.

To prove that~$B$ is complete, it suffices to prove equation~\eqref{eqComplete2}, {\it viz.} that for~$i\le j$,
\begin{equation}\label{rewcom}e^\ast_i Ae^\ast_j\;=\; e^\ast_i B e^\ast_j\oplus \sum_{k <i}e^\ast_i Ae^\ast_k Ae^\ast_j.\end{equation}
For this we observe that the morphisms in $e_i^\ast Be_j^\ast$ are spanned by the surjections from indecomposable summands in $M_R^j$ onto their factor modules appearing in $M_R^i$. A natural complement of that space is given by morphisms from $M_R^j$ with image in the radical of~$M_R^i$. As this image will have Loewy length strictly lower than $i$, the morphism factors through $M_R^k$ (a direct sum of all factor modules of Loewy length $k$ of the projective $R$-modules) for some $k<i$. This complement is hence precisely $\sum_{k <i}e^\ast_i Ae^\ast_k Ae^\ast_j$.

Now we show that the complete $\mN$-graded subalgebra~$B$ is pre-Borelic. The algebra
\begin{equation}\label{eqHADR}H=B_0\cong\bigoplus_{i=1}^n c_{\ge i}^\ast S\end{equation} is semisimple and therefore ${}_HB$ is projective.
Now we prove that~$A_B$ is projective. Take first $\alpha\in e_k^\ast Ae_1^\ast$, for some $k\ge 1$. By construction, $\alpha$ corresponds to a monomorphism $D\hookrightarrow M_R^k$, with $D$ a direct summand of the semisimple module $M_R^1$. Let $e$ denote the idempotent in $e_1^\ast Ae_1^\ast\cong S$ such that~$D=M^1_Re$. In particular, we have $\alpha=\alpha e$. Since $\alpha$, restricted to~$D$ is injective, it follows that the canonical epimorphism $eB\tto \alpha B$ given by $x\mapsto \alpha x$ is an isomorphism. It follows that
$$Ae_1^\ast A=Ae_1^\ast B$$
is a projective right $B$-module.

Now assume that we have proved that~$A(\sum_{k<i}e_i^\ast)A$ is projective as a right $B$-module, for~$i>1$. 
We take $\alpha\in Ae_i^\ast$ and associate a right $R$-module
$$N_\alpha=\{v\in M^i_R\,|\,\alpha(v)R\mbox{ has Loewy length strictly less than $i$.}\}$$
The image of $N_\alpha$ under the projection $M^i_R\tto \Top M^i_R$, yields a direct summand~$X_{\alpha}$ of $\Top M^i_R$. Note that~$N_\alpha=N_{\alpha'}$ if and only if $X_\alpha=X_{\alpha'}$, since the radical of $M^i_R$ has Loewy length $i-1$. We have
$X_\alpha=\Top(M^i_R) f$ for some idempotent $f\in Sc^\ast_{\ge i}\subset e_i^\ast Ae_i^\ast$. It follows that~$N_\alpha=N_e$, for the idempotent $e:=e_2^\ast-f$.
Observe that~$A(\sum_{k<i}e_i^\ast)A$ is the ideal of morphisms whose image has Loewy length strictly less than $i$. For~$\beta\in e_i^\ast B$, we thus have $\alpha\beta\in A(\sum_{k<i}e_i^\ast)A$ if and only if $\im\beta\subset N_\alpha=N_e$.
Working inside the right $B$-module $A/(A(\sum_{k<i}e_i^\ast)A)$, it thus follows that~$\alpha B\cong eB$.

Hence we find that~$A(\sum_{k\le i}e_i^\ast)A/A(\sum_{k<i}e_i^\ast)A$ is also projective. This means that also $A(\sum_{k\le i}e_i^\ast)A$ is projective. It follows that $A_B$ is projective by iteration, which concludes the proof.
\end{proof}

\subsection{Main theorem on Auslander-Dlab-Ringel algebras}

\begin{thm}\label{ThmAus}
Consider a field $\mk$. If the algebra~$R$ is Wedderburn (for instance if~$\mk$ is perfect),
the simple modules of the ADR algebra~$A(R)$
are labelled by 
\begin{equation}\label{LAR}\Lambda_{A(R)}=\{(i,\lambda)\; \mbox{with }\;1\le i\le \ell(\lambda) \mbox{ and }\lambda\in \Lambda^0\}.\end{equation}
Moreover, $\left(A(R),\le_{\cQ}\right)$ is quasi-hereditary and admits an exact Borel subalgebra.
\end{thm}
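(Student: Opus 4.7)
The approach is to view the theorem as a direct consequence of Lemma~\ref{LemmaAus} combined with Corollary~\ref{CorgrBrBo}(3); almost all the substance sits in Lemma~\ref{LemmaAus}, which has already been proved. My plan is therefore to first dispose of the parenthetical, then feed the previous results into the general machinery, and finally read off the indexing set of simples from the explicit description of $H$.

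First I would handle the parenthetical: if $\mk$ is perfect, then $R/\rad R$ is a finite-dimensional semisimple $\mk$-algebra, each simple summand is separable over $\mk$ (by the general fact recalled in the preliminaries that every simple algebra over a perfect field is separable), so $R/\rad R$ is separable, and Wedderburn's principal theorem gives that $R = S \oplus \rad R$ for some semisimple subalgebra $S$. So we may assume throughout that $R$ is Wedderburn, which is precisely the hypothesis used to set up the subspace $B \subset A(R)$ in Section~\ref{SecADR}.

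Next I would apply Lemma~\ref{LemmaAus}: the subspace $B$ is a graded pre-Borelic subalgebra of the idempotent-graded algebra $A(R)$. The degree-zero part $H = B_0$ was computed in \eqref{eqHADR} to be $H \cong \bigoplus_{i=1}^n c_{\ge i}^{\ast} S$, which is a direct sum of semisimple algebras, hence semisimple. Corollary~\ref{CorgrBrBo}(3) then applies verbatim and yields in one stroke that $(A(R), \le_{\cQ})$ is quasi-hereditary with exact Borel subalgebra $B$. This is the non-trivial content of the theorem.

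Finally, to identify $\Lambda_{A(R)}$, I would invoke property (II) of pre-Borelic pairs in Definition~\ref{defB}: the simple $A(R)$-modules are in bijection with the simple $H$-modules. Decomposing
$$H \;\cong\; \bigoplus_{i=1}^n c^\ast_{\ge i} S \;=\; \bigoplus_{i=1}^{n} \bigoplus_{\lambda \in \Lambda^0,\; \ell(\lambda) \ge i} S c^\ast_\lambda,$$
each summand $S c^\ast_\lambda$ is a simple algebra and so contributes a unique simple module. The simple $H$-modules are therefore indexed by pairs $(i,\lambda)$ with $\lambda \in \Lambda^0$ and $1 \le i \le \ell(\lambda)$, matching \eqref{LAR}. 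There is no real obstacle in this final assembly; the only step that required genuine work was Lemma~\ref{LemmaAus}, where projectivity of $A(R)_B$ had to be verified by an inductive argument peeling off summands of increasing Loewy length.
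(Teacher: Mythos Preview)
Your proposal is correct and follows essentially the same route as the paper: invoke Lemma~\ref{LemmaAus} to get a graded pre-Borelic subalgebra, observe from \eqref{eqHADR} that $H$ is semisimple, and apply Corollary~\ref{CorgrBrBo}(3). The paper's proof is terser (it cites \eqref{eqLABH} for the indexing of simples rather than unpacking the decomposition of $H$, and it does not spell out the perfect-field parenthetical), but the substance is identical.
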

\begin{proof}
Since $H$ in equation~\eqref{eqHADR} is semisimple, the statement follows from Lemma~\ref{LemmaAus}, by using Corollary~\ref{CorgrBrBo}(3) and equation \eqref{eqLABH}.\end{proof}

\begin{rem}
By \cite{DRAus}, the ADR algebra is quasi-hereditary, regardless of wether~$R$ is Wedderburn. However, if~$R$ is not Wedderburn, Theorem \ref{ThmAus} does not yield an exact Borel subalgebra. In this case, the field $\mk$ cannot be algebraically closed and the results in~\cite{KKO} also do not ensure the existence of exact Borel subalgebras in Morita equivalence classes.
\end{rem}

\subsection{All algebras are standardly based}

\begin{thm}\label{AllBased}
Any algebra over an algebraically closed field $\mk$ is standardly based.
\end{thm}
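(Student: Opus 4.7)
The plan is to use the ADR construction to reduce to the quasi-hereditary case. Since $\mk$ is algebraically closed and in particular perfect, any finite-dimensional $\mk$-algebra $A$ is Wedderburn, so Theorem~\ref{ThmAus} applies: the ADR algebra $C := A(A)$ is quasi-hereditary. Because $\mk$ is algebraically closed, the discussion at the end of Section~\ref{SecSBalg} (citing \cite[Theorem~4.2.3]{JieDu}) then gives that $C$ is standardly based, with poset $L = \Lambda_C$ and cell modules $W(\lambda) = S(\lambda)$.

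Since $C$ is a cover of $A$ in the sense of Section~\ref{SecCover}, recalled in Section~\ref{introAus}, there is an idempotent $e \in C$ with $A \cong eCe$. The main technical step will be to transfer the standardly based structure from $C$ to $D := eCe$. For each $p \in L$, I would set $D^{\ge p} := eC^{\ge p}e$, which is a two-sided ideal of $D$, and observe $D^{>p} = eC^{>p}e$. Applying the bimodule-exact functor $e(-)e$ to the short exact sequence $0 \to C^{>p} \to C^{\ge p} \to W(p) \otimes_{\mk} W'(p) \to 0$ then yields an isomorphism of $D$-bimodules
$$D^{\ge p}/D^{>p} \;\cong\; eW(p) \otimes_{\mk} W'(p)e.$$
Taking $D^{(p)} := eC^{(p)}e$ as complements, the decomposition $\bigoplus_{p} D^{(p)} = D$ follows by applying $e(-)e$ to $\bigoplus_{p} C^{(p)} = C$. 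Discarding those $p \in L$ for which $eW(p) \otimes W'(p)e = 0$ will yield a standardly based structure on $A$ indexed by a subposet $L' \subseteq L$, with cell modules $eW(p)$ and right cell modules $W'(p)e$ for $p \in L'$.

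The descent argument itself should be routine; the only point requiring verification is that $\Lambda_A$ embeds in $L'$ as a subposet. This will be automatic, since for $\lambda \in \Lambda_A$ the simple $C$-module $L(\lambda)$ satisfies $eL(\lambda) \neq 0$, which forces $eW(\lambda) = eS(\lambda) \neq 0$ because $S(\lambda)$ has simple top $L(\lambda)$, with the symmetric argument applying to $W'(\lambda)e$. The main conceptual input is therefore the existence of the cover $A \cong eA(A)e$ from Auslander's construction together with Theorem~\ref{ThmAus}; the hardest step, if any, lies in the reduction to the Wedderburn case, which is already built into Theorem~\ref{ThmAus} via the algebraic closedness hypothesis.
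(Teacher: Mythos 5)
Your overall strategy is the same as the paper's: pass to the ADR algebra $C := A(A)$, use Theorem~\ref{ThmAus} to conclude $C$ is quasi-hereditary (hence standardly based by Du, as $\mk$ is algebraically closed), and then descend along $A\cong eCe$. The divergence is in the last step: the paper simply cites \cite[Proposition~3.5]{Yang} for the fact that a standardly based structure on $C$ passes to the centraliser algebra $eCe$, whereas you attempt to prove this by hand, and there is a genuine gap in your argument there.

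The exactness part is fine: $e(-)e$ is exact on $C$-bimodules, so one does get $eC^{\ge p}e/eC^{>p}e\cong eW(p)\otimes_\mk W'(p)e$ as $eCe$-bimodules. What does \emph{not} follow is your claim that $\bigoplus_p eC^{(p)}e = eCe$ by ``applying $e(-)e$ to $\bigoplus_p C^{(p)}=C$''. The map $x\mapsto exe$ is a linear projection of $C$ onto $eCe$, and applying a projection to a direct-sum decomposition of the source only yields a (typically non-direct) sum in the image, unless the kernel $(1-e)C+Ce(1-e)$ is compatible with the complements $C^{(p)}$ — which there is no reason to expect for an arbitrary choice of complements. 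So $\sum_p eC^{(p)}e = eCe$ holds, but directness of the sum, and the containment $D^{(p)}\subseteq D^{\ge p}$, are exactly the content of the Yang--Li proposition; establishing them requires either re-choosing the complements (equivalently, the standard basis of $C$) compatibly with the Peirce decomposition, or working through a refinement of the bimodule filtration of $C$, neither of which appears in your sketch. Once that step is handled (by citation or by a corrected argument), the rest — discarding those $p$ with $eW(p)\otimes W'(p)e=0$, and the identification of $\Lambda_A$ inside $L'$ — is routine, the last point being \cite[Theorem~2.4.1]{JieDu} as recalled in Section~\ref{SecSBalg}.
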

\begin{proof}
Consider an arbitrary algebra~$R$. As the field is perfect, Theorem~\ref{ThmAus} implies that~$A(R)$ is quasi-hereditary. By \cite[Theorem~4.2.7]{JieDu}, $A(R)$ is standardly based. As $R\cong e^\ast A(R)e^\ast$ for some idempotent~$e^\ast$, it follows that~$R$ is also standardly based, by \cite[Proposition~3.5]{Yang}.
\end{proof}

\subsubsection{}\label{Explanation}The standardly based structure can be derived explicitly from the above proof. Take a simple module~$L$ in the socle of the right regular $R$-module. Acting on this with the left $R$-action leads to a two-sided ideal~$W\otimes L$ in $R$ for some left $R$-module~$W$. Factoring out this ideal and continuing the procedure yields the structure.

\begin{rem}
Jie Du informed us that he was aware of this result. In the unpublished manuscript~\cite{preprint}, it is proved that any split finite dimensional algebra over any field is standardly based, using the same approach as in \ref{Explanation}.
\end{rem}

%%%%%%%%%%%%%%%%%%%%%%%%%%%%%%%%%%%%%%%%%%%%%%%%%%%%%%%%%%%%%%%%%%

\section{Example from Lie theory}\label{SecBGG}
We use the general theory to construct a very elementary Lie theoretic example of a properly stratified algebra with exact Borel subalgebra. A more advanced example can be found in \cite[Section~7]{Kluc}. In this section we set $\mk=\mC$.

\subsection{Thick category~$\cO$}
We consider the category~$\cO^{k}$, for~$k\in\mZ_{\ge 1}$, studied in e.g. \cite{Soergel}. For a reductive Lie algebra~$\fg$, with triangular decomposition~$\fg=\fn^-\oplus\fh\oplus \fn^+$ and Borel subalgebra~$\fb=\fh\oplus\fn^+$, the category~$\cO^{k}$ is the category of all $\fg$-modules which
\begin{itemize}
\item are finitely generated;
\item have a basis, where each basis element~$v$ satisfies $h^kv\in\Span\{h^jv\,|\, j<k\}$, if~$h\in\fh$;
\item are locally $U(\fn^+)$-finite.
\end{itemize} If $k=1$, we get the ordinary BGG category $\cO$.
For a module~$M$ in $\cO^k$ and~$\nu\in\fh^\ast$, we consider the generalised weight space
$$M_{(\nu)}:=\{v\in M\,|\, (h-\nu(h))^k v=0,\;\mbox{ for all }\,h\in\fh\}, \quad\mbox{so }\;M=\bigoplus_{\nu\in\fh^\ast}M_{(\nu)}.$$
The category~$\cO^k$ decomposes into subcategories $\cO^k_\chi$ based on the central characters $\chi$ of~$U(\fg)$. The simple objects in $\cO^k_\chi$ are the simple highest weight modules $L(\mu)$ with~$\mu$ in the Weyl group orbit corresponding to~$\chi$, see \cite[Section~1.7]{Humphreys}, which we denote by~$\Lambda$. We consider the Bruhat (partial) order $\le$ on $\Lambda$ of~\cite[Section~5.2]{Humphreys}. The module~$\widetilde{M}_{n,k}(\lambda)$ with~$\lambda\in \Lambda$, for~$n\in\mN$, introduced in \cite[Section~4]{SHPO}, is projective and does not depend on $n$, for~$n$ large enough, by \cite[Proposition~12]{SHPO}. We denote this module by~$P_\lambda$. By the construction in \cite[Section~4]{SHPO}, this module is generated by a vector~$v_\lambda\in (P_\lambda)_{(\lambda)}$, and we have an isomorphism
$$\Hom_{\cO^k}(P_\lambda,M)\,\;\tilde{\to}\;\, M_{(\lambda)};\quad \alpha\mapsto \alpha(v_\lambda). $$
As $\oplus_\lambda P_\lambda$ is a projective generator of~$\cO_\chi^k$, see \cite{SHPO, KKM, Soergel}, we have
$$\cO_\chi ^k\cong A\mbox{-mod}\qquad\mbox{with}\qquad A:=\End_{\cO_\chi^k}\left(\bigoplus_{\lambda\in\Lambda}P_\lambda\right)^{\op}.$$
We denote the identity morphism of~$P_\lambda$ by~$e^\ast_\lambda$. The modules $P_\lambda$ are not indecomposable, but they represent precisely the modules with corresponding notation in Lemma~\ref{PPlambda}.
\subsection{Main theorem on thick $\cO$}
The following theorem generalises~\cite[Theorem~D]{Koenig}.
\begin{thm}
The algebra~$(A,\le)$ is properly stratified with exact Borel subalgebra.
\end{thm}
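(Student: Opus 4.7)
The plan is to apply Corollary~\ref{CorgrBrBo}(2) to an idempotent grading on $A$ together with a graded pre-Borelic subalgebra $B$ whose degree-zero part $H=B_0$ is quasi-local, and then to upgrade the resulting stratification to the Bruhat order by identifying the arising standard modules with generalised Verma modules.

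First I fix a linear extension of the reversed Bruhat order and use it to define a bijection $F_A:\Lambda\to\{0,1,\ldots,|\Lambda|-1\}$. Setting $e^\ast_i:=e^\ast_{F_A^{-1}(i)}$ yields a decomposition as in~\eqref{decomp1} with associated idempotent grading on $A$, and the resulting total order $\le_{\cQ}$ is a refinement of the Bruhat order. I then introduce $B$ using the identification $\Hom_{\cO_\chi^k}(P_\lambda,P_\mu)\cong (P_\mu)_{(\lambda)}$: a morphism $\alpha$ belongs to $B$ exactly when $\alpha(v_\lambda)$ lies in the $U(\fb)$-submodule $U(\fb)\cdot v_\mu\subseteq P_\mu$, i.e.\ inside the ``top'' generalised Verma section of $P_\mu$. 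Composition of two such morphisms preserves this condition, so $B$ is a $\mZ$-graded subalgebra concentrated in non-negative degrees. Its zero part $H$ consists of endomorphisms of each $P_\lambda$ acting on the generator $v_\lambda$ via the cyclic $U(\fh)$-module $U(\fh)\cdot v_\lambda\subseteq (P_\lambda)_{(\lambda)}$; since $\fh$ acts on $v_\lambda$ by $\lambda(h)$ plus a nilpotent, as dictated by the definition of $\cO^k$, this cyclic module is a finite-dimensional local algebra, and $H$ is a direct sum of such local algebras, hence quasi-local.

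It then remains to check that $B$ is a graded pre-Borelic subalgebra in the sense of Definition~\ref{GrPreBor}. Completeness in~\eqref{eqComplete} follows from the generalised Verma flag of $P_\mu$: any morphism $\alpha:P_\lambda\to P_\mu$ either has $\alpha(v_\lambda)\in U(\fb)\cdot v_\mu$ (hence $\alpha\in B$), or its image is contained in a proper submodule of $P_\mu$ appearing in the Verma filtration, which in turn factors through some $P_\kappa$ with $F_A(\kappa)<F_A(\mu)$ (hence $\alpha\in A_-A$). The projectivity of ${}_HB$ and $A_B$ reduces, via Lemma~\ref{LemHB} and its right-module analogue, to the freeness of each Verma layer as a module over the local endomorphism ring of its own top weight space. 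Corollary~\ref{CorgrBrBo}(2) then produces $(A,\le_{\cQ})$ properly stratified with exact Borel subalgebra $B$, the standard module $\Delta(\lambda)=A\otimes_B He^0_\lambda$ being naturally isomorphic to the generalised Verma module, and the proper standard module $\overline{\Delta}(\lambda)$ to the classical Verma module $M(\lambda)$.

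Finally, to pass from $\le_{\cQ}$ to the Bruhat order $\le$, I invoke the classical facts that $[M(\lambda):L(\mu)]=0$ unless $\mu\le\lambda$ in the Bruhat order, that $\Delta(\lambda)$ admits a filtration with all sections isomorphic to $M(\lambda)$, and that the kernel of $P(\lambda)\twoheadrightarrow\Delta(\lambda)$ has a Verma filtration with sections $\Delta(\nu)$ for $\nu>\lambda$ in the Bruhat order. These three facts verify the hypotheses of Definition~\ref{DefA2}(4) with $\preccurlyeq$ taken to be the Bruhat order, completing the proof. The principal obstacle is the construction of $B$ together with the verification of the completeness and projectivity conditions, which both require a careful analysis of how morphisms between the projectives $P_\mu$ interact with their generalised Verma filtrations.
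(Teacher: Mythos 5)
Your overall strategy --- defining $B$ and $H$ via the conditions $\alpha(v_\lambda)\in U(\fb)v_\mu$ and $\alpha(v_\lambda)\in U(\fh)v_\mu$, checking that $B$ is a graded pre-Borelic subalgebra, invoking Corollary~\ref{CorgrBrBo}(2), and then passing from the total order $\le_{\cQ}$ to the Bruhat order --- is exactly the paper's, and your closing paragraph helpfully makes explicit the order-refinement step that the paper leaves implicit in ``the results then follow from Corollary~\ref{CorgrBrBo}(2).''

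However, your completeness verification does not go through as written. You assert that every $\alpha\colon P_\lambda\to P_\mu$ either lies in $B$ (if $\alpha(v_\lambda)\in U(\fb)v_\mu$) or lies in $A_-A$ (if its image sits deeper in the generalised Verma flag). But $e^\ast_\lambda Be^\ast_\mu$ and $e^\ast_\lambda(A_-A)e^\ast_\mu$ are linear subspaces of $\Hom_{\cO^k}(P_\lambda,P_\mu)$, and a union of two proper subspaces can never be the whole space; moreover, a generic $\alpha$ has $\alpha(v_\lambda)$ with components both inside and outside $U(\fb)v_\mu$, so it is in neither subspace. What equation~\eqref{eqComplete2} actually requires is the direct-sum decomposition $e^\ast_\lambda Ae^\ast_\mu=e^\ast_\lambda Be^\ast_\mu\oplus e^\ast_\lambda(A_-A)e^\ast_\mu$. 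The paper obtains it from the PBW splitting $U(\fg)=U(\fb)\oplus U(\fg)_{<}U(\fg)$ together with a weight-homogeneous spanning set $\{u_1u_2v_\mu : u_1\in U(\fn^-),\,u_2\in U(\fb)\}$ of $P_\mu$; the two PBW summands produce the two pieces, and they are transverse because they sit in different $U(\fn^-)$-degrees. A second, smaller slip: Lemma~\ref{LemHB} is a criterion for ${}_HB$ to be projective, not for $A_B$, and there is no ready-made ``right-module analogue'' for $A_B$. The paper instead proves $A_B$ projective directly, using the factorisation $A=\overline{N}B$ and the $U(\fn^-)$-freeness of the $P_\mu$ to show $\alpha B\cong e^\ast_\mu B$ for each diagram $\alpha\in e^\ast_\lambda\overline{N}e^\ast_\mu$.
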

The fact that~$(A,\le)$ is properly stratified was already pointed out in \cite[Corollary~9(a)]{KKM}, due to the Morita equivalence in \cite[Proposition~1]{Soergel}.
\begin{proof}
Let us define subalgebras $B,H,\overline{N}$ of~$A$. For arbitrary~$\lambda,\mu\in\fh^\ast$, we set
\begin{eqnarray*}
e^\ast_\lambda Be^\ast_\mu&:=&\{\alpha\in \Hom_{\cO^k}(P_\lambda,P_\mu)\,|\, \alpha(v_\lambda) \in U(\fb)v_\mu\}\\
e^\ast_\lambda He^\ast_\mu&:=&\{\alpha\in \Hom_{\cO^k}(P_\lambda,P_\mu)\,|\, \alpha(v_\lambda) \in U(\fh)v_\mu\}\\
e^\ast_\lambda \overline{N}e^\ast_\mu&:=&\{\alpha\in \Hom_{\cO^k}(P_\lambda,P_\mu)\,|\, \alpha(v_\lambda) \in U(\fn^-)v_\mu\}.
\end{eqnarray*}
By construction, $e_\lambda^\ast He_\mu^\ast=0$ unless $\mu=\lambda$ and~$He_\lambda^\ast\cong \mC[x]/(x^k)$ is a local algebra.
Since $P_\mu$ is generated by $v_\mu$, it follows from the PBW theorem in \cite[Section 0.5]{Humphreys} that $P_\mu$ is spanned by vectors of the form $u_1u_2v_\mu$, where $u_2\in U(\fb)$ and~$u_1\in U(\fn^-)$. Moreover, since all simple constituents have highest weight  in $\Lambda$, it follows that we can take a basis of such vectors where each~$u_2v_\mu\in (P_\mu)_{(\nu)}$ for some $\nu\in\Lambda$. This implies that
$A=\overline{N}B.$

Now consider $\alpha\in e^\ast_\lambda\overline{N}e^\ast_\mu$, with $\alpha(v_\lambda)=Yv_\mu$, for some $Y\in U(\fn^-)$. For any $\beta\in e^\ast_\mu Be^\ast_\nu$, with $\beta(v_\mu)=X_\beta v_\nu$, we have
$$\alpha\beta(v_\lambda)=\beta(Y v_\mu)=YX_\beta v_\nu.$$ 
Since the modules $P_\mu$ are $U(\fn^-)$-free, we find that $\alpha\beta=0$ if and only if $\beta=0$. It follows that $\alpha B\cong e^\ast_\mu B$ 
and hence that~$A_B$ is projective. Similarly it follows that~${}_HB$ is projective.
From the PBW theorem we find
\begin{equation}\label{ConsPBW}U(\fg)\;=\; U(\fn^-)U(\fb)\,=\, U(\fb)\,\oplus\, \fn^-U(\fg)\,=\, U(\fb)\,\oplus\, U(\fg)_{<} U(\fg),\end{equation}
where $U(\fg)_{<}$ is the subspace of~$U(\fg)$ of all elements which are negative weight vectors for the adjoint $\fh$-action.
Now consider an arbitrary extension~$\le^e$ of~$\le$, which is a total order, meaning we can identify $(\Lambda,\le^e)$ with a subset of~$\mN$. The basis of~$P_\mu$ mentioned above and equation~\eqref{ConsPBW} imply equation~\eqref{eqComplete2}. We thus find that~$B$ is a graded exact Borelic subalgebra. The results then follow from Corollary~\ref{CorgrBrBo}(2).
\end{proof}

%%%%%%%%%%%%%%%%%%%%%%%%%%%%%%%%%%%%%%%%%%%%%%%%%%%%%%%%%%%%%%%%%%

\section{Algebras in diagram categories}\label{Sec8}
\label{SecDia}
We will obtain many examples of the types of algebras introduced in Part I, as algebras of morphisms in the partition category. This category is a natural generalisation of the Brauer category of~\cite{Deligne, BrCat} and the partition algebra of~\cite{Jones, PartM}. 

\subsection{Category algebras}\label{GrAlg}
For a category~$\cC$ with finitely many objects and morphisms, the {\em category algebra} $\mk[\cC]$ is given, as a vector space, by all formal sums of morphisms in $\cC$ with coefficients in $\mk$. This is an algebra for the natural product of composition. In particular, for a finite group $G$, which is a category with one object where all morphisms are isomorphisms, we denote the group algebra by~$\mk G$.
If $\cC$ is {\em $\mk$-linear} with finitely many objects and finite dimensional morphism spaces, we define the category algebra as
 $$\mk[\cC]\;:=\;\bigoplus_{x,y\in \Ob\cC}\Hom_{\cC}(x,y)\qquad\mbox{with}\qquad e^\ast_y\mk[\cC]e^\ast_x= \Hom_{\cC}(x,y),$$
 where~$e^\ast_z$ is the identity morphism of~$z\in\Ob\cC$.

\subsection{Categories of diagrams}\label{SecDefCat}
Consider an arbitrary field $\mk$ and a fixed element~$\delta\in\mk$. 

\subsubsection{Partition category}
The partition category~$\cP(\delta)$ is $\mk$-linear with set of objects $\mN$. 
A $\mk$-basis of~$\Hom_{\cP(\delta)}(i,j)$ is given by all partitions of the set $\{1,2,\cdots,i,1',2',\cdots, j'\}$. We will also view a partition as an equivalence relation on the set.

For a partition~$p$ of the set $S$ and a partition~$q$ of the set $T$, with
$$S=\{1,2,\cdots,i,1',2',\cdots, j'\}\quad\mbox{and}\quad T=\{1',2',\cdots,j',1'',2'',\cdots, k''\},$$ we define the partition~$q\ast p$ of the set
$$S\cup T=\{1,2,\cdots,i,1',2',\cdots, j',1'',2'',\cdots, k''\},$$ corresponding to the minimal equivalence relation generated by~$p$ and~$q$. 

We derive two properties from~$q\ast p$. Firstly, it induces a partition~$q\odot p$ of the set
$$\{1,2,\cdots,i,1'',2'',\cdots, k''\},$$ where two elements in the latter set are equivalent if and only if they were so in $S\cup T$. The second property is the number~$d(p,q)$ of equivalence classes in $q\ast p$ which are contained in $S\cap T=\{1',2',\cdots,j'\}$. Now we identify the partitions $p$, $q$ and~$q\odot p$ with basis elements in respectively $\Hom_{\cP(\delta)}(i,j)$, $\Hom_{\cP(\delta)}(j,k)$ and~$\Hom_{\cP(\delta)}(i,k)$. 
The product (composition) $qp=q\circ p$ is defined as $\delta^{d(p,q)}q\odot p$, which extends bilinearly to
 $$\Hom_{\cP(\delta)}(j,k)\,\times\,\Hom_{\cP(\delta)}(i,j)\;\to\;\Hom_{\cP(\delta)}(i,k).$$
 
It is easily checked that for the above definitions, $\cP(\delta)$ is indeed a ($\mk$-linear) category. For any $n\in\mZ_{>1}$, the partition algebra is
$$\cP_n(\delta)=\End_{\cP(\delta)}(n).$$

We will think graphically of the sets (and their partitions) by imagining the elements of~$\{1,2,\cdots,i\}$ to be $i$ dots on a horizontal line, ordered from left to right and the elements of~$\{1',2',\cdots, j'\}$ to be similarly ordered on a horizontal line above the other one.

\subsubsection{Brauer category}
The Brauer category~$\cB(\delta)$, as introduced in~\cite[Definition~2.4]{BrCat}, is a $\mk$-linear subcategory of~$\cP(\delta)$, with the same set of objects $\Ob\cB(\delta)=\mN$. The morphism spaces are spanned by the partitions into subsets containing exactly two elements. Such partitions will be graphically represented as {\em Brauer diagrams}.
A $(k,l)$-Brauer diagram consists of~$k$ points on a horizontal line, $l$ points on a parallel line above the first line and~$(k+l)/2$ lines, each connecting two points. Composing a $(k,l)$- and a $(i,k)$-Brauer diagram, by drawing the first on top of the second one and using the procedure of composing general partitions yields a $(i,l)$-Brauer diagram, up to a power of $\delta$. We have for instance
\begin{equation*}\label{defOA}
\begin{tikzpicture}[scale=0.9,thick,>=angle 90]
\begin{scope}[xshift=8cm]

\draw (0,0) to [out=90,in=-180] +(.6,.6);
\draw (.6,.6) to [out=0,in=90] +(.6,-.6);
\draw (.6,0) to [out=90,in=-180] +(.6,.6);
\draw (1.2,.6) to [out=0,in=90] +(.6,-.6);
\node at (2.2,0.5) {$\circ$};

\draw (2.6,0) to   [out=60,in=-120] +(1.2,1);
\draw (3.2,0) to   [out=110,in=-70] +(-0.6,1);
\draw (3.2,1) to [out=-90,in=-180] +(.6,-.6);
\draw (3.8,.4) to [out=0,in=-90] +(.6,.6);

\node at (5,0.5) {=};

\node at (5.6,0.5) {$\delta$};

\draw (5.9,0) to [out=90,in=-180] +(.3,.3) to [out=0,in=90] +(.3,-.3);

\node at (12.5,0.5) {in $\Hom_{\cB(\delta)}(4,0)\times\Hom_{\cB(\delta)}(2,4)\to \Hom_{\cB(\delta)}(2,0)$.};
\end{scope}
\end{tikzpicture}
\end{equation*}
For any $n\in\mZ_{>1}$, the Brauer algebra is
$$\cB_n(\delta)=\End_{\cB(\delta)}(n).$$

The lines in Brauer diagrams which connect the lower and upper line will be referred to {\em propagating lines}. Lines connecting two points on the lower line are called {\em caps} and lines connecting two points on the upper are {\em cups}.

\subsubsection{Walled Brauer category}
The walled Brauer algebra~$\cB_{r,s}(\delta)$ is a subalgebra of the Brauer algebra~$\cB_{r+s}(\delta)$ satisfying 
$$\cB_{r,s}(\delta)\cong \cB_{s,r}(\delta)\quad\mbox{and} \quad\cB_{n,0}(\delta)\cong \mk\mS_n.$$

There are several options to define a ``walled Brauer category''. The most natural is as the category~$\underline{{\rm Rep}}_0({\rm GL}_\delta)$ of~\cite[Section~3.2]{Comes2}, see also~\cite{Deligne}, which has a tensor category structure. However, that category decomposes into blocks, and each such block is equivalent to a subcategory of the Brauer category~$\cB(\delta)$, which we realise as follows.

For any $p\in \mN$, the subcategory~${}^p\cB(\delta)$ of~$\cB(\delta)$ has set of objects $p+2\mN\subset \mN=\Ob\cB(\delta)$ and the morphisms $\Hom_{{}^p\cB(\delta)}(p+2i,p+2j)$ are spanned by the Brauer diagrams which are ``well-behaved'' with respect to a straight vertical line separating the left $p+i$ and~$p+j$ dots from the right $i$ and~$j$ dots on the two lines. Well-behaved means that propagating lines do not cross the straight line, but cups and caps intersect it precisely once. For~$r\ge s>0$, we have
$$\cB_{r,s}(\delta)=\End_{{}^{r\text{-}s}\cB(\delta)}(r+s).$$

\subsubsection{Jones category}
A {\em $(k,l)$-Jones diagram} is a partition of a set of~$k+l$ dots into pairs (so a Brauer diagram), which can be drawn without intersections when the $l$ dots are on the outer boundary of an annulus and the $k$ dots on the inner boundary of the annulus.
The Jones category~$J(\delta)$ is the $\mk$-linear subcategory of~$\cB(\delta)$, with the same set of objects $\mN$ and where the morphisms are spanned by the Jones diagrams.
For~$n\in\mZ_{>1}$, the Jones algebra is
$$J_n(\delta)=\End_{J(\delta)}(n).$$
For instance, we have $J_2(\delta)\cong\cB_2(\delta)$.

\subsubsection{Temperley-Lieb category}
The Temperley-Lieb category~$\TL(\delta)$ is a subcategory of the Brauer (or Jones) category with the same set of objects $\mN$, but morphisms are spanned by the diagrams without intersections. This is the category of~\cite[Section~2.2]{BFK}, specialised at $q$ with~$-q-q^{-1}=\delta$.
For~$n\in\mZ_{>1}$, the Temperley-Lieb algebra (of type A) is
$$\TL_n(\delta)=\End_{\TL(\delta)}(n).$$
For instance, we have $\TL_2(\delta)\cong \cB_{1,1}(\delta)$.

\subsubsection{The category~$\FI$}
Usually, $\FI$ is introduced as the category of which the objects are all finite sets and morphisms are all injective maps between sets. We take the equivalent full subcategory with~$\mN$ as set of objects, where $n\in\mN$ is identified with some set of cardinality~$n$. This is a subcategory of~$\cP(\delta)$, for arbitrary~$\delta$, but is not $\mk$-linear.

\subsection{Triangular structure and truncation}\label{SecNHN}
We will distinguish three types of partitions. 
\begin{enumerate}
\item Partitions of type~$H$: These are partitions into subsets of exactly two elements, one on each line (one primed and one unprimed).
\item Partitions of type $N^+$: These are partitions where 
\begin{itemize}
\item each element of the upper line is contained in a set with at least one element of the lower line and no other elements of the upper line.
\item for~$k$, resp.  $l$, minimal in the set containing~$i'$, resp. $j'$, we have $i<j\Leftrightarrow k<l$.
\end{itemize}
\item Partitions of type $N^-$: These are partitions where \begin{itemize}
\item each element of the lower line is contained in a set with at least one element of the upper line and no other elements of the lower line.
\item for~$i'$, resp.  $j'$, minimal in the set containing~$k$, resp. $l$, we have $k<l\Leftrightarrow i<j$.
\end{itemize}
\end{enumerate}
By definition, partitions of type~$H$ must appear in $\Hom_{\cP(\delta)}(j,j)$, for some $j\in\mN$, while those of type $N^+$, resp. $N^-$, must appear in $\Hom_{\cP(\delta)}(i,j)$, for~$i<j$, resp. $i>j$.

As a special case we have Brauer diagrams of the three corresponding types.
\begin{enumerate}
\item Brauer diagrams of type~$H$: These diagrams consist solely of propagating lines.
\item Brauer diagrams of type $N^+$: After removing all caps one is left with only non-crossing propagating lines.
\item Brauer diagrams of type $N^-$: After removing all cups one is left with only non-crossing propagating lines.
\end{enumerate}
The subspace of one of the category algebras spanned by all diagrams of type $H$ is also denoted by $H$, with similar convention for~$N^+$ and~$N^-$.
We draw some examples:
\begin{equation*}\label{defOA}
\begin{tikzpicture}[scale=0.9,thick,>=angle 90]
\begin{scope}[xshift=8cm]

\draw (0,1) to [out=-90,in=-180] +(.6,-.6);
\draw (.6,.4) to [out=0,in=-90] +(.6,.6);
\draw (.6,1) to [out=-90,in=-180] +(.6,-.6);
\draw (1.2,.4) to [out=0,in=-90] +(.6,.6);
\node at (2.7,0.5) {$\in N^-$,};

\draw (4.6,0) to   [out=60,in=-120] +(1.2,1);
\draw (5.2,0) to   [out=110,in=-70] +(-0.6,1);
\draw (5.8,0) to   [out=110,in=-70] +(-0.6,1);
\draw (6.4,0) to   [out=90,in=-90] +(0,1);

\node at (7.1,0.5) {$\in H,$};

\draw (9.6,0) to   [out=110,in=-70] +(-0.6,1);
\draw (10.8,0) to   [out=130,in=-50] +(-1.2,1);

\draw (10.2,0) to [out=90,in=-180] +(.6,.4);
\draw (10.8,.4) to [out=0,in=90] +(.6,-.4);

\draw (9,0) to [out=75,in=-180] +(1.5,.8);
\draw (10.5,.8) to [out=0,in=105] +(1.5,-.8);

\node at (12.7,0.5) {$\in N^+$.};

\end{scope}
\end{tikzpicture}
\end{equation*}

For all the categories, except ${}^p\cB(\delta)$, we denote the identity morphism in $\End(i)$ by~$e_i^\ast$.
For~${}^p\cB(\delta)$ we use~$e_i^{\ast}$ for the identity morphism of~$p+2i\in \Ob{}^p\cB(\delta)$.

For each $n\in\mZ_{>1}$, we introduce the full subcategory~$\cP^{\le n}(\delta)$, resp. $\FI^{\le n}$, of~$\cP(\delta)$, resp. $\FI$, with objects $\{0,1,2\cdots, n\}\subset \mN$. 
Their category algebras satisfy \eqref{decomp1}.

We observe that $\cB(\delta)$ decomposes into two subcategories, one with objects $2\mN$ and one with objects $2\mN+1$. Hence, for~$n\in\mZ_{>1}$, we define $\cB^{\le n}(\delta)$, $J^{\le n}(\delta)$ and~$\TL^{\le n}(\delta)$ as the full subcategories of resp. $\cB(\delta)$, $J(\delta)$ and~$\TL(\delta)$, with objects
\begin{equation}
\JJJ(n)\;:=\; \{n,n-2,\cdots,n-2\lfloor \frac{n}{2}\rfloor\}.
\end{equation}
The corresponding category algebras satisfy $1=\sum_{i\in\JJJ(n)}e^\ast_i$.
For~$p\in\mN$ and~$n\in\mZ_{>0}$, the category~${}^p\cB^{\le n}(\delta)$ is the full subcategory of~${}^p\cB(\delta)$ with objects
$$\{p,p+2,\cdots,p+2n\}.$$ The category algebra satisfies equation~\eqref{decomp1}.

\subsection{The category algebras}
We consider the category algebras of the previous section with idempotents $e_i^\ast$. We use the corresponding decomposition~$\cQ$ of~$\Lambda$, and the idempotent quasi-order $\preccurlyeq_{\cQ}$ and idempotent partial order $\le_{\cQ}$ on $\Lambda$ of Definition~\ref{lepleq}.
\begin{thm}\label{ThmDiagram1}
Fix an arbitrary field $\mk$ and~$\delta\in\mk$. %The labelling set $\Lambda$ of the following category algebras is given in Lemma~\ref{LemExBH}.
\begin{enumerate}
\item The algebras $\mk[\cP^{\le n}(\delta)]$, $\mk[\cB^{\le n}(\delta)]$, $\mk[{}^p\cB^{\le n-p}(\delta)]$ and~$\mk[\FI^{\le n}(\delta)]$ are
\begin{itemize}
\item quasi-hereditary for~$\le_{\cQ}$, with exact Borel subalgebra if~$\charr(\mk)\not\in [2,n]$;
\item exactly standardly stratified for~$\preccurlyeq_{\cQ}$, with exact Borel subalgebra;
\item base stratified for decomposition~$\cQ$.
\end{itemize}
\item The algebra~$\mk[J^{\le n}(\delta)]$ is
\begin{itemize}
\item quasi-hereditary for~$\le_{\cQ}$ with exact Borel subalgebra, if~$\charr(\mk)$ does not divide any element of~$\JJJ(n)$;
\item properly stratified for~$\le_{\cQ}$, with exact Borel subalgebra;
\item base stratified for decomposition~$\cQ$ if the polynomials $x^{i}-1$, for~$i\in\JJJ(n)$, split over the field $\mk$.
\end{itemize}
\item The algebra~$\mk[\TL^{\le n}(\delta)]$ is
\begin{itemize}
\item quasi-hereditary for~$\le_{\cQ}$, with exact Borel subalgebra;
\item base stratified for decomposition~$\cQ$.
\end{itemize}
\end{enumerate}
When the above condition for quasi-heredity is not satisfied, the category algebra is not quasi-hereditary, for any partial order on $\Lambda$ (with~$\Lambda$ given in Lemma~\ref{LemExBH}).
\end{thm}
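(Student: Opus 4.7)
\noindent\textbf{Proof proposal for Theorem \ref{ThmDiagram1}.}

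The plan is to present, for each of the six families of category algebras $A=\mk[\cC^{\le n}(\delta)]$, a single graded pre-Borelic subalgebra $B\subset A$ (in the sense of Definition \ref{GrPreBor}) and then read off every statement of the theorem from Corollary \ref{CorgrBrBo} and Theorem \ref{ThmIdGrB}/Corollary \ref{CorStB}. The common construction sets $B:=H\oplus(N^+)_{>0}$, where $H$, $N^+$, $N^-$ are the subspaces spanned by the type-$H$, $N^+$, $N^-$ partitions of Section~\ref{SecNHN}. The key combinatorial input is a triangular decomposition: by a normal-form argument (factor each diagram through its number of propagating lines, with the cups-and-propagating part on the bottom, a permutation/rotation/identity in the middle, and the caps-and-propagating part on top), every basis diagram $\alpha\in e_j^\ast A e_i^\ast$ admits a unique factorisation $\alpha=\alpha_-\cdot\alpha_0\cdot\alpha_+$ with $\alpha_\pm\in N^\pm$ and $\alpha_0\in H$.

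From this triangular decomposition I would deduce all three conditions of Definition \ref{GrPreBor} simultaneously. Closure of $B$ under composition follows from the observation that ``no caps'' is preserved under stacking (caps in a composed diagram can only come from two upper dots of the top diagram lying in a common block). The completeness condition $A=B\oplus A_-B$ of \eqref{eqComplete} reduces, after recognising that $A_-\subset N^-_{<0}$, to the assertion that every diagram with at least one cap factors as $\alpha_-\cdot\beta$ with $\alpha_-\in N^-_{<0}$ and $\beta\in B$, which is exactly the triangular normal form. Projectivity (indeed freeness) of $A_B$ and ${}_HB$ is immediate from the same decomposition by grouping basis elements according to their $N^-$-, resp.\ non-$N^-$-, coset. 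With $B$ a graded pre-Borelic subalgebra, the algebra $H=B_0$ is identified in each case as follows: for the partition, Brauer, walled-Brauer and $\FI$ categories the type-$H$ endomorphisms in $\End(i)$ are exactly the permutations, so $H=\bigoplus_i \mk\mS_{\ast}$ (with $\mS_{p+i}\times\mS_i$ in the walled case); for the Jones category, planarity on the annulus forces the permutations to be rotations, giving $H=\bigoplus_i\mk[\mZ/i\mZ]\cong\bigoplus_i\mk[x]/(x^i-1)$; for Temperley--Lieb, non-crossing plus all propagating leaves only identities, so $H=\bigoplus_i\mk$. Maschke's theorem and Examples~\ref{ExS1}--\ref{ExS2} then translate the intrinsic properties of $H$ (semisimple / quasi-local / standardly based) into the exact conditions in the table. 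Corollary \ref{CorgrBrBo} delivers the quasi-hereditary, properly stratified and exactly standardly stratified claims, with $B$ as the exact Borel subalgebra, and Corollary \ref{CorStB} delivers the base-stratified claim.

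The main obstacle is the final negative statement: when the stated semisimplicity condition on $H$ fails, $A$ must fail to be quasi-hereditary \emph{for every} partial order on $\Lambda$. My plan is to exploit the self-injectivity of the non-semisimple summands of $H$ (group algebras of $\mS_m$ for $m\in[2,\charr\mk]$, or of $\mZ/i\mZ$ when $\charr(\mk)\mid i$): such algebras admit simples $L^0,L'^0$ with $\Ext^1_H(L^0,L'^0)\neq 0$ \emph{and} $\Ext^1_H(L'^0,L^0)\neq 0$ by self-duality. Using the Borelic pair $(B,H)$ and the resulting exactness of $\ind^A_B$ together with adjunction, I would chase the resulting non-trivial self-extension through Lemma~\ref{PlambdaFilt} and Corollary~\ref{corPFlag} to produce $A$-simples $L(\lambda),L(\mu)$ in the same $\Lambda_i$-stratum with non-zero $\Ext^1_A$ in both directions. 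Since any quasi-hereditary order forces $\Ext^1_A(L(\lambda),L(\mu))\neq 0$ to imply strict comparability $\lambda<'\mu$ or $\mu<'\lambda$ (but never both), this is the desired contradiction. Making the $\Ext^1$-transfer precise is the delicate part: I would use the $\Delta$-flag of $P_\lambda$ from Lemma~\ref{PlambdaFilt} together with Lemma~\ref{LemExt1} applied to a length-two filtration of the relevant proper standard modules, reducing the required non-vanishing to the non-vanishing of $\Ext^1_H(L^0(\lambda),L^0(\mu))$ via the Frobenius-type identity~\eqref{FroDel}.
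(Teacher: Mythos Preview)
Your construction of the graded pre-Borelic subalgebra $B=HN^+$ and the derivation of the positive claims (quasi-hereditary, exactly/properly standardly stratified, base stratified, with $B$ as exact Borel subalgebra) via Corollary~\ref{CorgrBrBo} and Theorem~\ref{ThmIdGrB} is exactly the paper's approach, carried out in Proposition~\ref{PropExBH} and Lemma~\ref{LemExBH}.

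Your argument for the negative statement, however, has a genuine error. The assertion that for a quasi-hereditary algebra one cannot have $\Ext^1_A(L(\lambda),L(\mu))\neq 0$ and $\Ext^1_A(L(\mu),L(\lambda))\neq 0$ simultaneously is false: the principal block of category~$\cO$ for $\mathfrak{sl}_2$ is quasi-hereditary with two simples $L(0),L(-2)$ and $\Ext^1$ non-zero in both directions. What \emph{is} true is that $\Ext^1_A(L(\lambda),L(\mu))\neq 0$ forces $\lambda$ and $\mu$ to be comparable, but comparability in both directions is no contradiction. (The only genuine obstruction along these lines is $\Ext^1_A(L(\lambda),L(\lambda))\neq 0$, and you cannot always produce a self-extension of a simple in $H$: for $\mk\mS_3$ in characteristic~$3$ one has $\Ext^1(\mathrm{triv},\mathrm{triv})=0$.) There is also a gap in the transfer step: Lemma~\ref{NewLemStr} identifies $\Ext^k_A$ between the induced modules $\widetilde{M}=\Delta_M$, which are proper standard modules $\overline{\Delta}(\lambda)$, not simples; passing from $\Ext^1_A(\overline{\Delta}(\lambda),\overline{\Delta}(\mu))\neq 0$ to $\Ext^1_A(L(\lambda),L(\mu))\neq 0$ is not automatic.

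The paper's route avoids both problems by working with global dimension rather than $\Ext^1$. Since $He_i^\ast$ is a group algebra, it is Frobenius, hence self-injective, so it has finite global dimension if and only if it is semisimple. When it is not, equation~\eqref{eqAiH} gives $A^{(i)}\cong He_i^\ast$, and Lemma~\ref{NewLemStr} (applied for all $k$) shows that $A$ inherits infinite global dimension. Since quasi-hereditary algebras have finite global dimension (\cite{CPS} or \cite[Corollary~6.6]{APT}), $A$ cannot be quasi-hereditary for any partial order.
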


\begin{rem}
The statements on the exact standard stratification in Theorem~\ref{ThmDiagram1}(1) can be refined by replacing~$\preccurlyeq_{\cQ}$ by a partial quasi-order $\preccurlyeq$, such that~$<$ is the same as $\prec_{\cQ}$, but $\sim$ reflects the block decomposition of~$\mk \mS_t$, see \cite[Section~21]{James}.
\end{rem}

If $\mk$ is algebraically closed, the algebras in Theorem~\ref{ThmDiagram1} are all standardly based by Theorem~\ref{ThmBaSt}, with poset $L:=\sqcup_{i}L_i$ and~$L_i$ the posets for the standardly based algebra~$He_i^\ast$, as can be obtained from Examples \ref{ExS1} and~\ref{ExS2} and Lemma~\ref{LemExBH}. 
\begin{thm}\label{ThmDiagram2}
Assume that~$\mk$ is algebraically closed. The cell modules of~$C$ form a standard system for~$(L,\le)$ if and only if the condition below is satisfied.
\vspace{-4mm}
\begin{center}
\begin{tabular}{ | l | l | l |  }
\multicolumn{3}{c }{}\\
\hline
algebra~$C$& condition& Set $L\,=\,\sqcup_{i}L_i$ \\ \hline\hline
$\mk[\cP^{\le n}(\delta)]$ & $\charr(\mk)\not\in\{2,3\}$ or~$\charr(\mk)=3$ and~$n=2$&$\{(i,\mu)\,|\, 0\le i\le n,\; \mu\vdash i\}$  \\ \hline
$\mk[\cB^{\le n}(\delta)]$ & $\charr(\mk)\not\in\{2,3\}$ or~$\charr(\mk)=3$ and~$n=2$ &$\{(i,\mu)\,|\, i\in \mathscr{J}(n),\; \mu\vdash i\}$\\ \hline
$\mk[J^{\le n}(\delta)]$ & $i$ not divisible by~$\charr(\mk)$, for~$i\in\JJJ(n)$ &$\{(i,\omega)\,|\, i\in \mathscr{J}(n),\; \omega\in\mk,\,\, \omega^i=1\}$  \\ \hline
$\mk[\TL^{\le n}(\delta)]$ & $\emptyset$&$\JJJ(n)$ \\ \hline
$\mk[\FI^{\le n}(\delta)]$ & $\charr(\mk)\not\in\{2,3\}$ or~$\charr(\mk)=3$ and~$n=2$& $\{(i,\mu)\,|\, 0\le i\le n,\; \mu\vdash i\}$ \\ \hline
$\mk[{}^p\cB^{\le n}(\delta)]$ & $\charr(\mk)\not\in\{2,3\}$&$\{(i,\mu,\nu)\,|\, 0\le i\le n,$ \\ 
 & or~$\charr(\mk)=3$ and~$n+p\le 2$&$\mu\vdash p+i,\,\nu\vdash i \}$ \\ 
\hline
\end{tabular}
\end{center}
\end{thm}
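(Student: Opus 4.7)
The plan is to invoke Corollary~\ref{CorStB} together with Proposition~\ref{corHKKP}(2), which reduces the question to determining, for each index $i$, when the cell modules of $He^\ast_i$ form a standard system for the poset $L_i$. The first task is therefore to identify $He^\ast_i$ concretely for each of the six category algebras, by analysing the type-$H$ diagrams at the object labelled by $i$. Such diagrams are bijections between upper and lower points compatible with the ambient structure, so one obtains $He^\ast_i\cong\mk\mS_i$ for $\cP$, $\cB$ and $\FI$; the cyclic group algebra $\mk[\mZ/i\mZ]$ for $J$, since the annular planarity condition forces cyclic permutations; the ground field $\mk$ for $\TL$, where only the identity is planar; and $\mk\mS_{p+i}\otimes_\mk\mk\mS_i$ for ${}^p\cB$, since the wall forces permutations on each side independently. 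In each case the poset $L_i$ and cell modules must then be read off from Examples~\ref{ExS1} and~\ref{ExS2}, and these match the third column of the table.

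Next, I would determine the standard-system condition for each of the building blocks above. The case $He^\ast_i=\mk$ is trivial. For $\mk[\mZ/i\mZ]\cong\mk[x]/(x^i-1)$ with $\mk$ algebraically closed, the cell modules from Example~\ref{ExS1} are all one-dimensional; when $\charr(\mk)\nmid i$ the algebra is semisimple and the cell modules automatically form a standard system, while when $\charr(\mk)\mid i$ all cell modules become isomorphic to the single simple module, giving non-zero $\Hom$'s between non-comparable cell modules and violating the axioms. For $\mk\mS_i$, the cell modules are the Specht modules, and I would invoke the classical characterisation that they form a standard system precisely when $\charr(\mk)\notin\{2,3\}$, together with the boundary case $i\le 2$ in which $\mk\mS_i$ is semisimple for any characteristic. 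For the tensor product $\mk\mS_{p+i}\otimes\mk\mS_i$ relevant to ${}^p\cB$, standard systems pass through external tensor products, so the condition reduces to the conjunction of the conditions on the two factors; the binding one is that coming from the largest factor $\mk\mS_{p+n}$, which explains the appearance of $n+p\le 2$ in the walled Brauer row.

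Assembling these conclusions as $i$ ranges over $\{0,\dots,n\}$ (respectively $\JJJ(n)$ for $\cB$, $J$, $\TL$) and taking the intersection of the resulting conditions reproduces exactly the table. The main obstacle lies in the symmetric-group step: the characterisation of when Specht modules of $\mk\mS_i$ form a standard system is classical in characteristic $0$ and characteristic $\ge 5$, but in characteristics $2$ and $3$ with $i$ large enough one has to exhibit explicit non-zero $\Hom$ or $\Ext^1$ between Specht modules that violate the axioms, while the small cases (most importantly $i\le 2$ in characteristic $3$) must be handled by direct semisimplicity arguments to obtain the precise boundary in the table.
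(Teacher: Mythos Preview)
Your approach matches the paper's proof exactly: reduce via Corollary~\ref{CorStB} (whose hypotheses are supplied by Proposition~\ref{PropExBH}) to the building blocks $He^\ast_i$ identified in Lemma~\ref{LemExBH}, then carry out the case-by-case analysis that the paper records as Lemma~\ref{LemHN}. One small correction: when $\charr(\mk)\mid i$ the cell modules of $\mk C_i\cong\mk[x]/(x^i-1)$ need not all be isomorphic to a single simple module---only those corresponding to repeated roots of $x^i-1$ coincide---but this still produces non-zero $\Hom$ between cell modules with distinct labels and hence violates axiom~(2) of a standard system, so your conclusion stands.
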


We start the proofs of the theorems with the following proposition.

\begin{prop}\label{PropExBH}
Let $C$ be one of the category algebras in Theorem \ref{ThmDiagram2}, equipped with the idempotent grading of \eqref{grBr}.
The subspaces $H,N^+$ and~$N^-$ of~$C$, defined in Section~\ref{SecNHN} are subalgebras and~$B:=HN^+$ is a graded pre-Borelic subalgebra of~$C$.
\end{prop}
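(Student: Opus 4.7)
The plan is to carry out an analysis analogous to the $\overline{N}HN^+$ decomposition of $U(\mathfrak{g})$ used in Section~\ref{SecBGG}, exploiting a ``Bruhat-type'' factorization of diagrams.

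First I would verify that each of $H$, $N^+$ and $N^-$ is closed under composition. For $H$, this is immediate: composing two diagrams in which every block contains exactly one upper and one lower point yields another such diagram, up to a scalar $\delta^a$ for closed middle loops. For $N^+$, one checks diagrammatically that after composing $n_2^+\circ n_1^+$, each upper of the result lies in a set with lower points and no other uppers (since each upper of $n_2^+$ is grouped only with elements of the middle line, which in $n_1^+$ are grouped only with lowers of $n_1^+$), and the ordering condition is inherited from $n_1^+$ and $n_2^+$; the argument for $N^-$ is symmetric.

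The heart of the proof is a factorization: every basis diagram $d$ of $C$ admits a unique decomposition $d = \delta^{a(d)}\,n^-(d)\cdot h(d)\cdot n^+(d)$ with $n^\pm(d)$ and $h(d)$ basis elements of $N^\pm$ and $H$. The procedure is canonical: the pure-upper blocks of $d$ (resp.\ pure-lower) are placed into $n^-(d)$ (resp.\ $n^+(d)$); each mixed block containing $k$ upper and $l$ lower points is split by introducing a single ``propagating slot'' in $n^-(d)$ (grouping its $k$ upper points and one slot on its lower line), in $h(d)$ (a type-$H$ bijection), and in $n^+(d)$ (grouping the $l$ lower points and one slot on its upper line); the slots are ordered so that $n^\pm(d)$ satisfy the non-crossing conditions. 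This yields a $\mk$-vector-space isomorphism $C\cong N^-\otimes_\mk H\otimes_\mk N^+$.

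From this I would derive that $B=HN^+$ is a subalgebra by showing $N^+\cdot H\subseteq HN^+$: composing an element of $N^+$ with an $H$-element permutes the propagating slots but introduces no pure-upper blocks. The pre-Borelic properties of Definition~\ref{GrPreBor} follow: \emph{gradedness} of $B$ since $H\subseteq C_0$ and $N^+\subseteq\bigoplus_{j\ge 0}C_j$ (because every $N^+$-diagram has at least as many lower as upper points), with $B_0=H$; \emph{completeness} $C=B\oplus C_- B$, since $C=N^-\cdot B$ and the non-identity basis of $N^-$ lies in $C_-$; \emph{right projectivity} of $C$ over $B$, because $C=\bigoplus_{n^-}n^- B$ as a right $B$-module (summed over a basis of $N^-$), with uniqueness of the factorization ensuring each summand is free of rank one over $B$; analogously $B=\bigoplus_{n^+}H n^+$ is a free left $H$-module. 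The principal obstacle is establishing the uniqueness of the Bruhat-type factorization in the most general case, namely the partition category $\cP(\delta)$, where blocks can have arbitrary size and the canonical splitting of a mixed block into propagating-slot form requires a consistent ordering choice to comply with the non-crossing conditions; for the Brauer, walled Brauer, Jones and Temperley--Lieb categories, where all blocks have size two, the decomposition reduces to the visibly unique ``cups on top\,$+$\,permutation\,$+$\,caps on bottom'' picture.
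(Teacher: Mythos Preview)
Your approach via the triangular factorization $C \cong N^- \otimes_\mk H \otimes_\mk N^+$ is essentially the paper's proof, which phrases the same decomposition more tersely as $C=\bigoplus_i N^- e_i^\ast B$ together with $qB\cong e_i^\ast B$ for each diagram $q\in N^- e_i^\ast$ (and $Hq\cong He_i^\ast$ for $q\in e_i^\ast B$). One small correction: the summand $n^- B$ is not ``free of rank one over $B$'' but isomorphic to the projective summand $e_i^\ast B$ of $B$ (and likewise $Hn^+\cong He_i^\ast$), which is precisely what the projectivity of $C_B$ and ${}_HB$ requires.
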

\begin{proof}
One verifies that the subspaces are subalgebras for~$\cP(\delta)$. The other cases follow by restriction. Further, we can classify partitions into those where each element of the upper line is contained in a set/class with at least one element of the lower line and no other elements of the upper line, and the rest. In each of the cases, the first type of diagrams span $B$, whereas those of the second type span $C_-B$, proving equation~\eqref{eqComplete}. So we find that~$B$ is a complete $\mN$-graded subalgebra of~$C$.

It hence remains to prove that~$C_B$ and~${}_HB$ are projective modules. We have $C=N^-B$ and by associating to each partition the number of classes which contain elements of both rows (for Brauer diagrams this is just the number of propagating lines) we find a decomposition
$$C=\bigoplus_{i}N^- e_i^\ast B.$$
For any partition~$q$ in $N^- e_i^\ast$ we have $qB\cong e_i^\ast B$, proving that~$C_B$ is projective. Similarly it follows that for any partition~$q$ in~$e^\ast_i B$ we have $Hq\cong He_i^\ast$, which concludes the proof.
\end{proof}

For the cyclic group $C_t$ of order $t$, we have $\mk C_t\cong \mk[x]/(x^t-1)$. We denote its labelling set of simple modules by~$\Lambda_C^{\mk,t}$. In case the polynomial~$x^t-1$ splits over $\mk$, for instance when~$\mk$ is algebraically closed, we can take
$\{\omega\in\mk\,|\, \omega^t=1\}$ for this set.
\begin{lemma}\label{LemExBH}
%Let $C$ be any of the category algebras in Proposition~\ref{PropExBH}.
The following table summarises the structure of the subalgebra~$H\subset C$, its labelling set $\Lambda_C$ of isoclasses of simple modules and the criterion for~$H$ to be semisimple. \vspace{-4mm}
\begin{center}
\begin{tabular}{ | l | l | l |  l |}
\multicolumn{4}{c }{}\\
\hline
algebra~$C$& algebra~$H$ & set $\Lambda_C=\Lambda_H=\sqcup_i \Lambda_i$ & semisimplicity criterion for~$H$\\ \hline\hline
$\mk[\cP^{\le n}(\delta)]$ & $\bigoplus_{i=0}^n \mk\mS_i $ &$\{(i,\mu)\,|\, 0\le i\le n,\; \mu\vdash_{\mk} i\}$&$\charr(\mk)\not\in[2,n]$  \\ \hline
$\mk[\cB^{\le n}(\delta)]$ &$\bigoplus_{i\in\JJJ(n)} \mk\mS_i $ & $\{(i,\mu)\,|\, i\in \mathscr{J}(n),\; \mu\vdash_{\mk} i\}$ &$\charr(\mk)\not\in[2,n]$  \\ \hline
$\mk[J^{\le n}(\delta)]$ & $\bigoplus_{i\in\JJJ(n)} \mk C_i $ & $\{(i,\omega)\,|\, i\in \mathscr{J}(n),\; \omega\in \Lambda_C^{\mk,i}\}$ &$\charr(\mk)\nmid i$, for~$i\in\JJJ(n)$  \\ \hline
$\mk[\TL^{\le n}(\delta)]$ &$\bigoplus_{i\in\JJJ(n)} \mk $ &$\JJJ(n)$&$\emptyset$ \\ \hline
$\mk[\FI^{\le n}(\delta)]$ & $\bigoplus_{i=0}^n \mk\mS_i $ &$\{(i,\mu)\,|\, 0\le i\le n,\; \mu\vdash_{\mk} i\}$ &$\charr(\mk)\not\in[2,n]$  \\ \hline
$\mk[{}^p\cB^{\le n}(\delta)]$ & $\bigoplus_{i=0}^n \mk\mS_{p+i}\times\mS_i$  & $\{(i,\mu,\nu)\,|\, 0\le i\le n,$ &$\charr(\mk)\not\in[2,p+n]$ \\ 
 &  & $\mu\vdash_{\mk} p+i,\,\nu\vdash_{\mk} i \}$ &  \\ 
\hline

\end{tabular}
\end{center}
\end{lemma}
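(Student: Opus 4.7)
The plan is to identify $H$ directly from the definition of type-$H$ partitions in Section~\ref{SecNHN}, and then read off the labelling set $\Lambda_C = \Lambda_H$ using condition~(II) of Definition~\ref{defB} together with Proposition~\ref{PropExBH}. The semisimplicity criterion will follow from Maschke's theorem applied to each block.

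First I would handle the partition category. By definition, a type-$H$ partition pairs each primed element with exactly one unprimed element (and vice versa), so it lives in $\End_{\cP(\delta)}(i)$ and corresponds bijectively to an element of~$\mS_i$. Since the composition of two such partitions produces no closed loops (each class in the composite corresponds to the composition of two bijections), no power of $\delta$ appears, and $H \cap \End_{\cP(\delta)}(i) \cong \mk\mS_i$ as algebras. Because type-$H$ partitions must have equal row sizes, $e_i^\ast H e_j^\ast = 0$ for $i\ne j$, yielding $H \cong \bigoplus_{i=0}^n \mk\mS_i$.

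For $\mk[\cB^{\le n}(\delta)]$, $\mk[\FI^{\le n}]$, and $\mk[{}^p\cB^{\le n}(\delta)]$, the same argument applies after restriction: for the Brauer algebra a type-$H$ diagram is a Brauer diagram with only propagating lines, i.e.\ a permutation of the $i$ strands; for $\FI$ a type-$H$ partition forces the underlying injection to be a bijection; for the walled Brauer algebra the wall separates the propagating strands into a left-hand block (a permutation in $\mS_{p+i}$) and a right-hand block (a permutation in~$\mS_i$), giving $\mk\mS_{p+i}\times\mS_i$. For $\mk[\TL^{\le n}(\delta)]$, the non-crossing condition forces the identity permutation as the only type-$H$ diagram in each degree, hence $H \cong \bigoplus_{i\in \JJJ(n)} \mk$. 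The main obstacle is the Jones case $\mk[J^{\le n}(\delta)]$: here I would give a short topological argument that a Jones diagram on the annulus with only propagating lines must correspond to a permutation $\sigma\in\mS_i$ preserving the cyclic order of the $i$ boundary points, so $\sigma$ is a cyclic rotation; multiplication of two such rotations produces no closed loops, so $H\cap\End_{J(\delta)}(i)\cong \mk C_i$, whence $H\cong \bigoplus_{i\in\JJJ(n)}\mk C_i$.

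Having identified $H$, I would invoke Proposition~\ref{PropExBH} (so that $(B,H)$ is a pre-Borelic pair) together with equation~\eqref{eqLABH} to obtain $\Lambda_C = \Lambda_H$. The right-hand column of the table is then a standard fact: for $\mk\mS_t$ the simple modules are parametrised by the $\mk$-regular partitions of~$t$ (see \cite[Section~11]{James}), which reads as $\{\mu\vdash_{\mk} t\}$; for $\mk C_t = \mk[x]/(x^t-1)$ the simple modules are indexed by irreducible factors of $x^t-1$, giving $\Lambda_C^{\mk,t}$; and $\mk$ itself contributes a single simple. Summing these over the blocks yields the claimed $\Lambda_C$. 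Finally, Maschke's theorem gives that $\mk\mS_t$ is semisimple iff $\charr(\mk)\nmid t!$, equivalently $\charr(\mk)\notin[2,t]$, and $\mk C_t$ is semisimple iff $\charr(\mk)\nmid t$; taking the least restrictive condition over all blocks appearing in each case produces the semisimplicity criteria stated in the table, completing the proof.
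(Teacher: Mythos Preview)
Your proposal is correct and follows essentially the same approach as the paper: identify $H$ case-by-case from the definition of type-$H$ diagrams, invoke equation~\eqref{eqLABH} (via Proposition~\ref{PropExBH}) to get $\Lambda_C=\Lambda_H$, cite \cite{James} for the labelling of simples of $\mk\mS_t$, and apply Maschke's theorem for the semisimplicity criteria. One small wording fix: in the last sentence you want the \emph{conjunction} of the block conditions (i.e.\ the most restrictive one), not the ``least restrictive,'' since $H$ is semisimple iff every summand is.
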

\begin{proof}
The structure of the algebra~$H$ follows from its definition in Section \ref{SecDefCat}. We use equation~\eqref{eqLABH}. The simple modules of~$\mk\mS_t$ are labelled by~$\mk$-regular partitions of~$t$, see \cite[Section~11]{James}. 
By Maschke's theorem, for a finite group $G$, the algebra~$\mk G$ is semisimple if and only if the order $|G|$ is not divisible by~$\charr(\mk)$. Hence, $\mk\mS_t$ is semisimple if and only if~$\charr(\mk)>t$ or~$\charr(\mk)=0$.\end{proof}

\begin{proof}[Proof of Theorem~\ref{ThmDiagram1}]
We freely use Proposition~\ref{PropExBH} and Lemma~\ref{LemExBH}.

That the algebras are based-stratified follows from Theorem~\ref{ThmIdGrB} and Examples \ref{ExS1} and~\ref{ExS2}. 
The exact standard stratification follows from Corollary~\ref{CorgrBrBo}(1). The quasi-heredity follows from Corollary~\ref{CorgrBrBo}(3). It is easily checked that the group algebra~$\mk C_i$ is quasi-local, hence $C=\mk[J^{\le n}(\delta)]$ is properly stratified by Corollary~\ref{CorgrBrBo}(2).

Now we prove that in the remaining cases $C$ is not quasi-hereditary. By equation \eqref{eqAiH} and Lemma~\ref{NewLemStr}, $C$ will have infinite global dimension as soon as~$H$ has. For any finite group~$G$, the algebra~$\mk G$ is Frobenius and hence self-injective. In particular, the global dimension of~$\mk G$ is finite if and only if it is zero. In conclusion, when the criteria for semisimplicity of~$H$ in Lemma~\ref{LemExBH} are not satisfied, $C$ has infinite global dimension. In particular $C$ is not quasi-hereditary for any order, by \cite{CPS} or \cite[Corollary~6.6]{APT}.
\end{proof}

The following lemma can be derived from~\cite{Nakano1} with minor additional work.
\begin{lemma}\label{LemHN}
Assume that~$\mk$ is algebraically closed and~$i>1$. 
\begin{itemize}
\item The Specht modules of~$\mk\mS_i$ form a standard system if and only if~$$\begin{cases}\charr(\mk)\not\in \{2,3\}\,\,\,\mbox{ or}\\ \charr(\mk)=3 \,\mbox{ and }\,i=2.\end{cases}$$
\item The cell modules of the standardly based algebra~$\mk C_i$ of Example \ref{ExS1} form a standard system if and only if~$\charr(\mk)$ does not divide $i$.
\end{itemize}
\end{lemma}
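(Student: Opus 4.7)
The plan is to handle the two bullet points of the lemma separately, reducing the first to Nakano's theorem and verifying the second by direct computation.

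For the first bullet on Specht modules, the sufficient direction reduces, via specialization $q=1$, to the main result of \cite{Nakano1}: the Specht modules of the type $A$ Iwahori--Hecke algebra $H_i(q)$ form a standard system precisely when the quantum characteristic $e$ satisfies $e\ge 4$. At $q=1$ one has $e=\charr(\mk)$ when positive and $e=\infty$ otherwise, giving the condition $\charr(\mk)\notin\{2,3\}$. The remaining exceptional case $\charr(\mk)=3$ with $i=2$ is immediate: $\mk\mS_2$ is semisimple since $2$ is invertible in $\mk$, so the two Specht modules $S^{(2)}, S^{(1,1)}$ are distinct simple modules and the three standard-system conditions hold trivially.

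For the converse direction I would verify failure directly. In characteristic $2$ with $i\ge 2$, the modules $S^{(i)}$ and $S^{(1^i)}$ are both one-dimensional with trivial $\mS_i$-action (since $-1=1$) and hence coincide as $\mk\mS_i$-modules; yet $(i)\ne(1^i)$ as elements of $L$, and in the reversed dominance order $(1^i)>(i)$, so the nonzero $\Hom(S^{(1^i)},S^{(i)})=\mk$ violates condition (2). In characteristic $3$ with $i=3$, a direct computation in $\mk\mS_3$---whose principal block is a Brauer tree algebra with simples $L^{(3)}$ and $L^{(2,1)}$---gives $\Ext^1(S^{(1,1,1)},S^{(3)})=\Ext^1(L^{(2,1)},L^{(3)})=\mk$, violating condition (3) as $(1,1,1)>(3)$ in $L$. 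For $i\ge 4$ in characteristic $3$ one appeals again to \cite{Nakano1}, or propagates the $i=3$ failure by Frobenius reciprocity for the exact adjoint pair $(\ind^{\mS_i}_{\mS_{i-1}},\res^{\mS_i}_{\mS_{i-1}})$ together with the branching rule for Specht modules.

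For the second bullet on cyclic groups, the proof is by direct computation on the standardly based structure of $\mk C_i\cong\mk[x]/(x^i-1)$ from Example~\ref{ExS1}. I would first show that each cell module $W(k)=A^{\ge k}/A^{>k}$ is one-dimensional with $x$ acting as $\omega_k$: the identity $x\cdot a_k=(x-\omega_k)a_k+\omega_k a_k=a_{k+1}+\omega_k a_k$ shows $x\equiv \omega_k$ on $W(k)$. If $\charr(\mk)\nmid i$, then $x^i-1$ is separable, the $\omega_k$ are distinct, the modules $W(k)\cong\mk_{\omega_k}$ are pairwise non-isomorphic simples, $\mk C_i$ is semisimple, and the standard-system conditions hold trivially. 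If $\charr(\mk)\mid i$, then $x^i-1$ has repeated roots, so $\omega_k=\omega_l$ for some $k<l$; then $W(k)\cong W(l)$ gives $\Hom(W(l),W(k))=\mk$, which violates condition~(2) since $l>k$ in $L$.

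The main obstacle is propagating the characteristic $3$ failure from $\mS_3$ to $\mS_i$ for $i\ge 4$. The cleanest route, as suggested by the paper's parenthetical ``can be derived from~\cite{Nakano1} with minor additional work'', is to cite Nakano for both directions and only add the semisimple argument in the case $\charr(\mk)=3$, $i=2$; a self-contained propagation argument via Frobenius reciprocity would require a careful compatibility check between the reversed dominance order on $L$ and the branching rule for Specht modules.
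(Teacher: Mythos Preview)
Your treatment of the cyclic-group case and of characteristic $2$ matches the paper's proof, and the positive direction for Specht modules (citing \cite{Nakano1}, plus the semisimple case $p=3$, $i=2$) is the same. The divergence is in the negative direction for characteristic $3$, $i\ge 3$.

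The paper does not separate $i=3$ from $i\ge 4$ and does not invoke propagation. Instead it gives a single elementary construction, uniform in $i$: for the trivial module $S_1$ and sign module $S_2$ (both Specht), it writes down explicit two-dimensional modules
\[
M=\langle v,w\rangle,\quad s_j v=v+c_jw,\ s_jw=-w,\qquad
N=\langle v,w\rangle,\quad s_jw=-w+d_jv,\ s_jv=v,
\]
and checks that in characteristic $3$ the braid relations impose no constraint on the scalars $c_j,d_j$, so choosing them non-constant yields nonsplit extensions. This simultaneously shows $\Ext^1(S_1,S_2)\neq 0$ and $\Ext^1(S_2,S_1)\neq 0$ for every $i\ge 3$, hence the Specht modules fail to form a standard system \emph{for any partial order}, not just the reversed dominance order.

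Your route has two weaknesses by comparison. First, the appeal to \cite{Nakano1} for the negative direction is not supported by the paper's own citation: it invokes \cite[Theorem~6.4(b)(ii)]{Nakano1} only for the sufficiency (the $e\ge 4$ case), and the ``precisely when'' you attribute to that reference is not what is cited. Second, the Frobenius-reciprocity propagation you sketch is, as you acknowledge, incomplete: inducing up from $\mS_3$ produces modules with Specht filtrations rather than Specht modules, and extracting a single nonvanishing $\Ext^1(S^\lambda,S^\mu)$ with $\lambda\not<\mu$ from this requires real work. The paper's two-dimensional construction sidesteps both issues and is shorter than either alternative you propose.
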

\begin{proof}
For~$\charr(\mk)\not\in\{2,3\}$, \cite[Theorem~6.4(b)(ii)]{Nakano1} implies that the Specht modules of~$\mk\mS_i$ form a standard system. When $\charr(\mk)=3$, $\mk\mS_2$ is semisimple and the (simple) Specht modules thus form a standard system.

Now we prove that, in the remaining cases, the Specht modules do not constitute a standard system, for any order. The trivial module~$S_1$ and the sign module~$S_2$ are both Specht modules.
Assume $\charr(\mk)=2$, then~$S_1\cong S_2$, contradicting property (2) in the definition of a standard system in \ref{IntroFaithCov}. 
Now assume $\charr(\mk)=3$ and~$i\ge 3$. We will prove that
$$\Ext^1_{\mk\mS_i}(S_1,S_2)\not=0\not=\Ext^1_{\mk\mS_i}(S_2,S_1),$$
contradicting property (3) in the definition of a standard system. The modules $M$ and~$N$ corresponding to the extensions are given by
$$M=\langle v,w\rangle\qquad \mbox{with }\quad s_j v=v+c_j w\;\mbox{ and }\; s_j w=-w,$$
for~$s_{j}$, $j\in\{1,\ldots,i-1\}$ the generators of~$\mS_i$ and~$c_j\in\mk$ arbitrary, and
$$N=\langle v,w\rangle\qquad\mbox{with }\quad s_j w=-w+d_j v\;\mbox{ and }\; s_j v=v,$$
for~$d_j\in\mk$ arbitrary. It can be verified that in characteristic 3, there are no conditions on the coefficients $c_j,d_j$ coming from imposing the braid relations. However, when~$c_j\not= c_{j'}$ or~$d_j\not= d_{j'}$, for~$1\le j\not=j'\le i-1$, the modules do not decompose and the extensions hence do not split.

For~$\mk C_i\cong\mk[x]/(x^i-1)$, the cell modules are two-by-two non-isomorphic if and only if all $i$-th roots of~$1$ in the algebraically closed field $\mk$ are different. This is equivalent to demanding that~$\mk C_i$ is semisimple, in which case the cell modules are simple and form a standard system for any order. So a necessary and sufficient condition is that~$\charr(\mk)$ does not divide $i$.
\end{proof}

\begin{proof}[Proof of Theorem~\ref{ThmDiagram2}]
By Corollary~\ref{CorStB} and Proposition~\ref{PropExBH}, this follows from Lemmata~\ref{LemExBH} and~\ref{LemHN}.\end{proof}

\begin{rem} \label{RemnotS}
Consider the case~$\charr(\mk)\not\in[2,n]$.
We have proved that~$\mk[\cB^{\le n}(\delta)]$ is quasi-hereditary with exact Borel subalgebra. This is not a {\em strong} exact Borel subalgebra as defined in \cite{Koenig}, as $H$ is not a maximal semisimple subalgebra. In fact, generically, $\mk[\cB^{\le n}(\delta)]$ will be semisimple itself, as follows from \cite{Rui} and Theorem~\ref{ThmMor} below.
\end{rem}

\begin{rem}\label{NotKKO}
The exact Borel subalgebra~$B$ of~$C:=\mk[\cB^{\le n}(\delta)]$ does not satisfy the property
$$\Ext^k_C(C\otimes_B M,C\otimes_B N)\;\cong\;\Ext^k_B(M,N),\qquad \forall \,k\ge 2,$$
for arbitrary~$M,N\in B$-mod, which is satisfied for the exact Borel subalgebra predicted by the general theory of \cite{KKO}, see \cite[Theorems~1.1 and~10.5]{KKO}. Indeed, we observe that~$B$, and hence the extension group on the right-hand side, does not depend on $\delta$. The extension group in the left-hand side depends heavily on $\delta$.
As mentioned in Remark~\ref{RemnotS}, for generic $\delta$ the left-hand side will vanish. It is easily checked that it does not vanish for {\it e.g.} $\delta=0$. This proves that the displayed isomorphism cannot be true in general.
\end{rem}

\begin{rem}
For~$C=\mk[\FI^{\le n}]$, we have $B=H$ as $N^+=0$. So for~$\charr(\mk)\not\in[2,n]$, $\mk[\FI^{\le n}]$ is quasi-hereditary with semisimple exact Borel subalgebra. It can easily be checked that for the reversal of the order $\le_{\cQ}$, we have that~$\mk[\FI^{\le n}]$ is an exact Borel subalgebra of itself.
\end{rem}

\subsection{Morita equivalences}

\begin{thm}\label{ThmMor}
For an arbitrary field $\mk$, we have the following Morita equivalences under the respective conditions given in the table:
\vspace{-4mm}
\begin{center}
\begin{tabular}{ | l | l |  }
\multicolumn{2}{c }{}\\
\hline
Morita equivalence & condition \\ \hline\hline
$\cP_n(\delta)\;\,\,\stackrel{M}{=}\,\;\,\mk[\cP^{\le n}(\delta)]$ & $\delta\not=0$  \\ \hline
$\cB_n(\delta)\;\,\,\stackrel{M}{=}\;\,\,\mk[\cB^{\le n}(\delta)]$ & $\delta\not=0$ or~$n$ odd \\ \hline
$J_n(\delta)\;\,\,\stackrel{M}{=}\;\,\,\mk[J^{\le n}(\delta)]$ & $\delta\not=0$ or~$n$ odd   \\ \hline
$\TL_n(\delta)\,\,\stackrel{M}{=}\,\,\mk[\TL^{\le n}(\delta)]$ & $\delta\not=0$ or~$n$ odd \\ \hline
$\cB_{p+n,n}(\delta)\,\stackrel{M}{=}\,\mk[{}^p\cB^{\le n}(\delta)]$ & $\delta\not=0$ or~$p\not=0$\\ 
\hline
\end{tabular}
\end{center}
\end{thm}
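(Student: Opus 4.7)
\smallskip
The strategy is to apply~\ref{SecMor}. In each row of the table write $C$ for the category algebra on the right and $e_n^\ast\in C$ for the identity morphism of the largest object of the subcategory (namely $n$ in the first four rows and $p+2n$ in the walled Brauer row). The algebra on the left is precisely $e_n^\ast C e_n^\ast$, so by~\ref{SecMor} the desired Morita equivalence will follow once we show $C=Ce_n^\ast C$. Since $1_C=\sum_i e_i^\ast$ summed over the objects $i$ of the relevant full subcategory, this reduces to showing $e_i^\ast\in Ce_n^\ast C$ for every object $i\ne n$.

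Fix such an $i$. The plan is to exhibit diagrams $\alpha\in\Hom(i,n)$ and $\beta\in\Hom(n,i)$ in the relevant category whose composition satisfies $\beta\circ\alpha=\delta^{\ell}e_i^\ast$ for some $\ell\ge 0$; given this, $e_i^\ast=\delta^{-\ell}\beta\,e_n^\ast\,\alpha\in Ce_n^\ast C$ provided $\delta^\ell\ne 0$, which is automatic when $\ell=0$. The natural recipe is to take $\alpha$ to consist of $i$ propagating lines together with $(n-i)/2$ cups connecting the remaining middle points in pairs, and $\beta$ symmetrically from $i$ propagating lines and $(n-i)/2$ caps. The scalar $\delta^\ell$ arising in the composition is precisely $\delta$ raised to the number of closed components created in the middle row. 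The key combinatorial observation is that, as long as at least one propagating line is present, one can interleave the cups of $\alpha$ with the caps of $\beta$ into a zig-zag so that the middle-row graph contains no cycles and every connected component ends at an external anchor; a direct choice is to route $i-1$ straight propagating lines through middle positions $n-i+2,\dots,n$, attach the remaining $\alpha$-propagator at position $1$ and the remaining $\beta$-propagator at position $n-i+1$, and place the cups of $\alpha$ at $(2,3),(4,5),\dots$ and the caps of $\beta$ at $(1,2),(3,4),\dots$ inside $\{1,\dots,n-i+1\}$. A direct trace then yields the composition $e_i^\ast$, so $\ell=0$. These diagrams are planar, so they remain in the Jones and Temperley-Lieb subcategories; in the walled Brauer row, whenever $p+i\ge 1$ there is an external propagating anchor on at least one side of the wall, and the same zig-zag idea adapted so that cups and caps cross the wall again gives $\ell=0$.

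When no propagating line is available at all, every $\alpha$-cup is forced to pair with a $\beta$-cap in the middle row, closed loops are unavoidable, $\ell>0$, and the argument genuinely requires $\delta\ne 0$. This occurs exactly when the object $0$ lies in the object set of the subcategory (and, in the walled Brauer row, also $p=0$), and the five rows of the table correspond precisely to the five instances: $\cP^{\le n}(\delta)$ always has $0$ among its objects, forcing $\delta\ne 0$; the object set $\JJJ(n)$ of $\cB^{\le n}(\delta), J^{\le n}(\delta), \TL^{\le n}(\delta)$ contains $0$ iff $n$ is even, yielding ``$\delta\ne 0$ or $n$ odd''; and the object set $\{p,p+2,\dots,p+2n\}$ of ${}^p\cB^{\le n}(\delta)$ contains $0$ iff $p=0$, yielding ``$\delta\ne 0$ or $p\ne 0$''. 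The main obstacle is purely combinatorial bookkeeping: explicitly verifying that the zig-zag diagrams lie in each of the five subcategories, i.e.\ checking planarity for Jones and Temperley-Lieb and the wall-crossing constraint for the walled Brauer row, both of which are a direct inspection of the explicit diagrams drawn above.
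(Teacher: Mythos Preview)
Your argument follows the same route as the paper's: both reduce to showing $C = Ce_n^\ast C$ via~\ref{SecMor}, and both achieve this by exhibiting elements $a_i\in e_n^\ast Ce_i^\ast$ and $b_i\in e_i^\ast Ce_n^\ast$ with $b_ia_i = e_i^\ast$ (this is Lemma~\ref{Lemab} in the paper, and your $\alpha,\beta$). Your zig-zag interleaving of cups and caps is essentially the paper's diagram $\widehat{a}_i$, and your case analysis of when the object $0$ is present matches the paper's.

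There is one small gap. Your explicit recipe of ``$i$ propagating lines together with $(n-i)/2$ cups'' presupposes $n-i$ even. This is automatic in the Brauer, Jones, Temperley--Lieb and walled Brauer rows, but in the partition row the object set is all of $\{0,1,\dots,n\}$, and when $n-i$ is odd there are no Brauer-type diagrams in $\Hom_{\cP(\delta)}(i,n)$ at all, so your zig-zag is not defined. This is not fatal: since the hypothesis for the partition row is $\delta\ne 0$, you can simply take $\alpha$ to consist of $i$ propagating pairs and $n-i$ singleton blocks on the upper row (and $\beta$ symmetrically), giving $\beta\alpha=\delta^{n-i}e_i^\ast$, which suffices. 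The paper instead gives, for each $i\ge 1$, an explicit partition with one multi-element block that absorbs the spare dots so that $b_ia_i=e_i^\ast$ without any $\delta$-factor; but either fix completes your argument.
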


\begin{rem}
Theorem~\ref{ThmMor} and Lemma~\ref{LemExBH} together determine the labelling set $\Lambda_A$ of the simple modules of the diagram algebras $A$, under the condition in the right column of the table. In almost all cases $\Lambda_A$ is known by \cite{CDDM, CellAlg, partition, Xi}.
\end{rem}

We start the proof of Theorem~\ref{ThmMor} by constructing special elements in the category algebras, which will also be essential for constructions in Part III.
\begin{lemma}\label{Lemab}
Consider algebras $C$,~$A=e_n^\ast Ce_n^\ast$ and~$i\in I_A(n)$, as in the table below.
\vspace{-4mm}
\begin{center}
\begin{tabular}{ | l | l | l |  }
\multicolumn{3}{c }{}\\
\hline
Category algebra~$C$ & Diagram algebra~$A$ & $I_A(n)$\\ \hline\hline
$\mk[\cP^{\le n}(\delta)]$& $\cP_n(\delta)$ & $\{0,1,\ldots,n\}$  \\ \hline
$\mk[\cB^{\le n}(\delta)]$& $\cB_n(\delta)$ & $\JJJ(n)$  \\ \hline
$\mk[J^{\le n}(\delta)]$& $J_n(\delta)$ & $\JJJ(n)$  \\ \hline
$\mk[\TL^{\le n}(\delta)]$& $\TL_n(\delta)$ & $\JJJ(n)$  \\ \hline
$\mk[{}^p\cB^{\le n}(\delta)]$& $\cB_{p+n,n}(\delta)$ & $\{0,1,\ldots,n\}$  \\ \hline
\end{tabular}
\end{center}
Then there are elements $a_i\in e_n^\ast Ce_i^\ast$ and~$b_i\in e_i^\ast Ce_n^\ast$ such that~$c^\ast_i:=a_ib_i\in A$ and~$e_i^\ast$ satisfy
$$\begin{cases} e_i^\ast= b_ia_i\quad \mbox{and}\quad (c^\ast_i)^2=c^\ast_i&\mbox{ if either $\delta\not=0$ or~$i+p\not=0$}\\
(c^\ast_i)^2=0=a_ib_i&\mbox{ if~$\delta=0$ and~$i+p=0$.}
\end{cases}$$
Here, we set $p=0$ in all cases, except for~${}^p\cB^{\le n}(\delta)$.
\end{lemma}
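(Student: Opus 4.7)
The plan is to exhibit $a_i$ and $b_i$ as explicit (walled) Brauer, partition, Jones, or Temperley-Lieb diagrams and to verify the claimed identities by a direct trace of the composition in the diagram category. A single template works for all five classes of algebras, with only minor adjustments to respect planarity (Temperley-Lieb), annularity (Jones), and the wall (walled Brauer).

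In the main subcase $i + p > 0$, $a_i$ will be the diagram from $i$ to $n$ (or from $p+2i$ to $p+2n$) consisting of the $i$ straight propagating strands $\{j, j'\}$ for $j = 1, \ldots, i$ together with consecutive cups on the upper rim; $b_i$ is the diagram from $n$ to $i$ with the first $i - 1$ propagating strands straight, one extra ``long'' propagating strand $\{n, i'\}$ emanating from the far-right bottom dot, and consecutive caps on the lower rim whose positions are shifted by one relative to the cups of $a_i$. The key computation is then a direct trace of $b_i \circ a_i$ through the $n$ internal dots: the $i - 1$ short propagating pairs combine pairwise into the first $i - 1$ straight lines of $e_i^\ast$, and the remaining internal dots form a single alternating path in which each cup of $a_i$ is immediately followed by a cap of $b_i$, ending at the long propagating strand of $b_i$ and producing the final straight line of $e_i^\ast$. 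Since this trace uses every internal dot exactly once and closes no loops, $b_i a_i = e_i^\ast$ holds independently of $\delta$; the relations $a_i = a_i e_i^\ast$ and $b_i = e_i^\ast b_i$ then give $(a_i b_i)^2 = a_i(b_i a_i) b_i = a_i b_i = c_i^\ast$, so $c_i^\ast$ is idempotent in $A$. For the walled Brauer the same template is applied on whichever side of the wall supports at least one propagating strand (such a side exists precisely when $i + p > 0$), and cups and caps are drawn so as to cross the wall once, as required by the definition.

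In the degenerate subcase $i + p = 0$, no propagating strand is available to carry the chain, so one falls back to the simplest construction with $a_i$ consisting only of cups and $b_i$ only of caps. A direct trace then yields $b_i a_i = \delta^{(n-i)/2} e_i^\ast$, with one $\delta$-factor per cup/cap pair that closes up into a loop. If $\delta \neq 0$ this is normalised to $e_i^\ast$ by rescaling $a_i$ (say) by $\delta^{-(n-i)/2}$, which restores the first clause of the lemma. If $\delta = 0$ then $b_i a_i = 0$, and hence $(c_i^\ast)^2 = a_i(b_i a_i) b_i = 0$ follows immediately.

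The principal technical difficulty is the borderline subcase $\delta = 0$ with $i + p > 0$: a naive mirror choice of $b_i$ (a literal reflection of $a_i$) produces $(n-i)/2$ closed loops in $b_i a_i$ and therefore collapses to $0$ at $\delta = 0$. The shifted, chained construction above is designed precisely to absorb those loops into a single long alternating path, by redirecting one propagating strand of $b_i$ to the far end of the diagram and offsetting its caps by one position relative to the cups of $a_i$; this yields the exact equality $b_i a_i = e_i^\ast$ with no $\delta$-factor. Verifying case by case that the resulting diagrams still lie inside the planar (Temperley-Lieb), annular (Jones), or wall-respecting (walled Brauer) subcategories is routine and reduces to drawing the long propagating strand over the caps without crossing them or the wall.
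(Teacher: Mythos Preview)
Your construction is essentially identical to the paper's: your $a_i$ is the paper's $a_i$ and your shifted $b_i$ is precisely the paper's diagram $\widehat{a}_i$, and your alternating-path argument for $b_ia_i=e_i^\ast$ is the same computation. The degenerate case $i+p=0$ is also treated identically (and you correctly derive $b_ia_i=0$ and hence $(c_i^\ast)^2=0$; the displayed ``$a_ib_i=0$'' in the lemma statement is a typo for $b_ia_i=0$, as the paper's own proof makes clear).

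There is one point where your template does not literally apply: for the partition algebra one has $I_A(n)=\{0,1,\ldots,n\}$, so $n-i$ can be odd, and then ``consecutive cups on the upper rim'' and ``consecutive caps shifted by one'' make no sense. The paper covers this by a genuinely partition-theoretic construction: for $1\le i\le n$ it takes
\[
b_i=\{\{1,1'\},\ldots,\{i,i'\},\{i+1,\ldots,n\}\},\qquad
a_i=\{\{1,1'\},\ldots,\{i-1,(i-1)'\},\{i,i',(i+1)',\ldots,n'\}\},
\]
merging the excess dots into a single block rather than pairing them. The verification that $b_ia_i=e_i^\ast$ is then immediate. Your Brauer-style template can of course be used for the partition algebra when $n-i$ is even, but you should add this separate construction (or an analogous one) to cover all $i\in\{0,1,\ldots,n\}$.
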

\begin{proof}
There are many different choices for~$a_i$ and~$b_i$. The constructions given below for the various cases produce different~$a_i$ and~$b_i$ in the overlapping situations.

Firstly we consider the partition category (so $p=0$) and arbitrary~$1\le i\le n$. We put
$$b_i=\{\{1,1'\},\{2,2'\},\cdots,\{i,i'\},\{i+1,i+2,\cdots,n\}\}\qquad\mbox{and}$$
$$a_i=\{\{1,1'\},\{2,2'\},\cdots,\{i-1,(i-1)'\},\{i,i',(i+1)',(i+2)',\cdots,n'\}\}.$$
We have $b_i a_i= e^\ast_i $, which implies that~$c^\ast_i:=a_ib_i$ is an idempotent. In case~$i=0$ we define $a_0\in \Hom_{\cP(\delta)}(0,n)$ and~$\overline{a}_0\in \Hom_{\cP(\delta)}(n,0)$ as the partitions into one set. When $\delta\not=0$, we set $b_0=\overline{a}_0/\delta$, which leads to~$b_0a_0=e_0^\ast$ and an idempotent~$c^\ast_0=a_0b_0$. When $\delta=0$ we set $b_0=\overline{a}_0$, in which case~$c^\ast_0:=a_0b_0$ squares to zero. The above already deals with the partition category completely, although the diagrams that will be constructed below for the other categories can also be used for the partition category when~$i\in\JJJ(n)$.

Now we consider the cases $\cB_n(\delta)$, $J_n(\delta)$ and~$\TL_n(\delta)$ (so $p=0$).
For arbitrary~$i\in\JJJ(n)$ we introduce three Brauer diagrams, which are also Temperley-Lieb and hence Jones diagrams. We consider two diagrams $\overline{a}_i$ and~$\widehat{a}_i$ with~$(n-i)/2$ caps and~$i$ propagating lines and~$a_i$ with~$(n-i)/2$ cups and~$i$ propagating lines. Note that the definition of~$\widehat{a}_i$ requires $i>0$.

\begin{equation*}\label{defOA}
\begin{tikzpicture}[scale=0.9,thick,>=angle 90]
\begin{scope}[xshift=8cm]
\node at (0,0.5) {$\overline{a}_i=$};
\draw (.7,0) -- +(0,1);
\draw (1.3,0) -- +(0,1);
\draw [dotted] (1.6,.5) -- +(1,0);
\draw (2.9,0) -- +(0,1);
\draw (3.5,0) -- +(0,1);
\draw (4.1,0) to [out=90,in=-180] +(.3,.3) to [out=0,in=90] +(.3,-.3);
\draw (5.3,0) to [out=90,in=-180] +(.3,.3) to [out=0,in=90] +(.3,-.3);
\draw [dotted] (6.2,.1) -- +(1,0);
\draw (8,0) to [out=90,in=-180] +(.3,.3) to [out=0,in=90] +(.3,-.3);
\end{scope}
\end{tikzpicture}
\end{equation*}
\begin{equation*}\label{defBB}
\begin{tikzpicture}[scale=0.9,thick,>=angle 90]
\begin{scope}[xshift=8cm]
\node at (0,0.5) {$\widehat{a}_i=$};
\draw (.7,0) -- +(0,1);
\draw (1.3,0) -- +(0,1);
\draw [dotted] (1.6,.5) -- +(1,0);
\draw (2.9,0) -- +(0,1);
\draw (3.5,0) to [out=90,in=-180] +(.3,.3) to [out=0,in=90] +(.3,-.3);
\draw (4.7,0) to [out=90,in=-180] +(.3,.3) to [out=0,in=90] +(.3,-.3);
\draw [dotted] (5.7,.1) -- +(1,0);
\draw (7.4,0) to [out=90,in=-180] +(.3,.3) to [out=0,in=90] +(.3,-.3);
\draw (8.6,0) to [out=135,in=-45] +(-5.1,1);
\end{scope}
\end{tikzpicture}
\end{equation*}
\begin{equation*}\label{defA}
\begin{tikzpicture}[scale=0.9,thick,>=angle 90]
\begin{scope}[xshift=8cm]
\node at (0,0.5) {$a_i=$};
\draw (.7,0) -- +(0,1);
\draw (1.3,0) -- +(0,1);
\draw [dotted] (1.6,.5) -- +(1,0);
\draw (2.9,0) -- +(0,1);
\draw (3.5,0) -- +(0,1);
\draw (4.1,1) to [out=-90,in=-180] +(.3,-.3) to [out=0,in=-90] +(.3,.3);
\draw (5.3,1) to [out=-90,in=-180] +(.3,-.3) to [out=0,in=-90] +(.3,.3);
\draw [dotted] (6.2,.9) -- +(1,0);
\draw (8,1) to [out=-90,in=-180] +(.3,-.3) to [out=0,in=-90] +(.3,.3);
\end{scope}
\end{tikzpicture}
\end{equation*}

 If $\delta\not=0$, we can set $b_i:=\delta^{(i-n)/2}\overline{a}_i$ and then we have $b_ia_i=e_i^\ast$. 
 If $i\not=0$, we can set $b_i:=\widehat{a}_i$ and then we have $b_ia_i=e_i^\ast$. In either case, we automatically find that~$c^\ast_i:=a_ib_i$ is an idempotent. 
 When both $\delta=0$ and~$i=0$, we set $b:= \overline{a}_0$ and then $c^\ast_0=a_0b_0$ squares to zero.
This completes the cases $\cB_n(\delta)$, $J_n(\delta)$ and~$\TL_n(\delta)$.

The case~$\cB_{p+n,n}(\delta)$ behaves similarly. For instance, if~$p>0$ and~$\delta=0$, we can take
$$
\begin{tikzpicture}[scale=0.9,thick,>=angle 90]
\begin{scope}[xshift=4cm]
\node at (0,1) {$c_0^\ast:=$};
\draw  (1.2,0) -- +(0,2);
\draw [dotted] (1.4,1) -- +(0.6,0);
\draw  (2.2,0) -- +(0,2);
\draw (2.8,0) to [out=30, in=150] +(5.4,0);
%\draw (2.8,2) to [out=-30, in=-150] +(6,0);
\draw (3.4,0) to [out=32, in=148] +(4.2,0);
\draw (3.4,2) to [out=-32, in=-148] +(4.8,0);
\draw (3.9,0) [dotted] -- +(0.7,0);
\draw (3.9,2) [dotted] -- +(1.2,0);
\draw (4.9,0) to [out=80,in=100] +(1.2,0);
\draw (5.5,0) to [out=120, in=-60] +(- 2.7,2);
\draw (5.5,2) to [out=-90,in=180] +(.3,-.3) to [out=0,in=-90] +(.3,.3);
\draw[line width =2] (5.8,0) -- +(0,2);
\draw (6.5,0) [dotted] -- +(0.7,0);
\draw (6.5,2) [dotted] -- +(1.2,0);
\end{scope}
\end{tikzpicture}
$$
where there are $p$ propagating lines and~$n$ cups and caps. 
\end{proof}

\begin{proof}[Proof of Theorem~\ref{ThmMor}]
Any of the category algebras satisfies 
$$1_C=\sum_{i\in I_A(n)}e^\ast_i,$$
with $I_A(n)$ the set in Lemma~\ref{Lemab}.
By Lemma~\ref{Lemab}, the condition in the right column of the table in the theorem implies that $e_i^\ast\in Ce_n^\ast C$, for all $i\in I_A(n)$.
Consequently, we have~$1_C\in Ce_n^\ast C$, and hence $C=Ce_n^\ast C$.
The conclusion thus follows from~\ref{SecMor}.
\end{proof}

\subsection{Cellularity}We prove that the category algebras and the diagram algebras are cellular in the sense of \cite[Definition~1.1]{CellAlg}.
\subsubsection{}
For all $i,j\in\mN$, we define a $\mk$-vector space isomorphism
$$\imath:\Hom_{\cP(\delta)}(i,j)\to \Hom_{\cP(\delta)}(j,i),$$
which is determined by demanding that any partition is mapped to its ``horizontal flip'', which simply identifies the $i$ dots on the lower line of partitions in $\Hom_{\cP(\delta)}(i,j)$ with those on the upper line of partitions in $\Hom_{\cP(\delta)}(j,i)$. This clearly extends to an involutive anti-algebra morphism of~$\mk[\cP^{\le n}(\delta)]$, which furthermore restricts to involutive anti-algebra morphisms of~$\mk[\cB^{\le n}(\delta)]$, $\mk[{}^{p}\cB^{\le n}(\delta)]$ and~$\mk[\TL^{\le n}(\delta)]$. On a diagram with only propagating lines, interpreted as an element of a symmetric group, $\imath$ acts as inversion. For~$\mk[J^{\le n}(\delta)]$ we use a different involution~$\imath$, {\it viz.} $\imath$ is defined on a Jones diagram as its horizontal and vertical flip. In particular, this stabilises any Jones diagram with only propagating lines.

\begin{thm}
Consider an arbitrary field $\mk$. Let $C$ be $\mk[\cP^{\le n}(\delta)]$, $\mk[\cB^{\le n}(\delta)]$, $\mk[J^{\le n}(\delta)]$, $\mk[\TL^{\le n}(\delta)]$ or~$\mk[{}^p\cB^{\le n}(\delta)]$. Then $C$ is cellular for involution~$\imath$ when it satisfies the condition to be base stratified in Theorem~\ref{ThmDiagram1}.
\end{thm}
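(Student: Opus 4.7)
The strategy is to upgrade the standardly based structure of $C$ from Theorem~\ref{ThmBaSt} and Corollary~\ref{CorStB} to a cellular one by exploiting the diagrammatic involution $\imath$ together with the triangular decomposition of Section~\ref{SecNHN}. Every basis diagram of $C$ factors uniquely as $\alpha \cdot h \cdot \beta$ with $\alpha \in N^- e_i^\ast$ a cap-diagram, $h \in e_i^\ast H e_i^\ast$ a ``connector'', and $\beta \in e_i^\ast N^+$ a cup-diagram. Under $\imath$ such a decomposition maps to $\imath(\beta) \imath(h) \imath(\alpha)$, of the same triangular form but with the roles of $\alpha$ and $\beta$ swapped.

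First I would check that each summand $e_i^\ast H e_i^\ast$ is cellular for $\imath|_H$. For the partition, Brauer, walled Brauer and Temperley-Lieb algebras, $e_i^\ast H e_i^\ast$ is (a product of) symmetric group algebras or $\mk$, and $\imath$ restricts to group inversion; the Murphy basis (Example~\ref{ExS2}) then supplies the cellular structure. For the Jones algebra, $e_i^\ast H e_i^\ast \cong \mk C_i$ with $\imath$ acting by inversion; the splitting hypothesis on $x^i - 1$ lets me order the $i$-th roots of unity in $\imath$-invariant pairs $\{\omega, \omega^{-1}\}$ so that the filtration of Example~\ref{ExS1} becomes $\imath$-stable, upgrading the standardly based structure of $\mk C_i$ to a cellular one. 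Denote the resulting cellular basis by $\{C^p_{s,t} \mid s,t \in M^0_i(p)\}$.

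Next, via the bijection $\imath : N^- e_i^\ast \to e_i^\ast N^+$, I would identify cap- and cup-diagrams with a common index set $\mathscr{N}_i$, and define
$$D^{(i,p)}_{(\alpha,s),(\beta,t)} \,:=\, \alpha \cdot C^p_{s,t} \cdot \imath(\beta).$$
These form a $\mk$-basis of $C$, indexed compatibly with the poset $L$ from Theorem~\ref{ThmBaSt}. The cellular symmetry axiom is then immediate:
$$\imath(D^{(i,p)}_{(\alpha,s),(\beta,t)}) \,=\, \beta \cdot C^p_{t,s} \cdot \imath(\alpha) \,=\, D^{(i,p)}_{(\beta,t),(\alpha,s)}.$$

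The remaining multiplication axiom asks that for any $a \in C$, left multiplication on $D^{(i,p)}_{(\alpha,s),(\beta,t)}$ only affects the prefix $\alpha \cdot C^p_{s,t}$, and modulo $C^{>(i,p)}$ reduces to the left action on the cell module $W(i,p) \cong C \otimes_B W^0_i(p)$ of Proposition~\ref{CellB}, whose structure depends on $(\alpha,s)$ but not on $(\beta,t)$. The $\imath$-stability of the filtration $\{C^{\ge (i,p)}\}$ then follows from $\imath$-stability of the idempotent grading (propagating number is preserved by $\imath$) together with the first step. The hard part is precisely this first step in the Jones case: arranging the cellular structure of $\mk C_i$ to be simultaneously $\imath$-compatible and compatible with the standardly based filtration requires the pairing of roots described above, which is available exactly when $x^i - 1$ splits over $\mk$ -- this is why the theorem's hypothesis takes the form it does.
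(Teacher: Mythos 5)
Your overall plan---factor each diagram through the triangular decomposition $N^-\cdot H\cdot N^+$, upgrade the cellular structure on $H$ to one on $C$, and exploit $\imath$-compatibility of the idempotent grading---is the same idea the paper uses, though the paper carries it out abstractly by checking $\imath$-stability of the chain of ideals $\{C^{\ge(l,p)}\}$ and invoking~\cite[Definition~2.2]{CellQua} and~\cite[Lemma~1.2.4]{JieDu}, rather than by building an explicit cellular basis $D^{(i,p)}_{(\alpha,s),(\beta,t)}$. The explicit-basis route is a legitimate alternative.

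However, there is a concrete error in your Jones case. The paper explicitly defines $\imath$ on $\mk[J^{\le n}(\delta)]$ to be the \emph{horizontal and vertical} flip, not the horizontal flip alone used for the other categories. On a diagram with only propagating lines the horizontal flip acts as inversion, but the vertical flip \emph{also} acts as inversion (both reverse the direction of rotation on the annulus), so their composition fixes any such diagram; that is, $\imath|_H=\id$ on $\mk C_i$, not inversion. Your ``pair the roots in $\imath$-invariant pairs $\{\omega,\omega^{-1}\}$'' step is therefore unnecessary and, worse, would fail as stated: since $\mk C_i$ is commutative, inversion is an algebra automorphism, and $\mk C_i$ is generally \emph{not} cellular for it. For instance, if $x^3-1$ splits and $\charr(\mk)\ne 3$, then $\mk C_3\cong\mk^3$; a one-dimensional cell forces its unique basis element to be $\imath$-fixed, so three such cells would need a $3$-dimensional fixed subspace, yet the fixed subspace of inversion is only $2$-dimensional, and a $2$-dimensional cell would already contribute $4$ to the dimension. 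The correct input at this step is exactly Example~\ref{ExS1}: when $x^i-1$ splits, $\mk[x]/(x^i-1)$ is cellular for the \emph{identity} involution, which is precisely $\imath|_H$ in the Jones case.
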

\begin{proof}
We take an arbitrary extension of the partial order of \ref{DefL} on $L=\sqcup_i L_i$ to a total order, again denoted by $\le$. The ideals $C^{\ge (l,p)}$ for~$(l,p)\in L$ of the standardly based algebra~$C$ then form a chain of ideals, which is a refinement of the chain of ideals $J_i=Cf_i C$. We introduce the space $J_i'$ spanned by all partitions where exactly $i$ subsets contain dots of the upper and lower row, or the intersection of that space with the relevant category algebra. Then we have 
$$J_i = J_i'\oplus J_{i-1}\qquad\mbox{and}\qquad \imath(J_i')=J'_i.$$
By extending the cellular structures on $He_i^\ast$ in Examples \ref{ExS1} and \ref{ExS2} to~$C$, we construct similar decompositions 
$$C^{\ge (l,p)}=\left(C^{\ge (l,p)}\right)'\;\oplus\; C^{\ge (l,p_1)}$$ for the refinement, where $p_1$ is the (unique) maximal~$q\in L_l$ with~$q<p$.
Together with the above decompositions, it then follows easily that \cite[Definition~2.2]{CellQua} is satisfied for the refined chain and~$C$ is hence cellular, see also \cite[Lemma~1.2.4]{JieDu}.
\end{proof}

As $\imath(e_n^\ast)=e_n^\ast$, \cite[Proposition~4.3]{StructureCell} implies the following property.
\begin{cor}\label{CorCell}
The algebras $\cP_n(\delta)$, $\cB_n(\delta)$, $\cB_{r,s}(\delta)$ and~$\TL_n(\delta)$ are cellular with respect to the restriction of~$\imath$. If the polynomials $x^i-1$, for~$i\in\JJJ(n)$, split over $\mk$, the algebra~$J_n(\delta)$ is cellular with respect to the restriction of~$\imath$.
\end{cor}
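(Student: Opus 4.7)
The statement is essentially an application of idempotent truncation to the cellularity theorem just established for the category algebras, combined with the realisation of each diagram algebra as a corner of the corresponding category algebra. The key ingredient is the fact that \cite[Proposition~4.3]{StructureCell} (already cited in the text) transfers cellularity from a cellular algebra $(C,\imath)$ to the subalgebra $eCe$ whenever $e\in C$ is an idempotent fixed by $\imath$.

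First I would recall, as in Lemma~\ref{Lemab}, the identifications
$$\cP_n(\delta)=e_n^\ast\mk[\cP^{\le n}(\delta)]e_n^\ast,\qquad \cB_n(\delta)=e_n^\ast\mk[\cB^{\le n}(\delta)]e_n^\ast,$$
$$\TL_n(\delta)=e_n^\ast\mk[\TL^{\le n}(\delta)]e_n^\ast,\qquad J_n(\delta)=e_n^\ast\mk[J^{\le n}(\delta)]e_n^\ast,$$
and $\cB_{r,s}(\delta)=e_n^\ast\mk[{}^{p}\cB^{\le n}(\delta)]e_n^\ast$ with $p=|r-s|$ and $n=\min(r,s)$. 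In each case $e_n^\ast$ is the identity morphism of the top object, realised diagrammatically by $n$ (or $p+n$ together with $n$, in the walled Brauer case) disjoint vertical propagating lines, with no cups, caps or non-trivial partition blocks.

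Next I would check that $\imath(e_n^\ast)=e_n^\ast$. In the partition, Brauer, walled Brauer and Temperley--Lieb cases this is clear: the diagram of $e_n^\ast$ consists solely of vertical propagating lines, and horizontal reflection fixes each such line. For the Jones category the involution is the composite of horizontal and vertical reflection, but the same identity diagram is manifestly invariant under that symmetry as well.

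Finally I would invoke the preceding theorem, which establishes cellularity (with respect to~$\imath$) of each of the five category algebras under exactly the hypotheses of the corollary: unconditionally for $\cP^{\le n}$, $\cB^{\le n}$, ${}^p\cB^{\le n}$ and $\TL^{\le n}$, and under the splitting hypothesis on $x^i-1$ for $i\in\JJJ(n)$ in the case of $J^{\le n}$. Applying \cite[Proposition~4.3]{StructureCell} to $e=e_n^\ast$ in each of these cellular algebras then immediately yields cellularity of the corresponding corner, with the involution obtained by restriction of~$\imath$. There is no substantial obstacle; the entire content has been packaged into the preceding theorem and the invariance of $e_n^\ast$, and the transfer principle is a standard result on cellular algebras.
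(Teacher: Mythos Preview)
Your proposal is correct and matches the paper's own argument exactly: the paper simply observes that $\imath(e_n^\ast)=e_n^\ast$ and invokes \cite[Proposition~4.3]{StructureCell} to transfer cellularity from the category algebra to the corner $e_n^\ast C e_n^\ast$. Your write-up spells out the identifications and the invariance check in more detail, but the strategy is identical.
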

%Moreover, we obtain the following description of the cell modules.
\begin{prop}\label{PropCMod}
Consider any of the cellular algebras $A$ in Corollary~\ref{CorCell} with the corresponding category algebra~$C$ in Lemma~\ref{Lemab}. The cell modules of~$A$ are labelled by~$L_A:=L_C$ in Theorem~\ref{ThmDiagram2}, and are given by
$$W_A(i,p):=e_n^\ast W_C(i,p)\quad\mbox{with}\quad W_C(i,p)=C\otimes_B W^0_i(p)\quad\mbox{ for }\; i\in I_A\;\mbox{ and }\, p\in L_i.$$
\end{prop}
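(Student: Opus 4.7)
The plan is to extract the cell modules of $A$ from those of $C$ via the idempotent truncation functor $e_n^\ast(-)$, exploiting that $\imath(e_n^\ast) = e_n^\ast$ and $A = e_n^\ast C e_n^\ast$. By the theorem immediately preceding Corollary \ref{CorCell}, $C$ is cellular for the involution $\imath$ with cell modules $W_C(i,p)$ indexed by $L_C$, which by Proposition \ref{CellB} admit the presentation $W_C(i,p) \cong C \otimes_B W^0_i(p)$. Invoking \cite[Proposition~4.3]{StructureCell} (as in the proof of Corollary \ref{CorCell}) then shows $A$ is cellular with cell modules precisely the nonzero images $e_n^\ast W_C(i,p)$, equipped with the inherited partial order on labels. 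Exactness of $e_n^\ast(-)$ immediately delivers the asserted formula $e_n^\ast W_C(i,p) \cong e_n^\ast C \otimes_B W^0_i(p)$.

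The remaining task will be to verify non-vanishing: $e_n^\ast W_C(i,p) \neq 0$ for every $(i,p) \in L_C$, so that no labels are lost and $L_A = L_C$. The workhorse will be the diagrammatic element $a_i \in e_n^\ast C e_i^\ast$ constructed in Lemma \ref{Lemab}. Inspection of the explicit diagrams there shows that $a_i$ belongs to the subspace $N^-$ of Section \ref{SecNHN}: each dot on the lower row is in a two-element set with exactly one dot of the upper row and no other lower dot. Combining this with the right $B$-module decomposition $C = \bigoplus_k N^- e_k^\ast B$ obtained in the proof of Proposition \ref{PropExBH}, I would conclude that $a_i B$ is a free right $B$-module of rank one, namely $a_i B \cong e_i^\ast B$ via $a_i b \mapsto e_i^\ast b$. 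Tensoring over $B$ with $W^0_i(p)$ then gives
$$a_i \otimes_B W^0_i(p) \;\cong\; e_i^\ast B \otimes_B W^0_i(p) \;=\; W^0_i(p),$$
which embeds into $e_n^\ast C \otimes_B W^0_i(p) = e_n^\ast W_C(i,p)$ and is nonzero for $p \in L_i$.

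The mild subtlety to check is that the indexing set $L_C$ truly matches $\sqcup_{i \in I_A} L_i$, so that each $(i,p) \in L_C$ satisfies $i \in I_A$ and the element $a_i$ from Lemma \ref{Lemab} is available. This is built into how $L_C$ was defined through Corollary \ref{CorStB}: the piece $L_i$ is the poset of the standardly based algebra $He_i^\ast$, which is only present in the grading of $C$ when $e_i^\ast \neq 0$, i.e.\ when $i \in I_A$ in the sense of Lemma \ref{Lemab}. I expect no deeper homological obstruction beyond this bookkeeping, since both the cellular truncation machinery and the free generation of $a_i B$ are purely combinatorial consequences of the graded Borelic structure established in Sections \ref{SecPreSpec} and \ref{SecDia}.
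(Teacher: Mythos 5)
Your proof is correct and follows essentially the same route as the paper: use Proposition~\ref{CellB} to identify $W_C(i,p)\cong C\otimes_B W^0_i(p)$, then invoke the idempotent-truncation principle of \cite[Proposition~4.3]{StructureCell} to reduce the claim to the nonvanishing of $e_n^\ast W_C(i,p)$. The paper merely asserts this nonvanishing, while you verify it explicitly via $a_i\in N^-$ and the right-$B$-module decomposition $C=\bigoplus N^- e_j^\ast B$; note only the small wording issue that $a_iB\cong e_i^\ast B$ is a projective (not free) right $B$-module, and that for the partition category $a_i$ need not decompose into two-element blocks, though it still lies in $N^-$.
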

\begin{proof}
We use Proposition \ref{CellB}. Since 
$$e_n^\ast C\otimes_B W^0_i(p)\not=0,$$
for any $W^0_i(p)$, the proposition follows from the proof of~\cite[Proposition~4.3]{StructureCell}.
\end{proof}

%%%%%%%%%%%%%%%%%%%%%%%%%%%%%%%%%%%%%%%%%%%%%%%%%%%%%%%%%%%%%%%%%%%%%%%%%%%%%%%%

\part{Faithfulness of covers}
\label{Faith}

\section{Covers of the diagram algebras}\label{SecCov}
The two main methods to prove that cell modules of a cellular algebra form a standard system are construction of a certain quasi-hereditary 1-cover ({\it viz.} a cover-Schur algebra), see \cite{Nakano1, Henke}; and cellular/base stratification, see \cite{HHKP} and Section \ref{Sec8}. 

When the diagram algebra~$A$ in Lemma~\ref{Lemab} is Morita equivalent to~$C$, the base stratification of~$C$ solves the problem also for~$A$.
In this section, we use a combination of both approaches above, for when~$A$ is not Morita equivalent to~$C$. We prove that the base stratified algebra~$C$ is almost always a $1$-faithful cover of~$A$, in the sense of Section \ref{IntroFaithCov}. Even though $C$ might not be quasi-hereditary, this determines when the cell modules of~$A$ form a standard system.

\subsection{Connection with the coarse filtration}
In Section \ref{SecDefCat}, we defined the diagram algebras as the endomorphism algebras of the objects of the corresponding category. Remarkably, we can reconstruct the respective categories starting from the diagram algebra, by using the elements~$\{a_i,b_i,c_i^\ast\,|\, i\in I_A(n)\}$ of Lemma~\ref{Lemab}.
\begin{thm}\label{thmcoarseP}
Consider an arbitrary field $\mk$, $\delta\in\mk$ and let $\cA$ be $\cP(\delta)$, $\cB(\delta)$, ${}^p\cB(\delta)$, $J(\delta)$ or~$\TL(\delta)$. For~$n\in\mZ_{>0}$, set $A:=\End_{\cA}(n)$. For any $i,j\in I_A(n)$, we have an isomorphism
$$\Hom_{\cA}(j,i)\;\tilde{\to}\; \Hom_A(Ac_i^\ast,Ac_j^\ast);\quad x\mapsto \alpha_x\;\mbox{ with}\quad \alpha_x(c_i^\ast)=a_i xb_j,\quad\mbox{for~$x\in \Hom_{\cA}(j,i)$}.$$Moreover, the composition of such morphisms on both sides agrees contravariantly. \end{thm}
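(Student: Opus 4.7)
The plan is to exploit the standard identification $\Hom_A(Ac_i^\ast,Ac_j^\ast)\cong c_i^\ast Ac_j^\ast$ (sending $\phi$ to $\phi(c_i^\ast)$, valid whenever $c_i^\ast$ is idempotent), reducing the theorem to producing a bijection $\Hom_{\cA}(j,i)\stackrel{\sim}{\to} c_i^\ast Ac_j^\ast$ via $x\mapsto a_ixb_j$, with candidate inverse $y\mapsto b_iya_j$. The whole argument will then rest on the two identities $b_ia_i=e_i^\ast$ and $c_i^\ast=a_ib_i$ (an idempotent in $A$) furnished by Lemma~\ref{Lemab}.

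First I would verify well-definedness. Since $x:j\to i$ in $\cA$, we have $e_i^\ast x=x=xe_j^\ast$, whence
\[
c_i^\ast(a_ixb_j)=a_i(b_ia_i)xb_j=a_ixb_j\quad\mbox{and}\quad(a_ixb_j)c_j^\ast=a_ix(b_ja_j)b_j=a_ixb_j,
\]
so $a_ixb_j\in c_i^\ast Ac_j^\ast$ and $\alpha_x$ is a legitimate left $A$-linear map. Injectivity is then immediate from $b_i(a_ixb_j)a_j=e_i^\ast xe_j^\ast=x$. For surjectivity, given $y\in c_i^\ast Ac_j^\ast$, set $x:=b_iya_j\in\Hom_\cA(j,i)$; then $a_ixb_j=(a_ib_i)y(a_jb_j)=c_i^\ast yc_j^\ast=y$.

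For the contravariant functoriality, I would take $y:k\to j$ and $x:j\to i$ in $\cA$, so that $xy:k\to i$. Using the identity $a_ixb_j=(a_ixb_j)c_j^\ast$ just established, together with left $A$-linearity of $\alpha_y$, one computes
\[
\alpha_y\bigl(\alpha_x(c_i^\ast)\bigr)=\alpha_y(a_ixb_j)=(a_ixb_j)\alpha_y(c_j^\ast)=a_ix(b_ja_j)yb_k=a_i(xy)b_k=\alpha_{xy}(c_i^\ast),
\]
so $\alpha_y\circ\alpha_x=\alpha_{xy}$, as required.

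The main obstacle is the degenerate case $\delta=0$ with $i+p=0$, where Lemma~\ref{Lemab} produces only a square-zero $c_i^\ast$ rather than an idempotent, and the cornerstone identification $\Hom_A(Ac_i^\ast,-)\cong c_i^\ast(-)$ breaks down. There one would have to determine the annihilator of $c_i^\ast$ in $A$ directly, using the explicit partitions in the proof of Lemma~\ref{Lemab}, and check that $x\mapsto\alpha_x$ remains bijective onto the full Hom-space. The injectivity argument above still applies verbatim as it only uses $b_ia_i=e_i^\ast$, so the real additional content is surjectivity in this exceptional case.
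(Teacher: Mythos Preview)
Your treatment of the generic case, where both $c_i^\ast$ and $c_j^\ast$ are idempotents, is correct and coincides with the paper's case~(i); your functoriality computation is also essentially the paper's argument, written a bit more cleanly.

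The genuine gap is in the degenerate case. You assert that ``the injectivity argument above still applies verbatim as it only uses $b_ia_i=e_i^\ast$'', but this identity is \emph{not} available when $\delta=0$ and $i+p=0$. Lemma~\ref{Lemab} only guarantees $b_ia_i=e_i^\ast$ in the idempotent case; in the degenerate case one has $b_0a_0=0$ (indeed $(c_0^\ast)^2=a_0(b_0a_0)b_0$, and this vanishing is exactly why $c_0^\ast$ is square-zero). Hence your candidate inverse $y\mapsto b_iya_j$ kills everything once $i=0$ in the degenerate regime, and neither injectivity nor well-definedness of $\alpha_x$ follows from your argument there.

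The paper supplies the missing ingredient as Lemma~\ref{LemAc00}, which proves by a direct diagrammatic analysis that the evaluation map $\Hom_A(Ac_0^\ast,A)\to c_0^\ast A$, $\alpha\mapsto\alpha(c_0^\ast)$, is an isomorphism even though $c_0^\ast$ is not idempotent. With this in hand the paper splits into four cases (both idempotent; $i=0$ degenerate and $j>0$; $i>0$ and $j=0$ degenerate; $i=j=0$ degenerate), each reduced to an easy computation. Your outline ``determine the annihilator of $c_i^\ast$ directly'' is pointing in the right direction, but the actual content --- that every $A$-linear map out of $Ac_0^\ast$ lands in $c_0^\ast A$, proved using carefully chosen auxiliary elements $u$, $w$, $v$ in the respective diagram algebras --- is nontrivial and is precisely what you are missing.
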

As a special case we have the following corollary, giving an alternative description of the category algebra~$C$ connected to the diagram algebra~$A$.
\begin{cor}\label{CorCatAlgAlt}
Maintain the notation of Theorem~\ref{thmcoarseP} and consider the category algebra~$C:=\mk[\cA^{\le n}]$. We have an isomorphism of left $A$-modules
$$Ac^\ast_j\;\stackrel{\sim}{\to}\; \Hom_{\cA}(j,n)=e_n^\ast Ce_j^\ast,\quad c_j^\ast\mapsto a_j,\qquad\mbox{for }\; j\in I_A(n),$$
and an isomorphism of algebras
$$C\;\cong\; \End_A(\,\bigoplus_{j\in I_A(n)} Ac^\ast_j\,)^{\op}.$$
\end{cor}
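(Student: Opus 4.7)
Both statements in Corollary~\ref{CorCatAlgAlt} follow directly by reading off Theorem~\ref{thmcoarseP} at appropriate choices of indices and reassembling the resulting isomorphisms.

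For the left $A$-module isomorphism, the plan is to specialise Theorem~\ref{thmcoarseP} to $i=n$. In each of the categories in the statement, the identity endomorphism of the object $n$ is the identity diagram, so $a_n=b_n=e_n^\ast=1_A$ and hence $c_n^\ast=a_nb_n=1_A$; in particular $Ac_n^\ast=A$. Composing the resulting isomorphism with the canonical identification $\Hom_A(A,Ac_j^\ast)\cong Ac_j^\ast$ (evaluation at $1_A$) yields
$$
e_n^\ast Ce_j^\ast\;=\;\Hom_{\cA}(j,n)\;\stackrel{\sim}{\longrightarrow}\;\Hom_A(A,Ac_j^\ast)\;\stackrel{\sim}{\longrightarrow}\;Ac_j^\ast,
$$
and unwinding the defining formula $\alpha_x(c_n^\ast)=a_nxb_j=xb_j$ shows that $a_j\in\Hom_{\cA}(j,n)$ is sent to $a_jb_j=c_j^\ast$. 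Inverting produces the claimed map $c_j^\ast\mapsto a_j$.

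For the algebra isomorphism, I would assemble Theorem~\ref{thmcoarseP} over all pairs $i,j\in I_A(n)$. The left-hand side decomposes as $C=\bigoplus_{i,j}e_i^\ast Ce_j^\ast=\bigoplus_{i,j}\Hom_{\cA}(j,i)$, while the right-hand side, by the standard matrix-of-morphisms description of the endomorphism ring of a direct sum, decomposes as
$$
\End_A\Big(\bigoplus_{j\in I_A(n)}Ac_j^\ast\Big)\;=\;\bigoplus_{i,j}\Hom_A(Ac_i^\ast,Ac_j^\ast).
$$
Theorem~\ref{thmcoarseP} gives a $\mk$-linear bijection summand-by-summand; the reason the opposite algebra appears is exactly the contravariance in that theorem: for composable $f\in\Hom_{\cA}(j,i)$ and $g\in\Hom_{\cA}(k,j)$ one has $\alpha_{fg}=\alpha_g\circ\alpha_f$, which is precisely the product of $\alpha_f$ and $\alpha_g$ in $\End^{\op}$, so the assembled map is a ring homomorphism.

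The main point requiring care, rather than a genuine obstacle, is bookkeeping: verifying that the direct-sum indexing of $C$ by source and target objects aligns with the block decomposition of $\End_A$ of a direct sum of modules, and that the built-in contravariance in Theorem~\ref{thmcoarseP} is exactly absorbed by the opposite algebra structure. The only mildly delicate case is the degenerate situation in which some $c_j^\ast$ vanishes (for instance ${}^p\cB$ at $p=0$, $\delta=0$, $j=0$), but then the corresponding summand vanishes on both sides and does not affect the argument.
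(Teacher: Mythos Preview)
Your proof is correct and follows exactly the approach the paper intends: the corollary is immediate from Theorem~\ref{thmcoarseP} by specialising to $i=n$ for the module isomorphism and by summing over all $i,j\in I_A(n)$ for the algebra isomorphism, with the contravariance absorbed by the opposite structure.

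One small correction to your final remark: the element $c_j^\ast$ never actually vanishes. The display in Lemma~\ref{Lemab} reading $(c_i^\ast)^2=0=a_ib_i$ is a typo for $b_ia_i=0$; indeed $c_0^\ast=a_0b_0$ is the explicit nonzero diagram used throughout Section~\ref{SecCov} (see \ref{IntroC}), and if $c_j^\ast$ were zero then Theorem~\ref{thmcoarseP} itself would force $\Hom_{\cA}(j,n)=0$, which is false. Your argument does not rely on this case, so the proof stands as written.
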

\subsubsection{}Before we prove the theorem, we elaborate on the idempotents $c_i^\ast$.
The partition algebra has a filtration by two-sided ideals, known as the {\em coarse filtration}:
$$0=J_0\subsetneq J_1\subsetneq J_2\subsetneq \cdots\subsetneq J_{n}\subsetneq J_{n+1}=\cP_n(\delta),$$
see {\it e.g.} \cite[Lemma 4.6]{Xi}.
Here, $J_i$ is the ideal spanned by those partitions of 
\begin{equation}\label{SSS}
S=S_1\sqcup S_2=\{1,2,\ldots,n\}\sqcup\{1',2',\ldots, n'\},
\end{equation} 
where at most $i-1$ of the subsets contain elements of both $S_1$ and~$S_2$, for each $1\le i\le n+1$. Note that we have $J_i=Ac^\ast_{i-1}A$ for~$1\le i\le n+1$.
Hence, $J_i$ is an idempotent ideal, if either $i\not=1$ or~$\delta\not=0$. When $\delta=0$, we have $J_1^2=0$.

\begin{rem}
When $\delta\not=0$, the coarse filtration is an exactly standardly stratifying chain, even quasi-heredity when~$\charr(\mk)\not\in[2,n]$, which leads to the corresponding properties of~$\cP_n(\delta)$ in Theorem~\ref{ThmC}. %Furthermore (for any $\mk$ and~$\delta\in\mk$) the chain can be refined to a cell chain, see \cite[Definition~3.2]{StructureCell}, expressing the cellular structure in Theorem~\ref{ThmC}.
\end{rem}

\subsubsection{}By restricting the coarse filtration of~$\cP_n(\delta)$, we obtain coarse filtrations of~$\cB_n(\delta)$, $\cB_{r,s}(\delta)$, $J_n(\delta)$ and~$\TL_n(\delta)$.
These coarse filtrations are based on the number of propagating lines. The corresponding idempotents (or the nilpotent element) generating the ideals are also given by~$\{c_i^\ast\}$ in Lemma~\ref{Lemab}.

Now we start the proof of Theorem~\ref{thmcoarseP} with the following lemma.
\begin{lemma}\label{LemAc00}
Let $A$ be one of the diagram algebras in Theorem~\ref{thmcoarseP}, with~$n$ even in case~$A$ is $\cB_n(\delta)$, $J_n(\delta)$ or~$\TL_n(\delta)$. We have an isomorphism
$$\Hom_A(Ac_0^\ast,A)\;\,\tilde{\to}\;\,c_0^\ast A,\quad\alpha\mapsto \alpha(c_0^\ast).$$
\end{lemma}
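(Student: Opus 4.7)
The map $a \mapsto \alpha_a$ defined by $\alpha_a(xc_0^\ast) := xa$ is a well-defined $A$-linear injection $c_0^\ast A \hookrightarrow \Hom_A(Ac_0^\ast, A)$: well-defined because $a = c_0^\ast b$ gives $xc_0^\ast = 0 \Rightarrow xa = xc_0^\ast b = 0$, and $A$-linear in the evident way. It has the claimed evaluation map $\alpha \mapsto \alpha(c_0^\ast)$ as a one-sided inverse. The entire content of the lemma is therefore that the image of the evaluation map lies in $c_0^\ast A$; the two maps are then formally mutually inverse.

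In the cases where $c_0^\ast$ is an idempotent (namely $\delta\neq 0$, or for $\cB_{p+n,n}(\delta)$ with $p>0$), this is immediate: $\alpha(c_0^\ast) = \alpha((c_0^\ast)^2) = c_0^\ast \alpha(c_0^\ast) \in c_0^\ast A$. So only the nilpotent case $\delta = 0$ (with $n$ even for the Brauer/Jones/Temperley--Lieb algebras) requires genuine work, and there $(c_0^\ast)^2 = a_0(b_0 a_0) b_0 = 0$ kills the trivial argument.

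In this remaining case my plan is to reduce the computation to the category algebra $C$ via two structural isomorphisms. First, the right multiplication $R_{b_0}: e_n^\ast C e_0^\ast \to Ac_0^\ast$, $y \mapsto yb_0$, is an isomorphism of left $A$-modules. Surjectivity is just $Aa_0 = e_n^\ast C e_0^\ast$. Injectivity is a diagrammatic observation, using the classification of Section~\ref{SecNHN}: an element $y \in e_n^\ast C e_0^\ast$ is a linear combination of partitions of type $N^+$ (only the upper row is populated), while $b_0$ is of type $N^-$ with the lower row populated; stacking $b_0$ beneath $y$ produces no closed loops, so linearly independent $y$'s give linearly independent diagrams $yb_0$. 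By the same argument applied to $\imath$-transposed diagrams, $L_{a_0}: e_0^\ast C e_n^\ast \to c_0^\ast A$, $z \mapsto a_0 z$, is an isomorphism of right $A$-modules. Composing, the claim becomes: the map $\Psi: e_0^\ast C e_n^\ast \to \Hom_A(e_n^\ast C e_0^\ast, A)$ sending $z$ to the $A$-linear map $y \mapsto yz$ is an isomorphism of right $A$-modules.

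Injectivity of $\Psi$ is immediate: if $yz = 0$ for all $y \in e_n^\ast C e_0^\ast$, then in particular $a_0 z = 0$, and injectivity of $L_{a_0}$ forces $z = 0$. The hard part—and the main obstacle—is surjectivity, equivalent to $\mathrm{rann}_A(\mathrm{lann}_A(c_0^\ast)) = c_0^\ast A$, or to the dimension identity $\dim \Hom_A(e_n^\ast C e_0^\ast, A) = \dim e_0^\ast C e_n^\ast$. The clean way to obtain it is to invoke that each diagram algebra $A$ in the list is a Frobenius (indeed symmetric) algebra under a standard trace form, whence $\dim \Hom_A(M,A)=\dim M$ for every finitely generated $M$, and combined with $\dim e_n^\ast C e_0^\ast = \dim e_0^\ast C e_n^\ast$ (via the involution $\imath$ interchanging $a_0$ and $b_0$) the injective $\Psi$ is forced to be an isomorphism. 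The cleanest way to sidestep invoking Frobeniusness uniformly across all algebras would be an intrinsic diagrammatic dimension count: recognise $\mathrm{lann}(c_0^\ast)$ as the preimage of the span of propagating-line diagrams under the surjection $A \twoheadrightarrow Aa_0$, so $\dim \mathrm{lann}(c_0^\ast) = \dim A - \dim e_n^\ast C e_0^\ast$, and pair this with the (already established) inclusion $c_0^\ast A \subseteq \mathrm{rann}(\mathrm{lann}(c_0^\ast))$ together with $\dim c_0^\ast A = \dim e_0^\ast C e_n^\ast = \dim e_n^\ast C e_0^\ast$; the remaining input is exactly the codimension relation $\dim \mathrm{rann}(I) + \dim I = \dim A$ for left ideals $I$, which is where Frobeniusness is genuinely used.
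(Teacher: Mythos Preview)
Your setup---reducing to the claim that the image of $\alpha\mapsto\alpha(c_0^\ast)$ lies in $c_0^\ast A$, and dispatching the idempotent case via $\alpha(c_0^\ast)=c_0^\ast\alpha(c_0^\ast)$---matches the paper exactly. The divergence, and the problem, is in the nilpotent case $(c_0^\ast)^2=0$.

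There your argument rests on the assertion that each diagram algebra $A$ is Frobenius (indeed symmetric). This is \emph{false} in general. A concrete counterexample: $\cB_2(0)$ in characteristic~$2$ is a local algebra with one-dimensional top but two-dimensional socle $\langle 1+s,\,e\rangle$ (here $s$ is the swap and $e=c_0^\ast$ the cup-over-cap), hence not self-injective. More broadly, cellular diagram algebras such as Brauer, partition, and Temperley--Lieb algebras are typically \emph{not} Frobenius away from the semisimple locus; no ``standard trace form'' exists with the required non-degeneracy. You flag this worry yourself and sketch an alternative, but that alternative also terminates in the codimension identity $\dim\mathrm{rann}(I)+\dim I=\dim A$, which \emph{is} the Frobenius property rephrased. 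So the argument is genuinely incomplete: the surjectivity of $\Psi$ (equivalently, the containment $\mathrm{im}\,\phi\subset c_0^\ast A$) remains unproved. Your reformulation via $e_n^\ast C e_0^\ast\cong Ac_0^\ast$ and $e_0^\ast C e_n^\ast\cong c_0^\ast A$ is correct but does not reduce the difficulty---it is a change of coordinates, not of content.

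The paper's proof takes a completely different, hands-on route with no appeal to any global structural property of $A$. For each family it writes down specific elements $u$, $w$, or $v$ (built from explicit diagrams) that satisfy carefully chosen relations with $c_0^\ast$, and uses them to pin down $a:=\alpha(c_0^\ast)$. For instance, in the Brauer/Jones/Temperley--Lieb case with $n\ge4$: first $c_2^\ast c_0^\ast=c_0^\ast$ forces $a\in c_2^\ast A$, constraining the top row of every diagram in $a$ to have at most one non-cup pair; then an explicit idempotent $w$ with $wc_0^\ast=c_0^\ast$ forces that remaining pair to be a cup as well, whence $a\in c_0^\ast A$. The partition and walled Brauer cases use analogous bespoke elements. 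This is less conceptual than a Frobenius argument would be, but it actually works.
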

\begin{proof}
We have a monomorphism
$$\phi:\Hom_A(Ac_0^\ast,A)\;\,\hookrightarrow\;\, A,\quad\alpha\mapsto \alpha(c_0^\ast).$$
First we claim that~$c_0^\ast A$ is contained in the image of~$\phi$. We have the $A$-bimodule isomorphism 
$$Ac_0^\ast\otimes_{\mk} c_0^\ast A\;\tilde\to \; A c_0^\ast A;\qquad ac_0^\ast \otimes c_0^\ast b\mapsto ac_0^\ast b,$$
which is clear by construction, or follows from the cellular structure. In particular, for any $x\in c_0^\ast A$, the above gives rise to a morphism $\alpha_x$ from~$Ac_0^\ast$ to~$A$, given by $\alpha_x(ac_0^\ast)=ax$. Hence, $\phi(\alpha_x)=x$, which proves the claim.

Now we will prove that the image of~$\phi$ is contained in $c_0^\ast A$. If $c_0^\ast$ is an idempotent, this follows from $\alpha(c_0^\ast)=c_0^\ast\alpha(c_0^\ast)$. Consider therefore $(c_0^\ast)^2=0$ and some element~$a\in A$ in the image of~$\phi$, so $a=\alpha(c_0^\ast)$ for some $\alpha$.
We prove that~$a\in c_0^\ast A$ by considering cases separately. We will use the conventions of \ref{IntroC} for the elements $c_i^\ast$.

Firstly, consider $A=\cP_n(0)$.  Observe that~$a$ must satisfy $c_1^\ast a=a$, since $c_1^\ast c_0^\ast=c_0^\ast$. Hence, $a$ must be a linear combination of partitions of \eqref{SSS} where $S_2=\{1',\ldots, n'\}$ is contained in a subset. So $a=a_1+a_2$, where $a_1$ is a linear combination of partitions where $S_2$ is a subset (hence $a_1\in c_0^\ast A$) and~$a_2$ is a linear combination of partitions which have a subset which strictly contains $S_2$. We define $u=u_1-u_2$, with~$u_1,u_2$ partitions of \eqref{SSS} as:
\begin{equation}\label{equ}u_1:=\{\{1,2,\ldots,n,1'\},\{2',3',\ldots,n'\}\},\quad u_2:=\{\{1,2,\ldots,n,2',3',\ldots,n'\},\{1'\}\}.
\end{equation}
Since $uc_0^\ast=0=u a_1$, we must have $ua_2=0$. However, $u_1a_2$ is a linear combination of partitions where $\{1'\}$ is strictly contained in a subset, whereas $u_2a_2$ is a linear combination of partitions where $\{1'\}$ is a subset. Hence we must have $u_1a_2=0=u_2a_2$. However, the partition $v$ consisting of one set satisfies $vu_1a_2=a_2$. This shows that~$a_2=0$ and hence that~$a\in c_0^\ast A$.

Consider $A$ equal to~$ \cB_n(0)$, $J_n(0)$ or~$\TL_n(0)$ with~$n$ even. As the case~$n=2$ is straightforward, we assume $n\ge 4$. We now have that~$c_2^\ast a=a$. This means $a$ is a linear combination of diagrams with top row of the form 
\begin{equation}
\label{diagfora}
\begin{tikzpicture}[scale=0.9,thick,>=angle 90]

\begin{scope}[xshift=8cm]
\draw (2.8,0.8) -- +(0,0.2);
\draw (3.4,0.8) -- +(0,0.2);
\draw (4,1) to [out=-90,in=-180] +(.3,-.3) to [out=0,in=-90] +(.3,.3);
\draw (5.2,1) to [out=-90,in=-180] +(.3,-.3) to [out=0,in=-90] +(.3,.3);
\draw [dotted] (6.1,.9) -- +(1,0);
\draw (7.5,1) to [out=-90,in=-180] +(.3,-.3) to [out=0,in=-90] +(.3,.3);

\end{scope}
\end{tikzpicture}
\end{equation}
where the first two dots can be arbitrarily connected.
Consider also the diagram
\begin{equation}\label{Defw}
\begin{tikzpicture}[scale=0.9,thick,>=angle 90]

\begin{scope}[xshift=8cm]
\node at (-3,0.5) {$w:=$};
\draw (-1.7,1) to [out=-90,in=-180] +(.3,-.3) to [out=0,in=-90] +(.3,.3);
\draw (-1.7,0) to [out=50, in=-130] +(1.2,1);
\draw (-1.1,0) to [out=90,in=-180] +(.3,.3) to [out=0,in=90] +(.3,-.3);
\draw (.1,0) -- +(0,1);
\draw (.7,0) -- +(0,1);
\draw (1.3,0) -- +(0,1);
\draw [dotted] (1.6,.5) -- +(1,0);
\draw (2.9,0) -- +(0,1);
\draw (3.5,0) -- +(0,1);

\end{scope}
\end{tikzpicture}
\end{equation}
Since $wc_0^\ast=c_0^\ast$ we must also have $wa=a$. This implies that the top rows of the diagrams~\eqref{diagfora}, appearing in $a$, must have only caps. It now follows easily that~$a\in c_0^\ast A$.

Now set $A:=\cB_{r,r}(0)$ and observe that $a=c_1^\ast a$. We focus on the case~$r>1$ and define
\begin{equation}\label{Defv}
\begin{tikzpicture}[scale=0.9,thick,>=angle 90]
\begin{scope}[xshift=4cm]
\node at (0,1) {$v:=$};
\draw (2.8,0) to [out=50, in=130] +(3.3,0);
\draw (2.8,2) to [out=-30, in=-150] +(6,0);
\draw  (3.4,0) -- +(0,2);

\draw [dotted] (3.9,1) -- +(1,0);
\draw  (5.5,0) -- +(0,2);
\draw[line width =2] (5.8,0) -- +(0,2);
\draw  (6.7,0) -- +(0,2);
\draw [dotted] (7.1,1) -- +(0.6,0);
\draw  (8.2,0) -- +(0,2);
\draw  (8.8,0) to [out=120, in=-60] +(-2.7,2);
\end{scope}
\end{tikzpicture}
\end{equation}
which satisfies $vc_0^\ast=c_0^\ast$.
The conclusion follows as for the previous case.
\end{proof}

\begin{proof}[Proof of Theorem~\ref{thmcoarseP}]
First we prove that $\Hom_{\cA}(j,i)\to \Hom_A(Ac_i^\ast,Ac_j^\ast)$ is indeed an isomorphism. We distinguish four different cases.

i) When $c_i^\ast$ and~$c_j^\ast$ are idempotents, the proposed map is obviously well-defined and has inverse $\alpha\mapsto b_i \alpha(c_i^\ast)a_j$.

ii) Now assume that~$i=0$ and~$(c_0^\ast)^2=0$, but $j>0$, so that~$c_j^\ast$ is an idempotent. By Lemma~\ref{LemAc00}, it suffices to prove that
$$\phi:\;\Hom_{\cA}(j,0)\;{\to}\; c_0^\ast Ac_j^\ast;\qquad x\mapsto a_0 xb_j,$$
is an isomorphism.
An inverse to~$\phi$ is constructed by mapping any diagram $d$ in $c_0^\ast Ac_j^\ast$ to~$d'a_j$ where $d'$ is the diagram obtained from~$d$ by omitting the top row (essentially forgetting the diagram $a_0$), proving that~$\phi$ is an isomorphism.

iii) The case where $c_i^\ast$ is an idempotent, but $c_j^\ast=c_0^\ast$ is not, follows similarly, by using $\Hom_{A}(Ac_i^\ast,Ac_j^\ast)\cong c_i^\ast Ac_j^\ast$. 

iv) Finally, assume~$i=j=0$ and~$(c_0^\ast)^2=0$. Using the monomorphism $Ac_0^\ast\hookrightarrow A$ and the isomorphism in Lemma~\ref{LemAc00} shows that the image of the injective composition
$$\Hom_{A}(Ac_0^\ast,Ac_0^\ast)\hookrightarrow \Hom_A(Ac_0^\ast, A)\stackrel{\sim}{\to} c_0^\ast A$$
is the one-dimensional space $c_0^\ast A\cap Ac_0^\ast=\mk c_0^\ast$. So both $\Hom_{A}(Ac_0^\ast,Ac_0^\ast)$ and $\Hom_{\cA}(0,0)$ are one-dimensional and it follows that the morphism in the lemma is an isomorphism.

Now take $x\in \Hom_{\cA}(j,i)$ and~$y\in \Hom_{\cA}(k,j)$, so $xy\in \Hom_{\cA}(k,i)$. We claim that~$$\alpha_y\circ\alpha_x=\alpha_{xy}.$$
We have $\alpha_{y}\circ\alpha_x(c_i^\ast)=\alpha_y(a_i xb_j)$. We can take $x'\in \Hom_{\cA}(n,i)$ such that~$x' a_j=x$, then
$$\alpha_y(a_i xb_j)=a_ix' \alpha_y(c_j^\ast)=a_ix' a_jyb_k=a_ixyb_k,$$
proving the claim. \end{proof}

\subsection{Covers}\label{SecCovers}

\begin{thm}\label{ThmCov}Let $C$ be any of the category algebras in Lemma~\ref{Lemab} and~$A=e^\ast_nCe^\ast_n$ the corresponding diagram algebra.
Then~$C$ is a cover of~$A$.
\end{thm}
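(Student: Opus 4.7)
The plan is to show that the canonical algebra morphism
$$\kappa:\;C \,\to\, \End_A(e_n^\ast C)^{\op},\qquad c \mapsto (x \mapsto xc),$$
from~\ref{SecSecCover} is an isomorphism, where $A = e_n^\ast Ce_n^\ast$. By Corollary~\ref{CorCatAlgAlt}, the left $A$-module isomorphisms $Ac_j^\ast \stackrel{\sim}{\to} e_n^\ast Ce_j^\ast$, $c_j^\ast\mapsto a_j$, for $j\in I_A(n)$ sum to an isomorphism $\bigoplus_{j\in I_A(n)} Ac_j^\ast \cong e_n^\ast C$ of left $A$-modules, and the algebra isomorphism in the same corollary then yields $\dim_\mk \End_A(e_n^\ast C) = \dim_\mk C$. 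Hence it suffices to prove that~$\kappa$ is injective.

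For injectivity, suppose $xc = 0$ for every $x\in e_n^\ast C$, and decompose $c = \sum_{i,j}c_{ij}$ with $c_{ij}\in e_i^\ast Ce_j^\ast$ (sums over $i,j\in I_A(n)$). Fixing $i$ and taking $x=a_i\in e_n^\ast Ce_i^\ast$ yields $\sum_{j}a_ic_{ij}= 0$; since these summands lie in pairwise distinct subspaces $e_n^\ast Ce_j^\ast$, we obtain $a_ic_{ij}=0$ for every $j$. Whenever $c_i^\ast$ is an idempotent, Lemma~\ref{Lemab} gives $b_ia_i=e_i^\ast$ and therefore $c_{ij}=b_ia_ic_{ij}=0$.

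The sole remaining case is $i=0$ with $c_0^\ast$ nilpotent, which by Lemma~\ref{Lemab} only arises when $\delta=0$ in the partition, Brauer, Jones, or Temperley-Lieb cases (with $n$ even in the last three), or in the walled Brauer case with both $p=0$ and $\delta=0$. In each of these situations, $a_0$ is a single diagram whose lower row is empty, so composing it on top of any diagram in $e_0^\ast Ce_j^\ast$ (whose upper row is empty) simply juxtaposes the two with no middle dots to contract, and hence no powers of $\delta$ to intervene. Distinct basis diagrams of $e_0^\ast Ce_j^\ast$ are therefore sent to distinct basis diagrams of $e_n^\ast Ce_j^\ast$, so the map $y\mapsto a_0y$ is injective and $c_{0j}=0$ as well. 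This completes the injectivity argument and hence the proof.

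The main anticipated obstacle is precisely this nilpotent case, where one cannot cancel $a_0$ by left multiplication with $b_0$ since $b_0a_0=0$; a direct diagrammatic inspection nevertheless supplies the required injectivity uniformly across all five families.
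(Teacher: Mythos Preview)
Your proof is correct and relies on the same key ingredient as the paper, namely Corollary~\ref{CorCatAlgAlt}. The paper's proof is a one-liner: it simply observes that the algebra isomorphism $C\cong\End_A(\bigoplus_j Ac_j^\ast)^{\op}\cong\End_A(e_n^\ast C)^{\op}$ from Corollary~\ref{CorCatAlgAlt} \emph{is} the canonical map~$\kappa$, which one verifies by tracing through the explicit formula $x\mapsto\alpha_x$, $\alpha_x(c_i^\ast)=a_ixb_j$, from Theorem~\ref{thmcoarseP} and transporting along the module isomorphisms $Ac_j^\ast\cong e_n^\ast Ce_j^\ast$. You instead use Corollary~\ref{CorCatAlgAlt} only to obtain the dimension equality $\dim C=\dim\End_A(e_n^\ast C)$, and then supply an independent, direct injectivity argument for~$\kappa$ via left multiplication by the $a_i$ together with a small diagrammatic observation in the nilpotent case. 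This is a legitimate and slightly more self-contained route: it avoids having to check that the abstract isomorphism of the corollary coincides with the canonical map, at the cost of reproducing a fragment of the argument behind Theorem~\ref{thmcoarseP} (your injectivity of $y\mapsto a_0y$ is essentially the content of case~(ii) there).
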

\begin{proof}
The condition in~\ref{SecSecCover} follows from Corollary~\ref{CorCatAlgAlt}.
\end{proof}
In most cases, the cover $C$ is Morita equivalent to~$A$, by Theorem~\ref{ThmMor}. Now we focus on the remaining cases. Consider the full subcategory~$\overline{\cF}$ of~$C$-mod of modules which admit a filtration with sections $\overline{\Delta}(\mu)$, $\mu\in\Lambda$. Recall from Section~\ref{SecCover} the exact functor
$$F:=e_n^\ast C\otimes_C-\cong e_n^\ast- \;:C\mbox{-mod}\;\to\; A\mbox{-mod}.$$
\begin{thm}\label{Thm0Cover}
Let $A$ be $\cP_n(0)$ with~$n>2$; $\cB_n(0)$, $J_n(0)$ or~$\TL_n(0)$ with~$n$ even and~$n>4$; or~$A=\cB_{r,r}(0)$ with~$r>2$.
For all $M,N\in \FFF$, the functor~$F$ induces isomorphisms
$$\Hom_{C}(M,N)\;\,\tilde\to\;\, \Hom_A(FM,FN)\quad\mbox{ and }\,\quad \Ext^1_{C}(M,N)\;\,\tilde\to\;\, \Ext^1_A(FM,FN).$$
Hence, the cover $C$ is $1$-faithful.
\end{thm}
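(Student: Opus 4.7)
The plan is to analyze the unit morphism $\eta_M \colon M \to GFM$ of the adjunction between $F$ and its right adjoint $G = \Hom_A(e_n^*C, -)$. Because $F$ is exact and $FG \cong \Id$, both $\Ker(\eta_M)$ and $\coker(\eta_M)$ lie in the Serre subcategory $\Ker F \subset C\text{-mod}$, which is identified with $(C/Ce_n^*C)\text{-mod}$ since $FM = 0$ precisely when $Ce_n^*C$ annihilates $M$. The whole strategy is to pin down this Serre subcategory, show that Hom and $\Ext^1$ from $\FFF$ into it vanish, and conclude the two isomorphisms.

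First I would prove that, under the listed size hypotheses, the quotient $C/Ce_n^*C$ has exactly one simple object $L_0$, corresponding to the bottom stratum $i=0$. This is precisely where the conditions $n>2$, $n>4$ (with $n$ even), and $r>2$ are needed: the diagrams $u=u_1-u_2$, $w$ and $v$ introduced in equations~\eqref{equ}, \eqref{Defw} and \eqref{Defv} in the proof of Lemma~\ref{LemAc00} exist only in these parameter ranges, and they furnish the relations needed to rule out any other simple being annihilated by $e_n^*$. An adaptation of the argument of Lemma~\ref{LemAc00} from $A$-modules to $C$-modules then identifies $L_0$ uniquely.

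Second, dévissage along the $\overline{\Delta}$-filtration of $M \in \FFF$ reduces the Hom part of the theorem to the vanishing $\Hom_C(\overline{\Delta}(\mu), L_0) = 0$ for all $\mu \in \Lambda$. For $\mu$ different from the label $\lambda_0$ of $L_0$, this is immediate from Frobenius reciprocity \eqref{FroDel} and Definition~\ref{DefBorPair}. For $\mu = \lambda_0$ itself, I would check the vanishing by hand, using the explicit form of $c_0^*=a_0 b_0$ from Lemma~\ref{Lemab} to see that the non-trivial $B_+$-action on the top of $\overline{\Delta}(\lambda_0)$ prevents any map to the rigid $L_0$. This forces $\eta_M$ to be an isomorphism for $M \in \FFF$, proving the Hom statement.

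Finally, for the $\Ext^1$ statement, I would take a projective cover $0 \to K \to P \to M \to 0$ in $C\text{-mod}$ and note that $K$ admits a $\Delta$-flag by Corollary~\ref{corPFlag}, which by Lemma~\ref{LemSurjN} can be refined to a $\overline{\Delta}$-flag (since each $\Delta(\lambda)$ itself has such a flag by Definition~\ref{defB}(I)). Since $F$ preserves projectives by the cover property~\S\ref{SecSecCover}, applying $F$ yields a projective presentation of $FM$ in $A\text{-mod}$. Comparing the two long exact sequences $\Hom_C(-,N) \Rightarrow \Ext^1_C(-,N)$ and their $F$-images, and using the Hom isomorphism already established for both $M$ and $K$ (which lie in $\FFF$), reduces the desired $\Ext^1$ isomorphism to the vanishing $\Hom_C(K, L_0) = 0$, which is again handled by the previous step. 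The hard part will be confirming that $K$ genuinely admits a $\overline{\Delta}$-flag and securing the boundary case $\mu=\lambda_0$ in the Hom vanishing; the size restrictions $n>2$, $n>4$ and $r>2$ become indispensable precisely here, exactly as in the first step.
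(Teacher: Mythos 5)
The core difficulty that this theorem solves — and the reason all the diagram gymnastics in Lemmata~\ref{LemAc00}--\ref{ext02} are there — is precisely that when $\delta=0$ the idempotent $c_0^\ast$ is not an idempotent (it squares to zero), so $Ac_0^\ast$ is \emph{not} a projective $A$-module. Your proposal rests on two assertions that collide with this.

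First, you claim that $F$ preserves projectives by the cover property of \S\ref{SecSecCover}. This is false: the cover property only says $F$ is fully faithful when restricted to projectives. In fact $F(Ce^\ast_0)=e_n^\ast Ce^\ast_0\cong Ac_0^\ast$ by Corollary~\ref{CorCatAlgAlt}, and $Ac_0^\ast$ is not projective when $\delta=0$. Consequently, in your ``comparison of long exact sequences'' step, $\Ext^1_A(FP,FN)$ does not vanish automatically and the diagram chase you describe does not close. Controlling $\Ext^1_A(FP,FN)$ (equivalently, establishing $\cR_1G(FN)=0$) is exactly the content of Proposition~\ref{PropExt}, which in turn rests on the explicit diagram computations of Lemmata~\ref{LemHom01}, \ref{LemHom02} and~\ref{ext02}; it is \emph{these} computations, not the structure of $C/Ce_n^\ast C$, that consume the hypotheses $n>2$, $n>4$, $r>2$ (via the auxiliary diagrams $y$, $y_1$, $y_2$ of \S\ref{IntroY}). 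That $C/Ce_n^\ast C$ has a unique simple $L_0$ supported at level $i=0$ is true but elementary — it holds whenever $\delta=0$ and $0\in\JJJ(n)$, with no size restriction — so you have located the hypotheses in the wrong place.

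Second, the reduction of the Hom part to ``$\Hom_C(\overline{\Delta}(\mu),L_0)=0$ for all $\mu$'' cannot work, because the statement is false at $\mu=\lambda_0=(0,\emptyset)$: by Lemma~\ref{TopStan}, $L_0=L(\lambda_0)$ is the simple top of $\overline{\Delta}(\lambda_0)$, so $\Hom_C(\overline{\Delta}(\lambda_0),L_0)\cong\mk$. The correct quantity to control for injectivity of $\eta_M$ would be $\Hom_C(L_0,\overline{\Delta}(\mu))$ (a socle statement, which \emph{is} zero), but your argument as written does not address the cokernel of $\eta_M$ at all, and there is no evident way to rule out $L_0$ appearing in $GF\overline{\Delta}(\mu)/\overline{\Delta}(\mu)$ without the explicit calculations of Proposition~\ref{PropHom} and the $\Ext^1$-vanishing of Proposition~\ref{PropExt}. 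The paper's proof uses $\cR_1G(F\overline{\Delta}(i,\nu))=0$ together with the unit isomorphism on $\overline{\Delta}$'s to propagate, via the commutative diagram \eqref{commdia}, the isomorphism $\eta_M$ and the $\cR_1G$-vanishing simultaneously up any $\overline{\Delta}$-filtration; your sketch omits the $\cR_1G$ half of this double induction, which is indispensable.
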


\begin{rem}
\label{SchurGen} Consider one of the algebras $C$ in Theorem~\ref{ThmDiagram2} under the condition that its cell modules form a standard system. Due to the base stratification, the Schur algebra of~$C$ can be obtained similarly to \cite{Paget, HHKP, Henke}. The corresponding Schur algebra of~$C$ is also naturally a Schur algebra of~$A$ in case~$C$ and~$A$ are Morita equivalent, but also under the conditions in Theorem~\ref{Thm0Cover}. This will be worked out in more detail elsewhere.
\end{rem}

We make preparations for the proof of the theorem. The left exact functor
$$G:=\Hom_{A}(e_n^\ast C,-):\; A\mbox{-mod}\;\to\; C\mbox{-mod},$$
is right adjoint to~$F$. Hence we have the adjoint (unit) natural transformation~$$\eta:\Id\to G\circ F.$$

\begin{lemma}\label{LemGFiso}
Let $A$ be $\cB_n(0)$, $J_n(0)$ or~$\TL_n(0)$, for~$n$ even, $\cP_n(0)$ or~$\cB_{r,r}(0)$.
\begin{enumerate}
\item For any $M$ in $C${\rm-mod}, $\eta_M$ induces an isomorphism $M\cong G\circ F(M)$ if and only if
$$e_0^\ast M\;\to\; \Hom_A(Ac^\ast_0,e_n^\ast M):\quad\, v\mapsto \beta_v,\;\;\mbox{ where }\,\beta_v(c_0^\ast)=a_0v,$$
is an isomorphism, with~$a_0$ from Lemma~\ref{Lemab}.
\item For any $M$ in $C${\rm-mod}, we have $\cR_1G\circ F(M)=0$ if and only if
$$\Ext^1_A(Ac_0^\ast,e_n^\ast M)=0.$$
\end{enumerate}
\end{lemma}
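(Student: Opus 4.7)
The plan is to reduce both statements to the $j = 0$ summand by exploiting the left $A$-module decomposition
\[
e_n^\ast C \;=\; \bigoplus_{j \in I_A(n)} e_n^\ast C e_j^\ast,
\]
arising from right multiplication by the orthogonal idempotents $e_j^\ast$, together with the isomorphism $e_n^\ast C e_j^\ast \cong A c_j^\ast$ from Corollary~\ref{CorCatAlgAlt}. Applying this decomposition to $GF(M) = \Hom_A(e_n^\ast C, e_n^\ast M)$ and to $\cR_1 G(F(M)) = \Ext^1_A(e_n^\ast C, e_n^\ast M)$ splits both into finite direct sums of $j$-components, and I will show that the summands indexed by $j \neq 0$ are always trivially well-behaved.

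For part (1), identify $e_j^\ast GF(M) \cong \Hom_A(e_n^\ast C e_j^\ast, e_n^\ast M) \cong \Hom_A(Ac_j^\ast, e_n^\ast M)$ (using that the left $C$-action on $GF(M)$ comes from the right $C$-action on $e_n^\ast C$). Tracing $\eta_M$ through these identifications, the $e_j^\ast$-component of $\eta_M(v)$ depends only on $e_j^\ast v \in e_j^\ast M$ and corresponds, under Corollary~\ref{CorCatAlgAlt}, to the map $\beta_v : Ac_j^\ast \to e_n^\ast M$ with $\beta_v(c_j^\ast) = a_j v$. For $j \neq 0$, Lemma~\ref{Lemab} provides $c_j^\ast = a_j b_j$ and $e_j^\ast = b_j a_j$, so these are equivalent idempotents; hence $\Hom_A(Ac_j^\ast, N) \cong c_j^\ast N$ via evaluation at $c_j^\ast$, and the component map $e_j^\ast M \to c_j^\ast M$, $v \mapsto a_j v$, has two-sided inverse $w \mapsto b_j w$ (from $b_j a_j = e_j^\ast$ and $a_j b_j = c_j^\ast$). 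Thus $\eta_M$ is an isomorphism if and only if the $j = 0$ component is an isomorphism, which is exactly the claimed criterion.

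For part (2), since the decomposition of $e_n^\ast C$ is finite,
\[
\cR_1 G(N) \;=\; \Ext^1_A(e_n^\ast C, N) \;\cong\; \bigoplus_{j \in I_A(n)} \Ext^1_A(Ac_j^\ast, N).
\]
For $j \neq 0$, $Ac_j^\ast$ is a direct summand of the regular module $A$ (since $c_j^\ast$ is idempotent by Lemma~\ref{Lemab}), hence projective, so $\Ext^1_A(Ac_j^\ast, N) = 0$. Only the $j = 0$ summand survives, giving $\cR_1 G(F(M)) \cong \Ext^1_A(Ac_0^\ast, e_n^\ast M)$ and thus the equivalence.

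The main obstacle is the careful bookkeeping in part (1): one must verify that the $j$-component of $\eta_M$, after composing the projection onto $e_j^\ast GF(M)$, the Hom-restriction, and the isomorphism $e_n^\ast C e_j^\ast \cong Ac_j^\ast$ from Corollary~\ref{CorCatAlgAlt} (which sends $c_j^\ast \mapsto a_j$), really becomes the prescribed map $v \mapsto \beta_v$ with $\beta_v(c_j^\ast) = a_j v$. Once these identifications are correctly pinned down, the remaining arguments are essentially routine.
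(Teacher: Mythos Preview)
Your proof is correct and follows essentially the same approach as the paper: decompose $e_n^\ast C$ along the idempotents $e_j^\ast$, use Corollary~\ref{CorCatAlgAlt} to identify the summands with $Ac_j^\ast$, and observe that for $j\neq 0$ the relations $b_j a_j = e_j^\ast$ and $a_j b_j = c_j^\ast$ make the $j$-component of $\eta_M$ automatically invertible (the paper writes this inverse explicitly as $\rho^i:\alpha\mapsto b_i\alpha(a_i)$) and make $Ac_j^\ast$ projective. The only cosmetic difference is that the paper works directly with $\Hom_A(e_n^\ast C e_i^\ast,e_n^\ast M)$ before invoking Corollary~\ref{CorCatAlgAlt}, whereas you transport to $\Hom_A(Ac_j^\ast,e_n^\ast M)$ first; also note your target ``$c_j^\ast M$'' should read ``$c_j^\ast e_n^\ast M$''.
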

\begin{proof}
We evaluate $\eta$ on $M$, to get the following morphism
of~$C$-modules:
$$\eta_M:M\to G\circ F(M)\stackrel{\sim}{\to}\Hom_A(e_n^\ast C,e_n^\ast M);\;\quad v\mapsto \alpha_v,\;\;\alpha_v(c)=cv.$$
This restricts to vector space morphisms
$$\eta^i:\;\,e_i^\ast M\to\Hom_A(e_n^\ast Ce_i^\ast,e_n^\ast M)\quad\mbox{for}\quad i\in I_A.$$
We also introduce
$$\rho^i:\;\, \Hom_A(e_n^\ast Ce_i^\ast,e_n^\ast M)\to e_i^\ast M;\;\quad \alpha\mapsto b_i\alpha(a_i),$$
with~$a_i,b_i$ as in Lemma~\ref{Lemab}. Then $\eta^i$ and~$\rho^i$ are mutually inverse if $i\not=0$.

Thus we find that~$\eta_M$ is an isomorphism if and only if~$\eta^0$ is. Part (1) then follows from applying the isomorphism
$\Hom_A(e_n^\ast Ce_0^\ast, e_n^\ast M)\cong \Hom_A(Ac^\ast_0,e_n^\ast M)$ from Corollary~\ref{CorCatAlgAlt}.

To prove part (2), we observe that we have
$$\cR_1G\cong \Ext^1_A(e_n^\ast C,-)\cong \Ext^1_A(\bigoplus_{j\in I_A}Ac_j^\ast,-)\cong \Ext^1_A(Ac_0^\ast,-),$$
by Corollary~\ref{CorCatAlgAlt}, since $Ac_j^\ast$ is projective for~$j\not=0$. \end{proof}

The proof of the lemma also gives the following result.
\begin{cor}\label{Corin}
Let $A$ be as in Lemma \ref{LemGFiso}. For~$i\not=0$, we have
$$\Hom_A(Ac_i^\ast,e_n^\ast M)\;\cong\; e_i^\ast M,\quad\mbox{for any $M\in A${\rm -mod}.}$$
\end{cor}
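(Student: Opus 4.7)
The plan is to extract the corollary directly from the machinery already assembled in the proof of Lemma~\ref{LemGFiso}. First I would apply Corollary~\ref{CorCatAlgAlt} to identify $Ac_i^\ast$ with $e_n^\ast C e_i^\ast$ as a left $A$-module, so that it suffices to construct a natural isomorphism $\Hom_A(e_n^\ast C e_i^\ast, e_n^\ast M) \cong e_i^\ast M$ for each $M \in A$-mod.

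Next I would reuse the two $\mk$-linear maps introduced in the proof of Lemma~\ref{LemGFiso}:
\[
\eta^i \colon e_i^\ast M \to \Hom_A(e_n^\ast C e_i^\ast, e_n^\ast M), \qquad v \mapsto \alpha_v\;\;\text{with}\;\; \alpha_v(c) = cv,
\]
and
\[
\rho^i \colon \Hom_A(e_n^\ast C e_i^\ast, e_n^\ast M) \to e_i^\ast M, \qquad \alpha \mapsto b_i\,\alpha(a_i),
\]
where $a_i \in e_n^\ast C e_i^\ast$ and $b_i \in e_i^\ast C e_n^\ast$ are as in Lemma~\ref{Lemab}. Well-definedness of $\rho^i$ uses $b_i = e_i^\ast b_i$, placing the image in $e_i^\ast M$; $\eta^i$ is plainly $A$-linear in its argument.

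Finally I would verify that $\eta^i$ and $\rho^i$ are mutually inverse, which is exactly the place where the hypothesis $i \neq 0$ intervenes. By Lemma~\ref{Lemab}, $i \neq 0$ yields $b_i a_i = e_i^\ast$ (and $c_i^\ast = a_i b_i$ is an idempotent). The composition $\rho^i\circ\eta^i$ sends $v \in e_i^\ast M$ to $b_i a_i v = e_i^\ast v = v$. For the other composition, any $c \in e_n^\ast C e_i^\ast$ satisfies $c = c e_i^\ast = c b_i a_i$, so
\[
\eta^i(\rho^i(\alpha))(c) \;=\; c\,b_i\,\alpha(a_i) \;=\; \alpha(c b_i a_i) \;=\; \alpha(c).
\]
There is no real obstacle here: the point of the corollary is simply that this branch of the lemma's proof, which treated arbitrary $i > 0$, never used that the target module had the form $e_n^\ast N$ for a $C$-module $N$, so the same argument applies to arbitrary $M \in A$-mod.
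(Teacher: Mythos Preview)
Your argument is correct and is exactly what the paper does: it simply records that the mutual-inverse computation for $\eta^i$ and $\rho^i$ carried out in the proof of Lemma~\ref{LemGFiso} already yields the isomorphism. One caveat: the hypothesis ``$M\in A$-mod'' in the statement is evidently a slip for $C$-mod (note $e_i^\ast\notin A$, so $e_i^\ast M$ only makes sense for $C$-modules, and both uses of the corollary in the paper take $M=\overline{\Delta}(i,\nu)\in C\text{-mod}$); your maps $\eta^i,\rho^i$ likewise require the $C$-action, so your closing sentence about extending to arbitrary $A$-modules should be dropped.
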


\subsubsection{}\label{IntroC}
We fix some of the $c_i^\ast$ in Lemma \ref{Lemab}. For~$\cB_n(0)$, $J_n(0)$ or~$\TL_n(0)$ with~$n$ even, take
$$
\begin{tikzpicture}[scale=0.9,thick,>=angle 90]
\begin{scope}[xshift=4cm]
\node at (0.4,0.5) {$c_0^\ast=$};
\draw (1.9,0) to [out=90,in=-180] +(.3,.3) to [out=0,in=90] +(.3,-.3);
\draw (3.1,0) to [out=90,in=-180] +(.3,.3) to [out=0,in=90] +(.3,-.3);
\draw (4.3,0) to [out=90,in=-180] +(.3,.3) to [out=0,in=90] +(.3,-.3);
\draw [dotted] (5.2,.1) -- +(1,0);
\draw (6.6,0) to [out=90,in=-180] +(.3,.3) to [out=0,in=90] +(.3,-.3);

\draw (1.9,1) to [out=-90,in=-180] +(.3,-.3) to [out=0,in=-90] +(.3,.3);
\draw (3.1,1) to [out=-90,in=-180] +(.3,-.3) to [out=0,in=-90] +(.3,.3);
\draw (4.3,1) to [out=-90,in=-180] +(.3,-.3) to [out=0,in=-90] +(.3,.3);
\draw [dotted] (5.1,.9) -- +(1,0);
\draw (6.6,1) to [out=-90,in=-180] +(.3,-.3) to [out=0,in=-90] +(.3,.3);

\node at (9.4,0.5) {$c_2^\ast=$};
\draw (10.9,0) -- +(0,1);
\draw (11.5,0) to [out=90,in=-180] +(.3,.3) to [out=0,in=90] +(.3,-.3);

\draw (12.7,0) to [out=90,in=-180] +(.3,.3) to [out=0,in=90] +(.3,-.3);
\draw [dotted] (13.6,.1) -- +(1,0);
\draw (15,0) to [out=90,in=-180] +(.3,.3) to [out=0,in=90] +(.3,-.3);
\draw (16.2,0) to [out=140,in=-40] +(-4.7,1);
\draw (12.1,1) to [out=-90,in=-180] +(.3,-.3) to [out=0,in=-90] +(.3,.3);
\draw (13.3,1) to [out=-90,in=-180] +(.3,-.3) to [out=0,in=-90] +(.3,.3);
\draw [dotted] (14.1,.9) -- +(1,0);
\draw (15.6,1) to [out=-90,in=-180] +(.3,-.3) to [out=0,in=-90] +(.3,.3);
\end{scope}
\end{tikzpicture}
$$
$$
\begin{tikzpicture}[scale=0.9,thick,>=angle 90]
\begin{scope}[xshift=4cm]
\node at (9.4,0.5) {$c_4^\ast=$};
\draw (10.9,0) -- +(0,1);
\draw (11.5,0) -- +(0,1);
\draw (12.1,0) -- +(0,1);
\draw (12.7,0) to [out=90,in=-180] +(.3,.3) to [out=0,in=90] +(.3,-.3);
\draw [dotted] (13.6,.1) -- +(1,0);
\draw (15,0) to [out=90,in=-180] +(.3,.3) to [out=0,in=90] +(.3,-.3);
\draw (16.2,0) to [out=133,in=-47] +(-3.5,1);

\draw (13.3,1) to [out=-90,in=-180] +(.3,-.3) to [out=0,in=-90] +(.3,.3);
\draw [dotted] (14.1,.9) -- +(1,0);
\draw (15.6,1) to [out=-90,in=-180] +(.3,-.3) to [out=0,in=-90] +(.3,.3);
\node at (16.6,0) {.};
\end{scope}
\end{tikzpicture}
$$
For~$A=\cB_{r,r}(0)$, take
$$
\begin{tikzpicture}[scale=0.9,thick,>=angle 90]
\begin{scope}[xshift=4cm]
\node at (1.8,1) {$c_0^\ast:=$};
\draw (2.8,0) to [out=30, in=150] +(6,0);
\draw (2.8,2) to [out=-30, in=-150] +(6,0);
\draw (3.4,0) to [out=32, in=148] +(4.8,0);
\draw (3.4,2) to [out=-32, in=-148] +(4.8,0);
\draw (3.9,0) [dotted] -- +(1.2,0);
\draw (3.9,2) [dotted] -- +(1.2,0);
\draw (5.5,0) to [out=90,in=-180] +(.3,.3) to [out=0,in=90] +(.3,-.3);
\draw (5.5,2) to [out=-90,in=180] +(.3,-.3) to [out=0,in=-90] +(.3,.3);
\draw[line width =2] (5.8,0) -- +(0,2);
\draw (6.5,0) [dotted] -- +(1.2,0);
\draw (6.5,2) [dotted] -- +(1.2,0);

\node at (10.8,1) {$c_1^\ast:=$};
\draw (11.8,0) to [out=30, in=150] +(5.4,0);
%\draw (2.8,2) to [out=-30, in=-150] +(6,0);
\draw (12.4,0) to [out=32, in=148] +(4.2,0);
\draw (12.4,2) to [out=-32, in=-148] +(4.8,0);
\draw (12.9,0) [dotted] -- +(0.7,0);
\draw (12.9,2) [dotted] -- +(1.2,0);
\draw (13.9,0) to [out=80,in=100] +(1.2,0);
\draw (14.5,0) to [out=120, in=-60] +(- 2.7,2);
\draw (14.5,2) to [out=-90,in=180] +(.3,-.3) to [out=0,in=-90] +(.3,.3);
\draw[line width =2] (14.8,0) -- +(0,2);
\draw (15.5,0) [dotted] -- +(0.7,0);
\draw (15.5,2) [dotted] -- +(1.2,0);
\draw  (17.8,0) -- +(0,2);

\end{scope}
\end{tikzpicture}
$$
$$
\begin{tikzpicture}[scale=0.9,thick,>=angle 90]
\begin{scope}[xshift=4cm]
\node at (1.8,1) {$c_2^\ast:=$};
\draw  (2.8,0) -- +(0,2);
%\draw (2.8,0) to [out=30, in=150] +(5.4,0);
%\draw (2.8,2) to [out=-30, in=-150] +(6,0);
\draw (3.4,0) to [out=32, in=148] +(4.2,0);
\draw (4,2) to [out=-32, in=-148] +(3.6,0);
\draw (3.9,0) [dotted] -- +(0.7,0);
\draw (4.4,2) [dotted] -- +(0.7,0);
\draw (4.9,0) to [out=80,in=100] +(1.2,0);
\draw (5.5,0) to [out=120, in=-60] +(- 2.1,2);
\draw (5.5,2) to [out=-90,in=180] +(.3,-.3) to [out=0,in=-90] +(.3,.3);
\draw[line width =2] (5.8,0) -- +(0,2);
\draw (6.5,0) [dotted] -- +(0.7,0);
\draw (6.5,2) [dotted] -- +(0.7,0);
\draw  (8.2,0) -- +(0,2);
\draw  (8.8,0) -- +(0,2);
\node at (9.1,0) {.};
\end{scope}
\end{tikzpicture}
$$
For~$\cP_n(0)$, take 
$$c_0^\ast=\{\{1,2,\ldots,n\},\{1',2',\ldots,n'\}\},\quad c_1^\ast=\{\{1,1',2',\ldots,n'\},\{2,3,\ldots,n\}\},\mbox{ and}$$ 
$$c_2^\ast=\{\{1,1'\},\{2,2',3',\ldots,n'\},\{3,4,\ldots,n\}\}.$$

We also introduce $\gamma=\gamma_A$, where $\gamma_A=1$ for~$A=\cB_{r,r}(0)$ or~$\cP_n(0)$ and~$\gamma_A=2$ otherwise.

\begin{lemma}\label{Ac0A}
Consider $A$ equal to~$\cB_n(0)$, $J_n(0)$ or~$\TL_n(0)$ for~$n$ even and~$n\ge 4$, $\cP_n(0)$ with~$n\ge 2$, or~$\cB_{r,r}(0)$ with~$r\ge 2$. We have
$$\Hom_A(Ac_0^\ast, A/Ac_0^\ast A)=0.$$
\end{lemma}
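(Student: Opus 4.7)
The plan is to show directly that any $A$-module homomorphism $\phi : Ac_0^\ast \to A/Ac_0^\ast A$ must vanish. Since $Ac_0^\ast$ is generated by $c_0^\ast$ as a left $A$-module, such a $\phi$ is determined by $\bar v := \phi(c_0^\ast)$, where any lift $v \in A$ must satisfy
\[
av \in Ac_0^\ast A \quad \text{for every } a \in A \text{ with } ac_0^\ast = 0.
\]
I will show this forces $v \in Ac_0^\ast A$. The argument in each of the five cases is a parallel port of the two-step argument used in the proof of Lemma~\ref{LemAc00}, carried out modulo the two-sided ideal $Ac_0^\ast A$ rather than literally in $A$.

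For the first step, I will use the idempotents $c_k^\ast$ fixed in~\ref{IntroC}, namely $k=1$ for $\cP_n(0)$ and $\cB_{r,r}(0)$, and $k=2$ for $\cB_n(0)$, $J_n(0)$, $\TL_n(0)$: a direct diagrammatic check gives $c_k^\ast c_0^\ast = c_0^\ast$, so $1-c_k^\ast$ left-annihilates $c_0^\ast$ and the compatibility condition yields
\[
v \equiv c_k^\ast v \pmod{Ac_0^\ast A}.
\]
Replacing $v$ by $c_k^\ast v$, I may assume $v$ is supported on diagrams with the prescribed top-row structure already isolated in the proof of Lemma~\ref{LemAc00} (top row contained in a single block in the partition case, of the form~\eqref{diagfora} in the Brauer/Jones/Temperley-Lieb cases, and the analogous form in the walled Brauer case). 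For the second step, I will apply the second auxiliary diagrammatic element from the proof of Lemma~\ref{LemAc00}---that is, $u=u_1-u_2$ from~\eqref{equ} for $\cP_n(0)$, $w$ from~\eqref{Defw} for the Brauer/Jones/Temperley-Lieb cases, and the diagram in~\eqref{Defv} for $\cB_{r,r}(0)$---each of which either lies in the left annihilator of $c_0^\ast$ or differs from the identity by such an element. The corresponding compatibility congruences, combined with the combinatorial analysis already performed in the proof of Lemma~\ref{LemAc00}, will then force every diagram appearing in $v$ to have no propagating block, so $v \in J_1 = Ac_0^\ast A$ as required.

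The main obstacle is the combinatorial cancellation in the second step: in the partition case, for example, one must observe that $u_1 v$ and $u_2 v$ are supported on disjoint families of propagating diagrams, distinguished by whether the special propagating block contains $1''$ or the block $\{2'',\dots,n''\}$, so that $u_1 v - u_2 v$ can only lie in $Ac_0^\ast A$ when no propagating block is present in $v$ to begin with. This cancellation is exactly the one already handled for Lemma~\ref{LemAc00}; it transfers intact to the quotient because $Ac_0^\ast A=J_1$ is spanned by zero-propagating diagrams, so distinct propagating diagrams remain linearly independent modulo $Ac_0^\ast A$.
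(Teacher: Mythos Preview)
Your proposal is correct and follows essentially the same approach as the paper: both proofs port the two-step argument of Lemma~\ref{LemAc00} to the quotient $A/Ac_0^\ast A$, first using $c_k^\ast c_0^\ast=c_0^\ast$ to pin down the top-row structure of a lift of $\phi(c_0^\ast)$, and then applying the auxiliary element ($w$, $v$, or $u=u_1-u_2$) to force the lift into $Ac_0^\ast A$. Your explicit remark that distinct propagating diagrams remain linearly independent modulo $Ac_0^\ast A=J_1$ is exactly the observation that makes the transfer to the quotient go through.
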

\begin{proof}
This is a stronger version of Lemma~\ref{LemAc00}. It can be proved using the same arguments. Consider $\alpha:Ac_0^\ast\to A/Ac_0^\ast A$ and~$a\in A$ such that 
$a+Ac_0^\ast A=\alpha(c_0^\ast)$. 

For~$\cB_n(0)$, $J_n(0)$ or~$\TL_n(0)$, we have $c_2^\ast c_0^\ast=c_0^\ast$, so we can assume that~$a\in c_2^\ast A$. As $w$ in equation~\eqref{Defw} is an idempotent satisfying~$wc_0^\ast=c_0^\ast$, we can further assume that~$wa=a$. These two conditions on $a$ show that it must be a linear combination of diagrams with~$n/2$ caps, so $a\in Ac_0^\ast A$ and hence $\alpha=0$.

The proof for~$\cB_{r,r}(0)$ is identical, by using the idempotent~$v$ in equation~\eqref{Defv}. Also the proof for~$\cP_n(0)$ works along the same lines, by using~$uc_0^\ast=0$ with~$u=u_1-u_2$ in \eqref{equ}.
\end{proof}

\begin{lemma}\label{esc0}
Let $A$ be as in Lemma \ref{Ac0A}. There exists an exact sequence
$$Ac^\ast_{2\gamma}\to Ac_{\gamma}^\ast\to Ac_0^\ast\to 0.$$
\end{lemma}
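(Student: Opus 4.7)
The plan is to construct the two maps of the sequence explicitly using the generators from Lemma \ref{Lemab} and Section \ref{IntroC}, verify that their composite vanishes, and finally show that the image of the first equals the kernel of the second.

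First I would verify the identity $c_\gamma^\ast c_0^\ast = c_0^\ast$ in each of the five cases by composing the explicit diagrams of Section \ref{IntroC} and checking that no free loops arise in the middle contraction (so no vanishing factor of $\delta = 0$ appears). The assignment $xc_\gamma^\ast \mapsto xc_0^\ast$ then defines a well-posed left $A$-module map $\phi \colon Ac_\gamma^\ast \to Ac_0^\ast$, and surjectivity follows from $\phi(c_\gamma^\ast) = c_0^\ast$.

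Next, using the identification $\Hom_A(Ac_{2\gamma}^\ast, Ac_\gamma^\ast) \cong \Hom_{\cA}(\gamma, 2\gamma)$ of Theorem \ref{thmcoarseP}, I would pick a morphism $\mu \in \Hom_{\cA}(\gamma, 2\gamma)$ (or a linear combination thereof) so that the composite $\mu \circ (b_\gamma a_0) \in \Hom_{\cA}(0, 2\gamma)$ contains a closed loop; since $\delta = 0$ this composite vanishes in $\cA$. In each algebra the element $b_\gamma a_0$ is of cup type, so a suitable $\mu$ is obtained by placing a matching cap on its $\gamma$-side. The resulting map $\psi \colon Ac_{2\gamma}^\ast \to Ac_\gamma^\ast$ then satisfies $\phi \circ \psi = 0$, giving $\mathrm{image}(\psi) \subseteq \ker(\phi)$.

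For the reverse inclusion I would transport the picture through Corollary \ref{CorCatAlgAlt}, identifying each $Ac_i^\ast$ with the left $A$-module $\Hom_{\cA}(i,n)$, under which $\phi$ and $\psi$ become right composition with $b_\gamma a_0$ and with $\mu$ respectively. An element of the kernel is then a linear combination of diagrams $d \in \Hom_{\cA}(\gamma, n)$ whose contributions to $d \circ (b_\gamma a_0)$ cancel in $\Hom_{\cA}(0, n)$, and the task is to show that every such combination can be written as $d' \circ \mu$ for some $d' \in \Hom_{\cA}(2\gamma, n)$. This last step is the main obstacle: it requires a case-by-case combinatorial analysis adapted to each of the five diagram categories, showing that the local cap pattern forcing a relation among the $d \circ (b_\gamma a_0)$ already factors through the cap chosen in $\mu$. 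Lemma \ref{Ac0A} should be useful here for controlling the top composition factors of $\ker(\phi)$ and thus pinning down its projective presentation in terms of $Ac_{2\gamma}^\ast$.
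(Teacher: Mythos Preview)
Your setup for the surjection $\phi\colon Ac_\gamma^\ast\to Ac_0^\ast$ via right multiplication by $c_0^\ast$ is exactly right and matches the paper. The problem is your choice of $\psi$. You propose taking a single diagram $\mu\in\Hom_{\cA}(\gamma,2\gamma)$ with a cap on the $\gamma$-side so that $\mu\circ(b_\gamma a_0)$ acquires a closed loop. But for (say) the Brauer case, any single diagram $\mu\in\Hom_{\cB}(2,4)$ with $\mu\circ(\text{cup})=0$ must have a cap on the bottom, hence zero propagating lines; the resulting element $a_4\mu b_2\in Ac_2^\ast$ then also has zero propagating lines, and the left ideal it generates is contained in the span of diagrams with no propagating lines. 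However $\ker\phi$ is strictly larger: it also contains differences $d_1-d_2$ of diagrams each having two propagating lines (these satisfy $d_1c_0^\ast=d_2c_0^\ast$ by cancellation, not by a loop). Since left multiplication cannot increase the number of propagating lines, your $\psi$ misses these and the sequence fails to be exact at $Ac_\gamma^\ast$. Your parenthetical ``or a linear combination thereof'' is precisely what is needed, but you then commit to the single-cap choice, which does not work.

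The paper proceeds differently: it first computes $\ker\phi$ directly, showing it is spanned by the diagrams with no propagating lines together with the differences $d_1-d_2$ just described, and then proves that $\ker\phi$ is cyclic, generated by one explicit such difference $x$ (given diagrammatically for each case). Since $c_{2\gamma}^\ast x=x$, right multiplication by $x$ gives the desired surjection $Ac_{2\gamma}^\ast\twoheadrightarrow\ker\phi$. The verification that $Ax=\ker\phi$ is the genuine case-by-case combinatorial work. Lemma~\ref{Ac0A} is not used here; it plays a role in later lemmas but does not help determine the generator of $\ker\phi$.
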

\begin{proof}
First, let $A$ be $\cB_n(0)$, $J_n(0)$ or~$\TL_n(0)$.
The map $Ac_2^\ast\tto Ac_0^\ast$ is defined as $a\mapsto ac_0^\ast$ for any $a\in Ac_2^\ast$, where surjectivity follows from $c_2^\ast c_0^\ast=c_0^\ast$. Now we determine the kernel $K$ of this epimorphism. From the structure of~$c_2^\ast$ and~$c_0^\ast$ it follows that~$K$ is the spanned by all diagrams without propagating lines and by all elements of the form $d_1-d_2$, where $d_1,d_2\in Ac_2^\ast$ are diagrams satisfying the following conditions. For~$k\in \{1,2\}$, the diagram $d_k$ has two propagating lines, connecting~$1$ to~$i_k$ and~$n$ to~$j_k$, giving four different dots $\{i_1,i_2,j_1,j_2\}$.
There is a cap in $d_k$ which connects dots $i_l$ and~$j_l$, with~$\{k,l\}=\{1,2\}$. Finally, removing these caps and propagating lines in $d_1$ and~$d_2$ yield identical~$(n-2,n-4)$-Brauer diagrams.

An example of such a $d_1-d_2$, with~$i_1=1'$, $j_1=2'$, $i_2=3'$ and~$j_2=4'$, is given by
\begin{equation*}\label{Defx}
\begin{tikzpicture}[scale=0.9,thick,>=angle 90]
\begin{scope}[xshift=8cm]
\node at (1.4,0.5) {$x:=$};
\draw (2.9,0) -- +(0,1);
\draw (3.5,0) to [out=90,in=-180] +(.3,.3) to [out=0,in=90] +(.3,-.3);
\draw (4.7,0) to [out=90,in=-180] +(.3,.3) to [out=0,in=90] +(.3,-.3);
\draw [dotted] (5.6,.1) -- +(1,0);
\draw (6.9,0) to [out=90,in=-180] +(.3,.3) to [out=0,in=90] +(.3,-.3);
\draw (8.2,0) to [out=147,in=-33] +(-4.7,1);
\draw (4.1,1) to [out=-90,in=-180] +(.3,-.3) to [out=0,in=-90] +(.3,.3);
\draw (5.2,1) to [out=-90,in=-180] +(.3,-.3) to [out=0,in=-90] +(.3,.3);
\draw [dotted] (6.1,.9) -- +(1,0);
\draw (7.5,1) to [out=-90,in=-180] +(.3,-.3) to [out=0,in=-90] +(.3,.3);
\node at (9.5,0.5) {$-$};
\draw (10.9,0)  to [out=70,in=-110] +(1.2,1);
\draw (11.5,0) to [out=90,in=-180] +(.3,.3) to [out=0,in=90] +(.3,-.3);
\draw (12.7,0) to [out=90,in=-180] +(.3,.3) to [out=0,in=90] +(.3,-.3);
\draw [dotted] (13.6,.1) -- +(1,0);
\draw (14.9,0) to [out=90,in=-180] +(.3,.3) to [out=0,in=90] +(.3,-.3);
\draw (16.2,0) to [out=136,in=-44] +(-3.5,1);
\draw (10.9,1) to [out=-90,in=-180] +(.3,-.3) to [out=0,in=-90] +(.3,.3);
\draw (13.2,1) to [out=-90,in=-180] +(.3,-.3) to [out=0,in=-90] +(.3,.3);
\draw [dotted] (14.1,.9) -- +(1,0);
\draw (15.5,1) to [out=-90,in=-180] +(.3,-.3) to [out=0,in=-90] +(.3,.3);
\end{scope}
\end{tikzpicture}
\end{equation*}

Now we claim that~$K=Ax$. That the span of all diagrams without propagating lines is in $Ax$ follows easily from multiplying~$x$ with the diagram having a cup and cap connecting the first two dots and otherwise only vertical propagating lines. For~$d_1-d_2\in K$ as above, we consider the three algebras separately. 

For~$A=\cB_n(0)$, we can consider a diagram $a\in\mS_n$, where $1$ is connected to~$i_1$, $2$ to~$j_1$, $3$ to~$i_2$ and~$4$ to~$j_2$. It then follows easily that~$a$ can be completed such that~$ax=d_1-d_2$.

For~$A=\TL_n(0)$, take an arbitrary~$d_1-d_2$ as above and consider the $n/2-2$ cups which appear in both diagrams $d_1$ and~$d_2$. It follows easily that this information determines $d_1-d_2$ uniquely, up to sign.
Now we consider the unique diagram $a\in A$ which contains those $n/2-2$ cups, the $n/2-2$ caps which appear in $c_4^\ast$ and four propagating lines. Then we find $ax=\pm (d_1-d_2)$.
Finally, the case~$A=J_n(0)$ follows similarly.

As $c_4^\ast x=x$, we have a surjection~$Ac_4^\ast\tto K$, with~$K=Ax$, proving the exact sequence.

For the two remaining algebras one proves, similarly to the above, that the kernel of~$Ac_1^\ast \tto Ac_0^\ast$ is generated by $x=c_2^\ast x$, given by
$$
\begin{tikzpicture}[scale=0.9,thick,>=angle 90]
\begin{scope}[xshift=4cm]
\node at (1.5,1) {$x:=$};

\draw (2.8,0) to [out=30, in=150] +(5.4,0);
%\draw (2.8,2) to [out=-30, in=-150] +(6,0);
\draw (3.4,0) to [out=32, in=148] +(4.2,0);
\draw (3.4,2) to [out=-32, in=-148] +(4.8,0);
\draw (3.9,0) [dotted] -- +(0.7,0);
\draw (3.9,2) [dotted] -- +(1.2,0);
\draw (4.9,0) to [out=80,in=100] +(1.2,0);
\draw (5.5,0) to [out=120, in=-60] +(- 2.7,2);
\draw (5.5,2) to [out=-90,in=180] +(.3,-.3) to [out=0,in=-90] +(.3,.3);
\draw[line width =2] (5.8,0) -- +(0,2);
\draw (6.5,0) [dotted] -- +(0.7,0);
\draw (6.5,2) [dotted] -- +(1.2,0);
\draw  (8.8,0) -- +(0,2);

\node at (10.5,1) {$-$};

\draw (11.8,0) to [out=30, in=150] +(5.4,0);
\draw (12.4,0) to [out=32, in=148] +(4.2,0);
\draw (11.8,2) to [out=-30, in=-150] +(6,0);
\draw (13,2) to [out=-35, in=-145] +(3.6,0);
\draw (12.9,0) [dotted] -- +(0.7,0);
\draw (13.6,2) [dotted] -- +(0.7,0);
\draw (13.9,0) to [out=80,in=100] +(1.2,0);
\draw (14.5,0) to [out=120, in=-60] +(- 2.1,2);
\draw (14.5,2) to [out=-90,in=180] +(.3,-.3) to [out=0,in=-90] +(.3,.3);
\draw[line width =2] (14.8,0) -- +(0,2);
\draw (15.5,0) [dotted] -- +(0.7,0);
\draw (15.2,2) [dotted] -- +(0.7,0);
\draw  (17.8,0) to [out=100,in=-80] +(-0.6,2);

\end{scope}
\end{tikzpicture}
$$
for~$A=\cB_{r,r}(0)$, and by
$$x:=\;\;\{\{1,1'\},\{2,3,\ldots,n\},\{2',3',\ldots,n'\}\}\;-\;\{\{1'\},\{2,3,\ldots,n\},\{1,2',3',\ldots,n'\}\},$$
for~$A=\cP_n(0)$.
\end{proof}

\begin{lemma}\label{LemDi}
For~$A$ as in Lemma~\ref{LemGFiso}, let $D_i:=e_n^\ast C\otimes_B He_i^\ast$. Then
$$D_i\,\cong\, Ac_i^\ast/Ac_{i-\gamma}^\ast Ac_i^\ast,\;\,\mbox{ for }\quad\;i\in I_A.$$
\end{lemma}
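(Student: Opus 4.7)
The plan is to combine Lemma \ref{LemABAM} (which describes $\Delta_M = A\otimes_B M$ in the idempotent-graded setting) with the isomorphism $Ac_j^\ast \cong e_n^\ast Ce_j^\ast$ of Corollary \ref{CorCatAlgAlt}, and then to identify the relevant submodules of $Ac_i^\ast$ and $e_n^\ast Ce_i^\ast$ under this isomorphism.

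First, I would apply Lemma \ref{LemABAM} with $M = He_i^\ast$, viewed as the regular left module over $A^{(i)}\cong He_i^\ast$. Writing $J_{<i}$ for the ideal generated by all $e_k^\ast$ with $k\in I_A,\,k<i$ and $f_{\le i}=\sum_{k\le i}e_k^\ast$, the lemma gives
$$C\otimes_B He_i^\ast\;\cong\;(C/J_{<i})f_{\le i}\otimes_{A^{(i)}}He_i^\ast.$$
Since $e_k^\ast\in J_{<i}$ for $k<i$, only the $e_i^\ast$-summand survives and the tensor collapses to $Ce_i^\ast/J_{<i}e_i^\ast$. Because the ideals $Ce_k^\ast C$ form a chain in each of the diagram categories, $J_{<i}=Ce_{i-\gamma}^\ast C$, so after multiplying by $e_n^\ast$ we obtain
$$D_i\;\cong\;e_n^\ast Ce_i^\ast/e_n^\ast Ce_{i-\gamma}^\ast Ce_i^\ast.$$
The remaining task is to show that the isomorphism $Ac_i^\ast\xrightarrow{\sim}e_n^\ast Ce_i^\ast$, $\alpha c_i^\ast\mapsto\alpha a_i$, sends the submodule $Ac_{i-\gamma}^\ast Ac_i^\ast$ onto $e_n^\ast Ce_{i-\gamma}^\ast Ce_i^\ast$.

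When $i-\gamma>0$, the element $c_{i-\gamma}^\ast$ is idempotent and $b_{i-\gamma}a_{i-\gamma}=e_{i-\gamma}^\ast$. For $xyb_i$ with $x\in e_n^\ast Ce_{i-\gamma}^\ast$ and $y\in e_{i-\gamma}^\ast Ce_i^\ast$, setting $\alpha:=xb_{i-\gamma}\in A$ and $\beta:=a_{i-\gamma}yb_i\in A$, the identities $b_{i-\gamma}a_{i-\gamma}=e_{i-\gamma}^\ast$, $b_ia_i=e_i^\ast$, $xe_{i-\gamma}^\ast=x$, $ye_i^\ast=y$ collapse $\alpha c_{i-\gamma}^\ast\beta c_i^\ast$ to $xyb_i$; conversely, every $\alpha c_{i-\gamma}^\ast\beta c_i^\ast=(\alpha a_{i-\gamma})(b_{i-\gamma}\beta a_i)b_i$ visibly factors through the index $i-\gamma$. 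This handles the generic case.

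The main obstacle is the case $i=\gamma$ (relevant when $\delta=0$ and $p=0$), where $b_0a_0=0$ so the substitution $\alpha=xb_0$ fails. Here I would use a right-sided analogue of the isomorphism: combining Theorem \ref{thmcoarseP} for $(i,j)=(0,n)$ with Lemma \ref{LemAc00} gives a bijection $e_0^\ast Ce_n^\ast\cong c_0^\ast A$ sending $x\mapsto a_0x$, which via $c_0^\ast A=a_0b_0A$ and injectivity of $a_0\cdot$ yields
$$b_0A\;=\;e_0^\ast Ce_n^\ast.$$
Together with $Aa_0=e_n^\ast Ce_0^\ast$, this produces the factorization
$$Ac_0^\ast A\;=\;(Aa_0)(b_0A)\;=\;e_n^\ast Ce_0^\ast Ce_n^\ast.$$
For $x\in e_n^\ast Ce_0^\ast$ and $y\in e_0^\ast Ce_i^\ast$, the product $xyb_i$ lies in $e_n^\ast Ce_0^\ast\cdot e_0^\ast Ce_n^\ast=Ac_0^\ast A$, and the identity $xyb_i\cdot c_i^\ast=xy(b_ia_i)b_i=xyb_i$ shows $xyb_i=(xyb_i)c_i^\ast\in Ac_0^\ast A\cdot c_i^\ast=Ac_0^\ast Ac_i^\ast$, which together with the (easy) reverse inclusion $Ac_0^\ast Ac_i^\ast\subseteq e_n^\ast Ce_0^\ast Ce_i^\ast\cdot b_i$ completes the identification and the proof.
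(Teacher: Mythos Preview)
Your proof is correct and follows the same overall strategy as the paper: apply Lemma~\ref{LemABAM} to obtain $D_i\cong e_n^\ast Ce_i^\ast/e_n^\ast Ce_{i-\gamma}^\ast Ce_i^\ast$, and then transport this through the isomorphism $Ac_i^\ast\cong e_n^\ast Ce_i^\ast$ of Corollary~\ref{CorCatAlgAlt}. The only difference is in how the submodules are matched: the paper dispatches this in one sentence by observing diagrammatically that both $e_n^\ast Ce_{i-\gamma}^\ast Ce_i^\ast$ (under the isomorphism) and $Ac_{i-\gamma}^\ast Ac_i^\ast$ are the span of diagrams in $Ac_i^\ast$ with strictly fewer than $i$ propagating lines (or classes meeting both rows), whereas you verify the same equality algebraically via the relations among $a_j,b_j,c_j^\ast$, with separate care for the nilpotent case $c_0^\ast$. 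Your route is more explicit and avoids any appeal to diagram combinatorics, at the cost of a longer argument; the paper's route is shorter but presupposes the reader is comfortable reading off the ``propagating-line'' description directly.
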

\begin{proof}
Set $f'=\sum_{j<i}e_j^\ast$ and~$f=f'+e_i^\ast$. Lemma \ref{LemABAM} and equation~\eqref{eqAiH} then imply that
$$C\otimes_B He_i^\ast\;\cong\; \left(C/Cf'C\right)f.$$
By Corollary~\ref{CorCatAlgAlt}, the $A$-module $e_n^\ast C\otimes_B He_i^\ast$ is a quotient of $Ac_i^\ast$. Furthermore, the submodule $e_n^\ast C\sum_{j<i}e_i^\ast Ce_i^\ast$ corresponds precisely to the submodule in $Ac_i^\ast$ spanned by diagram which have strictly fewer than $i$ propagating lines (subsets which contain dots on both lines). This is precisely $Ac_{i-\gamma}^\ast Ac_i^\ast$. \end{proof}

\begin{prop}\label{PropHom}
For~$A$ in Lemma \ref{Ac0A}, the unit $\eta:\Id\to G\circ F$ induces isomorphisms
$$G\circ F(\overline{\Delta}(i,\nu))\;\cong\;\overline{\Delta}(i,\nu),\qquad\mbox{for all $i\in I_A$ and~$\nu\in\Lambda_i$.}$$
\end{prop}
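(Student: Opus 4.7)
The plan is to invoke Lemma~\ref{LemGFiso}(1), which reduces the statement to showing that the canonical map
$$\phi_M\colon\;e_0^\ast M\;\to\;\Hom_A(Ac_0^\ast,\,e_n^\ast M),\qquad v\mapsto\bigl(c_0^\ast\mapsto a_0 v\bigr),$$
is an isomorphism for $M=\overline{\Delta}(i,\nu)$. The crucial structural input is the $\mZ$-grading from the proof of Proposition~\ref{grBrBo}: the module $\overline{\Delta}(i,\nu)$ is concentrated in degrees $\{-i,-i-1,\ldots\}$, so that $e_j^\ast \overline{\Delta}(i,\nu)=0$ for $j<i$, while $e_i^\ast \overline{\Delta}(i,\nu)=L^0(i,\nu)$.

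To unpack the right-hand side, I will apply $\Hom_A(-,e_n^\ast M)$ to the exact sequence $Ac_{2\gamma}^\ast\xrightarrow{\psi}Ac_\gamma^\ast\to Ac_0^\ast\to 0$ of Lemma~\ref{esc0}; together with Corollary~\ref{Corin} this yields
$$0\to \Hom_A(Ac_0^\ast,e_n^\ast M)\;\hookrightarrow\;e_\gamma^\ast M\xrightarrow{\psi^\ast} e_{2\gamma}^\ast M,$$
in which the embedding sends $\alpha$ to $b_\gamma\alpha(c_0^\ast)$ and $\psi^\ast$ is left multiplication by an explicit element $b_{2\gamma}\tilde{x}a_\gamma\in e_{2\gamma}^\ast Ce_\gamma^\ast$, constructed from the relation element $x=\tilde{x}c_\gamma^\ast$ of Lemma~\ref{esc0}. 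Composed with $\phi_M$, the embedding identifies the image of $\phi_M$ in $e_\gamma^\ast M$ with left multiplication by $b_\gamma a_0\in e_\gamma^\ast Ce_0^\ast$ on $e_0^\ast M$. For $i>\gamma$ the grading forces both sides to vanish, so $\phi_M$ is trivially an isomorphism; the range $0<i<\gamma$ is vacuous in all five families (for $\gamma=2$ by the parity of $i$, for $\gamma=1$ tautologically). The essential cases are therefore $i=\gamma$, where $e_0^\ast M=0$ and one needs $\psi^\ast$ to act injectively on the simple $He_\gamma^\ast$-module $L^0(\gamma,\nu)$, and $i=0$, where one needs $v\mapsto b_\gamma a_0 v$ to be a bijection $L^0(0,\nu)\xrightarrow{\sim}\ker\psi^\ast$.

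Both remaining cases will be settled by explicit diagrammatic computation, separately for each of the five algebras. Using the concrete forms of $a_0,a_\gamma,b_\gamma,b_{2\gamma}$ from Lemma~\ref{Lemab} and the explicit $x$ constructed in the proof of Lemma~\ref{esc0}, the operators $b_\gamma a_0$ and $b_{2\gamma}\tilde{x}a_\gamma$ become concrete elements of the category algebra acting on $L^0(0,\nu)$ and $L^0(\gamma,\nu)$; the simplicity of these modules over $He_i^\ast$ reduces injectivity in each instance to a nonvanishing check. I expect the main obstacle to be the surjectivity claim in the case $i=0$: after identifying $e_\gamma^\ast \overline{\Delta}(0,\nu)$ with a quotient of $e_\gamma^\ast Ce_0^\ast\otimes_{He_0^\ast}L^0(0,\nu)$ via Lemma~\ref{LemABAM}, one must show that every element annihilated by $b_{2\gamma}\tilde{x}a_\gamma$ admits a representative of the form $b_\gamma a_0\otimes v'$. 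The combinatorial identities required mirror those appearing in the proof of Lemma~\ref{Ac0A}, where the auxiliary elements $u$, $w$, $v$ were used to reduce diagrams modulo the ideal $Ac_0^\ast A$.
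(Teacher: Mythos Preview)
Your reduction via Lemma~\ref{LemGFiso}(1) and the grading argument for $i>\gamma$ are exactly what the paper does. The two substantive cases are handled very differently, however, and the paper's route is both shorter and avoids the obstacles you flag.

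For $i=0$, the paper does not compute $\ker\psi^\ast$ at all. Since $He_0^\ast=\mk$ in every case, $\overline{\Delta}(0,\emptyset)=Ce_0^\ast$ and $e_n^\ast\overline{\Delta}(0,\emptyset)\cong Ac_0^\ast$ by Corollary~\ref{CorCatAlgAlt}. The map $\phi_M$ is then precisely the isomorphism $\Hom_{\cA}(0,0)\to\Hom_A(Ac_0^\ast,Ac_0^\ast)$ already established in Theorem~\ref{thmcoarseP} (the case $i=j=0$, resting on Lemma~\ref{LemAc00}). No further diagram computation is needed, and the surjectivity difficulty you anticipate disappears.

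For $i=\gamma$, the paper again avoids analysing $\psi^\ast$. Since $He_\gamma^\ast$ is a group algebra, hence self-injective, each simple $L^0(\gamma,\nu)$ embeds in $He_\gamma^\ast$; applying the exact functor $e_n^\ast C\otimes_B-$ gives $e_n^\ast\overline{\Delta}(\gamma,\nu)\hookrightarrow D_\gamma$. By Lemma~\ref{LemDi}, $D_\gamma$ is a summand of $A/Ac_0^\ast A$, so Lemma~\ref{Ac0A} forces $\Hom_A(Ac_0^\ast,D_\gamma)=0$ and hence $\Hom_A(Ac_0^\ast,e_n^\ast\overline{\Delta}(\gamma,\nu))=0$. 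This handles all $\nu\in\Lambda_\gamma$ uniformly, with no case-by-case diagrammatics.

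One further point on your plan: the operator $\psi^\ast$ is left multiplication by an element of $e_{2\gamma}^\ast Ce_\gamma^\ast$ and is \emph{not} $He_\gamma^\ast$-linear, so simplicity of $L^0(\gamma,\nu)$ does not by itself reduce injectivity to a nonvanishing check. In the present situation this happens not to matter, since every simple $He_\gamma^\ast$-module is one-dimensional, but the argument as stated is incomplete.
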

\begin{proof}
First we prove the case $i=0$. We have $\overline{\Delta}(0,\emptyset)\cong Ce_0^\ast$ and $F \overline{\Delta}(0,\emptyset)\cong Ac_0^\ast$ by Lemma~\ref{LemDi}. Lemma~\ref{LemGFiso}(1) applied to $M=Ce_0^\ast$ shows that $\eta_M$ is indeed an isomorphism, by Theorem~\ref{thmcoarseP} for~$i=j=0$.

For~$i>0$, we have $e_0^\ast \overline{\Delta}(i,\nu)=0$ for all $\nu\in \Lambda_i$, so by Lemma~\ref{LemGFiso}(1), we only need to prove
\begin{equation}\label{Hom000}\Hom_A(Ac_0^\ast, e_n^\ast \overline{\Delta}(i,\nu))=0.\end{equation}
Recall that~$He_{\gamma}^\ast$ is a group algebra and hence self-injective. In particular, for any simple module~$L^0(\gamma,\nu)$ of~$He_\gamma^\ast$ we have $L^0(\gamma,\nu)\hookrightarrow He_\gamma^\ast$ and consequently $e_n^\ast\overline{\Delta}(\gamma,\nu)\hookrightarrow D_\gamma$.
By Lemma~\ref{LemDi}, $D_\gamma$ is a direct summand of~$A/Ac_0^\ast A$. Hence Lemma~\ref{Ac0A} implies
$$\Hom_A(Ac_0^\ast, D_\gamma)=0.$$
 Thus the above equation implies equation~\eqref{Hom000}, concluding the case~$i=\gamma$. Finally, for~$i>\gamma$, we use Lemma~\ref{esc0}, which implies that the left-hand side of equation \eqref{Hom000} is a subspace of
$\Hom_A(Ac^\ast_\gamma,e_n^\ast \overline{\Delta}(i,\nu))$, which is equal to~$e_\gamma^\ast \overline{\Delta}(i,\nu)$
by Corollary~\ref{Corin}. Now $e_\gamma^\ast \overline{\Delta}(i,\nu)=0$ since $i>\gamma$, and equation \eqref{Hom000} is again satisfied.
\end{proof}

\subsubsection{}\label{IntroY}From now on we will assume that for~$A$ equal to~$\cB_n(0)$, $J_n(0)$ or~$\TL_n(0)$ we have $n>4$. This allows to introduce diagrams $y_1,y_2\in A$ as
$$
\begin{tikzpicture}[scale=1,thick,>=angle 90]
\begin{scope}[xshift=4cm]

\node at (7,0.5) {$y_1:=$};

\draw (7.9,0) -- +(0,1);
\draw (8.5,0) -- +(0,1);
\draw (9.1,0) to [out=60,in=-110] +(1.2,1);
\draw (9.7,0) to [out=90,in=-180] +(.3,.3) to [out=0,in=90] +(.3,-.3);
\draw [dotted] (10.6,.1) -- +(1,0);
\draw (12,0) to [out=90,in=-180] +(.3,.3) to [out=0,in=90] +(.3,-.3);
\draw (13.2,0) to [out=120,in=-60] +(-2.3,1);

\draw (9.1,1) to [out=-90,in=-180] +(.3,-.3) to [out=0,in=-90] +(.3,.3);
\draw [dotted] (12.14,.9) -- +(0.4,0);
\draw (12.6,1) to [out=-90,in=-180] +(.3,-.3) to [out=0,in=-90] +(.3,.3);
\draw (11.5,1) to [out=-90,in=-180] +(.3,-.3) to [out=0,in=-90] +(.3,.3);

\node at (15,0.5) {$y_2:=$};

\draw (15.9,0) to [out=60,in=-110] +(1.2,1);
\draw (16.5,0) to [out=60,in=-110] +(1.2,1);
\draw (17.1,0) to [out=60,in=-110] +(1.2,1);
\draw (17.7,0) to [out=90,in=-180] +(.3,.3) to [out=0,in=90] +(.3,-.3);
\draw [dotted] (18.6,.1) -- +(1,0);
\draw (20,0) to [out=90,in=-180] +(.3,.3) to [out=0,in=90] +(.3,-.3);
\draw (21.2,0) to [out=120,in=-60] +(-2.3,1);

\draw (15.9,1) to [out=-90,in=-180] +(.3,-.3) to [out=0,in=-90] +(.3,.3);
\draw [dotted] (20.14,.9) -- +(0.4,0);
\draw (20.6,1) to [out=-90,in=-180] +(.3,-.3) to [out=0,in=-90] +(.3,.3);
\draw (19.5,1) to [out=-90,in=-180] +(.3,-.3) to [out=0,in=-90] +(.3,.3);

\end{scope}
\end{tikzpicture}
$$
Similarly, for~$A=\cB_{r,r}(0)$ we will assume $r>2$, which allows to introduce
$$
\begin{tikzpicture}[scale=0.9,thick,>=angle 90]
\begin{scope}[xshift=4cm]
\node at (1.6,1) {$y_1:=$};
\draw  (2.8,0) -- +(0,2);
%\draw (2.8,0) to [out=30, in=150] +(5.4,0);
%\draw (2.8,2) to [out=-30, in=-150] +(6,0);
\draw (3.4,0) to [out=32, in=148] +(4.2,0);
\draw (3.4,2) to [out=-30, in=-150] +(4.8,0);
\draw (4.6,2) to [out=-40, in=-140] +(2.4,0);
\draw (3.9,0) [dotted] -- +(0.7,0);
\draw (4.9,2) [dotted] -- +(0.4,0);
\draw (4.9,0) to [out=80,in=100] +(1.2,0);
\draw (5.5,0) to [out=120, in=-60] +(- 1.5,2);
\draw (5.5,2) to [out=-90,in=180] +(.3,-.3) to [out=0,in=-90] +(.3,.3);
\draw[line width =2] (5.8,0) -- +(0,2);
\draw (6.5,0) [dotted] -- +(0.7,0);
\draw (6.25,2) [dotted] -- +(0.4,0);
\draw  (8.2,0) to [out=100,in=-80] +(-0.6,2);
\draw  (8.8,0) -- +(0,2);

\node at (10,1) {$y_2:=$};

\draw  (10.8,0) to [out=80, in=-100] +(0.6,2);
%\draw (2.8,0) to [out=30, in=150] +(5.4,0);
%\draw (2.8,2) to [out=-30, in=-150] +(6,0);
\draw (11.4,0) to [out=32, in=148] +(4.2,0);
\draw (10.8,2) to [out=-30, in=-150] +(6,0);
\draw (12.6,2) to [out=-40, in=-140] +(2.4,0);
\draw (11.9,0) [dotted] -- +(0.7,0);
\draw (12.9,2) [dotted] -- +(0.4,0);
\draw (12.9,0) to [out=80,in=100] +(1.2,0);
\draw (13.5,0) to [out=120, in=-60] +(- 1.5,2);
\draw (13.5,2) to [out=-90,in=180] +(.3,-.3) to [out=0,in=-90] +(.3,.3);
\draw[line width =2] (13.8,0) -- +(0,2);
\draw (14.5,0) [dotted] -- +(0.7,0);
\draw (14.25,2) [dotted] -- +(0.4,0);
\draw  (16.2,0) to [out=100,in=-80] +(-0.6,2);
\draw  (16.8,0) to [out=100,in=-80] +(-0.6,2);
\end{scope}
\end{tikzpicture}
$$
For all four algebras a direct computation shows that
\begin{equation}\label{eqyx}
x\;=\; y_1x-y_2x,
\end{equation}
with~$x$ as introduced in the proof of Lemma \ref{esc0}.

For~$A=\cP_n(0)$, we will assume that~$n>2$, and we introduce $y\in A$,
$$y:=\quad \{\{1,1'\},\{2,2'\},\{3,4,\ldots,n\},\{3',4',\ldots,n'\}\}\,-$$
$$\{\{1,1'\},\{2'\},\{3,4,\ldots,n\},\{2,3',4',\ldots,n'\}\}+\{\{1'\},\{1,2'\},\{3,4,\ldots,n\},\{2,3',4',\ldots,n'\}\}.$$
It follows immediately that~$yx=0$.

\begin{lemma}\label{LemHom01}
Let $A$ be as in Theorem~\ref{Thm0Cover}. Then
$$\Ext^1_{A}(Ac_0^\ast,Ac_0^\ast)=0.$$
\end{lemma}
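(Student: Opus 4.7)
The plan is to compute $\Ext^1_A(Ac_0^\ast, Ac_0^\ast)$ by exploiting the partial projective presentation of $Ac_0^\ast$ supplied by Lemma~\ref{esc0}. Since $c_\gamma^\ast$ is an idempotent with $\gamma>0$, the module $Ac_\gamma^\ast$ is projective, so from the short exact sequence
$$0\to Ax \to Ac_\gamma^\ast \to Ac_0^\ast \to 0$$
and the vanishing $\Ext^1_A(Ac_\gamma^\ast,Ac_0^\ast)=0$, applying $\Hom_A(-,Ac_0^\ast)$ identifies $\Ext^1_A(Ac_0^\ast,Ac_0^\ast)$ with the cokernel of the restriction map
$$c_\gamma^\ast Ac_0^\ast \;=\;\Hom_A(Ac_\gamma^\ast,Ac_0^\ast)\;\longrightarrow\;\Hom_A(Ax,Ac_0^\ast),\qquad w\mapsto(ax\mapsto axw).$$
Hence the task reduces to showing that every $A$-homomorphism $\psi\colon Ax\to Ac_0^\ast$ has the form $ax\mapsto axw$ for some $w\in c_\gamma^\ast Ac_0^\ast$.

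Any such $\psi$ is determined by $z:=\psi(x)$, which automatically lies in $c_{2\gamma}^\ast Ac_0^\ast$ (since $c_{2\gamma}^\ast x=x$) and must satisfy the annihilator condition $ax=0\Rightarrow az=0$ for all $a\in A$. The crucial input is the relation $(1-y_1+y_2)x=0$ coming from \eqref{eqyx} (respectively $yx=0$ in the partition case, as established in \ref{IntroY}), which forces
$$(1-y_1+y_2)z=0\qquad(\text{resp.\ } yz=0).$$
The content of the lemma is that this constraint, combined with $z\in c_{2\gamma}^\ast Ac_0^\ast$, already pins $z$ inside the image of left multiplication by $x$ on $c_\gamma^\ast Ac_0^\ast$.

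To prove this, I would perform a diagrammatic case analysis along the lines of Lemmas~\ref{LemAc00} and~\ref{Ac0A}: fix a convenient spanning set of $c_{2\gamma}^\ast Ac_0^\ast$ using the explicit shapes of $c_0^\ast$, $c_\gamma^\ast$ and $c_{2\gamma}^\ast$, then use the auxiliary diagrams already introduced in the section (the analogues of $u$ in \eqref{equ}, $w$ in \eqref{Defw}, $v$ in \eqref{Defv}, and $y_1,y_2,y$ themselves) to project any $z$ satisfying the annihilation constraint onto a canonical form. From this canonical form a lift $w\in c_\gamma^\ast Ac_0^\ast$ with $xw=z$ can be exhibited by inspection, building it out of the same building blocks that entered the definition of $x$ in the proof of Lemma~\ref{esc0}.

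The main obstacle is precisely this last combinatorial step: the relation $(1-y_1+y_2)z=0$ is not tautologically equivalent to $z\in x\cdot (c_\gamma^\ast Ac_0^\ast)$, so one must honestly verify, separately for each of the five families of diagram algebras, that the kernel of multiplication by $1-y_1+y_2$ (or $y$) inside $c_{2\gamma}^\ast Ac_0^\ast$ is no larger than this image. The auxiliary diagrams $y_1,y_2$ and $y$ have been engineered with exactly this detection in mind, so the case analysis — while careful — is expected to mirror and extend the arguments of Lemmas~\ref{LemAc00}, \ref{Ac0A}, \ref{esc0} and Proposition~\ref{PropHom}, using the fact that diagrams in $Ac_0^\ast$ have their lower row entirely determined by $c_0^\ast$ and so offer little room to manoeuvre.
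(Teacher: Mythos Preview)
Your setup is exactly the paper's: use the short exact sequence $0\to Ax\to Ac_\gamma^\ast\to Ac_0^\ast\to 0$ with $Ac_\gamma^\ast$ projective, then analyse $z=\psi(x)\in c_{2\gamma}^\ast Ac_0^\ast$ via the relation $(1-y_1+y_2)z=0$ (resp.\ $yz=0$). The one point you have not anticipated is that this analysis yields the stronger conclusion $\Hom_A(Ax,Ac_0^\ast)=0$, not merely that the restriction map is surjective. In the Brauer/Jones/Temperley--Lieb cases, the constraints $z\in c_4^\ast Ac_0^\ast$ and $z=y_1z-y_2z$ force $z$ to be a multiple of $c_0^\ast$ itself, and then $y_1c_0^\ast=y_2c_0^\ast$ gives $z=0$; for $\cP_n(0)$ the space $c_2^\ast Ac_0^\ast$ is two-dimensional and $yz=0$ already forces $z=0$. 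So you never need to exhibit a lift $w$ with $xw=z$: the target of your restriction map vanishes and the $\Ext^1$ is immediately zero. Your plan would work, but it is aiming for a weaker statement than what the diagrammatic analysis actually delivers.
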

\begin{proof}
First we will prove $\Hom_{A}(Ax,Ac_0^\ast)=0.$ Consider $\phi:Ax\to Ac_0^\ast$ and~$a:=\phi(c_0^\ast)$.

First, let $A$ be $\cB_n(0)$, $J_n(0)$ or~$\TL_n(0)$, the case~$\cB_{r,r}(0)$ is proved similarly. As $c_4^\ast x=x$, we have $a\in c_4^\ast A c_0^\ast$. By equation \eqref{eqyx}, we must have
$$a=y_1 a- y_2a.$$
This means that~$a$ is a linear combination of diagrams which have the $n/2$ caps of~$c_0^\ast$, the $n/2-2$ cups of~$c_4^\ast$ and another cup connecting either $1',2'$, or~$3',4'$. The only such diagram is $c_0^\ast$. However, $y_1c_0^\ast=y_2c_0^\ast$ and hence $a=0$.

For~$A=\cP_n(0)$, we have $a\in c_2^\ast Ac_0^\ast$, with~$\dim c_2^\ast Ac_0^\ast=2$. With $y$ from \ref{IntroY}, we must have $ya=0$. It follows by direct computation that~$ya=0$ for~$a\in  c_2^\ast Ac_0^\ast$ implies $a=0$.

Hence, in every case we have indeed, $\Hom_{A}(Ax,Ac_0^\ast)=0$. Recall the short exact sequence 
\begin{equation}\label{sesK}0\to Ax\to Ac_\gamma^\ast \to Ac_0^\ast\to 0\end{equation}
from the proof of Lemma~\ref{esc0}. This implies an exact sequence
\begin{equation}\label{ExtHomeq}\Hom_A(Ax,M)\to \Ext^1_A(Ac_0^\ast,M)\to 0,\end{equation}
for any $A$-module~$M$, as $Ac_\gamma^\ast$ is projective. As we established that~$\Hom_A(Ax,Ac_0^\ast)=0$, the statement follows.
\end{proof}

\begin{lemma}\label{LemHom02}
Let $A$ be as in Theorem \ref{Thm0Cover}. Then 
$$\Ext^1_A(Ac_0^\ast, Ac_{2\gamma}^\ast/Ac_{\gamma}^\ast Ac_{2\gamma}^\ast)=0.$$
\end{lemma}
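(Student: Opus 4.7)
My plan is to adapt the proof of Lemma~\ref{LemHom01}. Set $M:=Ac_{2\gamma}^\ast/Ac_\gamma^\ast Ac_{2\gamma}^\ast$, which by Lemma~\ref{LemDi} is isomorphic to $D_{2\gamma}$. Applying $\Hom_A(-,M)$ to the short exact sequence~\eqref{sesK} of Lemma~\ref{esc0}, and using projectivity of $Ac_\gamma^\ast$, gives the exact sequence
$$\Hom_A(Ax,M)\;\to\;\Ext^1_A(Ac_0^\ast,M)\;\to\; 0,$$
so it suffices to prove $\Hom_A(Ax,M)=0$.

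The argument then proceeds in parallel to Lemma~\ref{LemHom01}. Take $\phi\in\Hom_A(Ax,M)$ and set $a:=\phi(x)\in M$. Since $c_{2\gamma}^\ast x=x$, $A$-linearity of $\phi$ forces $a\in c_{2\gamma}^\ast M$, and equation~\eqref{eqyx} (respectively $yx=0$ for $A=\cP_n(0)$) gives $a=y_1a-y_2a$ (respectively $ya=0$). These two constraints restrict $a$ to the image in $M$ of a small, explicit set of diagrams in $c_{2\gamma}^\ast Ac_{2\gamma}^\ast$ having exactly $2\gamma$ propagating lines, parameterised by the possible ways of reconnecting the propagating endpoints of $c_{2\gamma}^\ast$. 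I will then check by direct diagram composition, as in the final step of Lemma~\ref{LemHom01}, that the $y$-relation forces each surviving coefficient to vanish modulo $Ac_\gamma^\ast Ac_{2\gamma}^\ast$, so that $a=0$ in $M$.

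The main obstacle is the combinatorial verification of this last step, case by case. The Brauer, Jones and Temperley--Lieb algebras admit a uniform treatment using the elements $y_1,y_2$ of~\ref{IntroY} in exactly the manner of Lemma~\ref{LemHom01}, while the walled Brauer and partition algebra cases use the appropriate analogues ($y_1,y_2$ for $\cB_{r,r}(0)$ and $y$ for $\cP_n(0)$). Since the space cut out by the two constraints is just as restricted as in Lemma~\ref{LemHom01}, I expect this to be a straightforward, if somewhat lengthy, diagram calculation, with no new conceptual ingredient beyond those used in the preceding lemma.
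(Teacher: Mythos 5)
Your proposal is correct and follows the paper's own argument essentially verbatim: set $M:=Ac_{2\gamma}^\ast/Ac_\gamma^\ast Ac_{2\gamma}^\ast$, reduce via the exact sequence of Lemma~\ref{esc0} to showing $\Hom_A(Ax,M)=0$, then use $c_{2\gamma}^\ast x=x$ together with $x=y_1x-y_2x$ (or $yx=0$) to constrain $a=\phi(x)$ and conclude $a=0$ by a small diagrammatic check. Two minor remarks: the paper's written $a:=\phi(c_0^\ast)$ is a typo for your correct $a:=\phi(x)$, and the paper also only sketches the final diagram verification (for the Brauer/Jones/Temperley--Lieb family it simply observes the two constraints are mutually inconsistent), so your deferred ``direct diagram composition'' step is no less complete than what the paper records.
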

\begin{proof}
We will prove $\Hom_{A}(Ax,Ac_{2\gamma}^\ast/Ac_{\gamma}^\ast Ac_{2\gamma}^\ast)=0,$ then the statement follows from~\eqref{ExtHomeq}. Consider $\phi:Ax\to Ac_{2\gamma}^\ast/Ac_{\gamma}^\ast Ac_{2\gamma}^\ast$ and~$a:=\phi(c_0^\ast)$.

First, let $A$ be $\cB_n(0)$, $J_n(0)$ or~$\TL_n(0)$, the case~$\cB_{r,r}(0)$ is proved similarly. As $c_4^\ast x=x$, we have $a\in c_4^\ast A/(Ac_2^\ast) c_4^\ast$. By equation \eqref{eqyx}, we must have
$$a=y_1 a- y_2a.$$
This means that~$a$ is represented by a linear combination of diagrams, where each contains $4$ propagating lines, the $n/2-2$ cups and caps of~$c_4^\ast$, but also either the cup connecting~$1',2'$ or~$3',4'$. This is an inconsistency, so $a=0$.

For~$A=\cP_n(0)$, the dimension of~$c_2^\ast (A/Ac_0^\ast A)c_2^\ast$ is 2. It follows quickly that no non-zero element~$a$ satisfies $ya=0$, so $a=0$.
\end{proof}

\begin{lemma}\label{ext02}
Let $A$ be as in Theorem~\ref{Thm0Cover}. Then
$$\Ext^1_A(Ac_0^\ast,Ac_\gamma^\ast/Ac_0^\ast Ac_\gamma^\ast)=0.$$
\end{lemma}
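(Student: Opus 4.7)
The plan parallels the proofs of Lemmas \ref{LemHom01} and \ref{LemHom02}. First, I would invoke the short exact sequence
$$0\,\to\, Ax\,\to\, Ac_\gamma^\ast\,\to\, Ac_0^\ast\,\to\, 0$$
from Lemma \ref{esc0}. Since $Ac_\gamma^\ast$ is projective, the contravariant $\Hom_A(-,M)$ long exact sequence yields
$$\Hom_A(Ax, M)\,\to\,\Ext^1_A(Ac_0^\ast, M)\,\to\, 0,$$
for any $A$-module $M$, exactly as in \eqref{ExtHomeq}. Specialising to $M = Ac_\gamma^\ast / Ac_0^\ast Ac_\gamma^\ast$ reduces the lemma to the claim that $\Hom_A(Ax,\; Ac_\gamma^\ast / Ac_0^\ast Ac_\gamma^\ast) = 0.$

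To prove this, I would take a morphism $\phi\colon Ax \to Ac_\gamma^\ast/Ac_0^\ast Ac_\gamma^\ast$ and set $a:=\phi(x)$. The identity $c_{2\gamma}^\ast x=x$, which holds by construction of $x$ in the proof of Lemma \ref{esc0}, places $a$ in $c_{2\gamma}^\ast Ac_\gamma^\ast / c_{2\gamma}^\ast Ac_0^\ast Ac_\gamma^\ast$. Since $Ac_0^\ast A$ is the minimal nonzero ideal of the coarse filtration---the ideal of diagrams with zero propagating lines in the cases $\cB_n(0)$, $J_n(0)$, $\TL_n(0)$, $\cB_{r,r}(0)$, and of partitions having no block meeting both rows for $\cP_n(0)$---a representative of $a$ is a linear combination of diagrams with exactly $\gamma$ propagating lines, whose top is prescribed by $c_{2\gamma}^\ast$ and whose bottom by $c_\gamma^\ast$. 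Next, I would apply the identity $x = y_1 x - y_2 x$ from \eqref{eqyx} (respectively $yx=0$ for $\cP_n(0)$) to deduce $a = y_1 a - y_2 a$ (respectively $y a = 0$).

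The remainder is a diagrammatic case analysis, carried out separately for each algebra family and analogous in spirit to those in Lemmas \ref{LemHom01} and \ref{LemHom02}. For $A$ equal to $\cB_n(0)$, $J_n(0)$ or $\TL_n(0)$ with $n\ge 6$ even, the relation $a = y_1 a - y_2 a$ would force every summand of a representative of $a$ to carry, on top of the fixed cups and caps inherited from $c_4^\ast$ and $c_2^\ast$, an additional top cup between two of the positions $\{1',2',3',4'\}$; the two remaining positions of this set would then be joined by the two propagating strands to the two bottom propagating positions of $c_2^\ast$, leaving only finitely many diagram shapes, on which $a=y_1a-y_2a$ collapses to zero. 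The case $A=\cB_{r,r}(0)$ is entirely parallel, using the $y_1, y_2$ of \ref{IntroY}. For $A = \cP_n(0)$, I would reduce to a direct linear-algebra computation inside the low-dimensional space $c_2^\ast(A/Ac_0^\ast A)c_1^\ast$, showing that no nonzero element there is annihilated on the left by $y$.

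The hard part will be this final diagrammatic bookkeeping. Relative to Lemma \ref{LemHom02}, where the target $D_{2\gamma}$ forces a representative of $a$ to have the maximum possible number of propagating lines and the $(y_1,y_2)$-identity immediately yields an inconsistent extra cup, the target $D_\gamma$ here admits a larger pool of candidate diagrams: several routings of the $\gamma$ propagating strands between top and bottom are a priori possible, and one must confirm case by case that none of these routings is compatible with the constraint $a = y_1 a - y_2 a$.
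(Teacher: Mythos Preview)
Your reduction has a genuine error: the claim that $\Hom_A(Ax,\,Ac_\gamma^\ast/Ac_0^\ast Ac_\gamma^\ast)=0$ is false. Indeed, the inclusion $Ax\hookrightarrow Ac_\gamma^\ast$ composed with the projection to $M:=Ac_\gamma^\ast/Ac_0^\ast Ac_\gamma^\ast$ already gives a nonzero morphism, since $x$ itself has $\gamma$ propagating lines and so does not lie in $Ac_0^\ast A$. More concretely, your diagrammatic analysis does correctly narrow down the representatives of $a=\phi(x)$ to a small finite-dimensional space, but your assertion that the identity $a=y_1a-y_2a$ then ``collapses to zero'' on that space is wrong: for $\TL_n(0)$ there is a one-dimensional space of solutions, and for $\cB_n(0)$ and $J_n(0)$ a two-dimensional one. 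So the step you flag as the ``hard part'' is not merely hard---it is impossible as stated.

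What the paper does instead is a dimension count. From Lemma~\ref{Ac0A} one has $\Hom_A(Ac_0^\ast,M)=0$, so the long exact sequence for \eqref{sesK} begins with
\[
0\;\to\;\Hom_A(Ac_\gamma^\ast,M)\;\to\;\Hom_A(Ax,M)\;\to\;\Ext^1_A(Ac_0^\ast,M)\;\to\;0.
\]
The first term is $c_\gamma^\ast M\cong He_\gamma^\ast$, of dimension $d$. Your diagrammatic analysis, carried through correctly, shows not that $\Hom_A(Ax,M)=0$ but that $\dim\Hom_A(Ax,M)\le d$; equality is then forced by injectivity of the first map, and the $\Ext^1$ vanishes. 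In short, Lemmas~\ref{LemHom01} and~\ref{LemHom02} are genuine vanishing-of-$\Hom$ arguments, whereas Lemma~\ref{ext02} requires the extra observation that the connecting map has trivial cokernel for dimension reasons.
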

\begin{proof}
Set $M:=Ac_\gamma^\ast/Ac_0^\ast Ac_\gamma^\ast$. 
The short exact sequence \eqref{sesK} and Lemma \ref{Ac0A} imply
a short exact sequence
$$0\to \Hom_A(Ac_\gamma^\ast,M)\to\Hom_A(Ax,M)\to \Ext^1_A(Ac_0^\ast,M)\to 0.$$
We will prove that for each algebra, 
$$\dim \Hom_A(Ax,M)\;\le\; d:=\dim c_\gamma^\ast(A/Ac_0^\ast A)c_\gamma^\ast =\dim H e_\gamma^\ast,$$
proving that the extension group must vanish.

First, let $A$ be $\cB_n(0)$, $J_n(0)$ or~$\TL_n(0)$, the case~$\cB_{r,r}(0)$ is proved similarly. If $a$ is the image of~$x$ under a morphism $Ax\to M$, then, by equation \eqref{eqyx}, we have
$$ a= c_4^\ast a\quad\mbox{and}\quad a=y_1a-y_2a.$$
Hence $a$ must be represented by a linear combinations of diagrams, containing the $n/2-1$ caps of~$c_2^\ast$, the $n/2-2$ cups of~$c_4^\ast$ and a cup which either connects $1',2'$ or~$3',4'$. The dimension of the space of such $a$ is $4$ for~$\cB_n(0)$ and~$J_n(0)$ and~$2$ for~$\TL_n(0)$. In each case, imposing the actual condition that~$a=y_1a-y_2a$, leaves half of the dimensions. For each case, this yields precisely $d$.

For~$A=\cP_n(0)$, we have $d=1$ and the dimension of~$c_2^\ast (A/Ac_0^\ast A)c_1^\ast$ is $3$. The subspace of elements that are annihilated by left multiplication with~$y$ also has dimension~$1$.\end{proof}

\begin{prop}\label{PropExt}
Maintain the notation and assumptions of Theorem \ref{Thm0Cover}.
We have
$$\cR_1G\circ F(\overline{\Delta}(i,\nu))=0,\qquad\mbox{for all $i\in I_A$ and~$\nu\in\Lambda_i$.}$$
\end{prop}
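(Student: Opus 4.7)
The plan is to invoke Lemma \ref{LemGFiso}(2) to reduce the claim to showing $\Ext^1_A(Ac_0^\ast, M_{i,\nu})=0$ for every $i\in I_A$ and $\nu\in\Lambda_i$, where I write $M_{i,\nu}:=e_n^\ast\overline{\Delta}(i,\nu)$. The argument then splits into three regimes according to the value of $i$.

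For $i=0$: In every case of the theorem, $He_0^\ast\cong \mk$ by Lemma \ref{LemExBH}, so $L^0(0,\emptyset)=He_0^\ast$ and thus $\overline{\Delta}(0,\emptyset)=\Delta(0,\emptyset)$. Lemma \ref{LemDi} then gives $M_{0,\emptyset}\cong Ac_0^\ast$, and the desired vanishing is exactly Lemma \ref{LemHom01}.

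For $i\in\{\gamma,2\gamma\}$: Since $He_i^\ast$ is a group algebra (or local in the Jones case), it is Frobenius, and so every simple $L^0(i,\nu)$ embeds into $He_i^\ast$. Applying the exact functor $e_n^\ast C\otimes_B-$ yields a short exact sequence $0\to M_{i,\nu}\to D_i\to Q\to 0$ of $A$-modules. A composition series of $He_i^\ast$ (resp.\ of $He_i^\ast/L^0(i,\nu)$) as an $H$-module, inflated to $B$-modules with trivial $B_+$-action, induces under $e_n^\ast C\otimes_B-$ a filtration of $D_i$ (resp.\ of $Q$) whose sections are the modules $M_{i,\mu}$. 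Each $\Hom_A(Ac_0^\ast,M_{i,\mu})$ vanishes by the argument establishing equation \eqref{Hom000} in the proof of Proposition \ref{PropHom}, so both $\Hom_A(Ac_0^\ast,D_i)$ and $\Hom_A(Ac_0^\ast,Q)$ are zero. The long exact sequence then reads
$$0\;=\;\Hom_A(Ac_0^\ast,Q)\to\Ext^1_A(Ac_0^\ast,M_{i,\nu})\to\Ext^1_A(Ac_0^\ast,D_i)\;=\;0,$$
where the rightmost vanishing is Lemma \ref{ext02} for $i=\gamma$ and Lemma \ref{LemHom02} for $i=2\gamma$.

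For $i>2\gamma$: I would apply $\Hom_A(-,M_{i,\nu})$ to the short exact sequence $0\to K\to Ac_\gamma^\ast\to Ac_0^\ast\to 0$ coming from Lemma \ref{esc0}. As $Ac_\gamma^\ast$ is projective, this produces a surjection $\Hom_A(K,M_{i,\nu})\tto\Ext^1_A(Ac_0^\ast,M_{i,\nu})$. Since $K$ is a quotient of $Ac_{2\gamma}^\ast$, $\Hom_A(K,M_{i,\nu})$ embeds into $\Hom_A(Ac_{2\gamma}^\ast,M_{i,\nu})=c_{2\gamma}^\ast M_{i,\nu}=a_{2\gamma}b_{2\gamma}M_{i,\nu}$, with $b_{2\gamma}$ carrying $M_{i,\nu}$ into $e_{2\gamma}^\ast\overline{\Delta}(i,\nu)=0$; the latter vanishes because $\overline{\Delta}(i,\nu)$ is concentrated in degrees $\{-i,-i-1,\ldots\}$ by the proof of Proposition \ref{grBrBo} and $2\gamma<i$. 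Hence the extension group vanishes.

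The main delicacy is in the middle cases $i\in\{\gamma,2\gamma\}$: one must verify that the filtrations of $D_i$ and $Q$ by modules $M_{i,\mu}$ genuinely arise from the composition structure of $He_i^\ast$ and survive the exact functor $e_n^\ast C\otimes_B-$. Once this is in place, combining Proposition \ref{PropHom} with the tailored extension vanishings of Lemmas \ref{ext02} and \ref{LemHom02} closes the argument.
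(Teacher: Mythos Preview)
Your proposal is correct and follows essentially the same approach as the paper: reduce via Lemma~\ref{LemGFiso}(2), handle $i=0$ by Lemma~\ref{LemHom01}, handle $i\in\{\gamma,2\gamma\}$ via the self-injectivity of $He_i^\ast$ together with Lemmata~\ref{ext02} and~\ref{LemHom02}, and handle $i>2\gamma$ by observing that the extension is a subquotient of $\Hom_A(Ac_{2\gamma}^\ast,e_n^\ast\overline\Delta(i,\nu))\cong e_{2\gamma}^\ast\overline\Delta(i,\nu)=0$. The only cosmetic differences are that the paper cites Corollary~\ref{Corin} directly for the last step rather than unpacking $c_{2\gamma}^\ast=a_{2\gamma}b_{2\gamma}$, and does not bother recording the superfluous vanishing of $\Hom_A(Ac_0^\ast,D_i)$.
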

\begin{proof}
By Lemma~\ref{LemGFiso}(2), it suffices to prove that
\begin{equation}\label{eqExtvan}\Ext^1_A(Ac_0^\ast,e_n^\ast\overline{\Delta}(i,\nu))=0.\end{equation}
By Lemma \ref{esc0}, this space is a subquotient of 
$$\Hom_A(Ac_{2\gamma}^\ast,e_n^\ast\overline{\Delta}(i,\nu)),$$
which is zero when~$i>2\gamma$, by Corollary \ref{Corin}. Hence we focus on $i \in\{0,\gamma,2\gamma\}$.

By Lemma~\ref{LemDi}, we have $e_n^\ast\overline{\Delta}(0,\emptyset)\cong Ac_0^\ast$, so \eqref{eqExtvan} is satisfied for~$i=0$ by Lemma~\ref{LemHom01}.

As $He_i^\ast$ is self-injective, for~$D_i$ in Lemma~\ref{LemDi}, we have a short exact sequence
$$0\to e_n^\ast \overline{\Delta}(i,\nu)\to D_i\to Q_\nu\to 0,$$
where $Q_\nu$ has a filtration with sections $e_n^\ast\overline{\Delta}(i,\nu')$ with~$\nu'\in\Lambda_i$. This gives an exact sequence
$$\Hom_A(Ac_0^\ast,Q_\nu)\to \Ext^1_A(Ac_0^\ast,e_n^\ast \overline{\Delta}(i,\nu))\to \Ext^1_A(Ac_0^\ast,D_i).$$
If $i\not=0$, the left-hand space is zero by \eqref{Hom000}. For~$i\in\{\gamma,2\gamma\}$, the right-hand side is zero by Lemmata \ref{LemHom02} and \ref{ext02}. Hence the middle term is zero and \eqref{eqExtvan} is satisfied. \end{proof}

\begin{proof}[Proof of Theorem~\ref{Thm0Cover}]
By Proposition \ref{PropExt}, we have 
\begin{equation*}\label{eqRG}
\cR_1G(FM)=\Ext^1_{A}(e_n^\ast C,FM)=0,\qquad\mbox{for any $M\in\overline{\cF}$.}
\end{equation*} Consider a short exact sequence $M_1\hookrightarrow M\tto M_2$, with~$M_i$ (and hence also $M$) in $\overline{\cF}$. Using the above vanishing of cohomology, we find a commutative diagram with exact rows
\begin{equation}\label{commdia}\xymatrix{
0\ar[r] & M_1 \ar[r]\ar[d]^{\eta_{M_1}} & M\ar[r]\ar[d]^{\eta_M} &
 M_2\ar[r]\ar[d]^{\eta_{M_2}} & 0\\
0\ar[r] & GFM_1 \ar[r] & GFM\ar[r] & GFM_2\ar[r] &0.
}\end{equation}
This implies that, if~$\eta_{M_1}$ and~$\eta_{M_2}$ are isomorphisms, so is $\eta_M$.
Proposition \ref{PropHom} can then be used to prove, by induction on the length of the filtration that we have
\begin{equation*}\label{eqeta}\eta_M:\;\;M\;\,\tilde\to\;\, G\circ F(M),
\end{equation*}
for any $M\in\overline{\cF}$. Hence, we have an isomorphism
$$\Hom_C(M,N)\cong \Hom_C(M,G\circ F( N))\cong \Hom_A(FM,FN),$$
induced by~$F$.
Now we consider arbitrary~$M_1,M_2\in\overline{\cF}$. The morphism
$$F:\;\Ext^1_C(M_2,M_1)\to  \Ext^1_A(FM_2,FM_1)$$
has left inverse induced by $G$, by \eqref{commdia}. As furthermore $F\circ G\cong \Id$, the above morphism is an isomorphism. This concludes the proof.
\end{proof}

\subsection{Cell modules and standard systems}

\begin{thm}\label{ThmCellStan}
Assume that the field $\mk$ is algebraically closed. Let $A$ be $\cB_n(0)$, $J_n(0)$ or~$\TL_n(0)$ with~$n$ even, $\cP_n(0)$ or~$\cB_{r,r}(0)$. The cell modules of the cellular algebra~$A$ form a standard system if and only if the following condition is satisfied.
\vspace{-4mm}
\begin{center}
\begin{tabular}{ | l | l |  }
\multicolumn{2}{c }{}\\
\hline
algebra~$A$& condition \\ \hline\hline
$\cP_n(0)$ & $n>2$ and~$\charr(\mk)\not\in\{2,3\}$  \\ \hline
$\cB_n(0)$ &  $n\not\in \{2,4\}$ and~$\charr(\mk)\not\in\{2,3\}$ \\ \hline
$J_n(0)$ & $n\not\in \{2,4\}$ and~$\charr(\mk)\not\in[2,n/2]$ \\ \hline

  $\TL_n(0)$ & $n\not\in \{2,4\}$  \\ \hline
$\cB_{r,r}(0)$ & $r>2$ and~$\charr(\mk)\not\in\{2,3\}$\\ 
\hline
\end{tabular}
\end{center}
\end{thm}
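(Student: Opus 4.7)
The plan is to transfer the standard system property for cell modules between the diagram algebra $A$ and its cover $C = \mk[\cA^{\le n}]$, where $\cA$ is the ambient category of Lemma~\ref{Lemab}. By Proposition~\ref{PropCMod}, the cell modules of $A$ are given by $W_A(i,p) = F(W_C(i,p))$, where $F = e_n^\ast\cdot\colon C\text{-mod}\to A\text{-mod}$ is the exact cover functor of Section~\ref{SecCover}; and since $(C,\preccurlyeq_{\cQ})$ is exactly standardly stratified by Theorem~\ref{ThmDiagram1}, each $W_C(i,p)$ admits a filtration by proper standard modules $\overline{\Delta}(i,\nu)$ and hence lies in the subcategory $\overline{\cF}_C$.

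First I would handle the cases where the size hypothesis of Theorem~\ref{Thm0Cover} is satisfied, namely $\cP_n(0)$ with $n>2$, $\cB_n(0)$, $J_n(0)$, $\TL_n(0)$ with $n$ even and $n>4$, and $\cB_{r,r}(0)$ with $r>2$. Here Theorem~\ref{Thm0Cover} shows that $F$ is fully faithful on $\overline{\cF}_C$ and induces isomorphisms on $\Ext^1$ between objects of $\overline{\cF}_C$. Consequently the three axioms defining a standard system---$\End$ being a division ring, $\Hom$-vanishing for incomparable indices, and $\Ext^1$-vanishing for non-strictly-smaller indices---transfer bijectively between $\{W_C(i,p)\}$ and $\{W_A(i,p)\}$ under the common poset $(L,\le)$. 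Theorem~\ref{ThmDiagram2} then identifies the condition on $\charr(\mk)$ under which $\{W_C(i,p)\}$ is a standard system, and this recovers the characteristic parts of the tabled conditions.

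Second, for the small cases not covered by Theorem~\ref{Thm0Cover}, namely $\cP_2(0)$, $\cB_n(0)$, $J_n(0)$, $\TL_n(0)$ with $n\in\{2,4\}$, and $\cB_{r,r}(0)$ with $r\in\{1,2\}$, the tabled condition is never met, and I must show that the cell modules never form a standard system. The guiding principle is that when $\delta=0$ and the algebra is small, the bottom cell module $W_A(0,\emptyset)$ becomes isomorphic to, or admits a nonzero homomorphism from, a higher cell module $W_A(i,\nu)$ with $i>0$, which contradicts the second axiom of a standard system under the poset convention of Section~\ref{DefL} (higher $i$ is lower in $L$). The cleanest instance is $\TL_2(0)\cong\mk[U]/(U^2)$, in which $W_A(2)$ and $W_A(0)$ are both isomorphic to the unique simple module, giving a nonzero map $W_A(0)\to W_A(2)$ even though $(0,\cdot)\not\le(2,\cdot)$ in $L$. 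The same mechanism disposes of $\cB_2(0)$, $J_2(0)$ and $\cB_{1,1}(0)$, where at $\delta=0$ the bottom cell is isomorphic to the trivial Specht module sitting in the top block; the cases $n=4$ and $r=2$ follow by an analogous diagrammatic computation exhibiting the identification of $W_A(0,\emptyset)$ with a Specht cell inside $W_A(n,\mu)$.

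The main obstacle is the explicit identification of coincidences in the finite list of small algebras $\cB_4(0)$, $J_4(0)$, $\TL_4(0)$, $\cP_2(0)$ and $\cB_{2,2}(0)$. In each of these, the required nonzero homomorphism is visible by computing the action of $c_0^\ast$ and of the remaining diagrammatic generators on the one-dimensional extreme cells, but the verification is case-by-case rather than uniform. Once these finite checks are completed, the theorem follows by combining them with the equivalence established in the large cases.
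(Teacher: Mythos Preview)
Your first paragraph, transferring the standard system question from $A$ to $C$ via Theorem~\ref{Thm0Cover} and then invoking Theorem~\ref{ThmDiagram2}, is exactly the paper's argument and is correct. Your treatment of the size-two cases $\TL_2(0)\cong\cB_{1,1}(0)$ and $\cB_2(0)\cong J_2(0)$ via isomorphic cell modules is also the paper's approach (Lemma~\ref{NegPn}(1)).

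The genuine gap is in the remaining small cases $\TL_4(0)$, $\cB_4(0)$, $J_4(0)$, $\cB_{2,2}(0)$, $\cP_2(0)$. Your strategy is to exhibit a nonzero $\Hom$ from $W_A(0,\emptyset)$ to a cell module with higher $i$, but no such morphism exists. Lemma~\ref{Ac0A} (which applies for $n\ge 4$, resp.\ $r\ge 2$, resp.\ $n\ge 2$) gives $\Hom_A(Ac_0^\ast,A/Ac_0^\ast A)=0$, and every cell module $W_A(i,\nu)$ with $i>0$ is a subquotient of $A/Ac_0^\ast A$; a short check (e.g.\ using $(1-w)c_0^\ast=0$ for the element $w$ of \eqref{Defw}) shows that in fact $\Hom_A(Ac_0^\ast,W_A(i,\nu))=0$ for all $i>0$ in each of these algebras. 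Moreover $W_A(0,\emptyset)=Ac_0^\ast$ cannot be isomorphic to any $W_A(n,\mu)$: the ideal $Ac_\gamma^\ast A$ acts nontrivially on $Ac_0^\ast$ (since $c_\gamma^\ast c_0^\ast=c_0^\ast$) but kills every $W_A(n,\mu)$. So the obstruction is not at the $\Hom$ level.

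What actually fails is axiom~(3) of a standard system: the paper shows (Lemma~\ref{NegPn}(2)) that
\[
\Ext^1_A\bigl(W_A(0,\emptyset),\,A/Ac_\gamma^\ast A\bigr)\;\neq\;0,
\]
using that in these small cases $c_{2\gamma}^\ast=1$, so the exact sequence of Lemma~\ref{esc0} reads $0\to Ax\to Ac_\gamma^\ast\to Ac_0^\ast\to 0$, and one produces an explicit nonzero $\phi\colon Ax\to A/Ac_\gamma^\ast A$ (for $\TL_4(0)$ one may take $\phi(x)=1$; for $\cP_2(0)$, $\phi(x)=1-s$). Since $A/Ac_\gamma^\ast A$ is filtered by the $W_A(2\gamma,\nu)$ and $(2\gamma,\nu)<(0,\emptyset)$ in $L$, this contradicts the $\Ext^1$-vanishing required of a standard system. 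You should replace your $\Hom$-level argument for these five algebras by this $\Ext^1$ computation.
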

First we prove the following lemma.

\begin{lemma}
\label{NegPn}
The cell modules of 
\begin{enumerate}
\item $\cB_2(0)$, $J_2(0)$, $\TL_2(0)$ and~$\cB_{1,1}(0)$ do not form a standard system, for any partial order;
\item $\cB_4(0)$, $J_4(0)$, $\TL_4(0)$, $\cP_2(0)$ and~$\cB_{2,2}(0)$ do not form a standard system.
%\item $\cB_n(0)$, $J_n(0)$, $\TL_n(0)$ with~$n$ even and~$n\ge 4$, $\cP_n(0)$ with~$n\ge 2$ and~$\cB_{r,r}(0)$ with~$r\ge 2$ do not form a standard system if~$\charr(\mk)=2$, for any partial order.
\end{enumerate}
\end{lemma}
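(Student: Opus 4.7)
The plan is to prove the lemma by a case-by-case analysis, exhibiting in each of the listed algebras an explicit obstruction to the standard-system axioms of Section~\ref{IntroFaithCov}. Throughout, I would compute cell modules explicitly via Proposition~\ref{PropCMod}, use the cellular bilinear form to identify $\rad W(\lambda)$ and hence the composition factors, and compute endomorphism rings directly to settle indecomposability.

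For part~(1), I would first observe that each of $\TL_2(0), J_2(0), \cB_{1,1}(0)$ is isomorphic to $\mk[u]/(u^2)$, while $\cB_2(0)$ has basis $\{1,s,u\}$ with $s^2=1$, $u^2=0$ and $su=us=u$. A direct calculation of the cellular structure shows that in each of these algebras there exist two distinct labels $p,q\in L_A$ whose cell modules $W(p), W(q)$ are one-dimensional and isomorphic as $A$-modules: in the three $\mk[u]/(u^2)$-cases both are trivial one-dimensional modules with $u$ acting as $0$; in $\cB_2(0)$ the cells $W(0,\emptyset)$ and $W(2,(2))$ coincide ($s$ acts by $+1$ and $u$ by $0$ on both). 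Axiom~(2) of Section~\ref{IntroFaithCov} applied with $p\ne q$ and non-zero $\Hom$-spaces in both directions forces $p\le q\le p$, contradicting antisymmetry of any partial order.

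For part~(2), I would carry out the argument in full for $A=\TL_4(0)$; the remaining algebras $\cB_4(0), J_4(0), \cP_2(0), \cB_{2,2}(0)$ would then be handled by entirely parallel low-rank computations on their diagram bases. For $\TL_4(0)$ the cell modules $W(4), W(2), W(0)$ have dimensions $1,3,2$. Computing the cellular Gram form at $\delta=0$ shows that $\rad W(2)$ is the one-dimensional subspace spanned by $v_{12}-v_{34}$, annihilated by every generator $u_i$, hence isomorphic to the trivial module $L(4)=W(4)$; computing $\End_A(W(2))$ directly gives it one-dimensional, so the short exact sequence
\[ 0\longrightarrow W(4)\longrightarrow W(2)\longrightarrow L(2)\longrightarrow 0 \]
is non-split. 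A second Gram-form calculation would show that the form on $W(0)$ vanishes identically, so $L(0)=0$, and inspection of the $u_i$-action then identifies $W(0)$ with $L(2)=W(2)/\rad W(2)$. These facts together yield three relations among the cell modules: $\Hom_A(W(4),W(2))\ne 0$ (radical inclusion), $\Hom_A(W(2),W(0))\ne 0$ (projection onto the top), and $\Ext^1_A(W(0),W(4))\ne 0$ (the class of the displayed sequence under the identifications $W(0)\cong L(2)$, $W(4)\cong L(4)$). Axioms~(2) and~(3) of a standard system would then force $4\le 2\le 0$ and $0<4$ simultaneously, contradicting antisymmetry.

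The main obstacle, to be redone for each of the five algebras in part~(2), will be establishing the non-vanishing of the relevant $\Ext^1$; this reduces in every case to verifying that a specific length-two cell module is indecomposable by computing its endomorphism ring, which is a finite direct calculation.
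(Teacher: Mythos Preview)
Your argument for part~(1) is essentially the paper's: both exhibit two distinct labels with isomorphic one-dimensional cell modules, contradicting axiom~(2). Your choice of the pair $W(0,\emptyset)\cong W(2,(2))$ for $\cB_2(0)$ is in fact cleaner than the paper's, which splits into cases according to $\charr(\mk)$.

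For part~(2) your approach is correct but genuinely different from the paper's. The paper does not compute Gram forms or composition series; instead it invokes the short exact sequence $0\to Ax\to Ac_\gamma^\ast\to Ac_0^\ast\to 0$ from Lemma~\ref{esc0} (which exists precisely because $c_{2\gamma}^\ast=1$ in these small cases) and exhibits a concrete nonzero morphism $\phi\colon Ax\to A/Ac_\gamma^\ast A$, e.g.\ $\phi(x)=1$ for $\TL_4(0)$. Since $Ac_\gamma^\ast$ is projective, this immediately gives $\Ext^1_A(Ac_0^\ast,A/Ac_\gamma^\ast A)\neq 0$, i.e.\ $\Ext^1_A(W(0,\emptyset),W(2\gamma,\nu))\neq 0$ for some $\nu$, contradicting axiom~(3) for the given order on~$L$. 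Your route reaches the very same Ext non-vanishing (for $\TL_4(0)$ you identify $W(0)\cong L(2)$, $W(4)\cong L(4)$ and use the non-split extension $W(2)$), but via explicit representation-theoretic computations rather than the structural machinery of Section~\ref{SecCov}. What you gain is a self-contained argument that in fact rules out \emph{every} partial order, not just the cellular one; what the paper gains is uniformity, since its proof reuses objects ($c_i^\ast$, $x$, the exact sequence) already in place, and the verification in each case reduces to writing down a single morphism rather than analysing module structure.
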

\begin{proof}
First we prove part (1), we have $$\TL_2(0)\cong\cB_{1,1}(0)\cong \mk[x]/(x^2).$$ The only cellular structure on this algebra gives two cell modules which are isomorphic. If $\charr(\mk)\not=2$, we have 
$$\cB_2(0)\cong J_2(0)\cong \mk[x]/(x^2)\oplus \mk,$$
so the result follows as above. If $\charr(\mk)=2$, the algebra~$\cB_2(0)\cong J_2(0)$ has two isomorphic cell modules, induced from the sign and trivial~$\mk\mS_2$-module, by Proposition \ref{PropCMod}.

Now we prove part (2). We have $c_{2\gamma}^\ast=1$ and, as $c_\gamma^\ast$ is an idempotent, equation~\eqref{sesK} implies $$ \Hom_A(Ax,A/Ac_\gamma^\ast A)\cong \Ext^1(Ac_0^\ast, A/Ac_\gamma^\ast A).$$
As $Ac_0^\ast\cong W(0,\emptyset)$ and~$A/Ac_\gamma^\ast A$ has a filtration with sections of the form $W(2\gamma,\nu)$ with~$\nu\in L_{2\gamma}$, the right-hand side must be zero in order to have a standard system for~$(L,\le)$. However, we claim that there exists a non-zero morphism 
$$\phi: Ax\to A/Ac_\gamma^\ast A.$$ For~$A=\TL_4(0)$, we have $A/Ac_2^\ast A\cong \mk$ and we can set $\phi(x)= 1$. For~$A=\cP_2(0)$ we have $A/Ac_1^\ast A\cong\mk\mS_2$ and we can set $\phi(x)=1-s$, for~$s$ the generator of $\mS_2$. The other cases are left as an exercise.
 \end{proof}

\begin{proof}[Proof of Theorem~\ref{ThmCellStan}]
By Proposition \ref{PropCMod}, under the conditions in Theorem~\ref{Thm0Cover}, the cell modules of~$A$ form a standard system if and only if the cell modules of~$C $ form a standard system. The necessary and sufficient condition for the latter is given in Theorem~\ref{ThmDiagram2}.

For the remaining cases, {\it i.e.} when Theorem~\ref{Thm0Cover} is not applicable, the cell modules do not form a standard system by Lemma~\ref{NegPn}. 
\end{proof}

%%%%%%%%%%%%%%%%%%%%%%%%%%%%%%%%%%%%%%%%%%%%%%%%%%%%%%

\appendix

\section{Stratified algebras}\label{TheApp}

\subsection{Homological stratification}
We have the following alternative characterisations of standardly and/or exactly stratifying ideals.
\begin{lemma}\label{LemStu}
Consider an idempotent ideal~$J=AeA$ in $A$.
\begin{enumerate}
\item The ideal~$J$ is standardly stratifying (${}_AJ$ is projective) if and only if
\begin{itemize}
\item multiplication induces an $A$-bimodule isomorphism $Ae\otimes_{eAe}eA\;\tilde\to\;J$, and
\item the left $eAe$-module~$eA$ is projective.
\end{itemize}
\item The ideal~$J$ is exactly stratifying ($J_A$ is projective)  if and only if
\begin{itemize}
\item multiplication induces an $A$-bimodule isomorphism $Ae\otimes_{eAe}eA\;\tilde\to\;J$, and
\item the right~$eAe$-module~$Ae$ is projective.
\end{itemize}
\end{enumerate}

\end{lemma}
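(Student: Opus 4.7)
The plan is to prove (1); part (2) will follow by the symmetry $A \leftrightarrow A^{\op}$, which swaps left and right modules while preserving idempotent ideals. The multiplication map $\mu: Ae \otimes_{eAe} eA \to J$ is always a surjective bimodule map, so the content of both directions of (1) is about injectivity of $\mu$ together with the appropriate one-sided projectivity of $eA$.

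For the easy implication ($\Leftarrow$), I would exploit the adjunction between $Ae \otimes_{eAe} -$ and $\Hom_A(Ae, -) = e\cdot$. Since $Ae$ is projective as a left $A$-module, the right adjoint $e\cdot$ is exact, so the left adjoint $Ae \otimes_{eAe} -$ preserves projectives. Applied to the $eAe$-projective module $_{eAe}eA$, this yields that $Ae \otimes_{eAe} eA$ is $A$-projective, and the iso $\mu$ identifies this with $J$.

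For the harder implication ($\Rightarrow$), assume $_AJ$ is projective. I would first show $\mu$ is iso. Set $K = \ker \mu$. A direct check gives $eJ = eA$: trivially $eJ \subseteq eA$, and conversely $eA = e \cdot eA \subseteq e \cdot AeA = eJ$. Applying the exact functor $e\cdot$ to $0 \to K \to Ae \otimes_{eAe} eA \to J \to 0$ produces $0 \to eK \to eA \to eA \to 0$, where the right map is the canonical iso, forcing $eK = 0$. Since $_AJ$ is projective, the sequence splits, so $K$ becomes a summand of $Ae \otimes_{eAe} eA$. If $K$ were nonzero, it would admit a simple quotient $L$, and $eK = 0$ would force $eL = 0$; but by tensor-hom adjunction
\[
\Hom_A(Ae \otimes_{eAe} eA, L) \;\cong\; \Hom_{eAe}(eA, eL) \;=\; 0,
\]
contradicting the nonzero composition $Ae \otimes_{eAe} eA \tto K \tto L$ coming from the splitting. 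Hence $K = 0$.

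The main obstacle will be the second step: deducing projectivity of $_{eAe}eA$ from projectivity of $_AJ$. Using Krull-Schmidt, I would decompose $_AJ \cong \bigoplus_\lambda (Ae_\lambda)^{m_\lambda}$ and argue that $m_\lambda = 0$ whenever $eL(\lambda) = 0$. The key observation is that any simple $L$ with $eL = 0$ is annihilated by $J = AeA$, so using $J^2 = J$, every $A$-morphism $\phi : J \to L$ vanishes (write $\phi(jj') = j\,\phi(j') \in JL = 0$); thus $L(\lambda)$ cannot be a quotient of any summand $Ae_\lambda$ of $J$. For the remaining $\lambda$ (those with $eL(\lambda) \ne 0$), the nonzero map $Ae \to L(\lambda)$ exhibits $L(\lambda)$ as a quotient of $Ae$, so $Ae_\lambda$ is a direct summand of $Ae$, and $e_\lambda$ is equivalent to a primitive idempotent $e'_\lambda \in eAe$. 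Then $eAe_\lambda \cong (eAe)e'_\lambda$ is a projective left $eAe$-module, and assembling these summands,
\[
_{eAe}eA \;=\; eJ \;\cong\; \bigoplus_\lambda (eAe_\lambda)^{m_\lambda}
\]
is a direct sum of projective $eAe$-modules, hence projective.
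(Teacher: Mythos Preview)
Your argument is correct. The implication ($\Leftarrow$) is exactly the paper's: $Ae\otimes_{eAe}-$ is left adjoint to the exact functor $e\cdot$, hence preserves projectives. For ($\Rightarrow$) the paper does not argue at all; it simply cites the proof of \cite[Remark~2.1.2(b)]{CPSbook}. Your self-contained treatment fills in precisely those details, and the two steps you give are the natural ones. A couple of minor comments: in the second step you do not actually need $e'_\lambda$ to be \emph{primitive} in $eAe$ (any idempotent $f\in eAe$ yields a projective $(eAe)f$), though primitivity does follow from indecomposability of $Ae'_\lambda$ as an $A$-module; and the identification $e\,(Ae\otimes_{eAe}eA)\cong eA$ you use is just $eAe\otimes_{eAe}eA\cong eA$, with the restriction of $\mu$ becoming the identity, so $eK=0$ is immediate as you say.
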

\begin{proof}
We prove part (1), as part (2) then follows from considering~$A^{\op}$.
If $Ae\otimes_{eAe}eA\;\tilde\to\;J$, then the left $A$-module~$J$ is induced from the left $eAe$-module~$eA$. If the latter is projective then ${}_AJ$ is also projective, so the ideal~$J$ is standardly stratifying.
If $J$ is standardly stratifying, the conclusion follows from the proof of~\cite[Remark~2.1.2(b)]{CPSbook}.
\end{proof}

An {\em exact stratification of an algebra} $A$ is a chain \eqref{stratchain} of ideals
such that~$J_{i}/J_{i-1}$ is exactly stratifying in $A/J_{i-1}$, see \ref{SecStratId2}(0), for~$1\le i\le  m$. For a chain \eqref{stratchain} we consider the algebras~$A^{(i)}$ as in Remark \ref{DefAi} and we identify $A^{(i)}$-modules with~$f_{i+1}Af_{i+1}$-modules having trivial~$f_{i+1}J_{i}f_{i+1}$-action. 
The following principle is well-known.
\begin{lemma}\label{NewLemStr}
Consider an algebra~$A$ with an exact stratification \eqref{stratchain}. For any $A^{(l)}$-module~$M$ and~$A^{(j)}$-module~$N$, with~$0\le l\le j\le m-1$, let $\widetilde{M}:=Af_{l+1}\otimes _{f_{l+1}Af_{l+1}}M$ and~$\widetilde{N}:=Af_{j+1}\otimes_{f_{j+1}Af_{j+1}}N$.
Then
$$\Ext^k_{A}(\widetilde{M},\widetilde{N})\;\cong\; \delta_{jl}\,\Ext^k_{A^{(l)}}(M,N),\qquad\mbox{for any }\; k\in\mN.$$
\end{lemma}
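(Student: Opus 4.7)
The plan is induction on the length $m$ of the chain. The base case $m=1$ is tautological: $J_1=A$, $f_1=1$, $A^{(0)}=A$, so $\widetilde{M}=M$ and $\widetilde{N}=N$. For the inductive step, I would split on whether $l=0$.

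\emph{Case $l\geq 1$.} Both $\widetilde{M}$ and $\widetilde{N}$ factor through $A\twoheadrightarrow A/J_1$, being killed by $J_l\supseteq J_1$ and $J_j\supseteq J_1$ respectively. By~\ref{SecStratId2} the exactly stratifying ideal $J_1$ is stratifying in the sense of \cite[Definition~2.1.1]{CPSbook}, which yields $\Ext^k_A(\widetilde{M},\widetilde{N})\cong \Ext^k_{A/J_1}(\widetilde{M},\widetilde{N})$ for every $k$. The quotient $A/J_1$ inherits an exact stratification of length $m-1$ with ideals $\bar J_i:=J_{i+1}/J_1$, and $(A/J_1)^{(i-1)}\cong A^{(i)}$ for $i\geq 1$. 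Under this identification the modules $\widetilde{M}$ and $\widetilde{N}$ are exactly the tilde-modules attached to the indices $l-1$ and $j-1$ of the shorter chain, so the inductive hypothesis delivers $\delta_{j-1,l-1}\Ext^k_{A^{(l)}}(M,N)=\delta_{jl}\Ext^k_{A^{(l)}}(M,N)$.

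\emph{Case $l=0$.} Applying Lemma~\ref{LemStu}(2) to $J_1$ gives the bimodule isomorphism $Af_1\otimes_{A^{(0)}}f_1A\;\tilde\to\;J_1$ together with the projectivity of $Af_1$ as a right $A^{(0)}$-module. Hence the extension functor $F:=Af_1\otimes_{A^{(0)}}-\,:\,A^{(0)}\mbox{-mod}\to A\mbox{-mod}$ is exact; its right adjoint $f_1(-)$ is exact as well (multiplication by an idempotent), so $F$ preserves projectives, and $f_1\circ F\cong \mathrm{Id}$ because $f_1Af_1=A^{(0)}$. Taking a projective resolution $P_\bullet\to M$ in $A^{(0)}$-mod produces a projective resolution $F(P_\bullet)\to\widetilde{M}$ in $A$-mod, and the standard adjunction computation gives
$$\Ext^k_A(\widetilde{M},\widetilde{N})\;\cong\;H^k\Hom_{A^{(0)}}(P_\bullet,f_1\widetilde{N})\;\cong\;\Ext^k_{A^{(0)}}(M,f_1\widetilde{N}).$$

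It remains to identify $f_1\widetilde{N}$. Using that $N$ has trivial $f_{j+1}J_jf_{j+1}$-action, one has $f_1\widetilde{N}\cong f_1(A/J_j)f_{j+1}\otimes_{A^{(j)}} N$. When $j=0$ this is $A^{(0)}\otimes_{A^{(0)}}N=N$, which yields the required isomorphism; when $j\geq 1$ we have $f_1\in J_1\subseteq J_j$, so the image of $f_1$ in $A/J_j$ vanishes, forcing $f_1\widetilde{N}=0$ and hence the vanishing of Ext. The only non-routine step is the bookkeeping that matches $\widetilde{M},\widetilde{N}$ with the tilde-modules of the induced chain on $A/J_1$; the essential analytic inputs are the CPS-stratifying property of exactly stratifying ideals and the exactness plus projective-preservation of $F$.
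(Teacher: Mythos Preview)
Your proof is correct and uses essentially the same ingredients as the paper's: the CPS stratifying property of exactly stratifying ideals to pass to a quotient, and the adjunction between the exact functor $Af_{1}\otimes_{A^{(0)}}-$ and the exact functor $f_{1}(-)$ to compute the Ext groups. The only organisational difference is that you argue by induction on $m$, peeling off $J_1$ at each step and handling $l=0$ directly, whereas the paper passes in a single step to $A/J_{l}$ (using that $J_l$ is stratifying in $A$) and then performs the same adjunction computation there with $\Upsilon_l=(A/J_l)f_{l+1}\otimes_{A^{(l)}}-$; the content is the same.
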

\begin{proof}
First we claim that~$J_{l}\widetilde{M}=0=J_{l}\widetilde{N}$. It suffices to show that~$f_{l}$ acts as zero. Now $f_{l}$ is included in $f_{j+1}J_{l}f_{j+1}$, since $l\le j$. Action of~$f_{j+1}J_{l}f_{j+1}$ on $\widetilde{N}$ gives
$$f_{j+1}J_{l}f_{j+1}\otimes _{f_{j+1}Af_{j+1}} N=0.$$
This observation and \cite[equation (2.1.2.1)]{CPSbook} imply
$$\Ext^k_A(\widetilde{M}, \widetilde{N})\;\cong\; \Ext^k_{A/J_{l}}(\widetilde{M}, \widetilde{N}).$$
Consider the functors 
$$\Upsilon_i=(A/J_i)f_{i+1}\otimes_{A^{(i)}}-\;:A^{(i)}\mbox{{\rm-mod}}\to A/J_i\mbox{{\rm-mod}}.$$
By Lemma~\ref{LemStu}(2) these are exact. 
As $A/J_{l}$-modules we have $\widetilde{M}\cong\Upsilon_lM$,  and as $A/J_{j}$-modules 
$\widetilde{N}\cong\Upsilon_jN.$
Since the exact functor~$\Upsilon_l$ is left adjoint to the exact functor~$f_{l+1}-$, we have
$$\Ext^k_{A/J_{l}}(\widetilde{M}, \widetilde{N})\;\cong\; \Ext^k_{A^{(l)}}\left(M,f_{l+1}\Upsilon_j  N\right).$$
Since we have
$$f_{l+1}(A/J_{l}) f_{j+1}\;=\;\begin{cases} 0 &\mbox{for }\,  l<j, \\ A^{(l)} &\mbox{for }\, l=j, \end{cases}$$
we find $f_l\Upsilon_l\cong\Id$ and $f_l\Upsilon_j=0$ if $l<j$, which concludes the proof. \end{proof}

\subsection{Equivalence of ring and module theoretic definitions}
\label{AppEq}
In case the quasi-order $\preceq$ is total, Definitions \ref{DefA1} and~\ref{DefA2} agree. We give an overview of where this is proved, and use the numbering corresponding to Definitions \ref{DefA1} and~\ref{DefA2}.
\begin{enumerate}
\item This is~\cite[Theorem~2.2.3]{CPSbook}.
\item By \cite[Proposition~7]{Frisk}, Definition~\ref{DefA2}(2) is equivalent to demanding that Definition~\ref{DefA2}(1) holds both for~$A$ and~$A^{\op}$. By definition the same relation holds between Definition~\ref{DefA1}(1) and~(2). So this case follows from the above case~(1).
\item Both in Definitions~\ref{DefA1} and~\ref{DefA2} we find that going from (1) to (3) only corresponds to restricting from quasi-orders to orders, see Remark \ref{RemSSS}. This case thus also follows from (1).
\item As above, this case follows from case (2) by going from quasi-orders to orders. Alternatively one can apply results in \cite[Theorem~5]{Dlab}.
\item This is \cite[Theorem~3.6]{CPS}.
\end{enumerate}

%Also for orders which are not total, one can define stratified algebras in terms of chains of ideals, by considering coarser chains.

\subsection{Comparison with Kleshchev's terminology}\label{AppKl}
For any class $\cB$ of Noetherian, positively graded, connected algebras, Kleshchev introduced in \cite[Section~6]{AffineK} the notions of~$\cB$-standardly stratified, $\cB$-properly stratified and~$\cB$-quasi-hereditary algebras. Even though the aim of~\cite{AffineK} is to study infinite dimensional and graded algebras, the definitions also cover all the cases in Definition~\ref{DefA1}. Therefore, we assume that every grading is reduced to the zero component and introduce the classes $\cS\subset \cL\subset \cD$, where $\cD$ contains the finite dimensional unital algebras, $\cL$ the quasi-local algebras and~$\cS$ the semisimple algebras.
Then we have the following identification between our notions and those in \cite{AffineK}:
\vspace{-4mm}

\begin{center}
\begin{tabular}{ | l | l| l | l |l |}
\multicolumn{4}{c }{}\\
\hline
&& $\cB=\cD$& $\cB=\cL$ & $\cB=\cS$ \\ \hline\hline
$\cB$-standardly && standardly  &  strongly standardly&quasi-\\
 stratified && stratified   &  stratified &hereditary\\ \hline
 
$\cB$-properly  & &exactly standardly  &  properly&quasi-\\ 
stratified &&stratified & stratified & hereditary\\ 
\hline
$\cB$-quasi && properly  &  properly&quasi-\\ 
-hereditary &&stratified  &  stratified&hereditary\\ \hline
\end{tabular}
\end{center}

This follows by comparing \cite[Definitions 6.1 and 6.2]{AffineK} with~\ref{SecStratId2}, using Lemma~\ref{LemStu}.

\subsection*{Acknowledgement}
The research was supported by Australian Research Council Discover-Project Grant DP140103239 and an FWO postdoctoral grant.

The authors thank Steffen K\"onig, Julian K\"ulshammer and Volodymyr Mazorchuk for very useful comments on the first version of the manuscript.

\begin{flushleft}
	K. Coulembier\qquad \url{kevin.coulembier@sydney.edu.au}
	
	School of Mathematics and Statistics, University of Sydney, NSW 2006, Australia
	
	\medskip
	
	R.~B.~Zhang\qquad \url{ruibin.zhang@sydney.edu.au}
	
	School of Mathematics and Statistics, University of Sydney, NSW 2006, Australia 
\end{flushleft}

\end{document}